\def\Xint#1{\mathchoice
{\XXint\displaystyle\textstyle{#1}}%
{\XXint\textstyle\scriptstyle{#1}}%
{\XXint\scriptstyle\scriptscriptstyle{#1}}%
{\XXint\scriptscriptstyle%
\scriptscriptstyle{#1}}%
\!\int}
\def\XXint#1#2#3{{\setbox0=\hbox{$#1{#2#3}{%
\int}$ }
\vcenter{\hbox{$#2#3$ }}\kern-.6\wd0}}
\def\barint{\, \Xint -} 
\def\bariint{\barint_{} \kern-.4em \barint}
\def\bariiint{\bariint_{} \kern-.4em \barint}
\renewcommand{\iint}{\int_{}\kern-.34em \int} 
\renewcommand{\iiint}{\iint_{}\kern-.34em \int} 
\DeclareMathAlphabet{\mathcal}{OMS}{cmsy}{m}{n}
\theoremstyle{plain}
\newtheorem{theorem}{Theorem}[section]
\newtheorem{definition}[theorem]{Definition}
\newtheorem{lemma}[theorem]{Lemma}
\newtheorem{proposition}[theorem]{Proposition}
\theoremstyle{definition}
\newtheorem{remark}[theorem]{Remark}
\newcommand{\R}{\mathbb{R}}
\newcommand{\N}{\mathbb{N}}
\newcommand{\T}{\mathbb{T}}
\newcommand{\bI}{\mathbf{I}}
\newcommand{\p}{\partial}
\newcommand{\norm}[1]{\lVert #1 \rVert}
\newcommand{\into}{\hookrightarrow}
\newcommand{\E}{\mathcal{E}}
\newcommand{\bu}{\bm{u}}
\newcommand{\X}{\bm{X}}
\newcommand{\e}{\bm{e}}
\newcommand{\Y}{\bm{Y}}
\newcommand{\Z}{\bm{Z}}
\renewcommand{\b}{\bm{b}}
\newcommand{\abs}[1]{\left\lvert #1 \right\rvert}
\newcommand{\wh}[1]{\widehat{#1}}
\newcommand{\wt}[1]{\widetilde{#1}}
\newcommand{\mc}[1]{\mathcal{#1}}
\renewcommand{\bar}[1]{\overline{#1}}
\title{Rods in flows: the PDE theory of immersed elastic filaments} 
\author[Albritton]{Dallas Albritton} 
\address[Dallas Albritton]{University of Wisconsin-Madison, Department of Mathematics, 480 Lincoln Dr, Madison, WI 53706, USA}
\email{dalbritton@wisc.edu}
\author[Ohm]{Laurel Ohm} 
\address[Laurel Ohm]{University of Wisconsin-Madison, Department of Mathematics, 480 Lincoln Dr, Madison, WI 53706, USA}
\email{lohm2@wisc.edu}
\numberwithin{equation}{section}
\setlist[enumerate]{leftmargin=*}
\begin{document}

\begin{abstract}
We investigate a family of curve evolution equations approximating the motion of a Kirchhoff rod immersed in a low Reynolds number fluid. The rod is modeled as a framed curve whose energy consists of the bending energy of the curve and the twisting energy of the frame. The equations we consider may be realized as gradient flows of the rod energy under a certain anisotropic metric coming from resistive force theory. Ultimately, our goal is to provide a comprehensive treatment of the PDE theory of immersed rod dynamics. We begin by analyzing the problem without the frame, in which case the evolution is globally well-posed and solutions asymptotically converge to Euler elasticae. Next, in the planar setting, we demonstrate the existence of large time-periodic solutions forced by intrinsic curvature relevant to undulatory swimming. Finally, the majority of the paper is devoted to the evolution of an immersed Kirchhoff rod in 3D, which involves a strong coupling between the curve and frame. 
Under a physically reasonable assumption on the curvature, solutions exist globally-in-time and asymptotically converge to rod equilibria.
\end{abstract}

\maketitle

\setcounter{tocdepth}{1}
\tableofcontents

\parskip   2pt plus 0.5pt minus 0.5pt


\section{Introduction}
We investigate a family of curve evolution equations approximating the motion of a Kirchhoff rod immersed in a low Reynolds number fluid. These models arise in the study of undulatory swimming \cite{gray1955propulsion,cox1970motion, wiggins1998flexive}, DNA supercoiling \cite{fuller1971writhing,coleman2000theory,shi1994kirchhoff}, polymer dynamics in fluids \cite{goldstein1995nonlinear}, and pattern selection in the growth of biofilaments \cite{wolgemuth2004dynamic,moulton2013morphoelastic}. While rod equilibria are well studied, the above phenomena are inherently dynamical in nature, and it is important to develop a theory for the physically relevant equations of motion. 
One can consider filament evolution equations which incorporate fluid effects at various levels of complexity. In this paper, we give a thorough treatment of a selection of models derived from resistive force theory.

\subsubsection*{Curve evolutions}

We begin with classical elastohydrodynamics. Let $I=[0,1]$ or $\T : =\R/\mathbb{Z}$. The filament position $\X:I\times [0,T]\to \R^3$ evolves according to
\begin{equation}\label{eq:classical}
\begin{aligned}
\frac{\p\X}{\p t}(s,t) &= -\big({\bf I}+\gamma\X_s\otimes\X_s\big)\big(\X_{sss}-\lambda(s,t)\X_s\big)_s \\
\abs{\X_s}^2&=1 \\
\X_{ss}\big|_{s=0,1}&=0\,, \quad (\X_{sss}-\lambda\X_s)\big|_{s=0,1}=0 \qquad \text{if }I=[0,1]\,,
\end{aligned}
\end{equation}
or $\X$ is periodic in $s$ if $I=\T$. Here the matrix $\big({\bf I}+\gamma\X_s\otimes\X_s\big)$ is the resistive force theory (local slender body theory) approximation of the hydrodynamic effects on the filament evolution \cite{gray1955propulsion, johnson1979flagellar, keller1976swimming, pironneau1974optimal}, where the shape factor $0\le \gamma\le1$ incorporates the anisotropy of the viscous drag on a thin filament. For an infinitely thin filament, $\gamma\to 1$, weighting the drag along the tangential direction of the filament twice as much as the normal directions.
The terms $-(\X_{sss}-\lambda(s,t)\X_s)_s$ model the elastic response of the filament according to Euler-Bernoulli beam theory~\cite{camalet2000generic, camalet1999self, hines1978bend, tornberg2004simulating, wiggins1998flexive, wiggins1998trapping}. The filament tension $\lambda(s,t)$ serves as a Lagrange multiplier to enforce the local inextensibility constraint $\abs{\X_s}^2=1$.
The boundary conditions come from the requirement that the filament is force- and torque-free at its endpoints. It is due to the surrounding low Reynolds number fluid that the evolution is parabolic, whereas the evolution of a non-immersed beam would be hyperbolic. 

The system~\eqref{eq:classical} with $\gamma=0$ is known as the \emph{curve-straightening flow} for a (locally) inextensible filament, i.e., the $L^2$ gradient flow for the filament bending energy 
\begin{equation}\label{eq:bending_energy}
    \frac{1}{2}\int_{I}\kappa^2\,ds = \frac{1}{2}\int_{I}\abs{\X_{ss}}^2\,ds
\end{equation}
with the constraint $\abs{\X_s}^2=1$. The critical points of~\eqref{eq:bending_energy} are called the \emph{Euler elasticae}~\cite{EulerOriginal} and themselves have a rich history recounted in, for example, \cite{levien2008elastica,matsutani2010euler}. 
The elasticae are the steady states of any gradient flow of~\eqref{eq:bending_energy}, and various flows 
have been proposed~\cite{langer1984total, langer1984knotted, langer1985curve, wen1993l2, wen1995curve, mantegazza2021survey} to understand how the elasticae are attained dynamically. However, the above papers replace the local inextensibility constraint 
with the non-local constraint that the~\emph{total} length of the filament is fixed. 
Such flows have the same equilibria, without capturing the physical dynamics. On the other hand, the system~\eqref{eq:classical} in the planar setting $\X(s,t) \in \R^2$ is simpler to formulate. In that case, with the inextensibility constraint, the curve-straightening flow has been shown to be globally well-posed in~\cite{koiso1996motion,oelz2011curve} for closed loops and free ends, respectively.


In this paper, we interpret~\eqref{eq:classical} as the gradient flow of the bending energy \eqref{eq:bending_energy} with respect to the anisotropic metric 
\begin{equation}\label{eq:metric1}
 \langle \bm{W},\bm{Y}\rangle_\gamma = \int_{I} \left({\bf I} + \gamma \X_s\otimes\X_s\right)^{-1} \bm{W}\cdot\bm{Y}\,ds
 \end{equation}
 defined on perturbations $\bm{W},\bm{Y}$ of an inextensible curve $\X$ in $\R^3$. 
 We subsequently exploit the connection to geometric flows to prove global well-posedness and convergence to equilibrium for arbitrary solutions to~\eqref{eq:classical} in both the closed loop and free end settings:


\begin{theorem}
\label{thm:GWP_classical}
The classical elastohydrodynamics model~\eqref{eq:classical} on $I = \T$ and $[0,1]$ is globally well-posed in $H^2_s$, and its solutions converge strongly to Euler elasticae as $t \to +\infty$.
\end{theorem}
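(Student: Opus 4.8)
The plan is to treat \eqref{eq:classical} as a quasilinear parabolic system for the tangential reparametrization and to combine local well-posedness with global a priori estimates coming from the gradient-flow structure. First I would set up local well-posedness in $H^2_s$: differentiating the constraint $\abs{\X_s}^2=1$ shows that the tension $\lambda$ is determined by an elliptic two-point boundary value problem (or a periodic problem on $\T$) with $\X$-dependent coefficients, namely $-\lambda_{ss}+(\text{lower order in }\lambda)=(\text{quadratic in }\X_{ss},\X_{sss})$, so $\lambda$ can be solved for and is as smooth as the curvature allows. Substituting $\lambda[\X]$ back into the evolution yields a fourth-order quasilinear parabolic equation for $\X$ whose leading operator is $-({\bf I}+\gamma\X_s\otimes\X_s)\partial_s^4$, uniformly elliptic since $0\le\gamma\le 1$. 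Standard parabolic theory (e.g.\ a fixed-point/maximal regularity argument, or the theory of geometric fourth-order flows) then gives a unique short-time solution in $C([0,T];H^2_s)$ with instantaneous smoothing, preserving the constraint and the boundary conditions.

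The core of the argument is the global a priori bound. Here I would use that \eqref{eq:classical} is the gradient flow of the bending energy \eqref{eq:bending_energy} under the metric \eqref{eq:metric1}: along a solution,
\begin{equation*}
\frac{d}{dt}\,\frac12\int_I \abs{\X_{ss}}^2\,ds = -\big\langle \partial_t\X,\partial_t\X\big\rangle_\gamma = -\int_I ({\bf I}+\gamma\X_s\otimes\X_s)^{-1}\partial_t\X\cdot\partial_t\X\,ds \le 0\,,
\end{equation*}
so $\norm{\X_{ss}(t)}_{L^2}$ is nonincreasing and the dissipation $\int_0^\infty\norm{\partial_t\X}_{L^2}^2\,dt$ is finite (using that the metric weights are comparable to the identity). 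This controls $\X$ in $H^2_s$ for all time. To upgrade to higher norms and rule out blow-up, I would derive differentiated energy estimates: control $\frac{d}{dt}\int_I\abs{\X_{sss}}^2\,ds$ and higher, absorbing the nonlinear terms by interpolation against the $H^2_s$ bound and the parabolic smoothing gain. Because arclength is used as the spatial variable and the domain length is fixed (free ends) or the curve is closed, there is no loss from reparametrization, and the bootstrap closes, giving global existence and uniform-in-time smoothness of orbits.

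For the convergence statement I would argue as follows. The finite dissipation gives a sequence $t_n\to\infty$ with $\partial_t\X(t_n)\to 0$ in $L^2$, hence (by the uniform higher bounds and compactness) a subsequential limit $\X_\infty$ that is an inextensible curve with $(\X_{sss}-\lambda\X_s)_s=0$ and the force/torque-free boundary conditions, i.e.\ an Euler elastica. To get convergence of the full trajectory (not just a subsequence) and strong convergence, I would invoke a \L{}ojasiewicz--Simon gradient inequality for the bending energy on the constraint manifold of inextensible $H^2_s$ curves: the energy is analytic, the constraint is analytic, and the linearized operator is Fredholm, so the standard \L{}ojasiewicz--Simon machinery (as in Simon's work on geometric evolution equations) applies and upgrades $L^2$-dissipation-to-zero into $\X(t)\to\X_\infty$ in $H^2_s$ at an algebraic or exponential rate.

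The main obstacle I expect is twofold and both parts live in the a priori analysis rather than the abstract scheme. First, the tension $\lambda$ is genuinely nonlocal and nonlinear in $\X$ through the elliptic problem, and one must show the $\lambda$-solve stays well posed (uniform ellipticity of its coefficient, which degenerates if the curve develops a cusp or the constraint is violated) and that the resulting $\lambda$ is controlled by the current regularity of $\X$ without costing derivatives; handling the free-end boundary conditions $\X_{ss}=0$, $(\X_{sss}-\lambda\X_s)=0$ correctly in the energy estimates (so the boundary terms from integration by parts vanish or have a favorable sign) is the delicate point. Second, closing the higher-order bootstrap for a fourth-order quasilinear system requires care with the structure of the nonlinearity — one wants the "bad" top-order terms to be in divergence form or to be absorbed by the good term $-\int({\bf I}+\gamma\X_s\otimes\X_s)^{-1}|\partial_t\X|^2$ — and this is where the precise geometric form of \eqref{eq:classical} (rather than a generic fourth-order PDE) has to be exploited.
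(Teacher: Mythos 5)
Your overall architecture — local well-posedness, the energy identity from the gradient-flow structure, subsequential limits plus a Lojasiewicz--Simon inequality for full convergence — is the same as the paper's. The convergence step in particular matches: the paper proves a Lojasiewicz inequality on the constraint manifold (Proposition~\ref{pro:curvelojascewicz}, via the Fredholm property of the linearized operator) and runs the standard trapping argument for the closed loop, while the free-end case is handled more simply by a Poincar\'e inequality giving exponential decay.

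The genuine gap is in how you propose to rule out blow-up. The energy identity gives $\norm{\X_{ss}}_{L^2_s}$ bounded and $\int_0^\infty D(t)\,dt<\infty$ with $D(t)=\int_I|(\X_{sss}-\lambda\X_s)_s|^2+\gamma(\cdots)^2\,ds$; but the continuation criterion lives at the level of $\X_s\in L^\infty_t H^1_s\cap L^4_t H^2_s$, and the dissipation controls $\Z_s=\X_{ssss}-(\lambda\X_s)_s$, \emph{not} $\X_{ssss}$. Your substitute — a Gr\"onwall estimate on $\int_I|\X_{sss}|^2\,ds$ absorbing nonlinearities by interpolation against the $H^2_s$ bound — does not obviously close: the tension satisfies $\norm{\lambda}_{H^1_s}\lesssim\norm{\X_{ss}}_{H^1_s}^2$, so the tension terms in such an estimate are superlinear in exactly the quantity $\norm{\X_{sss}}_{L^2_s}$ you are trying to control, and the $H^2_s$ energy bound alone is too weak to absorb them (this is precisely why the paper says the energy space "acts as critical" despite being subcritical by scaling). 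The missing ingredient is Proposition~\ref{prop:Xssss_ests_curve}: following Koiso, one works with $\Z=\X_{sss}-\lambda\X_s$ itself rather than $\Z_s$, observes that its tangential part is $-(|\X_{ss}|^2+\lambda)\X_s$ and that $P_{\X_s}\Z_s=-\tfrac32(|\X_{ss}|^2)_s\X_s-\lambda_s\X_s$ is determined by lower-order data through the constraint, and thereby extracts $\norm{\X_{ssss}}_{L^2_s}\lesssim D^{1/2}(1+E^{1/2})+E^{5/2}$ (plus an $E$ term on $\T$, where one must also control the mean of $\Z$ and use Fenchel's theorem to invert the tension equation). This puts $\X_{ssss}\in L^2_tL^2_s$ directly from the time-integrability of $D$, which is what closes global existence. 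Without this decomposition (or an equivalent device), your bootstrap is circular.
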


A crucial tool we borrow from geometric flows is the \emph{Lojasiewicz inequality}, which we prove for both curves and rods in Section~\ref{sec:rod_loja}.


An important application of~\eqref{eq:classical} is to the investigation of undulatory swimming at low Reynolds number \cite{el2020optimal, gadelha2010nonlinear, gadelha2019flagellar, hu2022enhanced, montenegro2015spermatozoa, lauga2013shape, lauga2007floppy,  spagnolie2010optimal, lauga2009hydrodynamics, mori2023well, ohm2024well}. In such applications, we also want to prescribe a 
time-dependent forcing along the filament to drive its motion. If we restrict the filament to planar deformations only, we may consider the free-end version of \eqref{eq:classical} with the addition of forcing by \emph{preferred curvature} $\zeta(s,t)$:
\begin{equation}\label{eq:2D_forced}
\frac{\p\X}{\p t}(s,t) = -\big({\bf I}+\gamma\X_s\otimes\X_s\big)\big(\X_{sss}-\lambda(s,t)\X_s-(\zeta\e_{\rm n})_s\big)_s\,, \quad \abs{\X_s}^2=1\,.
\end{equation}
Here $\e_{\rm n}(s,t) = \X_s^\perp$ is the unique (up to sign) in-plane unit normal vector to the filament $\X(s,t)$. 
Integrating \eqref{eq:2D_forced} in arc length, we 
obtain a formal expression for the center-of-mass velocity of the filament: 
\begin{equation}\label{eq:swimming}
\bm{V}(t) := \int_0^1\frac{\p\X}{\p t}(s,t)\,ds = -\gamma\int_0^1\X_s\big(\X_s\cdot(\X_{ss}-\zeta\e_{\rm n})_{ss}-\lambda_s \big)\,ds\,.
\end{equation}
In particular, the drag anisotropy $\gamma> 0$ is necessary for the filament to actually swim in a viscous environment. A time-periodic preferred curvature is typically used to model the actuation of a flexible flagellum. The resulting time-periodic solution models the stroke.


In~\cite{mori2023well}, the second author and collaborator demonstrated local well-posedness for~\eqref{eq:2D_forced} 
and the existence of a unique time-periodic solution given a sufficiently small time-periodic preferred curvature. They subsequently analyzed and performed a numerical optimization on the swimming speed~\eqref{eq:swimming}. The gradient flow structure, however, was not exploited. Here we use this structure to demonstrate the existence of time-periodic solutions given arbitrarily large time-periodic forcing 
\begin{equation}
    \label{eq:zetacondition}
\zeta \in C(\T_T;H^1_s) \, , \quad \p_t \zeta \in C(\T_T;L^2_s) \, ,
\end{equation}
where $\T_T = \R/T\mathbb{Z}$ is the $T$-periodic interval.
\begin{theorem}
\label{thm:periodic}
Given $\zeta$ satisfying~\eqref{eq:zetacondition}, there exists a solution to~\eqref{eq:2D_forced} in the free end setting whose curvature $\kappa = \X_{ss} \cdot \e_{\rm n}$ is $T$-periodic in time.
\end{theorem}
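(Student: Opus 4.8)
\emph{Proof strategy.}
The plan is to produce a time-periodic solution as a fixed point of the time-$T$ (Poincar\'e) map, applied to the \emph{shape} of the filament rather than to its position. Since the curve is planar and locally inextensible, a configuration is determined, up to a rigid motion of $\R^2$, by the signed scalar curvature $\kappa = \X_{ss}\cdot\e_{\rm n}$, and the forcing $\zeta$ enters the evolution only through the curvature. First I would eliminate the tension, solving for $\lambda = \lambda[\kappa,\zeta]$ from the constraint $\abs{\X_s}^2 = 1$ and the force- and torque-free boundary conditions (a second-order two-point boundary value problem in $s$ at each time), so that \eqref{eq:2D_forced} becomes a closed, fourth-order, semilinear (with linear principal part) parabolic equation for $\kappa$ together with boundary conditions at $s=0,1$. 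Local well-posedness for this equation with forcing in the class \eqref{eq:zetacondition}, following the methods of \cite{mori2023well}, then defines a solution operator $\Phi_\zeta\colon\kappa(\cdot,0)\mapsto\kappa(\cdot,T)$ on the space of admissible curvatures in $H^1_s$.

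The crux is an a priori bound, and here the gradient-flow structure is decisive. Equation \eqref{eq:2D_forced} is the gradient flow, in the metric \eqref{eq:metric1}, of the modified bending energy
\[
E[\X,t] = \frac{1}{2}\int_0^1 \abs{\X_{ss} - \zeta(\cdot,t)\,\e_{\rm n}}^2\,ds = \frac{1}{2}\int_0^1 (\kappa - \zeta)^2\,ds\,,
\]
so that along solutions $\tfrac{d}{dt}E = -\norm{\p_t\X}_\gamma^2 - \int_0^1(\kappa - \zeta)\,\p_t\zeta\,ds$. Only $\zeta$ and $\p_t\zeta$ appear, so the regularity \eqref{eq:zetacondition} is exactly what is needed: Cauchy--Schwarz gives $\tfrac{d}{dt}\sqrt{2E}\le\norm{\p_t\zeta(t)}_{L^2_s}$, whence $E$ and $\norm{\kappa(\cdot,t)}_{L^2_s}$ remain finite on every finite time interval, and together with the parabolic bootstrap and the elliptic estimates for $\lambda$ already developed for Theorem~\ref{thm:GWP_classical} this promotes local to global well-posedness. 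The essential further point is that for large data the dissipation overwhelms the forcing: $\norm{\p_t\X}_\gamma^2$ is bounded below, once $\norm{\kappa}_{L^2_s}$ is large, by a supercritical quantity comparable to $\int_0^1\kappa^6\,ds$ coming from the cubic nonlinearity of the elastica operator, so that $\tfrac{d}{dt}\norm{\kappa}_{L^2_s}^2 \le -c\,\norm{\kappa}_{L^2_s}^6 + C_\zeta$ above a $\zeta$-dependent threshold, with an analogous inequality for $\norm{\kappa_s}_{L^2_s}^2$ after a further bootstrap in $s$. Consequently there is $R = R(\zeta)$ such that $\mathcal{C} := \{\kappa\in H^1_s : \norm{\kappa}_{H^1_s}\le R,\ \kappa|_{s=0,1} = 0\}$ is forward-invariant under the flow, i.e.\ $\Phi_\zeta(\mathcal{C})\subseteq\mathcal{C}$.

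With this in hand I would invoke Schauder's fixed point theorem. The set $\mathcal{C}$ is convex, bounded in $H^1_s$, and closed and compact in $L^2_s$; and $\Phi_\zeta$ is continuous on $\mathcal{C}$ in the $L^2_s$ topology, by continuous dependence on initial data for the parabolic problem (uniformly on $H^1_s$-bounded sets). Schauder's theorem then produces $\kappa_*\in\mathcal{C}$ with $\Phi_\zeta(\kappa_*) = \kappa_*$; extended $T$-periodically in time, $\kappa_*$ solves the curvature equation with $T$-periodic curvature. Finally I would reconstruct a solution $\X$ of \eqref{eq:2D_forced}: integrate the tangent angle $\theta$, $\theta_s = \kappa$, in $s$, and determine $\theta(0,t)$ and $\X(0,t)$ from the ordinary differential equations obtained by evaluating \eqref{eq:2D_forced} and its $s$-derivative at $s = 0$. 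The resulting $\X$ solves \eqref{eq:2D_forced} and has $T$-periodic curvature, although $\X$ itself need not be $T$-periodic, reflecting net locomotion of the filament over a stroke.

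I expect the main obstacle to be the a priori estimate of the second step: extracting the coercive, supercritical part of the dissipation $\norm{\p_t\X}_\gamma^2$ and absorbing into it both the $\zeta$-forcing and the nonlocal tension $\lambda[\kappa,\zeta]$ (which is itself accessible only through elliptic estimates with the free-end boundary conditions), all while working within the limited forcing regularity \eqref{eq:zetacondition}. Once the invariant, $L^2_s$-compact set $\mathcal{C}$ is secured, the construction of $\Phi_\zeta$ and the fixed-point argument are routine.
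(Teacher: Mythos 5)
Your overall architecture—recast the problem as a parabolic evolution for the curvature, define the time-$T$ Poincar\'e map, find a forward-invariant set, and apply Schauder—is exactly the paper's. But the step on which everything hinges, the absorbing-ball estimate, rests on a claim that is unjustified and almost certainly false: you assert that once $\norm{\kappa}_{L^2_s}$ is large the dissipation $\norm{\p_t\X}_\gamma^2$ is bounded below by $c\int_0^1\kappa^6\,ds$, "coming from the cubic nonlinearity of the elastica operator." There is no such coercivity: the dissipation is $\int_0^1|(\X_{sss}-\lambda\X_s-(\zeta\e_{\rm n})_s)_s|^2\,ds$ plus a tangential piece, and the cubic terms in its normal component can cancel against $\kappa_{ss}$ (this is precisely what happens on elastica profiles, where $\kappa_{ss}+\tfrac12\kappa^3-q\kappa=0$). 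Indeed, if your inequality were true it would apply equally on the torus, where the $n$-covered circle has $D=0$ and $\norm{\kappa}_{L^2}$ arbitrarily large—so the mechanism cannot be a pointwise/supercritical lower bound. The correct mechanism, and the reason the theorem is stated for free ends only, is the \emph{linear} Poincar\'e-type inequality $E_\zeta\le c_0^{-1}D_\zeta$ (obtained because $\tilde\kappa=\kappa-\zeta$ and $\tilde\kappa_s$ vanish at $s=0,1$, so one applies the $H^1_0$ Sobolev/Poincar\'e inequality twice, as in~\eqref{eq:Poincareinequalitystep}). Combined with your (correct) energy identity $\dot E_\zeta\le -D_\zeta+\norm{\p_t\zeta}_{L^2_s}\sqrt{2E_\zeta}$, this gives $\dot E_\zeta\le-\tfrac{c_0}{2}E_\zeta+\tfrac1{2c_0}\norm{\p_t\zeta}_{L^2_s}^2$ and hence a ball in $L^2_s$ stabilized by $\Phi^T$; your estimate alone only gives linear-in-time growth of $E_\zeta$, which is not enough.

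A second, lesser gap is compactness. You propose a forward-invariant set $\mathcal C$ bounded in $H^1_s$ and compact in $L^2_s$, but propagating the $H^1_s$ bound forward in time is itself a nontrivial estimate on $\frac{d}{dt}D_\zeta$ (tension terms, interpolation, Gr\"onwall with $L^1_t$ coefficients), which you only gesture at; moreover, restricting to $H^1$ data makes the $L^2$-continuity of $\Phi_\zeta$ on $\mathcal C$ awkward. The paper instead works with an $L^2_s$ ball and proves a parabolic \emph{smoothing} estimate $\Phi^T:\overline{B_M^{L^2}}\to\overline{B_{M'}^{H^2}}$ (choosing an intermediate time $t_0$ by Chebyshev where $D_\zeta(t_0)$ is controlled, then propagating $D_\zeta$ to time $T$), which yields compactness of $\Phi^T$ directly. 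Your reconstruction of $\X$ from $\kappa$ and the observation that only the curvature, not $\X$ itself, is periodic are both correct and match the paper.
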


We emphasize that these solutions are outside of the standard perturbation theory. We exhibit them via a topological fixed point theorem (the Leray-Schauder theorem).

\subsubsection*{Rod evolutions}





An important physical characteristic of a rod is its ability to resist twist about its centerline. This cannot be captured by the curve evolution~\eqref{eq:classical} alone. We thus adopt Kirchoff rod theory~\cite{antman1974kirchhoff, dill1992kirchhoff, langer1996lagrangian,oreilly2017modeling} and introduce an orthonormal frame $(\e_{\rm t},\e_1,\e_2)$ along the curve. Here $\e_{\rm t}=\X_s$ is the unit tangent vector to the curve and the directors $\e_1,\e_2$ span the directions normal to $\X$ at each point. At each time $t$, the frame satisfies the following ODE in arclength $s$: 
\begin{equation}\label{eq:frame_ODE}
\begin{aligned}
\p_s\begin{pmatrix}
\e_{\rm t} \\
\e_1 \\
\e_2
\end{pmatrix}
=
\begin{pmatrix}
0 & \kappa_1 & \kappa_2 \\
-\kappa_1 & 0 & \kappa_3 \\
-\kappa_2 & -\kappa_3 & 0
\end{pmatrix}\begin{pmatrix}
\e_{\rm t}\\
\e_1\\
\e_2
\end{pmatrix}
\end{aligned}
\end{equation}
with $\kappa_1^2+\kappa_2^2=\kappa^2(s,t)$, the squared curvature at $s$. The coefficient $\kappa_3$ corresponds to the twist in the frame along $\X(s,t)$. The frame $(\e_{\rm t},\e_1,\e_2)$ is material and evolves along with the curve.

We consider the energy 
\begin{equation}\label{eq:Eintrinsic}
E[\X,\e_1,\e_2] = \frac{1}{2} \int_0^1 \bigg(\abs{\kappa_1-\zeta_1}^2 + \abs{\kappa_2-\zeta_2}^2 + \eta \abs{\kappa_3-\zeta_3}^2\bigg)\,ds 
\end{equation}
over the space of curves $\X$ satisfying the inextensibility constraint $\abs{\X_s}^2=1$ and orthonormal frames $(\e_t,\e_1,\e_2)$ satisfying \eqref{eq:frame_ODE}.
Here $\zeta_1(s,t)$ and $\zeta_2(s,t)$ are components of an intrinsic filament curvature and $\zeta_3(s,t)$ is an intrinsic twist \cite{olson2013modeling, wolgemuth2004dynamic, moulton2013morphoelastic,haijun1999spontaneous}.\footnote{
Other forms of forcing are possible, such as those considered in \cite{lough2023self, gazzola2018forward}, but these still require tracking a material frame alongside the curve.} The material parameter $\eta$ corresponds to the ratio between the rod's resistance to twist and its bending stiffness and, for a thin circular rod, satisfies $\eta=\frac{1}{1+\nu}$, where $0\le\nu\le \frac{1}{2}$ is Poisson's ratio for the material \cite[p. 195]{coleman2000theory}, \cite{mcmillen2002tendril,goriely2006twisted}.\footnote{Some sources \cite[Chap. 5.14]{oreilly2017modeling}, \cite[p. 662]{bartels2020numerical} use $\eta=\frac{1}{2(1+\nu)}$, in which case $\frac{1}{3}\le\eta\le\frac{1}{2}$.} For isotropic, incompressible elastic materials, $\nu\approx\frac{1}{2}$; in particular, for typical flagellated microswimmers, the twist-to-bend resistance ratio $\eta$ satisfies $\eta<1$.

The coupled rod-frame evolution will be a gradient flow of the energy \eqref{eq:Eintrinsic} with respect to variations in both the curve \emph{and} the frame 
under the metric
\begin{equation}\label{eq:metric2}
 \langle\bm{W},\bm{Y} \rangle_\gamma + \langle a,b \rangle_\alpha = \langle\bm{W},\bm{Y} \rangle_\gamma + 
 \alpha \int_0^1a(s)b(s) \,ds\,,
\end{equation} 
where $\langle\bm{W},\bm{Y} \rangle_\gamma$ is as in~\eqref{eq:metric1} and the scalar-valued $a(s)$ parameterizes the in-plane perturbation of the frame in each cross section.
Here $0<\alpha\ll1$ is a rotational friction coefficient~\eqref{eq:alphadef} due to the viscous fluid effects. 
The derivation and physical reasoning for this metric will be discussed in Section~\ref{subsec:derivation}, where the full system with non-zero $\zeta_1,\zeta_2,\zeta_3$ is displayed. 
From now on, we will consider $\zeta_1=\zeta_2=\zeta_3=0$, in which case the system can be written in terms of the curve $\X$ and the frame twist $\kappa_3$ as 
\begin{align}
    \p_t\X 
    &=  -({\bf I}+\gamma\X_s\otimes\X_s)\big(\X_{sss}-\lambda\X_s -\eta\kappa_3\X_s\times\X_{ss} \big)_s \,, \quad \abs{\X_s}^2=1\,,
    \label{eq:Xss_0}\\
%
    \p_t \kappa_3 &= \p_t \X_s \cdot (\X_s \times \X_{ss}) + \frac{\eta}{\alpha} (\kappa_3)_{ss}\,, \label{eq:kap3_0}
\end{align}
where 
the tension $\lambda$ satisfies
\begin{equation}\label{eq:tension_0}
    (1+\gamma)\lambda_{ss}  -\abs{\X_{ss}}^2\lambda = 
    -(4+3\gamma)(\X_{sss}\cdot\X_{ss})_s +\abs{\X_{sss}}^2  - \eta\kappa_3\X_s\cdot(\X_{ss}\times\X_{sss})\,.
\end{equation}

A characteristic of the rod system is the strong coupling between the curve and the frame: The right-hand side of the $\kappa_3$ equation~\eqref{eq:kap3_0} contains two terms. The term $\p_t \X_s \cdot (\X_s \times \X_{ss})$ is the twist induced by the motion of the curve. The term $\frac{\eta}{\alpha} (\kappa_3)_{ss}$ captures the diffusion of the twist and arises entirely from the independent degree of freedom of the frame. If we substitute~\eqref{eq:Xss_0} into~\eqref{eq:kap3_0}, we obtain
\begin{equation}
    \label{eq:kappa3strongcoupling}
    \p_t \kappa_3 =  \bigg(\frac{\eta}{\alpha}+ (\eta-1) |\X_{ss}|^2 \bigg)(\kappa_3)_{ss} + \cdots \, .
\end{equation}
Since $\eta < 1$, the apparent backward heat in~\eqref{eq:kappa3strongcoupling} highlights the 
strong coupling, 
and one wonders whether the curve is attempting to decrease its energy by converting bend into twist through some mechanism at small scales. While the twist-bend coupling is known to give rise to large-scale instabilities of rod equilibria~\cite{goriely1997nonlinear,goriely2006twisted,bartels2020numerical}, 
to our knowledge, no such behavior at small scales has been reported in such a way which would suggest ill-posedness of the equation. In Section~\ref{subsec:uncond_GWP}, we present evidence in favor of unconditional well-posedness. Nevertheless, for the purposes of this paper, we will require a condition on the maximum filament curvature to control the feedback from the coupling. 
In particular, we require
\begin{equation}\label{eq:Xss_criterion}
    \norm{\X_{ss}}_{L^\infty_s}^2 < \frac{1}{\alpha}
\end{equation}
under which the energy dissipation (where $\gamma = 0$ for simplicity)
\begin{equation}
    \label{eq:energydissipationintro}
\frac{dE}{dt} = - \int_I \big|\big(\X_{sss}-\lambda\X_s -\eta\kappa_3\X_s\times\X_{ss} \big)_s\big|^2 - \frac{\eta^2}{\alpha} \int_I (\kappa_3)_s^2 \, ds
\end{equation}
is coercive, that is, controls $\X_{ssss}$ and $(\kappa_3)_s$.




Using the curvature criterion \eqref{eq:Xss_criterion}, we obtain conditional well-posedness of the curve-frame evolution in both the free end and closed loop settings. Furthermore, the free-ended filament converges to a straight rod, and the closed loop converges to a rod equilibrium.



\begin{theorem}
\label{thm:GWP_curveframe}
For $(\X^{\rm in},\kappa_3^{\rm in}) \in H^4_s \times H^1_s$ satisfying~\eqref{eq:Xss_criterion}, there exists a strong solution to the curve-frame evolution \eqref{eq:Xss_0}-\eqref{eq:kap3_0} which can be continued while~\eqref{eq:Xss_criterion} holds. If
\begin{equation}
    \label{eq:asymptoticalpha}
\limsup_{t \to +\infty} \| \X_{ss} \|_{L^\infty_s}^2 < \frac{1}{\alpha} \, ,
\end{equation}
then the solution converges to a rod equilibrium as $t \to +\infty$. Finally, there exist $C_1, C_2 > 0$ such that~\eqref{eq:asymptoticalpha} is guaranteed to hold provided
\begin{equation}
\| \X_{ss}^{\rm in} \|_{H^2_s} + \| \kappa_3^{\rm in} \|_{H^1_s} \leq \frac{1}{C_1} \log \frac{1}{C_2 \alpha} \, .
\end{equation}
\end{theorem}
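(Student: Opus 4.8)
The plan is to establish, in order: (i) local well-posedness in $H^4_s\times H^1_s$ together with the stated continuation criterion; (ii) convergence to a rod equilibrium when~\eqref{eq:asymptoticalpha} holds, using the Lojasiewicz inequality of Section~\ref{sec:rod_loja}; and (iii) that the smallness condition forces~\eqref{eq:asymptoticalpha} (and, with it, global existence). For (i), the tension $\lambda$ is not a dynamical unknown but is slaved to $\X$ by~\eqref{eq:tension_0}, which is uniquely solvable since $\abs{\X_{ss}}^2\ge 0$ makes the operator $(1+\gamma)\p_s^2-\abs{\X_{ss}}^2$ coercive under the free-end (or periodic) boundary conditions. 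Then~\eqref{eq:Xss_0}--\eqref{eq:kap3_0} is a quasilinear parabolic system for $(\X,\kappa_3)$ with principal part schematically $\p_t\X=-({\bf I}+\gamma\X_s\otimes\X_s)\X_{ssss}+\cdots$ and $\p_t\kappa_3=\big(\tfrac{\eta}{\alpha}+(\eta-1)\abs{\X_{ss}}^2\big)(\kappa_3)_{ss}+\cdots$ as in~\eqref{eq:kappa3strongcoupling}, and the curvature condition~\eqref{eq:Xss_criterion} is exactly what renders the dissipation in~\eqref{eq:energydissipationintro} coercive: the dangerous term $-\eta(\kappa_3)_s\,\X_s\times\X_{ss}$ produced upon differentiating contributes at worst $\eta^2\norm{(\kappa_3)_s}_{L^2_s}^2\norm{\X_{ss}}_{L^\infty_s}^2$, which is absorbed into $-\tfrac{\eta^2}{\alpha}\norm{(\kappa_3)_s}_{L^2_s}^2$ precisely when $\norm{\X_{ss}}_{L^\infty_s}^2<1/\alpha$. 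I would build the local solution by freezing $\X_{ss}$ in the tension equation and in the diffusion coefficient of $\kappa_3$, solving the resulting linear parabolic system, and closing a contraction mapping in $C([0,\tau];H^4_s\times H^1_s)$ for small $\tau$, exactly as for the curve flows underlying Theorem~\ref{thm:GWP_classical}; parabolicity of the frozen problem is preserved because~\eqref{eq:Xss_criterion} is an open condition.

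To complete (i) with the continuation criterion it suffices to show the $H^4_s\times H^1_s$ norm stays finite on any interval where $\norm{\X_{ss}}_{L^\infty_s}^2\le 1/\alpha-\delta$ with $\delta>0$ fixed. The energy identity~\eqref{eq:energydissipationintro} already gives $\X\in L^\infty_tH^2_s\cap L^2_tH^4_s$ and $\kappa_3\in L^\infty_tL^2_s\cap L^2_tH^1_s$; to reach the data level I would run the higher-order estimate for $F(t):=\tfrac12\norm{\X_{sss}}_{L^2_s}^2+\tfrac{c_1}{2}\norm{(\kappa_3)_s}_{L^2_s}^2$ with $c_1$ chosen so that the top-order dissipation $-\norm{\X_{sssss}}_{L^2_s}^2-\tfrac{c_1\eta}{\alpha}\norm{(\kappa_3)_{ss}}_{L^2_s}^2$ dominates the top-order coupling — again by~\eqref{eq:Xss_criterion}, since the worst cross term is controlled by $\tfrac{c_1\alpha}{2\eta}\norm{\X_{ss}}_{L^\infty_s}^2\norm{\X_{sssss}}_{L^2_s}^2<\tfrac{c_1}{2\eta}\norm{\X_{sssss}}_{L^2_s}^2$ — and then absorb the remaining lower-order terms by interpolation, the elliptic bounds for $\lambda$, and Young's inequality, arriving at $\tfrac{d}{dt}F\le C(\delta)\,P\!\big(\norm{\X_{ss}}_{L^2_s}^2,\norm{\kappa_3}_{H^1_s}^2\big)\,F+(\text{dissipation-integrable terms})$ for a polynomial $P$, which closes by Gr\"onwall on bounded time intervals. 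The delicate point is that the top-order cross terms between the $\X$- and $\kappa_3$-estimates, which cancel exactly in~\eqref{eq:energydissipationintro}, no longer cancel at the $F$-level and must be absorbed using~\eqref{eq:Xss_criterion}; this is where the structure of the twist--bend coupling is genuinely used.

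For (ii), assume the solution is global and~\eqref{eq:asymptoticalpha} holds, so $\norm{\X_{ss}(t)}_{L^\infty_s}^2\le 1/\alpha-\delta$ for $t\ge t_0$ and some $\delta>0$. By the previous step the $H^4_s\times H^1_s$ norm is bounded on $[t_0,\infty)$ and the dissipation is uniformly coercive, whence $\int_{t_0}^\infty\big(\norm{\X_{ssss}}_{L^2_s}^2+\norm{(\kappa_3)_s}_{L^2_s}^2\big)\,dt<\infty$; so the trajectory $\{(\X(t),\kappa_3(t)):t\ge t_0\}$ is precompact in, say, $H^3_s\times L^2_s$, every subsequential limit is a critical point of $E$ (a rod equilibrium) still satisfying~\eqref{eq:Xss_criterion}, and $E(t)\downarrow E_\infty$. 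Since $E$ is real-analytic, the Lojasiewicz inequality of Section~\ref{sec:rod_loja} holds near such a limit: $\abs{E-E_\infty}^{1-\theta}\le C\,\norm{\nabla E}$ for some $\theta\in(0,\tfrac12]$, where $\nabla E$ is the gradient in the metric~\eqref{eq:metric2} and $\norm{\nabla E}$ is comparable to $\norm{\p_t(\X,\kappa_3)}$ in a suitable norm. The standard Lojasiewicz--Simon argument — bounding $-\tfrac{d}{dt}(E-E_\infty)^{\theta}$ below by $c\,\norm{\p_t(\X,\kappa_3)}$ and integrating — then shows the trajectory has finite length and converges as $t\to+\infty$. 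For free ends the only equilibrium of small curvature is the straight rod, the global minimizer of $E$ (the linearized free-end elastica equations have no nontrivial small solution), so under the smallness hypothesis the limit is the straight rod; for closed loops it is a (generally nontrivial) rod equilibrium.

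Finally, for (iii), I would run a continuity argument: let $T^\ast$ be the supremum of times on which $\norm{\X_{ss}(t)}_{L^\infty_s}^2<1/\alpha$, so the local theory and the energy estimates of the first two steps are available on $[0,T^\ast)$. The crux is an a priori bound, valid on $[0,T^\ast)$ with constants $C_1,C_2$ \emph{independent of} $\alpha$, of the form $\norm{\X_{ss}(t)}_{L^\infty_s}^2\le C_2\exp\!\big(C_1(\norm{\X^{\rm in}_{ss}}_{H^2_s}+\norm{\kappa_3^{\rm in}}_{H^1_s})\big)$, deduced from $\norm{\X_{ss}}_{L^\infty_s}^2\les\norm{\X_{ss}}_{L^2_s}\norm{\X_{sss}}_{L^2_s}+\norm{\X_{ss}}_{L^2_s}^2$, the energy bound $\norm{\X_{ss}(t)}_{L^2_s}^2\le 2E(0)$, and a Gr\"onwall control of $F$ from step (ii) — the exponential coming from that Gr\"onwall. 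The constants do not degenerate as $\alpha\to0$ because the only dangerous appearance of $1/\alpha$ is the twist feedback, which~\eqref{eq:Xss_criterion} converts into a term absorbed by the large twist dissipation $\sim\tfrac1\alpha\norm{(\kappa_3)_s}_{L^2_s}^2$, leaving constants that depend only on the data. Granting this, if $\norm{\X^{\rm in}_{ss}}_{H^2_s}+\norm{\kappa_3^{\rm in}}_{H^1_s}\le\tfrac{1}{C_1}\log\tfrac{1}{C_2\alpha}$ then the right-hand side is $<1/\alpha$, so by continuity $T^\ast=+\infty$; the solution is global, $\limsup_{t\to\infty}\norm{\X_{ss}(t)}_{L^\infty_s}^2<1/\alpha$, and the convergence of step (ii) applies. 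I expect this $\alpha$-uniform a priori estimate to be the main obstacle: one must carry the higher-order energy estimates through carefully enough to check that every factor of $1/\alpha$ is matched by the twist dissipation, so the exponent in the bound sees only the data — which is exactly why the admissible data size is $O(\log(1/\alpha))$ rather than $O(1)$.
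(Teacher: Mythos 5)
Your plan for step (i) --- local well-posedness by freezing coefficients and closing a contraction mapping in $C([0,\tau];H^4_s\times H^1_s)$ --- does not close, and this is the central gap. The issue is not parabolicity of the frozen $\kappa_3$ equation (you are right that~\eqref{eq:Xss_criterion} preserves it), but a one-derivative mismatch in the cross-coupling: writing out $\p_t\X_{ss}$, the term $\eta\,\p_s^3(\kappa_3\X_s\times\X_{ss})$ forces $(\kappa_3)_{sss}$ into the forcing for the fourth-order heat operator, whereas the second-order parabolic $\kappa_3$ equation only furnishes $\kappa_3\in L^\infty_tH^1_s\cap L^2_tH^2_s$ from data $\kappa_3^{\rm in}\in H^1_s$. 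So $(\kappa_3)_{sss}\notin L^2_{t,s}$ at the regularity of the iterate and the iteration cannot stabilize the claimed ball. The paper flags exactly this obstruction just after its definition of strong solution, and circumvents it by first solving a hyperviscous system $\p_t\kappa_3+\nu\p_s^4\kappa_3=\cdots$ (Appendix~\ref{sec:hyperviscous}), where the biharmonic smoothing yields $\kappa_3\in L^2_tH^3_s$, then proving the propagation estimate on $D(t)$ uniformly in $\nu$, and finally taking $\nu\to0$ by Aubin--Lions. The crucial bound $\p_t\X_{ss}\in L^2_{t,s}$ comes only from a cancellation visible in the $D$-propagation estimate, not from any naive iteration at fixed regularity --- this is the structural reason the regularization is indispensable.

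For step (ii), your functional $F=\tfrac12\norm{\X_{sss}}_{L^2_s}^2+\tfrac{c_1}{2}\norm{(\kappa_3)_s}_{L^2_s}^2$ is one derivative short in $\X$ (it controls $\X\in H^3_s$, not the required $H^4_s$), and, more importantly, it forgoes a structural advantage the paper exploits. The paper propagates $D(t)=\int(\mathbf{I}+\gamma\X_s\otimes\X_s)\wt{\Z}_s\cdot\wt{\Z}_s\,ds+\tfrac{\eta^2}{\alpha}\int(\kappa_3)_s^2\,ds$, which is precisely the dissipation in $\tfrac{dE}{dt}=-D$; hence $\int_0^\infty D\,dt\le E^{\rm in}$ for free, so the Gr\"onwall coefficient in $\tfrac{dD}{dt}\lesssim C(E,\delta)D\cdot D+C(E,\delta)D$ is integrable on $[0,\infty)$ with a bound depending only on $E^{\rm in}$ and $\delta$. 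That is exactly the global, $\alpha$-uniform control your step (iii) needs, and it is not evident your $F$-Gr\"onwall closes beyond bounded intervals. Your identification of the absorption mechanism (the twist--bend cross term is subdominant once $\norm{\X_{ss}}_{L^\infty_s}^2\le(1-\delta)/\alpha$) is correct and matches Proposition~\ref{pro:rodrelationship}, and your use of the rod Lojasiewicz inequality for the closed loop is aligned with the paper; for free ends the paper in fact bypasses Lojasiewicz and obtains exponential decay of $E$ directly from a Poincar\'e inequality, so your aside about small-curvature equilibria is not needed (the theorem requires no smallness for convergence, only the asymptotic curvature bound).
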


Since the rotational friction coefficient $\alpha$ is very small, it makes sense to consider the asymptotic behavior of the system \eqref{eq:Xss_0}-\eqref{eq:kap3_0} as $\alpha\to0$. As can be seen from~\eqref{eq:energydissipationintro}, $\kappa_3$ equilibrates on the fast timescale $\alpha$, and in the limit its evolution becomes quasi-steady, with $\kappa_3=\overline\kappa_3(t)$ a function of time only. In the free end setting, $\overline{\kappa}_3 \equiv 0$, i.e., the material frame is always a Bishop frame \cite{bishop1975there}, and the curve evolves via~\eqref{eq:classical}. 
%
In the more interesting closed loop setting, the curve and constant twist $\bar{\kappa}_3(t)$ satisfy
\begin{align}
    \p_t\X + \X_{ssss}
    &=  -({\bf I}+\gamma\X_s\otimes\X_s)\big(\X_{sss}-\lambda\X_s -\eta\overline\kappa_3\X_s\times\X_{ss} \big)_s \,, \quad \abs{\X_s}^2=1\,,  \label{eq:Xss_alpha00} \\
    \p_t\overline\kappa_3 &= \eta\overline\kappa_3\int_{\T}\bigg(-\abs{\X_{sss}}^2 +\abs{\X_{ss}}^4\bigg)\,ds \nonumber \\
&\qquad+ \int_{\T}\bigg(\X_{ssss} \cdot (\X_s\times\X_{sss})+\lambda\X_{sss}\cdot (\X_s\times\X_{ss})\bigg)\,ds \, , \label{eq:kap3_alpha00}
\end{align}
while the tension $\lambda$ satisfies 
\begin{equation}\label{eq:lambda00}
    (1+\gamma)\lambda_{ss}  -\abs{\X_{ss}}^2\lambda = 
    -(4+3\gamma)(\X_{sss}\cdot\X_{ss})_s +\abs{\X_{sss}}^2  - \eta\overline\kappa_3\X_s\cdot(\X_{ss}\times\X_{sss})\,.
\end{equation}
We show global well-posedness for the system \eqref{eq:Xss_alpha00}-\eqref{eq:lambda00} without restriction on $\norm{\X_{ss}}_{L^\infty_s}$:

\begin{theorem}
\label{cor:alpha0}
The evolution \eqref{eq:Xss_alpha00}-\eqref{eq:kap3_alpha00} is globally well-posed in $(\X,\overline\kappa_3) \in H^2_s \times \R$.
\end{theorem}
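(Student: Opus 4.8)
The plan is to regard \eqref{eq:Xss_alpha00}--\eqref{eq:kap3_alpha00} as a strictly parabolic fourth-order equation for the curve $\X$ coupled to a scalar ODE for $\overline\kappa_3(t)$, to solve it locally by a fixed-point argument built on the local theory already available for \eqref{eq:classical}, and to extend it globally using monotonicity of the rod energy. The structural point is that, in contrast with the $\alpha>0$ system, the quasi-steady limit has removed the spatial diffusion of the twist and with it the apparent backward heat \eqref{eq:kappa3strongcoupling}: here $\overline\kappa_3$ has no spatial profile, so \eqref{eq:Xss_alpha00} is a lower-order perturbation of the curve-straightening flow of Theorem~\ref{thm:GWP_classical}. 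Indeed its principal part $-(\bI+\gamma\X_s\otimes\X_s)\X_{ssss}$ is strictly parabolic, and the new term $-\eta\overline\kappa_3(\X_s\times\X_{ss})_s = -\eta\overline\kappa_3\,\X_s\times\X_{sss}$ is of third order; in particular no restriction on $\norm{\X_{ss}}_{L^\infty_s}$ enters.

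For local well-posedness I would first record that the tension is well defined: on $\T$ an inextensible curve cannot be a segment, so $\X_{ss}\not\equiv0$ and the operator $(1+\gamma)\p_s^2-\abs{\X_{ss}}^2$ in \eqref{eq:lambda00} has trivial kernel; testing against $\lambda$ and using $\abs{\X_{ss}}^2\ge0$ gives the usual estimate of $\lambda$ in terms of $\X$, depending linearly on the extra datum $\overline\kappa_3$. Then, given $\overline\kappa_3\in C([0,\tau];\R)$, equations \eqref{eq:Xss_alpha00} and \eqref{eq:lambda00} form a Lipschitz perturbation of the local theory for \eqref{eq:classical}, producing a unique solution $\X$ with the parabolic smoothing $\X\in C((0,\tau];H^k_s)$ for every $k$, together with rates such as $\norm{\X_{sss}(t)}_{L^2_s}\lesssim t^{-1/4}$ and $\norm{\X_{ssss}(t)}_{L^2_s}\lesssim t^{-1/2}$. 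Substituting this $\X$ into the right-hand side of \eqref{eq:kap3_alpha00} and integrating in time yields a new $\overline\kappa_3$; the a priori singular integrands $\int_\T\abs{\X_{sss}}^2$, $\int_\T\X_{ssss}\cdot(\X_s\times\X_{sss})$, and so on, are $O(t^{-3/4})$ near $t=0$, hence time-integrable, so $\overline\kappa_3\in C([0,\tau];\R)$ and the composed map is a contraction for $\tau$ small, the gain coming from the time integral in the ODE. Uniqueness and continuous dependence follow from an energy estimate for the difference of two solutions together with Gr\"onwall, the $\overline\kappa_3$-equation again being read classically only for $t>0$.

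Global existence follows once $\norm{\X_{ss}}_{L^2_s}$ and $\abs{\overline\kappa_3}$ are bounded a priori. With $\zeta=0$ and $\kappa_3\equiv\overline\kappa_3(t)$, and since $\abs{\T}=1$, the rod energy \eqref{eq:Eintrinsic} reduces to $E(t)=\frac12\norm{\X_{ss}(t)}_{L^2_s}^2+\frac{\eta}{2}\,\overline\kappa_3(t)^2$, and the (constrained) gradient-flow structure inherited from \eqref{eq:metric2} --- equivalently, a direct computation of $\frac{d}{dt}E$ from \eqref{eq:Xss_alpha00}, \eqref{eq:kap3_alpha00} and \eqref{eq:lambda00} in the spirit of \eqref{eq:energydissipationintro} --- gives $\frac{dE}{dt}\le0$. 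Hence $\norm{\X_{ss}(t)}_{L^2_s}^2\le 2E(0)$ and $\overline\kappa_3(t)^2\le 2E(0)/\eta$ for all $t\ge0$. Together with $\abs{\X_s}\equiv1$, this controls $\X$ in $H^2_s$ on every bounded time interval, and, since the coefficient $\eta\abs{\overline\kappa_3}$ of the third-order coupling term is now uniformly bounded, that term is absorbed by interpolation, $\norm{\X_{sss}}_{L^2_s}\le\varepsilon\norm{\X_{ssss}}_{L^2_s}+C_\varepsilon$. So the local solution meets the continuation criterion for all time and the flow is global.

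I expect the only real work --- the reason this is phrased as a corollary --- to be technical rather than conceptual: matching regularity levels, since the data live in $H^2_s\times\R$ but the right-hand side of the $\overline\kappa_3$-equation needs $\X\in H^3_s$, so one must thread parabolic smoothing, time-integrability of the ODE forcing near $t=0$, and uniform control of the tension through the fixed-point and continuation arguments. The conceptual content is simply the observation that, unlike in Theorem~\ref{thm:GWP_curveframe}, no curvature smallness is needed here because the $\alpha\to0$ limit has eliminated the problematic twist--bend feedback at small scales.
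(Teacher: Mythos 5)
Your proposal is correct and follows essentially the same route as the paper: the energy identity bounds $\overline\kappa_3$ and $\norm{\X_{ss}}_{L^2_s}$ uniformly, and because $(\overline\kappa_3)_s\equiv 0$ the only new contribution to the dissipation is the third-order term $\eta\overline\kappa_3\,\X_s\times\X_{sss}$ with bounded coefficient, which the paper absorbs exactly as you indicate (via $\norm{\X_{sss}}_{L^2_s}^2\lesssim\norm{\X_{ss}}_{L^2_s}\norm{\X_{ssss}}_{L^2_s}$ and Young) to conclude $D_1\lesssim D + C(E)$ and hence $\norm{\X_{ssss}}_{L^2_s}\le C(E)D+C(E)$ without any curvature restriction. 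The one step you compress is that meeting the continuation criterion requires not just the $L^\infty_t H^1_s$ bound on $\X_s$ from the energy but also $\X_s\in L^2_tH^3_s$, which comes from combining the absorbed estimate with the time-integrated dissipation bound $\int_0^T D(t)\,dt\le E^{\rm in}$; your local construction via parabolic smoothing weights is a harmless variant of the paper's energy-space fixed point.
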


Finally, we have reason to suspect that the condition~\eqref{eq:Xss_criterion} is not necessary for well-posedness. Specifically, in Section~\ref{subsec:uncond_GWP}, we linearize the rod-frame equations for a closed filament about the multiply-covered circle $(\kappa_1,\kappa_2,\kappa_3)=(\overline\kappa,0,b)$ for (large) constant $\overline\kappa$ and constant twist $b$. We calculate the spectrum of the linearized operator and discover that the strong coupling between the curve and frame evolutions actually serves to cancel out the apparent backward heat in~\eqref{eq:kappa3strongcoupling}. This suggests that the principal symbol of the evolution \emph{as a system} is elliptic. We intend to exploit this in future work.


\subsection{Additional literature}\label{subsec:litreview}


\subsubsection*{Steady state classification} Critical points of the bending energy~\eqref{eq:bending_energy} are the Euler elasticae. The classification of 2D elasticae was known to Euler~\cite{EulerOriginal}. With our boundary conditions, the only solutions in 2D are the straight line, the circle, and the lemniscate (figure eight), whereas the situation in 3D is more complex. The steady state equation is $(\X_{sss} - \lambda \X_s)_s = 0$, and we reproduce the most elementary aspects of its analysis below:

In the free end setting, $\X_{sss}-\lambda\X_s = \text{const. } \bm{c} = 0$, due to the second boundary condition of \eqref{eq:classical}. Writing $\X_{sss}$ with respect to the Frenet frame, i.e., $(\kappa_1,\kappa_2,\kappa_3)=(\kappa,0,\tau)$, where $\tau$ is the curve torsion, we have
\begin{align}
    \kappa_s\e_{\rm n}+\tau\kappa\e_{\rm b}-\kappa^2\e_{\rm t} = \lambda \e_{\rm t}\,,
\end{align}
from which we obtain $\kappa_s=\tau\kappa=0$ and $\lambda=-\kappa^2$. Thus, $\tau=0$ and $\kappa={\rm const.}=0$ from the first boundary condition $\X_{ss}\big|_{s=0,1}=0$. Therefore, the free end elasticae are straight lines.


For a closed filament, the steady state equation written in the Frenet frame is 
\begin{align}
    \lambda_s+\frac{3}{2}(\kappa^2)_s &=0 \label{eq:SS1} \\
    \kappa_{ss}-\tau^2\kappa-\kappa^3-\lambda\kappa &= 0 \label{eq:SS2} \\
    2\tau\kappa_s+\tau_s\kappa &=0 \label{eq:SS3} \,.
\end{align}
Integrating \eqref{eq:SS1}, we have $\lambda=-\frac{3}{2}\kappa^2+q$ for constant $q$, which we substitute into~\eqref{eq:SS2}. Multiplying \eqref{eq:SS3} by $\kappa$ and integrating, we obtain the celebrated elastica equations for $\kappa$ and $\tau$:
\begin{align}
    \kappa_{ss}+\frac{1}{2}\kappa^3-\tau^2\kappa-q\kappa =0 \, , \quad \tau\kappa^2 &= {\rm const.} \, ,  \label{eq:elastica}    
\end{align}
with planar deformations corresponding to $\tau=0$.\footnote{The elastica equations \eqref{eq:elastica} arise also as the steady state equations for classical curve-straightening flow under a global, rather than local, length constraint, albeit with a different constant $q$.}
%
A full classification of 3D solutions to \eqref{eq:elastica} is due to Langer and Singer in the mid-1980s~\cite{langer1984total,langer1984knotted, langer1985curve}. 
The new steady states in 3D include an infinite family of non-planar curves lying on embedded tori of revolution. 


%
The critical points of the full Kirchhoff rod energy~\eqref{eq:Eintrinsic} are studied in \cite{langer1996lagrangian,kawakubo2002kirchhoff,kawakubo2000stability,ivey1999knot,gerlach2017elastic}. This classification is yet more complicated, although solutions satisfy $\kappa_3 = {\rm const}$.

\subsubsection*{Geometric flows.} 
The most directly relevant references to curve-straightening appear below~\eqref{eq:bending_energy}. Additional analysis results for gradient flows of the bending energy \eqref{eq:bending_energy} appear in~\cite{lin2015second, lin2012l2, dall2017gradient}, while gradient flows for the Kirchhoff rod energy \eqref{eq:Eintrinsic} are studied in \cite{lin2004geometric,lin2009evolving}. These papers are concerned with geometric flows which have the same equilibria as the curve and rod evolutions considered above, without investigating the physical dynamics directly. Interestingly, the flow in~\cite{lin2004geometric} has a quasi-steady $\bar{\kappa}_3$ evolution like the one we derive in~\eqref{eq:kap3_alpha00}.

We further note that in the non-immersed setting, the Euler-Lagrange equations for the beam and Kirchhoff rod energies are hyperbolic and have been studied in~\cite{caflisch1984nonlinear,burchard2003cauchy,coleman1993dynamics,maddocks1984stability}.



\subsubsection*{Classical rod instabilities}
A rich variety of dynamical instabilities for elastic rods have been identified and studied in both the immersed and non-immersed settings. These include buckling instabilities \cite{goriely1997nonlinear,goriely2000nonlinear,goriely1996new,majumdar2014stability}, perversions \cite{mcmillen2002tendril,goriely2000nonlinear,wolgemuth2004dynamic,DOMOKOS2006}, bistable helices~\cite{goldstein2000bistable}, loop and kink formation \cite{porubov2002strain, goyal2005nonlinear, haijun1999spontaneous,goriely1998nonlinear}, twirling and whirling instabilities \cite{koehler2000twirling,wolgemuth2000twirling,lim2004simulations,goriely2000nonlinear}, and Michell's instability \cite{goriely2006twisted,zajac1962stability,michell1890small,bartels2020numerical,van2003instability}. We note that Michell's instability is particularly relevant here due to the importance of the strong coupling between bending and twisting in the well-posedness theory--see Section \ref{subsec:uncond_GWP}.


\subsubsection*{Model justification} 
We mention further physical justification and applications for the models considered here. 
First, the Kirchhoff rod energy has been rigorously derived via $\Gamma$-convergence from 3D elasticity in~\cite{mora2002derivation,mora2004nonlinear,scardia2006nonlinear}. We also highlight the following articles justifying the form of forcing by preferred curvature considered in~\eqref{eq:2D_forced}: \cite{alouges2024some, sartori2016dynamic, riedel2007molecular, camalet2000generic}. 
In addition, we mention the numerical analysis results for Kirchhoff rods in~\cite{oelz2010derivation,lim2008dynamics,dziuk2002evolution,bartels2020numerical} and the convergence of discrete $N$-link rod approximations 
to the continuous rod in the planar setting~\cite{moreau2025n}. 
Finally, the analysis of~\eqref{eq:classical} and~\eqref{eq:Xss_0}-\eqref{eq:kap3_0} is part of the second author's program to understand a full model hierarchy of immersed filament dynamics, and paves the way for analyzing filament models which incorporate a more detailed treatment of the fluid-structure interaction~\cite{ohm2024free}.


\section{Derivation of the models}
\label{sec:derivation}

\subsection{Resistive force theory}
We begin with a brief description of the origins of resistive force theory, i.e., the form of the relationship 
\begin{equation}\label{eq:RFT}
\bu = ({\bf I}+\gamma\X_s\otimes\X_s)\bm{f}
\end{equation}
between the force density $\bm{f}(s)$ along a slender filament immersed in a viscous fluid and the filament velocity $\bu(s)$.
As a consequence of the highly viscous (Stokesian) nature of the fluid, the relationship between velocity and drag for objects immersed in the fluid is linear. This velocity-drag relationship can be computed explicitly for objects with simple geometries, including slender prolate spheroids with minor-to-major axis ratio $\epsilon$ \cite{chwang1976hydromechanics,kim1991microhydrodynamics,lauga2020fluid}. In the slender limit, the asymptotic expressions for the force-velocity relation in the directions normal and tangent, respectively, to the center cross section of the spheroid are
\begin{equation}\label{eq:prolate}
\bm{f}_\perp = c_\perp \bu_\perp\,, \quad \bm{f}_\parallel = c_\parallel \bu_\parallel\,, \qquad c_\perp\approx \frac{4\pi \mu }{\log(2/\epsilon)+\frac{1}{2}}\,, \quad c_\parallel \approx \frac{2\pi \mu}{\log(2/\epsilon)-\frac{1}{2}}\,, 
\end{equation}
where $\mu$ is the dynamic viscosity. The idea behind resistive force theory~\cite{gray1955propulsion,cox1970motion,batchelor1970slender} is to adopt this relationship pointwise along a curved filament, yielding 
\begin{equation}\label{eq:inverse}
\bm{f}(s)=[c_\perp ({\bf I}-\X_s\otimes\X_s)+c_\parallel \X_s\otimes\X_s]\bu(s)\,.
\end{equation}
Taking the drag force to be in balance with the force due to elastic stresses within the filament (typically modeled using Euler-Bernoulli beam theory), we may invert \eqref{eq:inverse} to obtain, under the time rescaling $t_{\rm new} = c_{\parallel} \,t_{\rm old}$, the expression \eqref{eq:RFT} relating forces within the filament to the velocity along the filament. In accordance with \eqref{eq:prolate}, the contribution of forces in the tangential direction along $\X(s)$ is weighted $(1+\gamma)$ times as much as the normal directions in computing the velocity along the filament, where $\gamma\to 1$ for an infinitely slender filament. Resistive force theory with $\gamma=1$ can also be derived as the leading order term in nonlocal slender body theory \cite{keller1976swimming,lighthill1976flagellar,keller1976slender,johnson1980improved}. The sense in which nonlocal slender body theory approximates the Stokes flow due to a line force density along the length of the filament is made precise in \cite{closed_loop,free_ends}.

\subsection{Derivation of the coupled rod--frame evolution}\label{subsec:derivation}
Here we derive equations for the evolution of an immersed rod with intrinsic preferred curvature and twist. As mentioned, these equations, given in the unforced setting by~\eqref{eq:Xss_0}-\eqref{eq:kap3_0}, may be considered as the gradient flow of the energy~\eqref{eq:Eintrinsic} under the metric~\eqref{eq:metric2}. Here we provide the details of from where this metric comes. 

Recall that, under the assumptions of Kirchhoff rod theory, we must track not only the evolution of the curve $\X$ but also the evolution of the frame $(\e_{\rm t},\e_1,\e_2)$ satisfying~\eqref{eq:frame_ODE} along the curve. The general equations describing curve-frame kinematics are
\begin{align}
\p_t \X &= \bu \, , \quad \bu_s \cdot \X_s = 0 \, , \label{eq:Xdot}\\
\p_t\e_{\rm t} &= \bu_s  \label{eq:etdot} \\
\p_t \e_1 &= \underbrace{\e_2 \times \bu_s}_{= -(\bu_s \cdot \e_1)\e_{\rm t}} + \,\omega \e_2  \label{eq:e1dot}\\
\p_t \e_2 &= \underbrace{- \e_1 \times \bu_s}_{= -(\bu_s \cdot \e_2)\e_{\rm t}} - \,\omega \e_1 \,. \label{eq:e2dot}
\end{align}
Equations \eqref{eq:e1dot} and \eqref{eq:e2dot} describe the local tilting of the frame in the $\e_{\rm t}$ direction as the centerline moves, along with the in-plane angular velocity 
$\omega=\p_t\e_1\cdot\e_2=-\p_t\e_2\cdot\e_1$ induced by the centerline motion. Using \eqref{eq:frame_ODE} and \eqref{eq:Xdot}-\eqref{eq:e2dot}, we may write the curve evolution in terms of the curvature components and $\kappa_3$:
\begin{align}
\p_t \kappa_1 &= \p_t \X_{ss} \cdot \e_1 + \X_{ss} \cdot \p_t \e_1 
= \bu_{ss} \cdot \e_1 + \omega \kappa_2  \label{eq:kappa1dot}\\
\p_t \kappa_2 &= \p_t \X_{ss} \cdot \e_2 + \X_{ss} \cdot \p_t \e_2 
= \bu_{ss} \cdot \e_2 - \omega \kappa_1 \label{eq:kappa2dot} \\
\p_t \kappa_3 &= \p_s (\p_t \e_1) \cdot \e_2 + (\p_s\e_1) \cdot (\p_t \e_2) 
= -(\bu_s \cdot \e_1) \kappa_2 + (\bu_s \cdot \e_2) \kappa_1 + \p_s \omega\nonumber \\ 
&= \bu_s \cdot (\X_s\times\X_{ss})+ \p_s \omega \, . \label{eq:kappa3dot}
\end{align}


Here the velocity field $\bu$ will be determined via resistive force theory~\eqref{eq:RFT}, 
and the in-plane angular velocity 
$\omega$ is linearly related to the local moment density $M$ about the tangent direction of the filament:
\begin{equation}\label{eq:Talpha}
\omega(s,t)=\frac{1}{\alpha}M(s,t)\,.
\end{equation}
The rotational friction coefficient $0<\alpha\ll1$ is due to viscous effects of the surrounding fluid \cite{wolgemuth2004dynamic,lough2023self,wada2011geometry,wolgemuth2000twirling}. For an immersed filament with finite radius $\epsilon>0$, under our scaling of time, 
the parameter $\alpha$ satisfies
\begin{equation}\label{eq:alphadef}
\alpha \approx \frac{4\pi \mu \epsilon^2}{c_{\parallel}} \approx 2 \epsilon^2 \log \frac{2}{\epsilon} \, .
\end{equation}

It remains to determine the force density $\bm{f}(s,t)$ and the (scalar) cross sectional moment density $M(s,t)$ along the filament. Recall the form \eqref{eq:Eintrinsic} of the energy $\E$, which we separate into three components as:
\begin{equation}\label{eq:Eintrinsic2}
\E[\X,\e_1,\e_2] = \frac{1}{2} \int_{I} \bigg(\abs{\kappa_1-\zeta_1}^2 + \abs{\kappa_2-\zeta_2}^2 + \eta \abs{\kappa_3-\zeta_3}^2\bigg)\,ds =: \E_1 + \E_2 + \eta \E_3 \, .
\end{equation}
As in \cite{olson2013modeling, wolgemuth2004dynamic, moulton2013morphoelastic,haijun1999spontaneous}, we will find $\bm{f}$ and $M$ by taking the variational derivative of this energy with respect to both the curve $\X$ and the frame $(\e_1,\e_2)$.

The configuration space, on which the energy \eqref{eq:Eintrinsic2} is defined, is (informally, without making precise the regularity of the curve and frame)
\begin{equation}
\begin{aligned}
    Q &= \{ (\X,\e_1,\e_2) \; \big| \;  \X,\e_1,\e_2 : [0,1] \to \R^3 \,,\, |\X_s|^2 = |\e_1|^2 = |\e_2|^2 = 1 \,,\\
    &\hspace{3cm} \X_s \times \e_1 = \e_2 \,,\, \text{BCs} \}
\end{aligned}    
\end{equation}
where the boundary conditions (BCs) are either periodicity (in which case $\X,\e_1,\e_2 : \T \to \R^3$) or the appropriate BCs for a free ended filament. These may include the force- and torque-free conditions 
\begin{equation}
\X_{sss}\big|_{s=0,1} = 0\,, \quad  \X_{ss}\big|_{s=0,1} = 0\,,
\end{equation}
in which case the corresponding boundary condition for $\kappa_3$ is $(\kappa_3)_s=0$. We may consider replacing one end with clamped boundary conditions 
\begin{equation}
\X\big|_{s=0} = 0\,, \quad \X_s\big|_{s=0} = \e_z\,,
\end{equation}
where $\e_z$ is a fixed unit vector which we may take to point along the $z$-axis in a fixed Cartesian coordinate system. The corresponding boundary condition for $\kappa_3$ at the clamped end is $(\kappa_3)_s=0$, i.e., no twist is allowed to leave through the clamped end. 
Here we consider only the force- and torque-free boundary conditions, but note that the clamped conditions are also important in practice.

We consider variations $\X(s,\varepsilon)$, $\e_1(s,\varepsilon)$, $\e_2(s,\varepsilon)$, which, using the inextensibility constraint and orthonormality of the frame, necessarily satisfy
\begin{align}
\X(s,\varepsilon) &= \X(s,0) + \varepsilon \dot \X(s,0) + o(\varepsilon) \, , \quad \dot \X_s(s,0) \cdot \X_s(s,0) = 0 \, , \\
\e_1(s,\varepsilon) &= \e_1(s,0) - \varepsilon (\e_1(s,0) \cdot \dot\X_s(s,0)) \e_{\rm t}(s,0) + \varepsilon a(s) \e_2(s,0) + o(\varepsilon) \, , \\
\e_2(s,\varepsilon) &= \e_2(s,0) - \varepsilon (\e_2(s,0) \cdot\dot\X_s(s,0)) \e_{\rm t}(s,0) - \varepsilon a(s) \e_1(s,0) + o(\varepsilon) \, .
\end{align}

Therefore, the tangent space $\mc{T}_{(\X,\e_1,\e_2)} Q$ to the configuration space at $(\X,\e_1,\e_2)$ consists precisely of those perturbations $(\dot \X,\dot \e_1,\dot \e_2)$ satisfying
\begin{align}
\dot \X_s \cdot \X_s = 0 \,, \quad 
\dot\e_1 =  -(\e_1 \cdot \dot\X_s)\e_{\rm t} + a(s)\e_2  \,, \quad 
\dot\e_2 = -(\e_2 \cdot \dot\X_s)\e_{\rm t} - a(s)\e_1  \, .
\end{align}
It is convenient to parameterize the perturbations to the frame by scalar-valued functions $a(s)$; we may therefore identify the tangent space with
\begin{equation}
    \label{eq:tangentspace}
    \mc{T}_{(\X,\e_1,\e_2)} Q \cong \{ (\dot \X, a) : \dot \X_s \cdot \X_s = 0 \} \, .
\end{equation}
In summary, the admissible perturbations are (1) normal perturbations $\dot{\X}$ of the curve, which also `tilt' the frame $\e_1,\e_2$ in the direction of $\e_{\rm t}$, and (2) `twists' $a$ of the frame in the plane of the cross section. The inner product on the tangent space will be
\begin{equation}
\int_{I} \dot{\X} \cdot \dot{\bm{Y}} \, ds + \int_{I} a(s) b(s) \, ds \, .
\end{equation}

We may calculate 
\begin{equation}
    \label{eq:firstvariationofE1}
\begin{aligned}
\frac{d}{d\varepsilon}\bigg|_{\varepsilon=0} \E_1 &= 
\frac{d}{d\varepsilon}\bigg|_{\varepsilon=0} \frac{1}{2}\int_{I} \big((\X_{ss}+\varepsilon\dot\X_{ss}-\zeta_1\e_1-\zeta_2\e_2)\cdot\e_1 \big)^2\,ds \\
&= \int_{I} (\kappa_1-\zeta_1) \big(\dot \X_{ss} \cdot \e_1 + (\X_{ss}-\zeta_2\e_2) \cdot \dot \e_1\big)\,ds \\
&= \int_{I} \bigg(\big[((\kappa_1 - \zeta_1) \e_1)_{s}-\lambda_1\X_s\big]_s \cdot \dot \X + (\kappa_1-\zeta_1)(\kappa_2-\zeta_2) a(s)\bigg) \,ds \, ,
\end{aligned}
\end{equation}
where $\lambda_1$ is a Lagrange multiplier to enforce that $\big[((\kappa_1 - \zeta_1) \e_1)_{s}-\lambda_1\X_s\big]_s$ belongs to the tangent space \eqref{eq:tangentspace}, i.e. $\big[((\kappa_1 - \zeta_1) \e_1)_{s}-\lambda_1\X_s\big]_{ss}\cdot\X_s=0$. In particular, using the duple notation of~\eqref{eq:tangentspace}, we may write 
\begin{equation}
    \label{eq:gradientofE1}
\nabla \E_1 = \frac{\p \E_1}{\p \X} + \frac{\p \E_1}{\p a} = \big(\big[((\kappa_1 - \zeta_1) \e_1)_{s}-\lambda_1\X_s\big]_s,0\big) + \big(0,(\kappa_1-\zeta_1)(\kappa_2-\zeta_2)\big) \, .
\end{equation}
Similarly,
\begin{equation}
    \label{eq:gradientofE2}
\nabla \E_2 = \frac{\p \E_2}{\p \X} + \frac{\p \E_2}{\p a} = 
\big( \big[((\kappa_2 - \zeta_2) \e_2)_{s}-\lambda_2\X_s\big]_s, 0\big) + \big(0,- (\kappa_1-\zeta_1)(\kappa_2-\zeta_2) \big) \, .
\end{equation}
Notice the extra minus sign in the frame component.
Finally,
\begin{equation}
\begin{aligned}
\frac{d}{d\varepsilon}\bigg|_{\varepsilon=0} \E_3 
&= \frac{d}{d\varepsilon}\bigg|_{\varepsilon=0} \frac{1}{2} \int_{I}\big(\p_s(\e_1+\varepsilon\dot\e_1)\cdot(\e_2+\varepsilon\dot\e_2)-\zeta_3\big)^2\,ds\\
&= \int_{I} (\kappa_3-\zeta_3) \big(-(\e_1\cdot\dot\X_s)\kappa_2+\p_sa(s) +(\e_2\cdot\dot\X_s)\kappa_1 \big)\, ds \\
&= -\int_{I} \bigg((\kappa_3-\zeta_3)_sa(s)+ \big[(\kappa_3-\zeta_3)(-\kappa_2\e_1 +\kappa_1\e_2 )+\lambda_3\X_s\big]_s\cdot\dot\X\bigg)\, ds \,.
\end{aligned}
\end{equation}
Therefore,
\begin{equation}
    \label{eq:gradientofE3}
\nabla \E_3 = \frac{\p \E_3}{\p \X} + \frac{\p \E_3}{\p a} = \big(\big[(\kappa_3 - \zeta_3)(\kappa_2 \e_1-\kappa_1\e_2)-\lambda_3\X_s\big]_s, 0 \big)+ \big(0, - (\kappa_3-\zeta_3)_s \big) \, , 
\end{equation}
where we note that 
\begin{equation}
\kappa_2 \e_1- \kappa_1\e_2  = -\X_s\times\X_{ss} \, .
\end{equation}

By the principle of virtual work, we expect the force $\bm{f}(s)$ on each filament cross section to be in balance with variations of the energy \eqref{eq:Eintrinsic} with respect to the curve: 
\begin{equation}\label{eq:f_form}
\begin{aligned}
- \bm{f} &= \frac{\p\E_1}{\p \X} + \frac{\p\E_2}{\p \X} + \eta \frac{\p \E_3}{\p \X} \\
&= \big[(\X_{ss}-\zeta_1\e_1-\zeta_2\e_2)_s - \eta (\kappa_3-\zeta_3)\X_s\times\X_{ss}- (\lambda_1+\lambda_2+\lambda_3 +\lambda_4)\X_s\big]_s\,.
\end{aligned}
\end{equation}
Here $\lambda_4$ is an additional tangential contribution serving to enforce the inextensibility constraint $\abs{\X_s}=1$ \emph{after} plugging $\bm{f}$ into \eqref{eq:RFT}. We may consider the Lagrange multiplier terms collectively as the tension $\lambda$, yielding the velocity expression
\begin{equation}\label{eq:u_RFT}
    \frac{\p\X}{\p t}=\bu = -({\bf I}+\gamma\X_s\otimes\X_s)\big[(\X_{ss}-\zeta_1\e_1-\zeta_2\e_2)_s - \eta (\kappa_3-\zeta_3)\X_s\times\X_{ss}- \lambda\X_s\big]_s \,.
\end{equation}
The cross sectional moment density $M$
is given by 
\begin{equation}\label{eq:M_form}
- M = \frac{\p\E_1}{\p a} + \frac{\p\E_2}{\p a} + \eta \frac{\p \E_3}{\p a} 
= - \eta(\kappa_3-\zeta_3)_s \, .
\end{equation}
Note that the moment contributions from $\E_1$ and $\E_2$ cancel. Recalling the rotational friction coefficient $0<\alpha\ll1$ appearing due to the surrounding viscous fluid \eqref{eq:Talpha}, the twist rate density $\omega(s,t)$ is given by
\begin{equation}\label{eq:T_form}
\omega = \frac{\eta}{\alpha}(\kappa_3-\zeta_3)_s \, .
\end{equation}

Using the above expressions \eqref{eq:u_RFT} and \eqref{eq:T_form} for $\bu$ and $\omega$, we may calculate the full right-hand side of \eqref{eq:kappa1dot}-\eqref{eq:kappa3dot} as
\begin{align}
\p_t \kappa_1 
    &= -(\kappa_1 - \zeta_1)_{ssss} + (\kappa_3)_{sss}(\kappa_2 - \zeta_2)-\eta\kappa_2(\kappa_3 - \zeta_3)_{sss} +\mc{R}_1[\kappa_j,\zeta_j,\lambda] \label{eq:kap1} \\
\p_t \kappa_2 
    &= -(\kappa_2 - \zeta_2)_{ssss} - (\kappa_3)_{sss}(\kappa_1 - \zeta_1) + \eta\kappa_1(\kappa_3 - \zeta_3)_{sss} + \mc{R}_2[\kappa_j,\zeta_j,\lambda] \label{eq:kap2} \\
\p_t \kappa_3 
    &= \frac{\eta}{\alpha}(\kappa_3 - \zeta_3)_{ss} +\eta(\kappa_1^2+\kappa_2^2)(\kappa_3 - \zeta_3)_{ss} -\big( \kappa_1(\kappa_1 - \zeta_1)+\kappa_2(\kappa_2 - \zeta_2)\big)(\kappa_3)_{ss}   \nonumber \\
    &\qquad  -\kappa_1(\kappa_2 - \zeta_2)_{sss}+\kappa_2(\kappa_1 - \zeta_1)_{sss} +\mc{R}_3[\kappa_j,\zeta_j,\lambda]  \label{eq:kap3}\,,
\end{align}
along with the tension equation
\begin{equation}\label{eq:wtlambda}
    (1+\gamma)\lambda_{ss}- (\kappa_1^2+\kappa_2^2)\lambda = -\mc{R}_{\rm t}[\kappa_j,\zeta_j] \,.
\end{equation}
The remainder terms $\mc{R}_i$ are recorded in Appendix~\ref{app:forcing}.
%

The formulation \eqref{eq:kap1}-\eqref{eq:kap3} in terms of $(\kappa_1,\kappa_2,\kappa_3)$ is not the cleanest-looking, not is it the most convenient formulation for analysis. We display it here to highlight the complicated effect of forcing by intrinsic curvature and twist, especially in regard to the potential backward heat. We proceed in the absence of intrinsic forcing, in which case \eqref{eq:f_form} becomes
\begin{equation}\label{eq:f_form2}
 \bm{f} = -\big(\X_{sss}-\lambda\X_s - \eta \kappa_3\X_s\times\X_{ss}\big)_s\,,
\end{equation}
which, combined with \eqref{eq:u_RFT}, yields the curve evolution equation \eqref{eq:Xss_0}.

\subsection{Energy balance}
To compute the evolution of the energy, we revisit the notion that~\eqref{eq:Xss_0}-\eqref{eq:kap3_0} is a gradient flow of the energy~\eqref{eq:Eintrinsic2} without preferred curvature, which we simply write as $E$, with respect to the metric in~\eqref{eq:metric2}. The abstract theory of gradient flows $\dot x = -\nabla_g V$ tells us that $\dot V = - |\nabla_g V|_g^2$. For rods, this is
\begin{equation}
	\label{eq:E_ID}
\frac{dE}{dt} = - \int_I \big( (\mathbf{I} + \gamma \X_s \otimes \X_s) \wt{\Z}_s \big) \cdot \wt{\Z}_s ds- \frac{\eta^2}{\alpha} \int_I (\kappa_3)_s^2 \, ds
\end{equation}
where
\begin{equation}
	\wt{\Z}_s := (\X_{sss} - \lambda \X_s - \eta \kappa_3 \X_s \times \X_{ss})_s \, .
\end{equation}
While the abstract theory gives~\eqref{eq:E_ID} immediately, it is nevertheless instructive to compute $\frac{dE}{dt}$ directly: Multiply~\eqref{eq:Xss_0} by $(\X_{sss} -\lambda\X_s- \eta \kappa_3\X_s\times\X_{ss})_s$ and integrate by parts to obtain
\begin{equation}\label{eq:Xssnew}
\begin{aligned}
&\frac{1}{2} \frac{d}{dt} \int_{I}\abs{\X_{ss}}^2\,ds + \eta\int_I \frac{\p\X_s}{\p t}\cdot(\kappa_3\X_s\times\X_{ss})\,ds\\
&\quad=-\int_{I} \abs{(\X_{sss} -\lambda\X_s- \eta \kappa_3\X_s\times\X_{ss})_s}^2\,ds \\
&\qquad - \gamma\int_{I} \big(\X_s\cdot(\X_{sss} -\lambda\X_s- \eta \kappa_3\X_s\times\X_{ss})_s\big)^2\,ds\,.
\end{aligned}
\end{equation}
Multiply~\eqref{eq:kap3_0} by $\eta \kappa_3$ and integrate by parts to obtain
\begin{equation}
	\label{eq:muhkappa3thing}
\frac{\eta}{2} \frac{d}{dt} \int_I \kappa_3^2 \, ds = \eta\int_I \frac{\p\X_s}{\p t}\cdot(\kappa_3\X_s\times\X_{ss})\,ds - \frac{\eta^2}{\alpha} \int_I (\kappa_3)_s^2 \, ds \, .
\end{equation}
Summing~\eqref{eq:Xssnew} and~\eqref{eq:muhkappa3thing} and cancelling the terms containing $\frac{\p\X_s}{\p_t}$ yields~\eqref{eq:E_ID}.

In the case $\eta=0$, the rod-frame energy identity~\eqref{eq:E_ID} reduces to the energy identity for the curve-only setting:
\begin{equation}\label{eq:energy}
\frac{1}{2} \frac{d}{dt} \int_{I}\abs{\X_{ss}}^2\,ds = -\int_I\abs{(\X_{sss}-\lambda\X_s)_s}^2\,ds - \gamma\int_I\big(\X_s\cdot(\X_{sss}-\lambda\X_s)_s\big)^2\,ds\,.
\end{equation}


\section{Immersed filament evolution without forcing}\label{sec:immersednoforcing}

In this section, we prove Theorem~\ref{thm:GWP_classical} for the curve-only evolution~\eqref{eq:classical} on $I = [0,1]$ and $\T$. It will be convenient to consider~\eqref{eq:classical} as a closed system for the tangent vector $\X_s$:
\begin{align}
\p_t \X_s &= - \p_s^4 \X_s + (\lambda \X_s)_{ss} + 3\gamma [(\X_{sss} \cdot \X_{ss}) \X_s]_s + \gamma (\lambda_s \X_s)_s \, , \label{eq:section3xsequation} \\
0&=(1+\gamma) \lambda_{ss} - |\X_{ss}|^2 \lambda + (3\gamma+4) (\X_{sss} \cdot \X_{ss})_s - |\X_{sss}|^2 \, ,
\end{align}
with zero-stress and zero-moment boundary conditions at the endpoints when $I=[0,1]$:
\begin{equation}
\X_{ss}, \, \X_{sss}\big|_{s=0,1} = 0 \, , \quad \lambda\big|_{s=0,1} = 0 \, .
\end{equation}
The constraint $|\X_s|^2 = 1$ is propagated by the evolution. To recover the motion of $\X$, one may integrate the equation \eqref{eq:classical} as in \eqref{eq:swimming}
to obtain the evolution for the center-of-mass:
\begin{equation}
\begin{aligned}
\frac{d}{dt} \int_I \X \, ds &= \gamma \int_I (3\X_{ss} \cdot \X_{sss}+\lambda_s) \X_s \, ds \, ,
\end{aligned}
\end{equation}
where we have used identities stemming from differentiating the inextensibility constraint $\abs{\X_s}^2=1$ to rewrite the right-hand side.
Throughout, we note that $\gamma \geq 0$ (with $\gamma\le 1$ for most physically meaningful situations), and we will not track dependence of implicit constants on $\gamma$.

We highlight two key difficulties in obtaining Theorem \ref{thm:GWP_classical}. The system~\eqref{eq:section3xsequation} has a scaling symmetry
\begin{equation}
    \label{eq:scalingsym}
(\X_s ,\lambda) \mapsto (\X_s(\theta s,\theta^4 t),\theta^2\lambda(\theta s,\theta^4 t)) \, , \quad \theta > 0 \, ,
\end{equation}
and standard heuristics in nonlinear PDEs would suggest that~\eqref{eq:section3xsequation} should be well-posed in (sub)critical spaces, e.g., $H^{\sigma}$ ($\sigma > 1/2$) or $C^\alpha$, whose norms do not increase (decrease) when ``zooming in" with the symmetry~\eqref{eq:scalingsym}. However, in practice, the need to solve the elliptic equation for the filament tension $\lambda$ causes the energy space $\X_{ss} \in L^2_s$ to act as critical; solving the PDE in the energy space requires the full power of the smoothing. See \cite[Remark 2.4]{mori2023well} for additional discussion. The second difficulty, again tied to the tension, is to extract control on $\X_{ssss}$ from the energy dissipation quantity $(\X_{sss}-\lambda\X_s)_s$. A key insight of \cite{koiso1996motion} is to separate out the effects of the tension at the level of $\X_{sss}-\lambda\X_s$. 

\subsection{Linear theory}
\label{sec:lineartheory}
We begin by summarizing linear estimates which will be used here and throughout the paper.

\subsubsection{Parabolic estimates}
Consider the 1D biharmonic heat equation
\begin{equation}
    \label{eq:biharmonicheat}
\begin{aligned}
\p_t u + \p_s^4 u &= f \\
u\big|_{t=0} &= u^{\rm in} \, ,
\end{aligned}
\end{equation}
supplemented with clamped boundary conditions when $I=[0,1]$:
\begin{equation}
    \label{eq:Dirichletbcsbiharmonic}
u\,,\; \p_s u\big|_{s=0,1} = 0 \, .
\end{equation}
The problem~\eqref{eq:biharmonicheat}-\eqref{eq:Dirichletbcsbiharmonic} is relevant at the level of $u = \X_{ss}$.


Let $u^{\rm in} \in L^2_s$ and $f=0$. Then there exists a unique weak solution
\begin{equation}
u \in C(\bar{\R_+};L^2_s) \cap L^2_t H^2_s(I \times \R_+)
\end{equation}
which satisfies
\begin{equation}
    \label{eq:energyeqbelowwhich}
\frac{1}{2} \frac{d}{dt} \| u(\cdot,t) \|_{L^2_s}^2 = - \| \p_s^2 u \|_{L^2_s}^2 \, ,
\end{equation}
thereby yielding global-in-time estimates in the energy space.

If $u^{\rm in} \in H^2_s$ satisfies the boundary conditions~\eqref{eq:Dirichletbcsbiharmonic}, one can multiply~\eqref{eq:biharmonicheat} by $\p_t u$ to obtain \emph{a priori} estimates ``raised" by two spatial derivatives:
\begin{equation}
	\label{eq:secondestimate}
\frac{1}{2} \frac{d}{dt} \| \p_s^2 u \|_{L^2_s}^2 = - \| \p_t u \|_{L^2_s}^2 \, .
\end{equation}

If moreover $u^{\rm in} \in H^4_s$, one can apply $\p_t$ to~\eqref{eq:biharmonicheat} and multiply by $\p_t u$, etc. These estimates form the basis for a solution theory in Sobolev spaces $H^{2k}_s$ with appropriate compatibility conditions on $u^{\rm in}$. However, only~\eqref{eq:energyeqbelowwhich} and~\eqref{eq:secondestimate} will be required in this paper.

The energy estimate~\eqref{eq:energyeqbelowwhich} can also accomodate a non-zero force
\begin{equation}
	\label{eq:fcond1}
f = f_0 + \p_s f_1 + \p_{ss} f_2 \, ,
\end{equation}
\begin{equation}
	\label{eq:fcond2}
f_0 \in L^1_t L^2_s \, , \quad f_1 \in L^{4/3}_t L^2_s \, , \quad f_2 \in L^2_t L^2_s \, ,
\end{equation}
while~\eqref{eq:secondestimate} can accomodate $f \in L^2_t L^2_s$. We have the following:

\begin{lemma}
    \label{lem:parabolicestimates}
Let $T > 0$ and $I = [0,1]$ or $\T$. For every $u^{\rm in} \in L^2(I)$ and $f$ satisfying~\eqref{eq:fcond1}-\eqref{eq:fcond2} on $I \times (0,T)$, there exists a unique weak solution
\begin{equation}
u \in L^\infty_t L^2_s \cap L^2_t H^2_s(I \times (0,T))
\end{equation}
satisfying
\begin{equation}
\| u - u^{\rm in} \|_{L^2_s} \to 0^+ \text{ as } t \to 0^+ \, .
\end{equation}
The weak solution is continuous on $[0,T]$ with values in $L^2_s$ and satisfies
\begin{equation}
\| u \|_{L^\infty_t L^2_s} + \| u \|_{L^2_t \dot H^{2}_s} \lesssim \| u^{\rm in} \|_{L^2_s} + \| f_0 \|_{L^1_t L^2_s} + \| f_1 \|_{L^{4/3}_t L^2_s} + \| f_2 \|_{L^2_t L^2_s} \, ,
\end{equation}
where the space-time norms are taken on $I \times (0,T)$.

For every $u^{\rm in} \in H^2(I)$ (satisfying~\eqref{eq:Dirichletbcsbiharmonic} when $I = [0,1]$) and $f \in L^2_t L^2_s$, the unique weak solution additionally belongs to $C([0,T];H^2_s) \cap L^2_t H^4_s$ and satisfies
\begin{equation}
\label{eq:higherlinearestimate}
\| \p_s^2 u \|_{L^\infty_t L^2_s} + \| \p_s^2 u \|_{L^2_t \dot H^2_s} \lesssim \| f \|_{L^2_t L^2_s} \, .
\end{equation}
\end{lemma}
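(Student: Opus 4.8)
The plan is to prove both estimates by energy methods, carried out on a Galerkin approximation and then passed to the limit; this is standard fourth‑order parabolic theory, and the only point requiring real care is matching the time‑integrability exponents $1,\tfrac{4}{3},2$ to the decomposition $f=f_0+\partial_s f_1+\partial_{ss}f_2$. First I would fix the setup: the operator $A=\partial_s^4$, with domain $H^4(\T)$ on the torus and $D(A)=\{u\in H^4(0,1):u=\partial_s u=0\text{ at }s=0,1\}$ on the interval, is self‑adjoint, nonnegative, and has compact resolvent, so $L^2_s$ admits an orthonormal eigenbasis $\{w_j\}_{j\ge1}$ (automatically orthogonal in the form norm, i.e.\ in $H^2_s$), each $w_j$ satisfying the boundary conditions. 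I would then build $u_N=\sum_{j\le N}c_j(t)w_j$ solving the finite‑dimensional ODE system obtained by projecting $\partial_t u+\partial_s^4 u=f$ onto $\mathrm{span}\{w_1,\dots,w_N\}$; since $w_j$ and $\partial_s w_j$ vanish at the endpoints, the forcing enters only through the scalars $\int_I f_0 w_j-\int_I f_1\partial_s w_j+\int_I f_2\partial_s^2 w_j$, with no boundary contributions, and the system is globally solvable.

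For the base estimate I would test against $u_N$. Integrating by parts (the clamped boundary terms $\partial_s^3 u_N\,u_N$ and $\partial_s^2 u_N\,\partial_s u_N$ vanish) gives
\begin{equation*}
\tfrac{1}{2}\tfrac{d}{dt}\|u_N\|_{L^2_s}^2+\|\partial_s^2 u_N\|_{L^2_s}^2=\int_I f_0\,u_N\,ds-\int_I f_1\,\partial_s u_N\,ds+\int_I f_2\,\partial_s^2 u_N\,ds \, .
\end{equation*}
Using the interpolation inequality $\|\partial_s u\|_{L^2_s}^2=-\int_I u\,\partial_s^2 u\le\|u\|_{L^2_s}\|\partial_s^2 u\|_{L^2_s}$ (valid for periodic $u$ and, since $u(0)=u(1)=0$, for clamped $u$) together with Young's inequality, I would bound the last two integrals by $\tfrac{1}{2}\|\partial_s^2 u_N\|_{L^2_s}^2+C\|f_1\|_{L^2_s}^{4/3}\|u_N\|_{L^2_s}^{2/3}+C\|f_2\|_{L^2_s}^2$; here the exponent $\tfrac{4}{3}$ is exactly what the interpolation forces if one insists on absorbing a single power of $\|\partial_s^2 u_N\|_{L^2_s}$, which is why $f_1$ must be measured in $L^{4/3}_t$. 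Absorbing that term, integrating in time, and writing $M_N(t):=\sup_{[0,t]}\|u_N\|_{L^2_s}$, I would obtain
\begin{equation*}
M_N(t)^2+\|\partial_s^2 u_N\|_{L^2_tL^2_s}^2\lesssim\|u^{\rm in}\|_{L^2_s}^2+\|f_0\|_{L^1_tL^2_s}M_N(t)+\|f_1\|_{L^{4/3}_tL^2_s}^{4/3}M_N(t)^{2/3}+\|f_2\|_{L^2_tL^2_s}^2 \, ,
\end{equation*}
and one more Young's inequality (exponents $2,2$ on the $f_0$ term, $3,\tfrac{3}{2}$ on the $f_1$ term) absorbs $M_N(t)$ and yields the stated bound, uniformly in $N$.

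The remainder of the first part is routine: $\{u_N\}$ is bounded in $L^\infty_tL^2_s\cap L^2_tH^2_s$ and, testing the equation against the form domain, $\{\partial_t u_N\}$ is bounded in $L^1_tH^{-2}_s$, so along a subsequence $u_N$ converges (weak‑$*$ in $L^\infty_tL^2_s$, weakly in $L^2_tH^2_s$) to a weak solution inheriting the estimate; continuity into $L^2_s$ with $\|u-u^{\rm in}\|_{L^2_s}\to0^+$ follows from the Lions--Magenes lemma, and uniqueness from linearity together with the energy identity for the homogeneous problem (justified by passing through the $u_N$, which satisfy it exactly).

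For the higher estimate, with $u^{\rm in}\in H^2_s$ satisfying the boundary conditions and $f\in L^2_tL^2_s$, I would test the Galerkin equation against $\partial_t u_N$. The identity $\int_I\partial_s^4 u_N\,\partial_t u_N=\tfrac{1}{2}\tfrac{d}{dt}\|\partial_s^2 u_N\|_{L^2_s}^2$ is legitimate because the boundary terms involve $\partial_t u_N$ and $\partial_s\partial_t u_N$, which vanish at the endpoints since the clamped conditions are time‑independent; this yields $\|\partial_t u_N\|_{L^2_s}^2+\tfrac{1}{2}\tfrac{d}{dt}\|\partial_s^2 u_N\|_{L^2_s}^2\le\tfrac{1}{2}\|f\|_{L^2_s}^2+\tfrac{1}{2}\|\partial_t u_N\|_{L^2_s}^2$, hence after integration $\|\partial_s^2 u_N\|_{L^\infty_tL^2_s}+\|\partial_t u_N\|_{L^2_tL^2_s}\lesssim\|\partial_s^2 u^{\rm in}\|_{L^2_s}+\|f\|_{L^2_tL^2_s}$. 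Reading off $\|\partial_s^4 u_N\|_{L^2_tL^2_s}\lesssim\|f\|_{L^2_tL^2_s}+\|\partial_t u_N\|_{L^2_tL^2_s}$ from the equation and using elliptic regularity for $A$ (i.e.\ $\|\partial_s^4 v\|_{L^2_s}\simeq\|Av\|_{L^2_s}$ on $D(A)$) then controls $\|\partial_s^2 u_N\|_{L^2_t\dot H^2_s}$ by the same right‑hand side; passing to the limit and using $\partial_s^2 u\in L^2_tH^2_s$ with $\partial_t\partial_s^2 u\in L^2_tH^{-2}_s$ to conclude $u\in C([0,T];H^2_s)$ completes the proof. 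I expect no genuine obstacle here: the substance is the exponent bookkeeping in the second paragraph, and the (routine but not entirely automatic) justification of the energy identities at the level of weak solutions, which is precisely why I would argue via the approximants $u_N$ rather than formally. I would also note that the natural right‑hand side of~\eqref{eq:higherlinearestimate} carries an additional $\|\partial_s^2 u^{\rm in}\|_{L^2_s}$, which should be included there.
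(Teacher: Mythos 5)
Your proposal is correct and follows the same route the paper sketches before stating the lemma: the two energy estimates (testing against $u$ with the $1$, $4/3$, $2$ exponent bookkeeping for $f_0,f_1,f_2$, and testing against $\p_t u$ for the raised estimate), with the Galerkin approximation supplying the standard rigorous justification. Your closing observation is also right: as written,~\eqref{eq:higherlinearestimate} should carry $\|\p_s^2 u^{\rm in}\|_{L^2_s}$ on the right-hand side, and the paper implicitly applies it only to the Duhamel (zero-initial-data) part, treating the semigroup contribution separately in~\eqref{eq:initialdataestimate}.
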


\begin{remark}[Solvability at $\X_s$ level]
The above considerations were at the level of $\X_{ss}$. However, we often consider the evolution at the level of $\X_s$, for example,
\begin{equation}
	\label{eq:levelofXs}
\p_t v + \p_s^4 v = 0 \, , \quad v\big|_{t=0} = v^{\rm in} \, , \quad \p_s v\,,\, \p_s^2 v\big|_{s=0,1} = 0 \, ,
\end{equation}
with initial data $v^{\rm in} \in H^1(I)$.

To solve~\eqref{eq:levelofXs}, we consider $u^{\rm in} = \p_s v^{\rm in} \in L^2$. Let $u$ be the solution to the clamped problem~\eqref{eq:biharmonicheat}-\eqref{eq:Dirichletbcsbiharmonic} with initial data $u^{\rm in}$. This determines $v$ up to a constant, which can be recovered by the value of $v$ at $s=0$. Define
\begin{equation}
v(s,t) = \int_0^s u(s',t) \, ds' + v^{\rm in}(0) \, .
\end{equation}
Then $v$ solves~\eqref{eq:levelofXs}, and it belongs to $C([0,T];H^1_s) \cap L^2_t H^3_s(I \times (0,T))$.

The construction works the same with a forcing term $f$ on the right-hand side in~\eqref{eq:levelofXs}. Then $u$ solves the clamped problem with forcing term $\p_s f$, and
\begin{equation}
v(s,t) = \int_0^su(s',t) \, ds' + v^{\rm in}(0) + \int_0^t f(0,t') \, dt' \, .
\end{equation}
A suitable requirement is that $f$ satisfies~\eqref{eq:fcond1}-\eqref{eq:fcond2} with $H^1_s$ instead of~$L^2_s$.

\emph{Uniqueness}. Suppose that $\wt{v} \in C([0,T];H^1) \cap L^2_t H^3_s(I \times (0,T))$ is a second solution with initial data $v^{\rm in}$. Then $\wt{u} = \p_s \wt{v}$ solves the clamped problem~\eqref{eq:biharmonicheat}-\eqref{eq:Dirichletbcsbiharmonic}, and therefore $u - \wt{u}$ is zero. Hence, $v - \wt{v}$ is constant-in-space. By the equation $(\p_t + \p_s^4)(v - \wt{v}) = 0$, it is therefore constant-in-time. Since $(v - \wt{v})|_{t=0} = 0$, we may conclude that $v \equiv \wt{v}$.
\end{remark}

\subsubsection{Tension estimates}

We next derive bounds for the tension $\lambda$.

\begin{lemma}
\label{lem:tension}
Let $\X_s \in H^3_s$. Then there exists a unique solution $\lambda \in H^1_0(0,1)$ in the free end setting or $\lambda \in H^1(\T)$ in the closed loop setting to
\begin{equation}
    \label{eq:tensionequationcurveforlemma}
(1+\gamma) \lambda_{ss} - |\X_{ss}|^2 \lambda + (3\gamma+4) (\X_{sss} \cdot \X_{ss})_s - |\X_{sss}|^2 = 0 
\end{equation}
satisfying the estimate
\begin{equation}
    \label{eq:finaltensionestimate}
\| \lambda \|_{H^1_s} \lesssim \| \X_{ss} \|_{H^1_s}^2 \, .
\end{equation}
If additionally $\X_s \in H^4_s$, then
\begin{equation}
    \label{eq:higherregtensionestimatecurve}
\| \lambda \|_{H^2_s} \lesssim \| \X_{ss} \|_{L^2_s} \| \X_{sss} \|_{L^2_s}^3 +\| \X_{sss} \|_{L^2_s} \| \X_{ssss} \|_{L^2_s} \, .
\end{equation}
\end{lemma}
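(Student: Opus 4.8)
The statement is an elliptic regularity estimate for the tension equation
\[
(1+\gamma)\lambda_{ss} - |\X_{ss}|^2 \lambda = -(3\gamma+4)(\X_{sss}\cdot\X_{ss})_s + |\X_{sss}|^2 \, ,
\]
with Dirichlet conditions $\lambda|_{s=0,1}=0$ (free end) or periodicity (closed loop). The operator $L\lambda := (1+\gamma)\lambda_{ss} - |\X_{ss}|^2\lambda$ is a uniformly elliptic second-order operator with a nonnegative zeroth-order coefficient $|\X_{ss}|^2 \in L^\infty_s$ (since $\X_s\in H^3_s \hookrightarrow C^2_s$). The plan is: (1) establish existence and uniqueness in $H^1_0$ (resp. $H^1(\T)$) by Lax–Milgram, (2) derive the $H^1$ estimate \eqref{eq:finaltensionestimate} by testing against $\lambda$ and handling the right-hand side by integration by parts, and (3) bootstrap to the $H^2$ estimate \eqref{eq:higherregtensionestimatecurve} using the equation to solve for $\lambda_{ss}$ and interpolation/Gagliardo–Nirenberg inequalities in one dimension.

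\emph{Existence/uniqueness and the $H^1$ bound.} For Lax–Milgram I would use the bilinear form $B[\lambda,\phi] = \int_I \big( (1+\gamma)\lambda_s\phi_s + |\X_{ss}|^2\lambda\phi \big)\,ds$, which is bounded and coercive on $H^1_0$ because $\gamma\ge 0$ and the first term already dominates the Poincaré norm (the zeroth-order term is a nonnegative bonus). In the periodic case one works on $H^1(\T)$ and coercivity there requires a Poincaré-type inequality, which holds because $|\X_{ss}|^2$ is not identically zero along a genuine closed loop — or, more robustly, one simply notes $B[\lambda,\lambda]\ge (1+\gamma)\|\lambda_s\|_{L^2}^2$ and if $\lambda_s\equiv 0$ then $\lambda$ is constant and the equation forces $|\X_{ss}|^2\lambda = \text{(mean-zero terms)}$, which pins down $\lambda$; the cleanest route in the closed-loop setting is to add and subtract a constant to make the problem coercive, or invoke the Fredholm alternative with the observation that the homogeneous problem has only the trivial solution. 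For the estimate, test the equation against $\lambda$:
\[
(1+\gamma)\|\lambda_s\|_{L^2_s}^2 + \int_I |\X_{ss}|^2\lambda^2\,ds = (3\gamma+4)\int_I (\X_{sss}\cdot\X_{ss})\lambda_s\,ds - \int_I |\X_{sss}|^2 \lambda\,ds \, ,
\]
where I have integrated by parts once in the $(\X_{sss}\cdot\X_{ss})_s$ term (boundary terms vanish by the conditions). Bound the first right-hand term by $\|\X_{sss}\|_{L^2}\|\X_{ss}\|_{L^\infty}\|\lambda_s\|_{L^2} \lesssim \|\X_{ss}\|_{H^1}^2 \|\X_{ss}\|_{H^1}\|\lambda_s\|_{L^2}$ wait — more carefully, $\|\X_{ss}\|_{L^\infty} \lesssim \|\X_{ss}\|_{H^1}$ in 1D, so this term is $\lesssim \|\X_{ss}\|_{H^1}^2\|\lambda_s\|_{L^2}$; absorb $\|\lambda_s\|_{L^2}$ by Young. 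The second term needs $\|\lambda\|_{L^2}$, controlled by $\|\lambda_s\|_{L^2}$ via Poincaré (Dirichlet case) — in the periodic case one controls $\|\lambda\|_{L^2}$ by testing more cleverly or by using the zeroth-order term, a point to handle with a little care — and $\||\X_{sss}|^2\|_{L^1} = \|\X_{sss}\|_{L^2}^2 \le \|\X_{ss}\|_{H^1}^2$. Combining yields $\|\lambda\|_{H^1_s} \lesssim \|\X_{ss}\|_{H^1_s}^2$.

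\emph{The $H^2$ bound.} Assuming additionally $\X_s\in H^4_s$, I would solve algebraically for $\lambda_{ss}$ from the equation and estimate each piece in $L^2_s$:
\[
(1+\gamma)\lambda_{ss} = |\X_{ss}|^2\lambda - (3\gamma+4)(\X_{sss}\cdot\X_{ss})_s + |\X_{sss}|^2 \, .
\]
Then $\|\,|\X_{ss}|^2\lambda\,\|_{L^2} \le \|\X_{ss}\|_{L^\infty}^2\|\lambda\|_{L^2} \lesssim \|\X_{ss}\|_{H^1}^2 \cdot \|\X_{ss}\|_{H^1}^2$ — but the target bound has the weaker form $\|\X_{ss}\|_{L^2}\|\X_{sss}\|_{L^2}^3 + \|\X_{sss}\|_{L^2}\|\X_{ssss}\|_{L^2}$, so here I must be sharper and use the 1D Gagliardo–Nirenberg inequality $\|\X_{ss}\|_{L^\infty}^2 \lesssim \|\X_{ss}\|_{L^2}\|\X_{sss}\|_{L^2} + \|\X_{ss}\|_{L^2}^2$ together with the already-proven $\|\lambda\|_{L^2}\lesssim \|\X_{ss}\|_{H^1}^2$, and track homogeneities, possibly after noticing that the genuinely top-order contribution is $(\X_{ssss}\cdot\X_{ss})$ and $(\X_{sss})^2$-type terms inside $(\X_{sss}\cdot\X_{ss})_s = \X_{ssss}\cdot\X_{ss} + |\X_{sss}|^2$. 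Expanding, the top-order term $\X_{ssss}\cdot\X_{ss}$ is bounded in $L^2$ by $\|\X_{ssss}\|_{L^2}\|\X_{ss}\|_{L^\infty}\lesssim \|\X_{ssss}\|_{L^2}(\|\X_{ss}\|_{L^2}^{1/2}\|\X_{sss}\|_{L^2}^{1/2} + \dots)$, which after Young's inequality and absorbing into the target structure gives the stated $\|\X_{sss}\|_{L^2}\|\X_{ssss}\|_{L^2}$ term, while the cubic terms like $|\X_{sss}|^2$ and $|\X_{ss}|^2\lambda$ produce the $\|\X_{ss}\|_{L^2}\|\X_{sss}\|_{L^2}^3$ term via Gagliardo–Nirenberg.

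\emph{Main obstacle.} The routine existence is not the issue; the delicate point is matching the precise homogeneity of \eqref{eq:higherregtensionestimatecurve}. A naive $\|\X_{ss}\|_{H^1}$-based estimate would give $\|\lambda\|_{H^2}\lesssim \|\X_{ss}\|_{H^1}^2 + (\text{top order})$, which is \emph{not} the scaling-respecting bound claimed; to get the stated form one must carefully apply 1D interpolation (Gagliardo–Nirenberg) to every intermediate-order factor and discard lower-order pieces, exploiting that the target is allowed to be quartic in the $L^2$-norms of the top two derivatives. Keeping track of exactly which powers appear where — and in the periodic case, ensuring the $L^2$ estimate on $\lambda$ does not secretly require the Dirichlet boundary condition — is the part that demands the most attention.
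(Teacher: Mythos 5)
Your overall route coincides with the paper's: test against $\lambda$ for the $H^1$ bound, then solve algebraically for $\lambda_{ss}$ and interpolate for the $H^2$ bound; the $H^2$ part of your sketch, including the use of $\|\X_{ss}\|_{L^\infty}^2 \lesssim \|\X_{ss}\|_{L^2}\|\X_{sss}\|_{L^2}$ to reach the homogeneity $\|\X_{ss}\|_{L^2}\|\X_{sss}\|_{L^2}^3$, is exactly what the paper does.

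The genuine gap is the point you yourself flag but do not resolve: coercivity on $H^1(\T)$ in the closed-loop case. The estimate \eqref{eq:finaltensionestimate} carries an implied constant independent of the curve, so it is not enough to observe that $|\X_{ss}|^2$ is ``not identically zero'' or to invoke the Fredholm alternative --- those give existence and uniqueness but a constant that degenerates as $\int_\T |\X_{ss}|^2\,ds \to 0$ or as the curvature concentrates. The paper's key ingredient is \emph{Fenchel's theorem}: for a closed unit-speed curve of length one, $\int_\T |\X_{ss}|^2\,ds \ge \int_\T |\X_{ss}|\,ds \ge 2\pi$. With this quantitative lower bound on the $L^1$ norm of the potential $K = |\X_{ss}|^2$, one shows
\begin{equation}
\int_\T |f|^2\,ds \lesssim \int_\T K |f|^2\,ds + \int_\T |f_s|^2\,ds
\end{equation}
with a universal constant (by controlling the mean $m = \int_\T f\,ds$ through $\int_\T K|m|^2 \lesssim \int_\T K|f|^2 + \|f - m\|_{L^\infty}^2 \int_\T K$ and absorbing the fluctuation via $\|f-m\|_{L^\infty} \lesssim \|f_s\|_{L^2}$). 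This makes $(f,g)_* = (1+\gamma)\int_\T f_s g_s + \int_\T |\X_{ss}|^2 fg$ a genuine inner product equivalent to the $H^1(\T)$ one, so Riesz representation gives both solvability and the uniform bound in one stroke. Without Fenchel (or some substitute lower bound on $\int_\T |\X_{ss}|^2$), your argument does not close in the periodic setting.
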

\begin{proof}
From \eqref{eq:tensionequationcurveforlemma}, we have the energy estimate
\begin{equation}
    \label{eq:energyestimatefortension}
(1+\gamma) \int_I |\lambda_s|^2\,ds + \int_I |\X_{ss}|^2 \lambda^2\,ds = (\gamma+2) \int_I (\X_{sss} \cdot \X_{ss}) \lambda_s\,ds - \int_I |P_{\X_s}^\perp \X_{sss}|^2 \lambda\,ds \, .
\end{equation}
The right-hand side may be estimated according to
\begin{equation}
\int_I \big(|\X_{sss} \cdot \X_{ss}| |\lambda_s| + |P_{\X_s}^\perp \X_{sss}|^2 |\lambda|\big) \, ds \lesssim \| \X_{sss} \|_{L^2_s} \| \X_{ss} \|_{L^\infty_s} \| \lambda_s \|_{L^2_s} + \| \X_{sss} \|_{L^2_s}^2 \| \lambda \|_{L^\infty_s} \, .
\end{equation}
When $I = [0,1]$, both $\norm{\lambda_s}_{L^2_s}$ and $\norm{\lambda}_{L^\infty_s}$ may be controlled by the left-hand side term $\int_0^1 |\lambda_s|^2\,ds$ alone. When $I = \T$, it is necessary to also utilize the term $\int_\T |\X_{ss}|^2 \lambda^2\,ds$. Specifically, we need to demonstrate that the left-hand side of~\eqref{eq:energyestimatefortension} controls $\| \lambda \|_{H^1_s(\T)}^2$. For this, we exploit \emph{Fenchel's theorem} \cite{fenchel1951differential}:
\begin{equation}
\int_\T |\X_{ss}|^2\,ds \geq \int_\T |\X_{ss}|\,ds \geq 2\pi \, .
\end{equation}
For all $f\in L^2(\T)$ and all non-negative potentials $K$ with $\| K \|_{L^1} \geq 2\pi$, we have
\begin{equation}
\int_\T |f|^2\,ds \lesssim \int_\T K|f|^2\,ds + \int_\T |f_s|^2\,ds \, .
\end{equation}
The crucial point is to control the mean $m = \int_\T f\,ds$. Let $\wt{f} = f - m$. Then
\begin{equation}
\begin{aligned}
\int_\T K|m|^2\,ds &= \int_\T K|f - \wt{f}|^2\,ds \leq 2 K|f|^2 + 2 \int_\T K|\wt{f}|^2\,ds \\
&\lesssim \int_\T K|f|^2\,ds + \| \wt{f} \|_{L^\infty}^2 \int_\T K \, ds \, .
\end{aligned}
\end{equation}
In particular, $H^1(\T)$ is a Hilbert space with inner product
\begin{equation}
(f,g)_* := (1+\gamma) \int_\T f_sg_s\,ds + \int_\T |\X_{ss}|^2 fg\,ds \, .
\end{equation}
Then, according to the Riesz representation theorem, the linear PDE
\begin{equation}
(1+\gamma) \lambda_{ss} - |\X_{ss}|^2 \lambda = q
\end{equation}
is uniquely solvable in $H^1(\T)$ for all $q \in (H^1)^*$.

To prove~\eqref{eq:higherregtensionestimatecurve}, we view~\eqref{eq:tensionequationcurveforlemma} as an equality for $\lambda_{ss}$, estimate the remaining terms by a constant multiple of
\begin{equation}
\| \X_{ss} \|_{L^4}^2 \| \X_{ss} \|_{H^1}^2 + \| \X_{ssss} \|_{L^\infty} \| \X_{ss} \|_{L^2} + \| \X_{sss} \|_{L^4}^2 \, ,
\end{equation}
and then exploit interpolation. 
\end{proof}

\subsection{Local well-posedness}

To establish global well-posedness for \eqref{eq:section3xsequation}, we begin with local well-posedness and, in the next section, argue that the short-time solution can be continued indefinitely.

\begin{proposition}
\emph{(Local well-posedness)} Let $\X_s^{\rm in} \in H^1_s$. Then there exists $T > 0$ and a unique solution
\begin{equation}
    \label{eq:topologyforLWP}
\X_s \in C([0,T];H^1_s) \cap L^2_t H^3_s(I \times (0,T))
\end{equation}
to \eqref{eq:section3xsequation} which depends Lipschitz-continuously on its initial data $\X_s^{\rm in} \in H^1_s$ in the topology of~\eqref{eq:topologyforLWP}.\footnote{The solution is unique in the sense that any two such solutions on time intervals $[0,T_1]$, $[0,T_2]$ agree on their mutual existence time.} For solutions in the above class, if $|\X_s^{\rm in}|^2 = 1$, then $|\X_s|^2 = 1$.

\emph{(Blow-up criterion)} Suppose that $\X_s$ is the maximally defined solution with maximal time of existence $T^* \in (0,+\infty]$. If $T^* < +\infty$, then
\begin{equation}
\| \X_s \|_{L^\infty_t H^1_s \cap L^4_t H^2_s(I \times (0,T^*))} = +\infty \, .
\end{equation}

\emph{(Higher regularity)} If $\X_s^{\rm in} \in H^3_s$ satisfies the boundary conditions, then
\begin{equation}
\X_s \in C([0,T];H^3_s) \cap L^2_t H^5_s(I \times (0,T)) \, , \quad \forall T < +\infty \, .
\end{equation}
\end{proposition}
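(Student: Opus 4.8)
The plan is to set up a fixed-point (contraction mapping) argument for the $\X_s$-equation~\eqref{eq:section3xsequation}, using the linear parabolic theory of Lemma~\ref{lem:parabolicestimates} at the level of $\X_s$ (as described in the Remark following that lemma) to invert the principal part $\p_t + \p_s^4$, and treating the remaining terms as a forcing. First I would observe that every nonlinear term on the right-hand side of~\eqref{eq:section3xsequation} can be written in the divergence form $f_0 + \p_s f_1 + \p_s^2 f_2$ required by~\eqref{eq:fcond1}--\eqref{eq:fcond2}: indeed $-\p_s^4\X_s$ is already the linear part, $(\lambda\X_s)_{ss}$ is $\p_s^2$ of an $L^2_tL^2_s$ quantity once we have tension estimates, and the terms $3\gamma[(\X_{sss}\cdot\X_{ss})\X_s]_s$ and $\gamma(\lambda_s\X_s)_s$ are $\p_s$ of quantities we must bound in $L^{4/3}_tL^2_s$ (actually in $L^1_tL^2_s$, which embeds). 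The function space for the iteration is the one in~\eqref{eq:topologyforLWP}, namely $X_T := C([0,T];H^1_s)\cap L^2_tH^3_s$; equivalently, at the level of $u=\X_{ss}$, it is $C([0,T];L^2_s)\cap L^2_tH^2_s$, which is exactly the space in which the first part of Lemma~\ref{lem:parabolicestimates} provides a bounded solution operator. The key multilinear estimates are of the schematic type $\|\X_{sss}\cdot\X_{ss}\|_{L^{4/3}_tL^2_s}\lesssim \|\X_{ss}\|_{L^4_tL^\infty_s}\|\X_{sss}\|_{L^2_tL^2_s}$ together with the interpolation (Gagliardo--Nirenberg in space, Hölder in time) $\|\X_{ss}\|_{L^4_tL^\infty_s}\lesssim \|\X_{ss}\|_{L^\infty_tL^2_s}^{1/2}\|\X_{ss}\|_{L^2_tH^2_s}^{1/2}$, so that the full nonlinearity is controlled by $\|\X_s\|_{X_T}^2$ with a constant that, crucially, carries a small power of $T$ (from the Hölder exponents $L^{4/3}$ vs.\ $L^2$, etc.) — this is what yields contraction on a small time interval and on a small ball.

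The steps, in order: (1) quote Lemma~\ref{lem:tension} (with the $H^1_s$ elliptic estimate~\eqref{eq:finaltensionestimate}) to see that the tension map $\X_s\mapsto\lambda$ is well-defined and Lipschitz from bounded sets of $L^\infty_tH^2_s$ into $L^\infty_tH^1_s$ — here one needs the difference of two tensions to solve a tension-type equation with controlled right-hand side, and the difference estimate follows by the same Riesz-representation/Fenchel argument, so $\lambda$ inherits the regularity and contraction properties of $\X_s$; (2) define the solution map $\Phi$ sending $\bm{V}\in X_T$ to the solution $\X_s$ of $(\p_t+\p_s^4)\X_s = N[\bm{V}]$ with $N[\bm{V}]$ the nonlinear terms evaluated at $\bm{V}$ and its induced tension, with initial data $\X_s^{\rm in}$ and the boundary conditions~\eqref{eq:levelofXs}; (3) show $\Phi$ maps the ball $B_R\subset X_T$ into itself and is a contraction there, for $R$ comparable to $\|\X_s^{\rm in}\|_{H^1_s}$ and $T=T(R)$ small, using the multilinear estimates above and the linear bounds of Lemma~\ref{lem:parabolicestimates}; (4) conclude existence and uniqueness of the fixed point, with Lipschitz dependence on the data following by applying the same estimates to the difference of two solutions and absorbing the small-$T$ factor. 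For propagation of the constraint $|\X_s|^2=1$, I would derive from~\eqref{eq:section3xsequation} a closed scalar equation for $w:=|\X_s|^2-1$ — differentiating, the tension terms are designed precisely so that $w$ satisfies a linear parabolic equation with no source (this is the role of the tension as a Lagrange multiplier), so $w^{\rm in}=0$ forces $w\equiv 0$ by uniqueness for that linear problem; the relevant computation uses $\X_s\cdot\X_{ss}=\tfrac12\p_s w$ repeatedly and the boundary conditions. For the blow-up criterion, standard continuation: if $T^*<\infty$ but $\|\X_s\|_{L^\infty_tH^1_s\cap L^4_tH^2_s(I\times(0,T^*))}<\infty$, then the local existence time in the iteration — which depends only on that norm — is bounded below as $t\uparrow T^*$, allowing the solution to be extended past $T^*$, a contradiction; here one also uses that the $L^4_tH^2_s$ (equivalently $L^4_t$ of $\X_{ss}$ in $H^2$) bound upgrades to the full $L^2_tH^3_s$ bound via the equation.

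For higher regularity, I would bootstrap using the second part of Lemma~\ref{lem:parabolicestimates}: given $\X_s^{\rm in}\in H^3_s$ satisfying the boundary conditions, differentiate~\eqref{eq:section3xsequation} once in $s$ (or work at the level of $\X_{sss}$), observe the forcing now lies in $L^2_tL^2_s$ on the already-constructed solution by the estimates~\eqref{eq:higherlinearestimate} and~\eqref{eq:higherregtensionestimatecurve} combined with the $X_T$-bound, and invoke the $C([0,T];H^3_s)\cap L^2_tH^5_s$ estimate; the compatibility of the boundary conditions ensures the linear theory applies. A mild point to check is that the $s$-differentiated equation still has forcing of the admissible form, which it does since differentiating only raises the order of each term by one and the worst term remains a spatial derivative of an $L^2_tL^2_s$ quantity after using the product estimates.

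The main obstacle I anticipate is \textbf{criticality of the energy space}, flagged in the text just before the Linear Theory subsection: because the tension equation is elliptic with an $|\X_{ss}|^2$ coefficient, the map $\X_s\mapsto\lambda$ only gains two derivatives relative to $\X_{ss}$ in $L^2$, so the term $(\lambda\X_s)_{ss}$ sits at exactly the same scaling level as $\p_s^4\X_s$ and cannot be treated as a lower-order perturbation in a scaling-subcritical sense. One must genuinely exploit the parabolic smoothing encoded in the $L^2_tH^2_s$ part of the norm (for $u=\X_{ss}$), rather than relying on any room in the spatial regularity, and the multilinear estimates must be arranged so that the full strength of that smoothing — together with the small power of $T$ from mismatched time-integrability exponents — closes the argument. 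This is precisely the difficulty noted in \cite[Remark 2.4]{mori2023well} and handled in the planar case in \cite{koiso1996motion}; the resolution here is the same in spirit, namely to split $\X_{sss}-\lambda\X_s$ and to bound the tension contributions in a lower time-integrability norm so that the contraction constant carries a factor $T^\delta$.
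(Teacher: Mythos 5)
Your overall skeleton — Duhamel's formula for $\p_t+\p_s^4$, the tension lemma, a contraction argument, propagation of $|\X_s|^2=1$ via a sourceless parabolic equation for $|\X_s|^2-1$, and bootstrapping for higher regularity — matches the paper's proof. The gap is in how you close the contraction for \emph{large} data. You propose a single norm $X_T=C([0,T];H^1_s)\cap L^2_tH^3_s$, a ball of radius $R\sim\|\X_s^{\rm in}\|_{H^1_s}$, and smallness from ``a small power of $T$ from mismatched time-integrability exponents.'' This works for the cubic term $[(\X_{sss}\cdot\X_{ss})\X_s]_s$ (the paper extracts a $T^{1/8}$ there), but it fails for the tension term, which is exactly the critical one you flag at the end. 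Concretely, $(\lambda\X_s)_{ss}$ must enter the linear estimate through the channel $\p_s^2 f_2$ with $f_2=(\lambda\X_s)_s\in L^2_tL^2_s$ (routing it through $\p_s f_1\in L^{4/3}_tL^2_s$ would require $\lambda_{ss}$, i.e.\ the $H^2$ tension estimate and hence more derivatives on $\X$ than the iteration space provides), and
\begin{equation*}
\|(\lambda\X_s)_s\|_{L^2_tL^2_s}\les\|\lambda\|_{L^2_tH^1_s}\|\X_s\|_{L^\infty_tH^1_s}\les\|\X_{ss}\|_{L^4_tH^1_s}^2\,\|\X_s\|_{L^\infty_tH^1_s}\les\|\X_s\|_{X_T}^3
\end{equation*}
with \emph{no} power of $T$: every Hölder exponent is saturated. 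So the map-into-ball estimate reads $R\mapsto CR^3+\dots$ and the contraction constant is $\ges R^2$, which only closes when $R\ll1$, i.e.\ for small data.

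The paper's resolution is structural rather than a $T^\delta$ gain: it runs the fixed point with \emph{two} separately tracked norms, $M_0$ bounding $\|\cdot\|_{C_tH^1_s}$ (large, set by the data) and $M_1$ bounding $\|\cdot\|_{L^4_tH^2_s}$ (chosen small, e.g.\ $M_1\le\min((4CM_0)^{-1},(4C)^{-1/2})$). The point is that $M_1$ is an integral quantity, so $\|e^{-t\p_s^4}\X_s^{\rm in}\|_{L^4_tH^2_s}=o_{T\to0^+}(1)$ even for large data, and every nonlinear contribution — in particular the tension term, bounded by $M_1^2M_0$ — carries at least one factor of $M_1$; ``$M_0$ does not appear without $M_1$.'' You should either adopt this two-parameter ball (which is essentially the role the $L^4_tH^2_s$ norm already plays in your blow-up criterion) or restrict your argument to small data. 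The remaining parts of your proposal (constraint propagation, continuation, higher regularity via the $C_tH^3\cap L^2_tH^5$ linear estimate) are consistent with the paper modulo this fix.
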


For solutions satisfying~\eqref{eq:topologyforLWP}, the tension $\lambda$ belongs to $L^2_t H^1_s(I \times (0,T))$.

\begin{proof}
\textbf{1. Local well-posedness in the energy space}. Let $T>0$. We seek a fixed point of the map $\Y_s \mapsto \X_s$, where
\begin{equation}
    \label{eq:Duhamelforcurve}
\begin{aligned}
\X_s &= e^{-t\p_s^4} \X_s^{\rm in} + \int_0^t e^{-(t-t')\p_s^4} \p_s [(\lambda \Y_s)_s + \gamma (\lambda_s \Y_s)] \, dt' \\
&\qquad + 3 \gamma \int_0^t e^{-(t-t')\p_s^4}\p_s (\Y_{sss} \cdot \Y_{ss}) \Y_{s} \, dt'\,,\\
\lambda &= \mc{T}[\Y_{s}]^{-1} \big(|\Y_{sss}|^2 - (3\gamma+4) (\Y_{sss}\cdot\Y_{ss})_s\big) \, ,
\end{aligned}
\end{equation}
and $\mc{T}[\Y_{s}]^{-1}(f)$ is the solution $\lambda$ of the equation
\begin{equation}
(1+\gamma) \lambda_{ss} - |\Y_{ss}|^2 \lambda = f \, .
\end{equation}
Here and throughout, the semigroup $e^{-\p_s^4}$ is understood with the appropriate boundary conditions. We employ the functional setup\footnote{We could also choose $\mathcal{Y}_1 := L^2(0,T;H^3_s) \cap {\rm BCs}$, so that the $\mathcal{Y}_0 \cap \mathcal{Y}_1$ is the natural energy space.}
\begin{equation}
\mathcal{Y}_0 = C([0,T];H^1_s) \, , \quad \mathcal{Y}_1 := L^4(0,T;H^2_s) \cap \{ {\rm BCs} \} \, .
\end{equation}
Let $M_0$ and $M_1$ be upper bounds for $\| \Y \|_{\mathcal{Y}_0}$ and $\| \Y \|_{\mathcal{Y}_1}$, respectively.


\emph{The map stabilizes a ball}. We record the tension estimate
\begin{equation}
\| \lambda \|_{L^2_t H^1_s} \lesssim \| \Y_s \|_{\mathcal{Y}_1}^2
\end{equation}
and the contribution of the initial data,
\begin{equation}
\| e^{-t\p_s^4} \X_s^{\rm in} \|_{\mathcal{Y}_0} + \underbrace{\| e^{-t\p_s^4} \X_s^{\rm in} \|_{\mathcal{Y}_1}}_{M^{\rm in}_1(T)} \lesssim \underbrace{\| \X_s^{\rm in} \|_{H^1}}_{M^{\rm in}_0} \, ,
\end{equation}
keeping in mind that $M^{\rm in}_1(T) = o_{T \to 0^+}(1)$, since $\| \cdot \|_{L^4_t H^2_s(I \times (0,T))}$ is an integral quantity.

The main estimates are for the integral terms in~\eqref{eq:Duhamelforcurve}. By the energy estimates,
\begin{equation}
\| \X_s \|_{\mathcal{Y}_0} + \| \X_s \|_{\mathcal{Y}_1} \lesssim \| (\lambda \Y_s)_s + \gamma (\lambda_s \Y_s) \|_{L^2_t L^2_s} + \| (\Y_{sss} \cdot \Y_{ss}) \Y_s \|_{L^2_t L^2_s} \, .
\end{equation}
We may then bound
\begin{equation}
\begin{aligned}
\| (\lambda \Y_s)_s \|_{L^2_t L^2_s} + \| \lambda_s \Y_s \|_{L^2_t L^2_s} \lesssim \| \lambda \|_{L^2_t H^1_s} \| \Y_s \|_{L^\infty_t H^1_s} &\lesssim M_1^2 M_0 \\
\| (\Y_{sss} \cdot \Y_{ss}) \cdot \Y_s \|_{L^2_t L^2_s} \lesssim \| \Y_{sss} \|_{L^4_t L^2_s} \| \Y_{ss} \|_{L^4_t L^\infty_s} \| \Y_s \|_{L^\infty_t H^1_s} &\lesssim T^{1/8} M_1^{3/2} M_0^{3/2} \, ,
\end{aligned}
\end{equation}
where we have used that $\| \Y_{ss} \|_{L^8_t L^\infty_s} \lesssim \| \Y_{sss} \|_{L^4_t L^2_s} \| \Y_s \|_{L^\infty_t H^1_s}$.

In conclusion, we obtain
\begin{equation}\label{eq:ballstab}
\begin{aligned}
\| \X_s \|_{\mathcal{Y}_1} &\leq M^{\rm in}_1(T) + C (M_1^2 M_0 + T^{1/8} M_1^{3/2} M_0^{3/2}) \quad (\leq 3M_1/4)\\
\| \X_s \|_{\mathcal{Y}_0} &\leq C M^{\rm in}_0 + C (M_1^2 M_0 + T^{1/8} M_1^{3/2} M_0^{3/2}) \quad (\leq 3M_0/4) \, ,
\end{aligned}
\end{equation}
provided we make the following choices:
First, choose $M_0 \geq 4 C M^{\rm in}_0$. Then, choose $M_1 \leq \min( (4CM_0)^{-1}, (4C)^{-1/2} )$. 
Finally, choose $T \leq 1$ small enough to satisfy $C T^{1/8} M_1^{3/2} M_0^{3/2} \leq \min(M_0,M_1)/4$ and $M^{\rm in}_1(T) \leq M_1/4$.

\emph{Contraction}. We begin with estimates on the difference between two tensions. For two curves $\Y^{(1)}$, $\Y^{(2)}$, we define 
\begin{equation}\label{eq:Rlambdak}
R_\lambda^{(k)} := |\Y^{(k)}_{sss}|^2 - (3\gamma+4) (\Y_{sss}^{(k)}\cdot\Y_{ss}^{(k)})_s \, , \quad k=1,2 \, ,
\end{equation}
and the tensions
\begin{equation}
\lambda^{(k)} = \mc{T}[\Y_{s}^{(k)}]^{-1} (R_\lambda^{(k)}) \, , \quad k=1,2 \,.
\end{equation}
The difference $\lambda^{(1)}-\lambda^{(2)}$ may then be written
\begin{equation}\label{eq:lam_strat}
\lambda^{(1)}-\lambda^{(2)} = [ \mc{T}[\Y^{(1)}_{s}]^{-1} - \mc{T}[\Y^{(2)}_{s}]^{-1} ] (R_\lambda^{(1)}) + \mc{T}[\Y^{(2)}_{s}]^{-1} (R_\lambda^{(1)}- R_\lambda^{(2)}) \, .
\end{equation}

First, we estimate
\begin{equation}\label{eq:qdef}
\mc{T}[\Y^{(1)}_{s}]^{-1} (R_\lambda^{(1)}) - \mc{T}[\Y^{(2)}_{s}]^{-1} (R_\lambda^{(1)}) =: q^{(1)} - q^{(2)} \, .
\end{equation}
Notably, $q^{(1)} = \lambda^{(1)}$. Then
\begin{equation}
(1+\gamma) \big(q^{(2)} - q^{(1)}\big)_{ss} - |\Y_{ss}^{(2)}|^2 q^{(2)} + |\Y_{ss}^{(1)}|^2 q^{(1)} = 0 \, ,
\end{equation}
which we expand as
\begin{equation}
(1+\gamma) \big(q^{(2)} - q^{(1)}\big)_{ss} - |\Y_{ss}^{(2)}|^2 (q^{(2)} - q^{(1)}) - (|\Y_{ss}^{(2)}|^2 - |\Y_{ss}^{(1)}|^2) q^{(1)}  = 0
\end{equation}
and estimate as
\begin{equation}
\| q^{(2)} - q^{(1)} \|_{H^1_s} \lesssim (\| \Y_s^{(1)} \|_{H^1_s} + \| \Y_s^{(2)} \|_{H^1_s} ) \| \Y_s^{(2)} - \Y_s^{(1)} \|_{H^1_s} \| q^{(1)} \|_{H^1_s} \, .
\end{equation}

Second, we estimate
\begin{equation}
\begin{aligned}
&\| R_\lambda^{(1)} - R_\lambda^{(2)} \|_{(H^1_s)^*} \\
&\quad \lesssim \big(\| \Y^{(1)}_{sss} \|_{L^2_s} + \| \Y^{(2)}_{sss} \|_{L^2_s}\big) \| \Y^{(1)}_{sss} - \Y^{(2)}_{sss} \|_{L^2_s} + \| \Y^{(1)}_{sss} - \Y^{(2)}_{sss} \|_{L^2_s} \| \Y^{(1)}_{ss} \|^{1/2}_{L^2_s} \| \Y^{(1)}_{sss} \|^{1/2}_{L^2_s} \\
&\quad\quad + \| \Y^{(1)}_{ss} - \Y^{(2)}_{ss} \|_{L^2_s}^{1/2} \| \Y^{(1)}_{sss} - \Y^{(2)}_{sss} \|_{L^2_s}^{1/2} \| \Y^{(2)}_{sss} \|_{L^2_s}\,.
\end{aligned}
\end{equation}

Altogether, the difference between tensions is estimated by
\begin{equation}
\begin{aligned}
&\| \lambda^{(1)} - \lambda^{(2)} \|_{L^2_t H^1_s} \lesssim M_0 M_1^2 \| \Y_s^{(1)} - \Y_s^{(2)} \|_{\mathcal{Y}_0} +  M_1 \| \Y^{(1)}_s - \Y^{(2)}_s \|_{\mathcal{Y}_1} \\
&\quad + \| \Y^{(1)}_s - \Y^{(2)}_s \|_{\mathcal{Y}_1} T^{1/8}  M_1^{1/2} M_0^{1/2}
 + \| \Y^{(1)}_s - \Y^{(2)}_s \|_{\mathcal{Y}_0}^{1/2} \| \Y^{(1)}_s - \Y^{(2)}_s \|_{\mathcal{Y}_1}^{1/2}T^{1/4} M_1^{3/2} \, .
\end{aligned}
\end{equation}

We now turn to estimating the difference 
\begin{equation}
\begin{aligned}
\X^{(1)}_s - \X^{(2)}_s =
\int_0^t e^{-(t-t')\p_s^4} \p_s R_{\rm t} \, dt' + (3\gamma+4)  \int_0^t e^{-(t-t')\p_s^4}\p_s R_3 \, dt' \, ,
\end{aligned}
\end{equation}
where
\begin{equation}
\begin{aligned}
R_{\rm t} &= \big( \lambda^{(1)}  \Y_s^{(1)} \big)_s - \big( \lambda^{(2)} \Y_s^{(2)}\big)_s + \gamma \lambda_s^{(1)} \Y_s^{(1)}   - \gamma \lambda^{(2)}_s \Y_s^{(2)}\,,\\
%
R_3 &= (\Y_{sss}^{(1)} \cdot \Y_{ss}^{(1)}) \Y_{s}^{(1)} - (\Y_{sss}^{(2)} \cdot \Y_{ss}^{(2)}) \Y_{s}^{(2)} \, .
\end{aligned}
\end{equation}
These remainder terms may be estimated as 
\begin{equation}
\begin{aligned}
\| R_{\rm t} \|_{L^2_t L^2_s} &\lesssim \| \lambda^{(1)} - \lambda^{(2)} \|_{L^2_t H^1_s} M_0 + M_1^2 \| \Y_s^{(1)} - \Y_s^{(2)} \|_{\mathcal{Y}_0} \,,\\
\| R_3 \|_{L^2_t L^2_s} &\lesssim  T^{1/8} M_0 M_1 \| \Y_{ss}^{(1)} - \Y_{ss}^{(2)} \|_{L^\infty_t L^2_s}^{1/2} \| \Y_{sss}^{(1)} - \Y_{sss}^{(2)} \|_{L^4_t L^2_s}^{1/2} \\
& + \| \Y_{sss}^{(1)} - \Y_{sss}^{(2)} \|_{L^4_t L^2_s} T^{1/8} M_0^{3/2} M_1^{1/2} + T^{1/8} M_1^{3/2}M_0^{1/2} \| \Y_s^{(1)} - \Y_s^{(2)} \|_{L^\infty_t H^1_s} \, .
\end{aligned}
\end{equation}

By Duhamel's formula, similar to the estimates \eqref{eq:ballstab}, we obtain
\begin{equation}
  \| \X^{(1)}_{s} - \X^{(2)}_{s} \|_{\mathcal{Y}_1 \cap \mathcal{Y}_0}  \lesssim C (1+T^{1/8}) (1 + C(M_0)) o_{M_1 \to 0^+}(1)  \| \Y^{(1)}_{s} - \Y^{(2)}_{s} \|_{\mathcal{Y}_1 \cap \mathcal{Y}_0} \, ,
\end{equation}
where $C(M_0)$ is an increasing function of $M_0$, and we recall $T \leq 1$. A key component of the above argument is that $M_0$ does not appear without $M_1$. Finally, we can decrease $M_1$ (depending on $M_0$, more specifically, like an inverse power of $M_0$) as necessary to ensure that the system is contractive. Notably, $T$ can be chosen to depend continuously on $\X_s^{\rm in} \in H^1_s$.

\textbf{2. Continuation criterion}. The above construction gives the continuation criterion $\X_s \in C([0,T];H^1_s)$. However, the above estimates on Duhamel's formula yield that any solution $\X_s \in L^\infty_t H^1_s \cap L^4_t H^2_s(I \times (0,T))$ belongs to $C([0,T];H^1_s)$. 

\textbf{3. Higher regularity}. We will demonstrate that, on a possibly shorter time interval $(0,T)$, the solution satisfies
\begin{equation}
    \label{eq:myhigherregestinproof}
\| \X_s \|_{L^\infty_t H^3_s \cap L^2_t H^5_s(I \times (0,T))} \lesssim \| e^{-t\p_s^4} \X_s^{\rm in} \|_{L^\infty_t H^3_s \cap L^2_t H^5_s(I \times (0,T))} \, .
\end{equation}
The method is to prove the estimate~\eqref{eq:myhigherregestinproof} for the Picard iterates and pass it to the solution $\X_s$ by lower semi-continuity.\footnote{\emph{A posteriori}, the estimates below with $\Y_s = \X_s$ will demonstrate that the solution belongs to $C([0,T];H^3_s)$.} The relevant estimates on Duhamel's formula~\eqref{eq:Duhamelforcurve} will be derived from the linear estimate~\eqref{eq:higherlinearestimate}. The contribution from the initial data is
\begin{equation}
    \label{eq:initialdataestimate}
\| e^{-t\p_s^4} \X_s^{\rm in} \|_{L^\infty_t H^3_s \cap L^2_t H^5_s(I \times (0,T))} \leq C_0 \| \X_s^{\rm in} \|_{H^3_s} \, .
\end{equation}
The ``forces" inside Duhamel's formula should be estimated in $L^2_t H^1_s$. For these particular estimates, we divide the terms into ``new information" $L^\infty_t H^3_s \cap L^2_t H^5_s$ (of which we are allowed only one copy) and ``old information" $L^\infty_t H^1_s \cap L^2_t H^3_s$ (which we collect in square brackets):\footnote{Alternatively, we could close a contraction argument in $L^\infty_t H^3_s \cap L^2_t H^5_s$ on a time of existence depending on $\| \X_s^{\rm in} \|_{H^3_s}$.}
\begin{equation}
\begin{aligned}
\| (\Y_{sss} \cdot \Y_{ss}) \Y_{s})_s \|_{L^2_t H^1_s} &\lesssim \| \Y_{ssss} \|_{L^4_t H^1_s} \big[ \| \Y_{ss} \|_{L^4_t H^1_s} \| \Y_s \|_{L^\infty H^1_s} \| \big] \\
&\quad 
+ \| \Y_{sss} \|_{L^\infty_t H^1_s} \big[ \| \Y_{sss} \|_{L^2_t H^1_s} \| \Y_{s} \|_{L^\infty_t H^1_s} + \| \Y_{ss} \|_{L^4_t H^1_s}^2 \big]\,.
\end{aligned}
\end{equation}
By the tension estimates~\eqref{eq:higherregtensionestimatecurve}, we have 
\begin{equation}
\begin{aligned}
\| \lambda[\Y_s] \|_{L^2_t H^2_s} &\lesssim \| \Y_{sss} \|_{L^\infty_t L^2_s} \big[\| \Y_{ss} \|_{L^\infty_t L^2_s} \| \Y_{sss} \|_{L^4_t L^2_s}^2\big] + \| \Y_{ssss} \|_{L^4_t L^2_s} \big[\| \Y_{sss} \|_{L^4_t L^2_s}\big]\,,\\
\| \lambda[\Y_s] \|_{L^4_t H^1_s} &\lesssim \| \Y_{sss} \|_{L^\infty_t L^2_s} \big[\| \Y_{sss} \|_{L^4_t L^2_s}\big] \, ,
\end{aligned}
\end{equation}
which yield
\begin{equation}
\| \lambda[\Y_s] \Y_s \|_{L^2_t H^2_s} \leq \| \Y_{sss} \|_{L^\infty_t L^2_s \cap L^2_t H^2_s(I \times (0,T))} C(M_0) o_{M_1 \to 0^+}(1) \, .
\end{equation}
In summary, we have
\begin{equation}
\| \X_s \|_{L^\infty_t H^3_s \cap L^2_t H^5_s(I \times (0,T))} \leq C_0 \| \X_s^{\rm in} \|_{H^3} + C(M_0) o_{M_1 \to 0^+}(1) \| \Y_{s} \|_{L^\infty_t L^2_s \cap L^2_t H^2_s(I \times (0,T))} \, .
\end{equation}
If we choose $T \ll 1$ to ensure that $C(M_0) o_{M_1 \to 0^+}(1) \leq 1/2$, then the estimate~\eqref{eq:myhigherregestinproof} with the constant $2 C_0$ will be propagated by the Picard iterates. Notably, $T$ can still be chosen to depend continuously on $\X_s^{\rm in} \in H^1_s$.


\textbf{4. Unit length}. \emph{A posteriori}, we may justify that the condition $|\X_s| = 1$ is propagated by solutions to the equation. Suppose that $\X_s^{\rm in} \in H^3_s$ with $|\X_s^{\rm in}|^2 = 1$. We multiply~\eqref{eq:section3xsequation} by $\X_s$ and simplify \emph{without} the condition $|\X_s|^2 = 1$:
\begin{equation}
\begin{aligned}
\frac{1}{2} (\p_t + \p_s^4) |\X_s|^2
&= 
(1+\gamma) \lambda_{ss} (|\X_s|^2-1) + \frac{1}{2} (1+\gamma) \lambda_s \p_s |\X_s|^2 + \frac{1}{2} \lambda\p_s^2 |\X_s|^2  \\
&\quad + 3 \gamma (\X_{sss} \cdot \X_{ss})_s (|\X_s|^2-1) + 3 \gamma (\X_{sss} \cdot \X_{ss}) \frac{1}{2} \p_s |\X_s|^2\,,
\end{aligned}
\end{equation}
where we have already applied the tension equation~\eqref{eq:tensionequationcurveforlemma}. This may be viewed as a parabolic equation for the quantity $|\X_s|^2 - 1$ with initial data $|\X_s^{\rm in}|^2 - 1=0$, whose unique solution is zero.\footnote{One may verify that the coefficients are smooth enough that the $L^2$ energy estimates make sense.}
Finally, when $\X_s^{\rm in} \in H^1_s$ with $|\X_s^{\rm in}| = 1$, we use an approximation argument.\footnote{For example, mollify $\X$ and rewrite in arclength parameterization.}
\end{proof}


\subsection{Energetics and global well-posedness}\label{subsec:energetics}

In this section, we use the energy identity \eqref{eq:energy} satisfied by the curve evolution to prove global well-posedness. Defining 
\begin{equation}
E(t) = \frac{1}{2} \int_I |\X_{ss}|^2 \, ds\,,
\end{equation}
we recall from~\eqref{eq:energy} that $E$ satisfies 
\begin{equation}
\frac{d}{dt} E(t) = - \int_I |(\X_{sss} - \lambda \X_s)_s|^2 - \gamma \int_I |(\X_{sss} - \lambda \X_s)_s \cdot \X_s|^2 \, ds \, .
\end{equation}
The right-hand side energy dissipation appears in terms of a quantity 
\begin{equation}
 \Z := \X_{sss} - \lambda \X_s \, .
 \end{equation}
In particular, defining
\begin{equation}
D(t) = \int_I |\Z_s|^2 \, ds\,,
\end{equation}
we note that 
\begin{equation}
\frac{d}{dt} E(t) \leq - D(t) \, ,
\end{equation}
 and a key point will be to extract control on $\X_{ssss}$ from
 \begin{equation}
 \Z_s = \X_{ssss} - (\lambda \X_s)_s\,.
 \end{equation}

\begin{proposition}\label{prop:Xssss_ests_curve}
In the free end setting, $\X_{ssss}$ may be controlled by the energy $E(t)$ and dissipation $D(t)$ as
\begin{equation}\label{eq:Xssss_free0}
\| \X_{ssss} \|_{L^2_s} \lesssim D(t)^{1/2} (1+E(t)^{1/2}) + E(t)^{5/2} \, .
\end{equation}
In the closed loop setting, $\X_{ssss}$ may be controlled by $E(t)$ and $D(t)$ as
\begin{equation}\label{eq:Xssss_closed0}
\| \X_{ssss} \|_{L^2_s} \lesssim D^{1/2}(t) (1 + E^{1/2}(t)) + E(t) + E(t)^{5/2} \, .
\end{equation}
\end{proposition}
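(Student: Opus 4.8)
The plan is to work from the decomposition
\[
\X_{ssss} = \Z_s + (\lambda\X_s)_s = \Z_s + \lambda_s\X_s + \lambda\X_{ss}\,,
\]
so that $\|\X_{ssss}\|_{L^2_s} \le D(t)^{1/2} + \|\lambda_s\|_{L^2_s} + \|\lambda\X_{ss}\|_{L^2_s}$, and to reduce everything to estimating $\lambda$ and $\lambda_s$. Two structural facts drive the argument. First, differentiating $|\X_s|^2 = 1$ twice gives $\X_{sss}\cdot\X_s = -|\X_{ss}|^2$, so dotting $\Z = \X_{sss} - \lambda\X_s$ with $\X_s$ produces the pointwise relation $\lambda = -|\X_{ss}|^2 - \Z\cdot\X_s$, whence $\lambda_s = -2\,\X_{ss}\cdot\X_{sss} - \Z_s\cdot\X_s - \Z\cdot\X_{ss}$; thus $\lambda$ and $\lambda_s$ are controlled through $\X_{ss}$, $\X_{sss}$ and the dissipation quantities $\Z$, $\Z_s$ only. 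Second, integrating by parts and discarding the boundary term (which vanishes in both settings, by $\X_{ss}|_{s=0,1}=0$ in the free end case and by periodicity in the closed loop case),
\[
\|\X_{sss}\|_{L^2_s}^2 = -\int_I \X_{ss}\cdot\X_{ssss}\,ds \le \|\X_{ss}\|_{L^2_s}\|\X_{ssss}\|_{L^2_s}\,.
\]
This is the crucial point: wherever $\X_{sss}$ — and, after interpolation, $\|\X_{ss}\|_{L^\infty_s}$ — appears, one picks up only a fractional power of $\|\X_{ssss}\|_{L^2_s}$, which Young's inequality absorbs at the cost of a power of $E(t)\sim \|\X_{ss}\|_{L^2_s}^2$.

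In the free end setting this closes immediately. The boundary conditions give $\X_{ss}(0,t)=0$ and $\Z(0,t)=\X_{sss}(0,t)-\lambda(0,t)\X_s(0,t)=0$, so $\|\X_{ss}\|_{L^\infty_s}^2 \le 2\|\X_{ss}\|_{L^2_s}\|\X_{sss}\|_{L^2_s}$ and $\|\Z\|_{L^\infty_s}\le \|\Z_s\|_{L^1_s}\le D(t)^{1/2}$. Feeding these into the two identities, using $\|\X_{sss}\|_{L^2_s}^2 \le \|\X_{ss}\|_{L^2_s}\|\X_{ssss}\|_{L^2_s}$, one finds that both $\|\lambda\X_{ss}\|_{L^2_s}$ and $\|\lambda_s\|_{L^2_s}$ are bounded by $\tfrac14\|\X_{ssss}\|_{L^2_s}+C\big(D(t)^{1/2}(1+E(t)^{1/2})+E(t)^{5/2}\big)$; absorbing the $\X_{ssss}$ terms yields~\eqref{eq:Xssss_free0}.

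The closed loop setting follows the same scheme. Since $\int_\T\X_{ss}\,ds=0$ by periodicity, the interpolation bound for $\|\X_{ss}\|_{L^\infty_s}$ survives up to a harmless additive lower-order term, and the free end computation goes through verbatim except that now $\|\Z\|_{L^\infty_s}\le |\bar\Z| + CD(t)^{1/2}$, where $\bar\Z := \int_\T\Z\,ds$ and the zero-mean part is controlled by Poincaré--Wirtinger and Agmon. So everything reduces to bounding the constant mode $\bar\Z = -\int_\T\lambda\X_s\,ds$; since $\int_\T\X_s\,ds=0$, only the zero-mean part $\lambda':=\lambda-\int_\T\lambda\,ds$ contributes, so $|\bar\Z|\le \|\lambda'\|_{L^2_s}\lesssim \|\lambda_s\|_{L^2_s}$. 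One then needs a bound of the form $|\bar\Z|\lesssim E(t)+E(t)^{1/2}\|\X_{ssss}\|_{L^2_s}^{1/2}+D(t)^{1/2}$ (the middle term again absorbable); added to the right-hand side as $|\bar\Z|\cdot\|\X_{ss}\|_{L^2_s}$, this produces exactly the extra $E(t)$ term in~\eqref{eq:Xssss_closed0}. Here one cannot simply invoke $\|\lambda\|_{H^1_s}\lesssim \|\X_{ss}\|_{H^1_s}^2$ from Lemma~\ref{lem:tension}, because $\|\X_{ss}\|_{H^1_s}^2$ reintroduces $\|\X_{sss}\|_{L^2_s}^2\lesssim \|\X_{ss}\|_{L^2_s}\|\X_{ssss}\|_{L^2_s}$ with a coefficient too large to absorb when $E(t)$ is big; instead one must exploit the coercivity built into the tension equation — the curvature-weighted zeroth-order term, whose strength is guaranteed by Fenchel's theorem $\int_\T|\X_{ss}|\,ds\ge 2\pi$, exactly as in the proof of Lemma~\ref{lem:tension} — together with the scalar identity $\int_\T\lambda\,ds = -2E(t)-\int_\T\Z'\cdot\X_s\,ds$ obtained by integrating $\X_{sss}\cdot\X_s = \Z\cdot\X_s+\lambda$ over $\T$.

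The main obstacle is precisely this last step in the closed loop case: controlling the tension (equivalently the constant mode of $\Z$) when $E(t)$ is not small, so that the $\|\X_{ssss}\|_{L^2_s}$-feedback appears only to fractional powers and can be absorbed, with all remaining terms genuinely lower order and assembling into the stated polynomial in $E(t)$ and $D(t)^{1/2}$. The free end case avoids this difficulty entirely, since the vanishing of $\Z$, $\X_{ss}$ and $\lambda$ at the endpoints kills the constant mode outright.
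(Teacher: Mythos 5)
Your free-end argument is correct and is essentially the paper's proof repackaged: the paper works with the projections $P_{\X_s}\Z$, $P_{\X_s}^\perp\Z$ and their derivatives, whereas you work with the scalar identity $\lambda = -|\X_{ss}|^2 - \Z\cdot\X_s$; these are the same computation in different notation, and the absorption via $\|\X_{sss}\|^2_{L^2_s}\lesssim\|\X_{ss}\|_{L^2_s}\|\X_{ssss}\|_{L^2_s}$ plus Young's inequality closes the estimate exactly as you describe.

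The closed-loop case, however, has a real gap. You correctly identify the difficulty (controlling the constant mode $\bar\Z$), but your proposed workaround does not resolve it. The circularity you face is: $\|\Z\|_{L^\infty_s}$ needs $|\bar\Z|$, and your formula $\lambda_s = -2\X_{ss}\cdot\X_{sss} - \Z_s\cdot\X_s - \Z\cdot\X_{ss}$ contains $\Z\cdot\X_{ss}$, which you estimate via $\|\Z\|_{L^\infty_s}\|\X_{ss}\|_{L^2_s}$, re-introducing $|\bar\Z|$ with a coefficient $E^{1/2}$ that cannot be absorbed for large $E$. The first missing observation is the simple cancellation $\Z\cdot\X_{ss} = (\X_{sss}-\lambda\X_s)\cdot\X_{ss} = \X_{sss}\cdot\X_{ss}$, since $\X_s\perp\X_{ss}$; with this, $\lambda_s = -3\X_{ss}\cdot\X_{sss} - \Z_s\cdot\X_s$ involves no $\Z$ at all, breaking the circularity. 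But even then, estimating $|\bar\Z|$ by $\|\lambda'\|_{L^2_s}\lesssim\|\lambda_s\|_{L^2_s}\lesssim\|\X_{ss}\|_{L^\infty_s}\|\X_{sss}\|_{L^2_s}+D^{1/2}\lesssim E^{5/8}\|\X_{ssss}\|_{L^2_s}^{3/4}+D^{1/2}$ and feeding the result into $\|\lambda\X_{ss}\|_{L^2_s}$ produces $E^{9/8}\|\X_{ssss}\|_{L^2_s}^{3/4}$, which after Young gives $E^{9/2}$ rather than the stated $E^{5/2}$. The second missing idea, which is what the paper actually does, is to avoid any fractional power of $\|\X_{ssss}\|_{L^2_s}$ when bounding $\bar\Z$ in the first place: write $\bar\Z = -\int_\T\lambda\X_s\,ds = \int_\T\lambda_s\big(\X-\X(0)\big)\,ds$, substitute $\lambda_s = -\Z_s\cdot\X_s - \tfrac{3}{2}(|\X_{ss}|^2)_s$, and integrate by parts once more in the $(|\X_{ss}|^2)_s$ term so that $\int_\T(|\X_{ss}|^2)_s(\X-\X(0))\,ds = -\int_\T|\X_{ss}|^2\X_s\,ds$, whose norm is bounded by $2E(t)$. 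This yields $|\bar\Z|\lesssim D^{1/2}+E$ with no $\X_{ssss}$ dependence, which is what produces exactly the extra $E(t)$ term in~\eqref{eq:Xssss_closed0}. Your appeal to the Fenchel-based coercivity of the tension equation does not help: that coercivity gives $\|\lambda\|_{H^1_s}\lesssim\|\X_{ss}\|_{H^1_s}^2\lesssim E+E^{1/2}\|\X_{ssss}\|_{L^2_s}$, which has a full power of $\|\X_{ssss}\|_{L^2_s}$ and is, as you yourself note, not absorbable; and your scalar identity controls the mean $\bar\lambda$, not the fluctuation $\lambda'$ that $\bar\Z$ actually depends on.
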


\begin{proof}
One useful observation (cf.~\cite[p. 406]{koiso1996motion}) is that it is often geometrically advantageous to work with the quantity $\Z$ instead of $\Z_s$.
We begin by noting the following tangential and non-tangential projections 
\begin{equation}
\begin{aligned}
P_{\X_s} \Z &= - (|\X_{ss}|^2 + \lambda) \X_s\,,\\
P_{\X_s}^\perp \Z &= \X_{sss} + |\X_{ss}|^2 \X_s\,,
\end{aligned}
\end{equation}
as well as 
\begin{equation}
	\label{eq:Zstan}
\begin{aligned}
P_{\X_s} \Z_s &= (\X_{ssss} \cdot \X_{s}) \X_s - \lambda_s \X_s 
= - \frac{3}{2} \big(|\X_{ss}|^2\big)_s \X_s -\lambda_s \X_s \, , \\
P_{\X_s}^\perp \Z_s &= \X_{ssss} + \frac{3}{2} \big(|\X_{ss}|^2\big)_s \X_s\,.
\end{aligned}
\end{equation}
We now consider the free end and closed loop cases separately.

\emph{Free ends}. To begin, we recall the boundary conditions $\X_{ss}\big|_{s=0,1} = \X_{sss}\big|_{s=0,1} = 0$. In particular, $\X_{ss} \in H^2_0$, so various homogeneous interpolation inequalities are valid, for example,
\begin{equation}
	\label{eq:interp2}
\| \X_{sss} \|_{L^2_s} \lesssim \| \X_{ss} \|_{L^2_s}^{1/2} \| \X_{ssss} \|_{L^2_s}^{1/2} \, ,
\end{equation}
\begin{equation}
	\label{eq:interp3}
\| \X_{ss} \|_{L^\infty_s} \lesssim \| \X_{ss} \|_{L^2_s}^{1/2} \| \X_{sss} \|_{L^2_s}^{1/2} \, .
\end{equation}

We follow the observation of~\cite{oelz2011curve}. Since $\Z$ vanishes at the filament endpoints, we may write
\begin{equation}
\Z(s) = \int_0^s \Z_{s'}(s') \, ds' \, ,
\end{equation}
and we obtain
\begin{equation}
\| \Z \|_{H^1_s} \lesssim D(t)^{1/2} \, .
\end{equation}
We estimate the tangential component of $\Z$ as
\begin{equation}
\| \lambda + |\X_{ss}|^2 \|_{H^1_s} = \| \Z \cdot \X_s \|_{H^1_s} \lesssim D(t) \| \X_s \|_{H^1_s} \lesssim D(t)^{1/2} (1+E(t)^{1/2}) \, ,
\end{equation}
so that, by the triangle inequality, the normal component satisfies
\begin{equation}
	\label{eq:normalcomponentpperp}
\| P^\perp_{\X_s} \X_{sss} \|_{H^1_s} \lesssim D(t)^{1/2} (1+E(t)^{1/2}) \, .
\end{equation}
To estimate the full $H^1_s$ norm of $\X_{sss}$, it remains to estimate the tangential contribution $P_{\X_s} \X_{sss} = - |\X_{ss}|^2 \X_s$ in $H^1_s$. First,
\begin{equation}
\| |\X_{ss}|^2 \X_s \|_{L^2_s} \leq \| \X_{ss} \|_{L^4_s}^2 \lesssim \| \X_{ss} \|_{L^2_s}^{7/8} \| \X_{ssss} \|_{L^2_s}^{1/8} \, ,
\end{equation}
where we use interpolation ($L^4$ is ``halfway" between $L^2$ and $L^\infty$, and then we apply~\eqref{eq:interp3} and~\eqref{eq:interp2}). Second, we may write
\begin{equation}
(|\X_{ss}|^2 \X_s)_s = 2 (\X_{sss} \cdot \X_{ss}) \X_s + \underbrace{|\X_{ss}|^2 \X_{ss}}_{= - (\X_{sss} \cdot \X_s) \X_{ss}} \, ,
\end{equation}
and therefore
\begin{equation}
\|(|\X_{ss}|^2 \X_s)_s \|_{L^2_s} \lesssim \| \X_{ss} \|_{L^2_s}^{1/2} \| \X_{sss} \|_{L^2_s}^{3/2} \lesssim \| \X_{ss} \|_{L^2_s}^{5/4} \| \X_{ssss} \|_{L^2_s}^{3/4} \, .
\end{equation}
Thus,
\begin{equation}
\begin{aligned}
\| \X_{ssss} \|_{L^2_s} &\leq \| (P_{\X_s}^\perp \X_{sss})_s \|_{L^2_s} + \| (|\X_{ss}|^2 \X_s)_s \|_{L^2_s} \\
&\leq \| (P_{\X_s}^\perp \X_{sss})_s \|_{L^2_s} + C \| \X_{ss} \|_{L^2_s}^{5/4} \| \X_{ssss} \|_{L^2_s}^{3/4} \, .
\end{aligned}
\end{equation}
Finally, we apply~\eqref{eq:normalcomponentpperp} to the first term and split the second term via Young's inequality to obtain~\eqref{eq:Xssss_free0}. This controls $\X_{ssss}$ in $L^2_t L^2_s$.

\emph{Closed loop}. We must now be careful with the mean of $\Z$, cf.~\cite[p. 428]{koiso1996motion}:
\begin{equation}
\int_\T \Z \, ds = \int_\T (\X_{sss} - \lambda \X_{s}) \, ds = \int_\T \lambda_s \big(\X-\X(0)\big) \, ds\,.
\end{equation}
Using~\eqref{eq:Zstan}, we may write
\begin{equation}
\int_\T \Z \, ds = - \int_\T \bigg(\Z_s\cdot\X_s + \frac{3}{2} \big(|\X_{ss}|^2\big)_s\bigg) \big(\X - \X(0)\big) \, ds \leq D(t)^{1/2} + E(t) \,,
\end{equation}
and hence
\begin{equation}
\| \Z \|_{H^1_s} \lesssim D(t)^{1/2} + E(t) \, .
\end{equation}
We can now continue with the same arguments as above. All the interpolation inequalities are valid because $\X_{ss}$ has zero mean, and we obtain the conclusion~\eqref{eq:Xssss_closed0}. \end{proof}

In conclusion, $\X_s$ belongs to $L^\infty_t H^1_s \cap L^2_t H^3_s$ on its maximal time of existence. Together with the continuation criterion, this proves global well-posedness.


\subsection{Propagation estimates and decay to equilibrium}
\label{sec:propagationestimates}

\subsubsection{Poincar{\'e} inequality for free ends}
In the free end setting, the dissipation $D(t)$ directly controls the energy through the following Poincar{\'e}-type inequality~\cite{oelz2011curve}:
\begin{equation}
    \label{eq:Poincareinequalitystep}
\begin{aligned}
\int_0^1 |\X_{ss}|^2\,ds &\overset{\eqref{eq:muckinaboutwiththecurvature}}{\leq} \frac{1}{\pi^2} \int_{ \{ |\X_{ss}|^2 > 0 \} } |P_{\e_{\rm n}} \X_{sss}|^2 \, ds \\
&\leq \frac{1}{\pi^2} \int_0^1 |\X_{sss} - \lambda \X_s|^2 \,ds \leq \frac{1}{\pi^4} D(t) \, ,
\end{aligned}
\end{equation}
obtained by applying the $H^1_0(0,1)$ Sobolev embedding twice.

The term $|P_{\e_{\rm n}} \X_{sss}|^2$ in~\eqref{eq:Poincareinequality} arises in the following way: Let $\kappa^2 := |\X_{ss}|^2$ and examine the (at most countably many) open intervals $I_j$ on which $\kappa^2 > 0$. Since $\kappa^2$ vanishes to at least second order at $\p I_j$, $\kappa := \sqrt{|\X_{ss}|^2} \in C^1(\overline{I_j})$ and is Lipschitz on $[0,1]$. Then
\begin{equation}
	\label{eq:muckinaboutwiththecurvature}
\int_0^1 \kappa^2 \, ds \leq \frac{1}{\pi^2} \int_0^1 |\kappa_s|^2 \, ds = \frac{1}{\pi^2} \sum_j \int_{I_j} |\X_{sss} \cdot \e_{\rm n}|^2 \, ds
\end{equation}
where $\e_{\rm n} := \X_{ss}/\kappa$ is well defined on each interval $I_j$.\footnote{The less involved computation
\begin{equation}
\int_0^1 |\X_{ss}|^2\,ds \leq \int_0^1 |\p_s |\X_{ss}|^2|\,ds \leq 2 \int_0^1 |\X_{sss} \cdot \X_{ss}|\,ds
\end{equation}
achieves a worse constant.}

In summary,
\begin{equation}
\label{eq:Poincareinequality}
E \leq \frac{1}{2\pi^4} D(t) \, .
\end{equation}

Let $c_0^{-1}$ be the best constant in the inequality~\eqref{eq:Poincareinequality}; in particular, $c_0 \geq 2\pi^4$.\footnote{As remarked in~\cite{oelz2011curve}, $\pi^4\approx 97$ is not the bottom eigenvalue of $-\p_s^4$ with clamped boundary conditions (the linearized operator around a straight rod), which is approximately $500$. However, $\pi^4$ \emph{is} the bottom eigenvalue of $-\p_s^4$ with boundary conditions $u,\,\p_s^2 u|_{s=0,1} = 0$.} Combining this with the energy balance, we obtain
\begin{equation}\label{eq:dtE_noforce}
\frac{dE}{dt} \leq -D(t) \leq - c_0 E \, .
\end{equation}
Therefore, Gr{\"o}nwall's inequality implies
\begin{equation}
    \label{eq:exponentialdecayfreeend}
E(t) \leq e^{-c_0 t} E^{\rm in} \, .
\end{equation}



Additionally, we may obtain a time-integrated decay estimate on $D$. For $\theta\in(0,1)$, the bound \eqref{eq:dtE_noforce} may be rewritten as   
\begin{equation}
\frac{dE}{dt} + \theta D \leq -(1-\theta) D \leq - c_0 (1-\theta) D \, ,
\end{equation}
which, using an integrating factor, may be further rewritten as 
\begin{equation}
\frac{d}{dt} (Ee^{c_0 (1-\theta) t}) + \theta D e^{c_0 (1-\theta) t} \leq 0 \, .
\end{equation}
Integrating in time yields the estimate
\begin{equation}
\label{eq:E_D_free}
\theta \int_0^{\infty} e^{c_0 (1-\theta) t} D(t) \, dt \leq E^{\rm in} \, .
\end{equation}

\subsubsection{Propagation estimates}

In the free end setting, we already have the exponential decay estimate~\eqref{eq:exponentialdecayfreeend}, whereas in the periodic setting, we only have \emph{a priori} bounds on the energy and a time-integrated decay of the energy dissipation:
\begin{equation}\label{eq:E_D_closed}
\sup_t E(t) \leq E^{\rm in} \, , \quad \int_0^{\infty} D(t) \, dt \leq E^{\rm in} \, .
\end{equation}
In both the free-end and periodic settings, our goal is to prove pointwise-in-time bounds on the energy dissipation $D(t)$. 
In the periodic setting, these will be upgraded to a full proof of convergence to Euler elasticae using the Lojasiewicz inequality from Section~\ref{sec:rod_loja}. 
\begin{proposition}
	\label{pro:differentialineq}
In the closed loop setting,
\begin{equation}
\frac{1}{2} \frac{d}{dt} D(t) \leq - \frac{1}{2} \int_\T |\p_t \X_{ss}|^2\,ds + C(E) D(t) \times D(t) + C(E) D(t) \, .
\end{equation}
In the free end setting,
\begin{equation}
\frac{1}{2} \frac{d}{dt} D(t) \leq - \frac{1}{2} \int_0^1 | \p_t \X_{ss} |^2\,ds + e^{-c_0t} \big(C(E) D(t)^2 + C(E) D(t)\big) + C D(t) + C D(t)^2 \, .
\end{equation}
\end{proposition}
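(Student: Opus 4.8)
The plan is to differentiate $D(t) = \int_I |\Z_s|^2\,ds$ in time and exploit the structure of the evolution equation. Writing $\Z_s = (\X_{sss}-\lambda\X_s)_s$ and recalling from~\eqref{eq:energy} that $\p_t\X = (\mathbf{I}+\gamma\X_s\otimes\X_s)\Z_s$ is (up to sign) the energy-dissipating velocity, the first step is to compute $\frac{1}{2}\frac{d}{dt}D = \int_I \Z_s\cdot\p_t\Z_s\,ds$ and integrate by parts to produce a good negative term. The key cancellation is geometric: testing the $\X$-equation against $\p_t\X_{ssss}$ (equivalently, using the parabolic smoothing estimate~\eqref{eq:secondestimate} at the level of $u = \X_{ss}$ with a forcing term) should yield a leading term $-\|\p_t\X_{ss}\|_{L^2_s}^2$ together with lower-order commutator and tension contributions. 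Concretely, I would write $\p_t\X_{ss} = -\X_{ssssss} + (\text{tension and nonlinear terms})_s$ from~\eqref{eq:section3xsequation}, so that $\int_I\Z_s\cdot\p_t\Z_s\,ds$ reorganizes into $-\|\p_t\X_{ss}\|_{L^2_s}^2$ plus error terms, each of which must be shown to be controlled by $C(E)(D^2 + D)$.

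The second step is to bound the error terms. These come in two flavors: (i) terms arising from differentiating $\lambda$ in time, and (ii) terms from the curvature-dependent coefficients $|\X_{ss}|^2$, $(\X_{sss}\cdot\X_{ss})$, etc. For (i), I would differentiate the tension equation~\eqref{eq:tensionequationcurveforlemma} in time to get an elliptic equation for $\p_t\lambda$ with forcing built from $\p_t\X_{ss}$, $\p_t\X_{sss}$ and the existing curvature quantities, then use the tension-type estimate of Lemma~\ref{lem:tension} (at one time slice) to bound $\|\p_t\lambda\|$ in terms of $\|\p_t\X_{ss}\|_{L^2_s}$ and powers of $E$ and $D$. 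Crucially, any appearance of $\|\p_t\X_{ss}\|_{L^2_s}$ in the error must carry a small prefactor (a power of $D$ or a controllable constant) so it can be absorbed by the good $-\frac{1}{2}\|\p_t\X_{ss}\|_{L^2_s}^2$ term via Young's inequality; this is why the statement has the factor $\frac{1}{2}$ rather than $1$. For (ii), I would use Proposition~\ref{prop:Xssss_ests_curve} to control $\|\X_{ssss}\|_{L^2_s}$ by $D^{1/2}(1+E^{1/2}) + E + E^{5/2}$, plus the standard interpolation inequalities~\eqref{eq:interp2}--\eqref{eq:interp3} (valid since $\X_{ss}$ is in $H^2_0$ for free ends, or has zero mean on $\T$), to turn products of high derivatives into the claimed $C(E)D^2 + C(E)D$ form. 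The structure $D(t)\times D(t) + D(t)$ in the closed-loop case precisely reflects a term that is quadratic in $D$ (from two factors each estimated by $D^{1/2}\cdot(\dots)$, after extracting $D$) times a leftover $D$, plus purely linear-in-$D$ contributions; I would keep the bookkeeping transparent by tracking the homogeneity of each monomial.

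The third step handles the difference between the two settings. In the free-end case, the extra decay comes from feeding the exponential bound $E(t)\le e^{-c_0t}E^{\rm in}$ from~\eqref{eq:exponentialdecayfreeend} into the $C(E)$ prefactors wherever a factor of $E$ (rather than just $\|\X_s\|_{H^1_s}\lesssim 1 + E^{1/2}$) genuinely appears; the terms that only involve $\|\X_s\|_{H^1_s}$ or the unit-length bound do not gain the $e^{-c_0t}$ factor, which is why the free-end estimate has both $e^{-c_0t}(C(E)D^2 + C(E)D)$ and bare $CD + CD^2$ pieces. In the closed-loop case, $E$ is only bounded (not decaying), so $C(E)$ is a genuine constant and all error terms collapse into $C(E)D^2 + C(E)D$, written suggestively as $C(E)D\cdot D + C(E)D$. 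For the boundary terms in the free-end integration by parts, I would check that the zero-stress/zero-moment conditions $\X_{ss},\X_{sss}|_{s=0,1}=0$ and $\lambda|_{s=0,1}=0$ (together with $(\X_{sss}-\lambda\X_s)|_{s=0,1}=0$ and its time derivative) make all boundary contributions vanish.

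The main obstacle I anticipate is the careful treatment of $\p_t\lambda$: one must solve the differentiated elliptic tension equation and obtain an estimate whose right-hand side is structured so that the inevitable $\|\p_t\X_{ss}\|_{L^2_s}$-dependence can be absorbed into the leading dissipative term, while the remainder is genuinely of the form $C(E)(D + D^2)$ (with the $E$-prefactor available to pick up exponential decay in the free-end case). A secondary subtlety, specific to the periodic setting, is controlling the mean of $\Z$ and of $\p_t\lambda$, handled as in Proposition~\ref{prop:Xssss_ests_curve} via Fenchel's theorem and the zero-mean property of $\X_{ss}$; I would reuse exactly that machinery rather than re-deriving it.
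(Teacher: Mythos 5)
Your overall skeleton matches the paper's: differentiate $D$, extract the good term $-\|\p_t\X_{ss}\|_{L^2_s}^2$, absorb the $\p_t\X_{ss}$-dependence of the errors by Young, control the rest via Proposition~\ref{prop:Xssss_ests_curve} and interpolation, and feed the exponential decay of $E$ into the prefactors only in the free-end case. However, your handling of the tension term --- which you yourself flag as the main obstacle --- misses the key structural point and, as proposed, is unlikely to close. The paper does not estimate $\p_t\lambda$ at all. The trick is to use $(\mathbf{I}+\gamma\X_s\otimes\X_s)\Z_s = -\p_t\X$ and integrate by parts \emph{once} so that the leading contribution becomes $\int_I \p_t\X_s\cdot\p_t\Z\,ds$ with $\p_t\Z = \p_t\X_{sss} - (\p_t\lambda)\X_s - \lambda\,\p_t\X_s$; the $\p_t\lambda$ term then vanishes identically because $\p_t\X_s\cdot\X_s = \tfrac12\p_t|\X_s|^2 = 0$. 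What survives is only $-\int_I\lambda|\p_t\X_s|^2\,ds$, estimated by $\|\lambda\|_{L^\infty_s}\|\p_t\X_s\|_{L^2_s}^2$ with the static tension bound of Lemma~\ref{lem:tension}, plus the $\gamma$-term $\int_I\gamma(\X_s\cdot\Z_s)(\p_t\X_s\cdot\Z_s)\,ds$.

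Your alternative --- differentiating the elliptic tension equation in time and bounding $\|\p_t\lambda\|$ --- runs into a real difficulty: the time-differentiated right-hand side contains $(\p_t\X_{sss}\cdot\X_{ss})_s$ and $\X_{sss}\cdot\p_t\X_{sss}$, and $\p_t\X_{sss}$ is a sixth-order quantity in $\X$ that is not controlled by $D$, $E$, or $\|\p_t\X_{ss}\|_{L^2_s}$. Even after integrating by parts against test functions, the natural estimate lands in $(H^2)^*$ rather than $(H^1)^*$, so you would not recover the $H^1$ bound on $\p_t\lambda$ you would need to pair with $\Z_s\cdot\X_s$. In short, the substitution $\p_t\X_{ss}=-\X_{ssssss}+\cdots$ points in the wrong direction (too many spatial derivatives); the correct move is to trade spatial derivatives of $\Z_s$ for time derivatives of $\X$ via the equation, after which the orthogonality $\p_t\X_s\cdot\X_s=0$ does the work you were planning to do analytically. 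The remaining bookkeeping in your steps two and three is consistent with the paper.
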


\begin{proof}
We begin with some calculations, valid for both the free and periodic cases. First, we have
\begin{equation}\label{eq:dtDt_eqn}
\begin{aligned}
\frac{1}{2} \frac{d}{dt} D(t) &= \frac{1}{2} \frac{d}{dt} \int_I |\Z_s|^2 + \gamma (\X_s \cdot \Z_s)^2 \, ds \\
&= \int_I [(\bI + \gamma \X_s \otimes \X_s) \Z_s] \cdot \p_t \Z_s \, ds + \int_I \gamma (\X_s \cdot \Z_s) (\p_t \X_s \cdot \Z_s) \, ds \, .
\end{aligned}
\end{equation}
Since $\p_t \Z|_{s=0,1}=0$ in the free end setting, we may write 
\begin{equation}
\int_I [\underbrace{(\bI + \gamma \X_s \otimes \X_s) \Z_s}_{- \p_t \X}] \cdot \p_t \Z_s \, ds = \int_I \p_t \X_s \cdot \p_t \Z = \int_I \p_t \X_s \cdot \p_t ( \X_{sss} - \lambda \X_s ) \, ds \, .
\end{equation}
The first term produces an advantageous dissipation-type term:
\begin{equation}
\int_I \p_t \X_s \cdot \p_t \X_{sss} \, ds = - \int_I |\p_t \X_{ss}|^2 \, ds \, .
\end{equation}
The second term becomes
\begin{equation}
- \int_I \lambda |\p_t \X_s|^2 \, ds \, ,
\end{equation}
as the term containing $\p_t \lambda$ vanishes due to $\p_t \X_s \cdot \X_s = 0$. Since $\lambda$ may not have an advantageous sign, it must be estimated as
\begin{equation}\label{eq:tensionterm}
\left| \int_I \lambda |\p_t \X_s|^2 \, ds \right| \lesssim \| \lambda \|_{L^\infty_s} \| \p_t \X_s \|_{L^2_s}^2\,. 
\end{equation} 
The remaining term in \eqref{eq:dtDt_eqn} satisfies
\begin{equation}
	\label{eq:gammaterm}
\left| \int_I \gamma (\X_s \cdot \Z_s) (\p_t \X_s \cdot \Z_s) \, ds \right| \lesssim \| \p_t \X_s \|_{L^\infty_s} \| \Z_s \|_{L^2_s}^2 \, .
\end{equation}
We proceed with estimating \eqref{eq:tensionterm} and \eqref{eq:gammaterm} by considering the periodic and free-end cases separately.

\textit{Closed loop}. 
The tension term \eqref{eq:tensionterm} may be estimated by using that the tension satisfies
\begin{equation}
\| \lambda \|_{H^1_s} \lesssim \| \X_{sss} \|_{L^2_s}^2 \lesssim \| \X_{ss} \|_{L^2_s} \| \X_{ssss} \|_{L^2_s} \, .
\end{equation}
We apply an interpolation inequality
\begin{equation}\label{eq:ptXs_interp0}
\| \p_t \X_s \|_{L^2_s}^2 \lesssim \| \p_t \X \|_{L^2_s} \| \p_t \X_{ss} \|_{L^2_s} \, ,
\end{equation}
since $\p_t \X_s$ has zero mean. Eventually, the term $\| \p_t \X_{ss} \|_{L^2_s}$ will be ``broken off" via Young's inequality, so it remains to estimate
\begin{equation}
\| \X_{ss} \|_{L^2_s}^2 \| \X_{ssss} \|_{L^2_s}^2 \| \Z_s \|_{L^2_s}^2 \, .
\end{equation}
Using the estimates of Proposition \ref{prop:Xssss_ests_curve} on $\X_{ssss}$, the above is estimated by
\begin{equation}\label{eq:Es_and_Ds}
C(E) D(t) \| \Z_s \|_{L^2_s}^2 + C(E) \| \Z_s \|_{L^2_s}^2 \leq C(E) D(t) \| \Z_s \|_{L^2_s}^2 + C(E) D(t) \, .
\end{equation}
We will use a Gr{\"o}nwall argument on the quantity $\| \Z_s \|_{L^2_s}^2$, which is equivalent to $D(t)$. We will crucially exploit that $D(t)$ is already known to be time-integrable to control the coefficients in the Gr{\"o}nwall argument and the ``forcing" $C(E) D(t)$.

We now estimate the term \eqref{eq:gammaterm} involving $\gamma$. Since $\| \p_t \X_s \|_{L^\infty_s} \lesssim \| \p_t \X_{ss} \|_{L^2_s}$, we can break off this term, and it remains to estimate $\| \Z_s \|_{L^2_s}^4$, namely, by $D(t)^2$.

Summarizing, we have
\begin{equation}
\frac{1}{2} \frac{d}{dt} D(t) \leq - \frac{1}{2} \int_\T |\p_t \X_{ss}|^2\,ds + C(E) D(t) \times D(t) + C(E) D(t) \, ,
\end{equation}
where the $1/2$ on the right-hand side comes from absorbing terms.

\textit{Free ends}. 
In the free end setting, we may estimate the tension term \eqref{eq:tensionterm} by using that
\begin{equation}
\| \lambda \|_{H^1_s} \lesssim \| \X_{ss} \|_{L^2_s} \| \X_{ssss} \|_{L^2_s} \, ,
\end{equation}
due to the boundary conditions. Additionally, we have
\begin{equation}
\| \p_t \X_s \|_{L^2_s}^2 \lesssim \| \p_t \X \|_{L^2_s}^2 + \| \p_t \X \|_{L^2_s} \| \p_t \X_{ss} \|_{L^2_s} \lesssim \| \Z_s \|_{L^2_s}^2 + \| \Z_s \|_{L^2_s} \| \p_t \X_{ss} \|_{L^2_s} \, .
\end{equation}
After breaking off the $\p_t \X_{ss}$ term, it remains to estimate
\begin{equation}
\| \X_{ss} \|_{L^2_s}^2 \| \X_{ssss} \|_{L^2_s}^2 \| \Z_s \|_{L^2_s}^2 \, ,
\end{equation}
as before. However, we already know that $E(t)$ is exponentially decaying. Therefore, this term is estimated by
\begin{equation}
e^{-c_0t} (C(E) D(t)^2 + C(E) D(t)) \, .
\end{equation}

Next, we estimate the term \eqref{eq:gammaterm} involving $\gamma$. For this, we use
\begin{equation}
\| \p_t \X_s \|_{L^\infty_s} \lesssim \| \Z_s \|_{L^2_s}^{1/4} \| \p_t\X \|_{H^2_s}^{3/4} \lesssim \| \Z_s \|_{L^2_s} + \| \Z_s \|_{L^2_s}^{1/4} \| \p_t \X_{ss} \|_{L^2_s}^{3/4} \, .
\end{equation}
Then
\begin{equation}\label{eq:free_est1}
\| \p_t \X_s \|_{L^\infty_s} \| \Z_s \|_{L^2_s}^2 \lesssim \| \Z_s \|_{L^2_s}^3 + \| \Z_s \|_{L^2_s}^{9/4} \| \p_t\X_{ss} \|_{L^2_s}^{3/4} \, .
\end{equation}
We break off the $\p_t \X_{ss}$ term to obtain
\begin{equation}\label{eq:free_est2}
\| \Z_s \|_{L^2_s}^3 + \| \Z_s \|_{L^2_s}^{18/5} \lesssim D(t) + D(t)^2 \, .
\end{equation}

Finally, we have
\begin{equation}
\frac{1}{2} \frac{d}{dt} D(t) \leq - \frac{1}{2} \int_0^1 | \p_t \X_{ss} |^2\,ds + e^{-c_0t} \big(C(E) D(t)^2 + C(E) D(t)\big) + D(t) + D(t)^2 \, .
\end{equation}
\end{proof}

\subsubsection{Decay to equilibrium}
    \label{sec:decayimmersednoforcing}

We now leverage the differential inequalities in Proposition~\ref{pro:differentialineq} to prove convergence to equilibrum. First, we establish the following behavior of $E$ and $D$: 

\begin{proposition}
In the closed loop setting,
\begin{equation}
E(t) \leq E^{\rm in} \, , \quad E(t) \searrow \bar{E} =: \liminf_{t' \to +\infty} E(t') \text{ as } t \to +\infty
\end{equation}
\begin{equation}
	\label{eq:Dconvergestozero}
D(t) \to 0 \text{ as } t \to +\infty \, .
\end{equation}

In the free end setting, we have the refined convergence
\begin{equation}
E(t) \leq e^{-c_0t} E^{\rm in}
\end{equation}
\begin{equation}
D(t) \leq e^{C(E^{\rm in},\theta)} e^{-\theta c_0 t} [D^{\rm in} + C(E^{\rm in},\theta)] \, , \quad \forall \theta \in [0,1) \, .
\end{equation}
\end{proposition}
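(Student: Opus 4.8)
The plan is to treat the closed-loop and free-end cases separately, in each case extracting pointwise-in-time decay of the dissipation $D$ from its time-integrated bounds together with the differential inequality of Proposition~\ref{pro:differentialineq}. In the closed loop, the assertions on $E$ are immediate: since $\frac{dE}{dt}\le -D(t)\le 0$, the energy is non-increasing, hence $E(t)\le E^{\rm in}$ and $E(t)$ decreases monotonically to its infimum, which coincides with $\liminf_{t\to\infty}E(t)=\bar E$. (By parabolic smoothing $E,D\in C^1((0,\infty))$, so all differential manipulations below are licit for $t>0$, and the small-time behavior is harmless.) For $D(t)\to 0$ I would integrate $\frac{dE}{dt}\le -D$ to get $\int_0^\infty D\,dt\le E^{\rm in}<\infty$ (this is also~\eqref{eq:E_D_closed}); in particular, given $\varepsilon>0$ one can pick $t_0$ large with both $D(t_0)<\varepsilon$ and $\int_{t_0}^\infty D\,dt<\varepsilon$ (the first by the mean value theorem on $[T,T+1]$ for $T$ large). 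Using $E(t)\le E^{\rm in}$ to bound the coefficient $C(E)$, Proposition~\ref{pro:differentialineq} gives $\frac{d}{dt}D\le 2C(E^{\rm in})D(1+D)$, so on any interval starting at $t_0$ on which $D\le 1$ one has $\frac{d}{dt}D\le 4C(E^{\rm in})D$ and hence, integrating, $D(t)\le\varepsilon\big(1+4C(E^{\rm in})\big)$. Choosing $\varepsilon$ with $\varepsilon(1+4C(E^{\rm in}))<1$, a standard continuity argument shows the bound $D<1$ persists for all $t\ge t_0$, whence $\limsup_{t\to\infty}D(t)\le\varepsilon(1+4C(E^{\rm in}))$; letting $\varepsilon\downarrow 0$ yields $D(t)\to 0$.

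In the free-end setting the decay $E(t)\le e^{-c_0t}E^{\rm in}$ is precisely~\eqref{eq:exponentialdecayfreeend}. For the weighted decay of $D$, I would first run the same barrier argument as above — the additional $e^{-c_0t}$-weighted terms in the free-end differential inequality only help — together with finiteness of $D(t)$ for $t>0$ from parabolic smoothing, to conclude that $D$ is globally bounded, say $\sup_tD(t)=M^*$. Fix $\theta\in(0,1)$. Applying the time-integrated estimate~\eqref{eq:E_D_free} with its parameter equal to $1-\theta$ gives $\int_0^\infty e^{\theta c_0t}D(t)\,dt\le \frac{E^{\rm in}}{1-\theta}$; set $w(t):=e^{\theta c_0t}D(t)$, so $w\in L^1(0,\infty)$. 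Discarding the non-positive $-\tfrac12\int_0^1|\p_t\X_{ss}|^2\,ds$ term in Proposition~\ref{pro:differentialineq} and using $D\le M^*$ and $E(t)\le E^{\rm in}$ to absorb the powers $D^{3/2},D^{9/5},D^2$ into a constant multiple of $D$, the free-end differential inequality becomes $\frac{d}{dt}D\le C_*D$ with $C_*=C_*(E^{\rm in},M^*)$ constant; therefore $w'=\theta c_0w+e^{\theta c_0t}D'\le(\theta c_0+C_*)w$. Since the right-hand side is $(\theta c_0+C_*)$ times the $L^1$-function $w$, integrating from $0$ gives $w(t)\le w(0)+(\theta c_0+C_*)\|w\|_{L^1}\le D^{\rm in}+(\theta c_0+C_*)\tfrac{E^{\rm in}}{1-\theta}$, i.e. $D(t)\le e^{-\theta c_0t}\big(D^{\rm in}+C(E^{\rm in},\theta)\big)$ after absorbing (with a little care on the dependence of $M^*$ on the data) the remaining constants into the stated prefactor $e^{C(E^{\rm in},\theta)}$; the case $\theta=0$ is just the boundedness of $D$.

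The main obstacle is the Riccati-type (quadratic) nature of the differential inequality for $D$: a naive Gr\"onwall estimate would only yield exponential growth, so the argument must crucially feed back the a priori \emph{time-integrated} control on $D$ — from $\int_0^\infty D\,dt\le E^{\rm in}$ in the closed loop, and from the exponentially weighted estimate~\eqref{eq:E_D_free} in the free-end case — in two ways: to control the nonlinear coefficient (via the barrier step giving $D\le 1$, resp. $D\le M^*$), and then to convert integrated smallness of $w$ into a pointwise bound on $w$. A secondary, purely technical point is that these differential inequalities hold only for $t>0$; this is immaterial since parabolic smoothing makes $E$ and $D$ of class $C^1$ on $(0,\infty)$ and continuous up to $t=0$ for sufficiently regular data.
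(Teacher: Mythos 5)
Your proposal is correct, and the closed-loop half is essentially the paper's argument in a slightly different dress: the paper reads the quadratic term $C(E)D^2$ as a \emph{linear} term $A(t)D$ with $L^1$-in-time coefficient $A=C(E)D$, applies Gr\"onwall to get $D(t)\le e^{C(E^{\rm in})}D(t_0)+e^{C(E^{\rm in})}C(E^{\rm in})\int_{t_0}^tD$, and sends $t_0\to+\infty$ along a sequence where $D(t_{0,k})\to 0$; your barrier step ($D\le 1$ persists, then $\limsup D\le\varepsilon(1+4C)$) achieves the same thing by hand and is fine. In the free-end half your route genuinely differs in its second stage. The paper keeps the damping term $-c_0 D$, absorbs the nonlinear coefficients into it on $[T,\infty)$ using the smallness of $\int_T^\infty(e^{-c_0t}C(E^{\rm in})+C)D\,dt$, and then estimates the Duhamel contribution of the forcing against the weighted $L^1$ bound on $D e^{\theta' c_0 t}$. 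You instead first prove a uniform bound $D\le M^*$, linearize to $\frac{d}{dt}D\le C_*D$, and convert the weighted integrability $w=e^{\theta c_0t}D\in L^1$ into a pointwise bound via $w(t)\le w(0)+(\theta c_0+C_*)\|w\|_{L^1}$. This is cleaner in that it never needs the pointwise damping $-\theta c_0 D$, at the cost of routing the constant through $M^*$; as you note, since $C_*$ is affine in $M^*$ and $M^*\le e^{C(E^{\rm in})}(D^{\rm in}+C(E^{\rm in}))$, the final constant still collapses to the stated form $e^{C(E^{\rm in},\theta)}[D^{\rm in}+C(E^{\rm in},\theta)]$. Both arguments rely on the same essential input — feeding the time-integrated control of $D$ back into the Riccati-type inequality — so I consider the two proofs interchangeable.
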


\begin{proof}
\emph{Closed loop}.
The inequalities at our disposal are
\begin{equation}
\frac{dE}{dt} = - D \leq 0
\end{equation}
\begin{equation}
\frac{dD}{dt} \leq AD + B
\end{equation}
where $A(t) = B(t) = C(E) D(t)$ and $\int_0^{+\infty} A(t) \, dt \leq C(E^{\rm in})$ (the same inequality holds for $B$). Gr{\"o}nwall's inequality yields
\begin{equation}
D(t) \leq e^{\int_{t_0}^t A(t') \, dt'} D(t_0) + \int_{t_0}^t e^{\int_{t'}^t A(\tau) \, d\tau} B(t') \, dt' \, , \quad \forall t \geq t_0 \geq 0 \, .
\end{equation}
In particular,
\begin{equation}
	\label{eq:exponentialupperbdsonD}
D(t) \leq e^{C(E^{\rm in})} D(t_0) + e^{C(E^{\rm in})} C(E^{\rm in}) \int_{t_0}^t D(t') \, dt' \, .
\end{equation}
Since $D(t) \in L^1_t(\R_+)$, we can choose a sequence of initial times $t_{0,k} \to +\infty$ such that $D(t_{0,k}) \to 0$, thereby demonstrating~\eqref{eq:Dconvergestozero}.

Notably, if we choose $t_0 = 0$ in~\eqref{eq:exponentialupperbdsonD}, then we obtain upper bounds on $D$ which are exponential in $E^{\rm in}$ (more specifically, of the type $e^{C \times (E^{\rm in})^p}$ for some $p \geq 1$).

\emph{Free end}. In this case, we have the refined inequalities
\begin{equation}
\frac{dE}{dt} = - D \leq - c_0 E
\end{equation}
\begin{equation}
\frac{dD}{dt} \leq -c_0D + e^{-c_0 t} C(E^{\rm in}) (D^2 + D) + C (D^2 + D) + c_0 D \, ,
\end{equation}
where have added and subtracted $c_0 D$. We already have integrated exponential decay~\eqref{eq:E_D_free} for $D$, which we will upgrade to pointwise exponential decay. For short times, $D$ is controlled by the exponential-in-$E^{\rm in}$ bound~\eqref{eq:exponentialupperbdsonD} with $t_0=0$. However, by the integral estimates~\eqref{eq:E_D_free} on $D$, we have the following: For any $\theta > 0$, there exists $T = T(E^{\rm in},\theta) >0$ such that 
\begin{equation}
\int_{T}^{+\infty} (e^{-c_0 t} C(E^{\rm in}) + C) D(t) \, dt \leq (1-\theta) c_0 \, .
\end{equation}
Then the differential inequality becomes
\begin{equation}
\frac{dD}{dt} \leq - \theta c_0 D + B  \qquad \forall t \geq T \, ,
\end{equation}
where $B = e^{-c_0t} C(E^{\rm in}) D + (C+c_0) D$. Then Gr{\"o}nwall's inequality will yield exponential convergence with rate $e^{-\theta c_0 t}$, since the contribution of the $B$ term in Duhamel's formula is estimated by
\begin{equation}
\int_{T}^t e^{-\theta c_0(t-t')} C(E^{\rm in}) e^{-\theta' c_0 t'} \underbrace{D(t') e^{\theta' c_0 t'}}_{\in L^1_{t'}(\R_+)} \, dt'  \qquad \forall t \geq T \, ,
\end{equation}
where we choose $\theta' \in (\theta,1)$. \end{proof}


Finally, we conclude the convergence to equilibrium in Theorem~\ref{thm:GWP_classical}.

\begin{proposition}
If $\X$ is a finite-energy solution to~\eqref{eq:classical}, then $\X$ converges to an Euler elastica $\bar{\X}$ in $H^4_s$ as $t \to +\infty$.
\end{proposition}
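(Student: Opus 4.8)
The plan is to combine the energy/dissipation decay already established with a Lojasiewicz–Simon argument to upgrade the qualitative statement "$D(t) \to 0$" into genuine convergence of $\X$ to a single equilibrium. First I would note that in the free end setting, the exponential decay $E(t) \le e^{-c_0 t} E^{\rm in}$ together with the pointwise exponential decay of $D(t)$ already pins down the limit: since $E(t) \to 0$, we have $\X_{ss} \to 0$ in $L^2_s$, and with the uniform-in-time propagation bounds on higher norms (obtained by integrating the differential inequality of Proposition~\ref{pro:differentialineq} and bootstrapping through the parabolic estimates of Lemma~\ref{lem:parabolicestimates} and the tension bounds of Lemma~\ref{lem:tension}), one upgrades this to $\X_{ss} \to 0$ in $H^2_s$, i.e. $\X_s \to \text{const}$ in $H^3_s$ and $\X$ converges to a straight segment, which is the unique free-end elastica. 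So the substantive case is the closed loop.

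For the closed loop, the strategy is: (i) from $\int_0^\infty D(t)\,dt < \infty$ and the differential inequality $\tfrac12 \tfrac{d}{dt} D \le -\tfrac12 \|\p_t \X_{ss}\|_{L^2_s}^2 + C(E) D^2 + C(E) D$, extract a sequence $t_k \to \infty$ with $D(t_k) \to 0$ and $\int_{t_k}^{t_k+1} \|\p_t \X_{ss}\|_{L^2_s}^2 \,dt \to 0$; (ii) using the a priori bound $\sup_t E(t) \le E^{\rm in}$ and the propagation estimate on $D$, together with parabolic smoothing, show the trajectory $\{\X(\cdot,t)\}_{t \ge 1}$ is precompact in $H^4_s$ (modulo the ambiguity of $\X$ up to a translation, which we can fix by normalizing $\int_\T \X\,ds$ — or more precisely, one should work with the curvature/Frenet data or with $\X_s$, which is genuinely compact); (iii) conclude that along $t_k$, $\X(\cdot,t_k)$ subconverges to some equilibrium $\bar{\X}$, an Euler elastica, since $D(t_k) \to 0$ forces $(\X_{sss} - \lambda \X_s)_s \to 0$ in $L^2_s$ in the limit. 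The final step (iv) is to apply the Lojasiewicz–Simon inequality for the bending energy, proved in Section~\ref{sec:rod_loja}: near $\bar{\X}$ one has $|E(\X) - E(\bar{\X})|^{1-\theta} \le C \|\nabla E(\X)\|$ for some $\theta \in (0,1/2]$, where $\|\nabla E(\X)\|$ is comparable to $D(t)^{1/2}$. Combined with $\tfrac{dE}{dt} = -D(t) \le -c\,\|\nabla E\|^2$, the standard Lojasiewicz ODE argument gives that once the trajectory enters a small neighborhood of $\bar{\X}$ (which it does along $t_k$), it stays there, the arclength $\int_{t_k}^\infty \|\p_t \X\|_{L^2_s}\,dt$ is finite, and hence $\X(\cdot,t) \to \bar{\X}$ in $L^2_s$ as $t \to \infty$ (not merely along a subsequence). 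A final bootstrap through the propagation and parabolic estimates promotes this to convergence in $H^4_s$, and identifies the limit $\bar E$ of $E(t)$ as $E(\bar{\X})$.

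The main obstacle is step (iv): verifying that the Lojasiewicz–Simon inequality applies in the present setting, i.e. that the bending energy $E$ restricted to the constraint manifold $\{|\X_s|^2 = 1\}$, with gradient taken in the anisotropic metric~\eqref{eq:metric1}, is real-analytic with Fredholm second variation and that the dissipation quantity $D(t)$ genuinely controls $\|\nabla_\gamma E\|^2$ up to constants depending only on $E^{\rm in}$. This is exactly what Section~\ref{sec:rod_loja} is designed to supply, so here I would simply invoke that inequality with the correct functional-analytic framework (working at the level of $\X_s \in H^1_s$ or the curvature, where the constraint manifold is smooth and the metric is a bounded perturbation of $L^2$). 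A secondary technical point, already signalled in~\cite{koiso1996motion}, is that in the closed-loop case the metric~\eqref{eq:metric1}, the tension equation, and the energy are only invariant under translations (and reparametrization), so the Lojasiewicz inequality and the compactness argument must be run modulo this finite-dimensional symmetry group; this is handled in the standard way by quotienting out the group or by fixing a gauge (e.g. $\int_\T \X\,ds = 0$), and does not affect the conclusion.
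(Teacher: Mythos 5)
Your proposal follows essentially the same route as the paper: exponential decay of $E$ and $D$ plus stabilization of the position for free ends, and for the closed loop a compactness argument modulo translation combined with the Lojasiewicz inequality of Section~\ref{sec:rod_loja} and the finite-arclength bound $\int \sqrt{D(t)}\,dt < +\infty$ to rule out winding between equilibria. The only small omission is in the free-end case, where convergence of $\X_s$ to a constant does not by itself give convergence of $\X$ (the position could still drift); one must integrate $\p_t \X$ in time against the integrated exponential decay of $D^{1/2}$ — exactly the step you already invoke in the closed-loop case — to pin down the limiting segment.
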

\begin{proof}
\emph{Free end}. We have already demonstrated that $\X_{ss}$ is converging exponentially to zero in $H^2_s$. We should moreover demonstrate that $\X$ is stabilizing as $t \to +\infty$. We integrate $\p_t \X$ from $t=t_0$ to $t_1$ and estimate in $L^2_s$:
\begin{equation}
    \label{eq:followmeforstabilizing}
\| \X(\cdot,t_0) - \X(\cdot,t_1) \|_{L^2_s} \lesssim \int_{t_0}^{t_1} D^{1/2}(t) \, dt \, , \quad \forall t_1 \geq t_0 \geq 0 \, .
\end{equation}
Since $D(t)$ has integrated exponential decay~\eqref{eq:E_D_free}, we 
obtain that $\X$ is stabilizing exponentially in $L^2_s$ as $t \to +\infty$. 
Hence, $\X$ is converging in $H^4_s$ as $t \to +\infty$.

\emph{Closed loop}. In the periodic setting, we require the Lojasiewicz inequality in Proposition~\ref{pro:curvelojascewicz}: For every equilibrium $\bar{\X}$, there exists $\delta > 0$, $\beta \in (0,1/2)$, and $A > 0$ such that if $\X \in H^4_s$ is an inextensible curve satisfying $\| \X - \bar{\X} \|_{H^4_s} \leq \delta$, then
\begin{equation}
    \label{eq:curvelojarestate}
(E[\X] - E[\bar{\X}])^{1-\beta} \leq A \sqrt{D[\X]} \, .
\end{equation}
The application of Lojasiewicz inequalities to geometric flows has a rich history starting with~\cite{SimonAnnals}. Our presentation is inspired by~\cite[Chapter 2]{kohout2021a}. 


By the energy and propagation estimates,
\begin{equation}
E(t) \leq E^{\rm in} \, , \quad \lim_{t \to +\infty} D(t) = 0 \, ,
\end{equation}
which control $\X_{ss}$ but not the center of mass $\int_\T \X(s,t) \,ds$. Define
\begin{equation}
\wt{\X}(s,t) = \X(s,t) - \int_\T \X(s,t) \, ds
\end{equation}
and observe that $E[\wt{\X}] = E[\X]$ and $D[\wt{\X}] = D[\X]$.


First, we demonstrate that \emph{any sequence $\wt{\X}(s,t_k)$ with $t_k \to +\infty$ contains a subsequence (not relabeled) converging in $H^4_s$}. By weak compactness in $H^4_s$ and the Rellich-Kondrachov theorem, there exists a subsequence satisfying
\begin{equation}
\wt{\X}(t_k) \to \bar{\X} \text{ in } H^{4-\varepsilon}_s \, , \; \forall \varepsilon \in (0,4] \, , \qquad \wt{\X}(t_k) \rightharpoonup \bar{\X} \text{ in } H^4_s
\end{equation}
\begin{equation}
\lambda[\wt{\X}] \to \lambda[\bar{\X}] \text{ in } H^1_s \, .
\end{equation}
Since $(\mathbf{I} +\gamma \X_s \otimes \X_s) \Z_s$ is weakly continuous in $L^2$ with respect to the above convergences, $\bar{\X}$ is a steady state:
\begin{equation}
D[\bar\X] = 0 \, .
\end{equation}
Once we demonstrate that
\begin{equation}
    \label{eq:IholdIhold}
\| \wt{\X}_{ssss}(\cdot,t_k) \|_{L^2_s} \to \| \bar{\X}_{ssss} \|_{L^2_s} \, ,
\end{equation}
then we automatically have $\wt{\X}_{ssss}(\cdot,t_k) \to \bar{\X}$ strongly in $L^2_s$ by the properties of weak convergence. For this, we observe that since $D[\wt{\X}] \to 0 = D[\bar\X]$ and $(\lambda[\wt{\X}_s] \wt{\X}_s + \kappa_3 \wt{\X}_s \times \wt{\X}_{ss})_s$ converges strongly in $L^2$, we must have~\eqref{eq:IholdIhold}; ergo, $\wt{\X}(\cdot,t_k) \to \bar\X$ strongly in $H^4_s$. 

Fix a sequence $t_k$ and steady state $\bar{\X}$ satisfying the above strong convergence. Let $\bar{E} := E[\bar\X]$. Let $\delta > 0$ be the size of an $H^4_s$-neighborhood of $\bar\X$ in which~\eqref{eq:curvelojarestate} holds. By continuity, for each $k$, there exists a closed interval $I_k$ containing $t_k$ such that $\wt{\X} \in B_\delta(\bar{\X})$ in the $H^4_s$ topology whenever $t \in I_k$. Choose the intervals to be maximal forward-in-time. For the sake of contradiction, suppose that each $I_k$ is finite, so we may assume they are of the form $[t_k,T_k]$ with $T_k < +\infty$. By maximality, $\| \wt{\X}(T_k) - \bar\X \|_{H^4_s} = \delta$.  Then
\begin{equation}
    \label{eq:mydiffeq}
\frac{d}{dt} (E(t) - \bar{E})^\beta = - \beta (E - \bar{E})^{\beta - 1} D(t) \overset{\eqref{eq:curvelojarestate}}{\leq} -\beta  C^{-1} \sqrt{D(t)} \, .
\end{equation}
We integrate the differential inequality~\eqref{eq:mydiffeq} from $t_k$ to $T_k$:
\begin{equation}
    \label{eq:mytimederivest}
\int_{t_k}^{T_k} \| \p_t \X \|_{L^2_s} \, dt = \int_{t_k}^{T_k} \sqrt{D(t)} \, dt \lesssim (E(t_k) - \bar{E})^\beta - (E(T_k) - \bar{E})^\beta = o_{k \to +\infty}(1) \, .
\end{equation}
By the triangle inequality,
\begin{equation}
\begin{aligned}
\| \wt{\X}(\cdot,T_k) - \bar{\X} \|_{L^2_s} &\leq \underbrace{\| \X(\cdot,T_k) - \X(\cdot,t_k) \|_{L^2_s}}_{= o_{k\to+\infty}(1) \text{ by \eqref{eq:mytimederivest}}} + \| \wt{\X}(\cdot,t_k) - \bar\X \|_{L^2_s} \\
&= o_{k \to +\infty}(1) \, ,
\end{aligned}
\end{equation}
which implies that $\wt{\X} \to \bar{\X}$ in $L^2_s$ as $k \to +\infty$. By the above analysis on sequences, there exists a subsequence such that $\wt{\X} \to \X$ in $H^4_s$; this contradicts the maximality of $T_k$.

We have demonstrated that, for every $0 < \delta \ll 1$, there exists $T(\delta) \geq 0$ such that $\| \wt{\X}(\cdot,t) - \bar\X \|_{H^4_s} \leq \delta$ whenever $t \geq T$, i.e., $\wt{\X} \to \bar\X$ in $H^4_s$.

To establish the convergence of $\X$ (compared to $\wt{\X}$), we revisit the inequality~\eqref{eq:mytimederivest}, which is now valid on $[T,+\infty)$ for $T \gg 1$ and demonstrate that $\int_T^{+\infty} \| \p_t \X \|_{L^2_s} \, dt < +\infty$. In particular, $\X$ is stabilizing in $L^2_s$ and, since $\wt{\X}$ is a translation of $\X$, $\X$ is also stabilizing in $H^4_s$. 
\end{proof}


\section{Forcing by intrinsic curvature in 2D: large periodic solutions}

In this section, we prove Theorem~\ref{thm:periodic}. We consider the planar curve evolution in the free end setting $I = [0,1]$ with preferred curvature $\zeta(s,t)$:
\begin{equation}\label{eq:2D_forced_repeat}
\p_t \X = -\big({\bf I}+\gamma\X_s\otimes\X_s\big)\big(\X_{sss}-\lambda(s,t)\X_s-(\zeta \e_{\rm n})_s \big)_s\,, \quad \abs{\X_s}^2=1\,,
\end{equation}
where $\e_{\rm n}(s,t) = \X_s^\perp$. With  forcing, the relevant boundary conditions become
\begin{equation}
	\label{eq:forcedboundaryconditions}
(\X_{ss} - \zeta \e_{\rm n})\big|_{s=0,1} = 0 \, , \quad ((\X_{ss} - \zeta \e_{\rm n})_s - \lambda \X_s)\big|_{s=0,1} = 0 \, ,
\end{equation}
equivalent to the curvature conditions
\begin{equation}
(\kappa - \zeta)\big|_{s=0,1} = 0 \, , \quad (\kappa - \zeta)_s\big|_{s=0,1} = 0 \, , \quad \lambda = 0 \, .
\end{equation}
The tension equation with forcing is given by
\begin{equation}\label{eq:lambda00repeat}
\begin{aligned}
    (1+\gamma)\lambda_{ss}  -\abs{\X_{ss}}^2\lambda = 
    &-(4+3\gamma)(\X_{sss}\cdot\X_{ss})_s +\abs{\X_{sss}}^2\\
    &- (1+\gamma) (\zeta \e_{\rm n})_{sss} \cdot \X_s - \gamma (\zeta \e_{\rm n})_{ss} \cdot \X_{ss} \, .
\end{aligned}
\end{equation}

Due to planarity, the forced system \eqref{eq:2D_forced_repeat},~\eqref{eq:forcedboundaryconditions}, and~\eqref{eq:lambda00repeat} may be recast as an evolution for the curvature difference $\wt\kappa=\kappa-\zeta$ by the system
\begin{align}
\p_t\wt\kappa  &=  
-\wt\kappa_{ssss} -\p_t\zeta
+ \big[(2+\gamma)(\wt\kappa+\zeta)\lambda_s 
+ \lambda(\wt\kappa+\zeta)_s \nonumber \\
&\qquad 
+ (3+2\gamma)\wt\kappa_s(\wt\kappa+\zeta)^2+ (3+\gamma)\wt\kappa(\wt\kappa+\zeta)(\wt\kappa+\zeta)_s \big]_s  \label{eq:barkappa}\\
(1+\gamma)\lambda_{ss} - \lambda(\wt\kappa+\zeta)^2 &=  -(3+2\gamma)\wt\kappa_{ss}(\wt\kappa+\zeta) + \wt\kappa(\wt\kappa+\zeta)^3 \nonumber \\
&\qquad - 3(1+\gamma)\wt\kappa_s(\wt\kappa+\zeta)_s
- (1+\gamma)\wt\kappa(\wt\kappa+\zeta)_{ss} \label{eq:barlambda} \\
 \wt\kappa\big|_{s=0,1}&=0\,,\quad \wt\kappa_s\big|_{s=0,1}=0\,, \quad \lambda\big|_{s=0,1}=0\,. \label{eq:homoBCs}
\end{align}
Given a unique solution $(\wt\kappa,\lambda)$ to the curvature formulation~\eqref{eq:barkappa}-\eqref{eq:homoBCs}, the curve may be recovered by solving
\begin{equation}
\begin{aligned}
\p_t\X &= ({\bf I}+\gamma\e_{\rm t}\otimes\e_{\rm t})((\wt\kappa\e_{\rm n})_s-\lambda\e_{\rm t})_s\,, \\
\p_t\e_{\rm t} &= F(\wt\kappa,\zeta,\lambda)\e_{\rm n}\,, \qquad 
\p_t\e_{\rm n} = -F(\wt\kappa,\zeta,\lambda)\e_{\rm t}\,,
\end{aligned}
\end{equation}
where $\e_{\rm t}=\X_s$ and the expression for $F$ is given explicitly in terms of $\wt\kappa, \zeta,\lambda$: 
\begin{equation}
\begin{aligned}
F(\wt\kappa,\zeta,\lambda) := \p_t\X_s\cdot\e_{\rm n} &=  
-\wt\kappa_{sss} 
+ (3+\gamma)\wt\kappa(\wt\kappa+\zeta)(\wt\kappa+\zeta)_s 
+ (3+2\gamma)\wt\kappa_s(\wt\kappa+\zeta)^2 \\
&\qquad +\lambda(\wt\kappa+\zeta)_s+ (2+\gamma)(\wt\kappa+\zeta)\lambda_s\,.
\end{aligned}
\end{equation}

The formulation \eqref{eq:barkappa}-\eqref{eq:homoBCs} was considered in~\cite{mori2023well},\footnote{up to a redefinition of the filament tension} where the authors investigate time-periodic solutions driven by small
\begin{equation}
	\label{eq:timeperiodiczeta}
\zeta \in C(\T_T;H^1_s) \, , \quad \p_t \zeta \in C(\T_T;L^2_s)\,.
\end{equation}
Here $\T_T := \R/T\mathbb{Z}$ denotes the $T$-periodic torus. Therein, a solution theory was developed in the time-weighted space $\wt\kappa \in C([0,T];L^2_s) \cap \big\{ \| t^{1/4} \| \wt\kappa(\cdot,t) \|_{H^1_s} \|_{L^\infty(0,T)} < +\infty \big\}$.


In this section, we construct time-periodic solutions \emph{without} size restrictions on $\zeta$. It will be convenient to work with solutions in the energy space $\wt{\kappa} \in C([0,T];L^2_s) \cap L^2_t H^2_s(I \times (0,T))$. The analysis is as in~\cite{mori2023well} and Section~\ref{sec:immersednoforcing}, except
\begin{itemize}
\item the time-weighted function space in~\cite{mori2023well} is replaced by $L^4_t H^1_s(I \times (0,T))$ in the fixed point argument,
\item we consider short-time theory for arbitrarily large $\zeta$, not only long-time theory for small $\zeta$, and
\item we record the continuous dependence on the initial data.
\end{itemize}

With these adjustments, the relevant local well-posedness statement is:
\begin{proposition}
	\label{prop:timeperiodicLWP} 
Let $\zeta$ satisfy~\eqref{eq:timeperiodiczeta}. Let $\wt{\kappa}^{\rm in} \in L^2(0,1)$. Then there exists $T^* \in (0,+\infty]$ and a unique maximal solution
\begin{equation}
\wt{\kappa} \in C([0,S];L^2_s) \cap L^2_t H^2_s((0,1) \times (0,S)) \, , \quad \forall S < T^* \, ,
\end{equation}
to the curvature formulation~\eqref{eq:barkappa}-\eqref{eq:homoBCs} with initial data $\wt{\kappa}^{\rm in}$. 

If $T^* < +\infty$, then
\begin{equation}
	\label{eq:continuationcriterionperiodic}
\| \wt{\kappa} \|_{L^\infty_t L^2_s \cap L^2_t H^2_s((0,1) \times (0,S))} \to +\infty \text{ as } S \to T^*_- \, .
\end{equation}

The solution $\wt{\kappa}$ depends continuously on the initial data $\wt{\kappa}^{\rm in}$ in the following sense:

Let $\wt{\kappa}_1$ be a solution on $[0,T^*_1)$. Let $S \in (0,T^*_1)$. Then, for all $\wt{\kappa}_2^{\rm in}$ sufficiently close to $\wt{\kappa}_1^{\rm in}$, the corresponding solution lives on $[0,S]$, and
\begin{equation}
\wt{\kappa}_2 \to \wt{\kappa}_1 \text{ in } C([0,S];L^2_s) \cap L^4_t H^1_s((0,1) \times (0,S)) \text{ as } \wt{\kappa}_2^{\rm in} \to \wt{\kappa}_1^{\rm in} \text{ in } L^2(0,1) \, .
\end{equation}
\end{proposition}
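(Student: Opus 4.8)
The plan is to prove Proposition~\ref{prop:timeperiodicLWP} by a contraction argument for Duhamel's formula associated with the biharmonic heat semigroup, following the proof of the curve local well-posedness in Section~\ref{sec:immersednoforcing} (and~\cite{mori2023well}) with every function space lowered by two spatial derivatives, since here the unknown $\wt\kappa$ plays the role of $\X_{ss}$. Concretely, I rewrite~\eqref{eq:barkappa} as
\begin{equation}
\p_t \wt\kappa + \p_s^4 \wt\kappa = g_0 + \p_s g_1 \, , \qquad g_0 := -\p_t\zeta \, ,
\end{equation}
where $g_1$ collects the four bracketed summands on the right-hand side of~\eqref{eq:barkappa}, together with the clamped boundary conditions $\wt\kappa,\wt\kappa_s|_{s=0,1}=0$ from~\eqref{eq:homoBCs}. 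As in Section~\ref{sec:immersednoforcing}, inverting the elliptic tension equation~\eqref{eq:barlambda} makes the energy space $\wt\kappa\in L^2_s$ behave as scaling-critical, so the fixed point is run in $\mathcal{Y}_0\cap\mathcal{Y}_1$ with $\mathcal{Y}_0 = C([0,T];L^2_s)$ and $\mathcal{Y}_1 = L^4(0,T;H^1_s)\cap\{{\rm BCs}\}$; the solutions produced by Lemma~\ref{lem:parabolicestimates} belong to $C([0,T];L^2_s)\cap L^2_t H^2_s$, hence to $\mathcal{Y}_0\cap\mathcal{Y}_1$ after interpolation in space and time, and the preferred curvature $\zeta$ enters merely as a fixed coefficient controlled through~\eqref{eq:timeperiodiczeta}. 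No separate ``unit length'' propagation step is needed, since inextensibility is built into the later recovery of the curve from $(\wt\kappa,\lambda)$.

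The first ingredient is the tension bound. Because $\lambda$ carries the homogeneous Dirichlet condition $\lambda|_{s=0,1}=0$, the Poincar\'e inequality holds automatically --- no Fenchel's theorem is required, in contrast to Lemma~\ref{lem:tension} --- so the weak form of~\eqref{eq:barlambda} is solved by Lax--Milgram, giving a unique $\lambda\in H^1_0(0,1)$ with $\norm{\lambda}_{H^1_s}\les P(\norm{\wt\kappa}_{H^1_s},\norm{\zeta}_{H^1_s})$ for an explicit polynomial $P$ with no constant term. The only point requiring care is that $\zeta$ is merely $H^1_s$, so the top-order right-hand side terms $\wt\kappa_{ss}(\wt\kappa+\zeta)$ and $\wt\kappa(\wt\kappa+\zeta)_{ss}$ must be interpreted in $(H^1_0)^*$ by integrating the second derivatives onto the test function; the cubic term $\wt\kappa(\wt\kappa+\zeta)^3$ is handled by $H^1_s\hookrightarrow L^\infty_s$ in one dimension. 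Subtracting the equations for two profiles and repeating the argument (the ``$q^{(1)}-q^{(2)}$'' device of Section~\ref{sec:immersednoforcing}) gives the Lipschitz bound $\norm{\lambda^{(1)}-\lambda^{(2)}}_{H^1_s}\les P_1\norm{\wt\kappa^{(1)}-\wt\kappa^{(2)}}_{H^1_s}$ with $P_1$ depending on the two $H^1_s$-norms and on $\norm{\zeta}_{H^1_s}$; since $\lambda$ is quadratic to leading order in $\wt\kappa$, it follows that $\lambda\in L^2_t H^1_s$ whenever $\wt\kappa\in\mathcal{Y}_1$.

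Granting the tension bounds, the fixed point argument is the same bookkeeping as in Section~\ref{sec:immersednoforcing}. Each summand of $g_1$ is estimated in $L^{4/3}_t L^2_s$ --- admissible in Lemma~\ref{lem:parabolicestimates} --- by a product of factors controlled in $\mathcal{Y}_0$, $\mathcal{Y}_1$, and $\norm{\zeta}_{C_t H^1_s}$, using the tension bound, $H^1_s\hookrightarrow L^\infty_s$, and the interpolation $\norm{f}_{L^\infty_s}\les\norm{f}_{L^2_s}^{1/2}\norm{f}_{H^1_s}^{1/2}$; the time-H\"older exponents close with room to spare, so each such estimate carries either a positive power of $T$ or of the $\mathcal{Y}_1$-radius, which supplies the smallness exactly as before. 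The one genuinely new contribution is $g_0=-\p_t\zeta$, which enters at the $L^1_t L^2_s$ level with $\norm{g_0}_{L^1_t L^2_s}\le T\norm{\p_t\zeta}_{C_t L^2_s}=o_{T\to0^+}(1)$. Thus the Duhamel map stabilizes a ball of $\mathcal{Y}_0\cap\mathcal{Y}_1$ for $T$ small (chosen continuously in $\norm{\wt\kappa^{\rm in}}_{L^2_s}$ and in $\zeta$), and applying the same estimates to differences shows it is a contraction there after possibly shrinking $T$; this yields existence, uniqueness within $C([0,S];L^2_s)\cap L^2_t H^2_s$ (via a connectedness argument extending the local uniqueness), and the maximal solution on $[0,T^*)$. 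The continuation criterion~\eqref{eq:continuationcriterionperiodic} follows because the existence time just produced depends only on $\norm{\wt\kappa(\cdot,t_0)}_{L^2_s}$ and on $\zeta$: were $\norm{\wt\kappa}_{L^\infty_t L^2_s\cap L^2_t H^2_s((0,1)\times(0,S))}$ to remain bounded as $S\to T^*_-$, one could restart from a time just below $T^*$ and continue past it.

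For the continuous dependence, fix $\wt\kappa_1$ on $[0,T^*_1)$ and $S<T^*_1$. Since $\wt\kappa_1\in C([0,S];L^2_s)$, its $L^\infty_t L^2_s$-norm on $[0,S]$ is finite, so there is a uniform $\tau>0$ such that the local construction started from $\wt\kappa_1(\cdot,t_0)$ for any $t_0\in[0,S]$ produces a solution on $[t_0,t_0+\tau]$, and the same $\tau$ works for data sufficiently close in $L^2$. Chaining the Lipschitz difference estimate over the $\lceil S/\tau\rceil$ subintervals of length $\tau$ (the near-equality of norms keeping $\wt\kappa_2$ inside the relevant balls at each step) shows that for $\wt\kappa_2^{\rm in}$ close to $\wt\kappa_1^{\rm in}$ in $L^2$ the solution $\wt\kappa_2$ exists on all of $[0,S]$ and $\wt\kappa_2\to\wt\kappa_1$ in $C([0,S];L^2_s)\cap L^4_t H^1_s$, with modulus a product of finitely many Lipschitz constants. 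I expect the main work to be a matter of care rather than new ideas: getting the tension estimate and its Lipschitz version right with $\zeta$ only in $H^1_s$ (so $(\wt\kappa+\zeta)_{ss}$ must be moved onto test functions), and checking that every product in $g_1$ genuinely has the spare time-integrability needed for the contraction to close with a small factor.
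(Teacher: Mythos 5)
Your proposal matches the paper's approach: the paper only sketches this proof, deferring to Section~\ref{sec:immersednoforcing} and~\cite{mori2023well} with the three stated adjustments (replace the time-weighted space by $L^4_t H^1_s$, allow large $\zeta$, record continuous dependence), and you fill those in correctly — the biharmonic Duhamel contraction in $\mathcal{Y}_0\cap\mathcal{Y}_1$, the Lax--Milgram tension estimate (observing correctly that the Dirichlet condition on $\lambda$ makes Fenchel's theorem unnecessary), the treatment of $-\p_t\zeta$ as a small-in-$T$ inhomogeneous term, the blow-up criterion from the $L^2$-only dependence of the local time, and interval-chaining for continuous dependence. No gaps.
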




The proof of Theorem~\ref{thm:periodic} will also demonstrate that $T^* = +\infty$ in Proposition~\ref{prop:timeperiodicLWP}.

First, we consider the tension with preferred curvature, analogous to Lemma~\ref{lem:tension}. 
\begin{lemma}[Tension estimate]
Let $\X_{ss} \in H^1(0,1)$ and $\zeta \e_{\rm n}- \X_{ss} \in H^1_0(0,1)$ with $|\X_s|^2 = 1$. Then there exists a unique solution $\lambda \in H^1_0(0,1)$ to~\eqref{eq:lambda00repeat}. The solution satisfies
\begin{equation}
	\label{eq:tensionestimateforced}
\| \lambda \|_{H^1_s} \lesssim \| \X_{sss} \|_{L^2_s}^2 + (1 + \| \X_{sss} \|_{L^2_s}^2) \| \zeta \|_{H^1_s} \, .
\end{equation}
\end{lemma}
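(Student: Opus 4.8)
The plan is to follow the free-end argument of Lemma~\ref{lem:tension}, treating the forcing terms as a perturbation; the only genuinely new difficulty is that these terms carry derivatives of $\zeta$ (which lies only in $H^1_s$) and of $\X_{ss}$ beyond what can be controlled pointwise, so they must first be integrated by parts. \emph{Existence and uniqueness.} On $(0,1)$, the operator $\lambda\mapsto-(1+\gamma)\lambda_{ss}+|\X_{ss}|^2\lambda$ with homogeneous Dirichlet data is associated to the bilinear form $a(\lambda,\phi):=(1+\gamma)\int_0^1\lambda_s\phi_s\,ds+\int_0^1|\X_{ss}|^2\lambda\phi\,ds$ on $H^1_0(0,1)$. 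Since $\X_{ss}\in H^1_s\hookrightarrow L^\infty_s$, the form is bounded; since $|\X_{ss}|^2\ge0$ and the Dirichlet Poincar\'e inequality holds on $(0,1)$, it is coercive, $a(\lambda,\lambda)\ge(1+\gamma)\|\lambda_s\|_{L^2_s}^2\gtrsim\|\lambda\|_{H^1_s}^2$. After integrating by parts, and using $\X_{ss}\in H^1_s$, $\e_{\rm n}=\X_s^\perp\in H^2_s\hookrightarrow C^1_s$, and $\zeta\in H^1_s$, the right-hand side of~\eqref{eq:lambda00repeat} defines a bounded linear functional on $H^1_0(0,1)$; the Lax--Milgram theorem then produces a unique $\lambda\in H^1_0(0,1)$.

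\emph{A priori estimate.} Testing~\eqref{eq:lambda00repeat} against $\lambda$ and invoking coercivity gives $\|\lambda\|_{H^1_s}\lesssim\|Q\|_{(H^1_0)^*}$, where $Q$ denotes the right-hand side of~\eqref{eq:lambda00repeat}, so it suffices to bound each term of $Q$ in the dual space $(H^1_0(0,1))^*$. Two elementary observations reduce all the relevant quantities to $\|\X_{sss}\|_{L^2_s}$ and $\|\zeta\|_{H^1_s}$: writing $\X_{sss}=\kappa_s\e_{\rm n}-\kappa^2\e_{\rm t}$ shows $\|\kappa_s\|_{L^2_s}\le\|\X_{sss}\|_{L^2_s}$; and, by hypothesis, $\wt\kappa:=\kappa-\zeta=(\X_{ss}-\zeta\e_{\rm n})\cdot\e_{\rm n}\in H^1_0(0,1)$, so $\|\wt\kappa\|_{L^\infty_s}\lesssim\|\wt\kappa_s\|_{L^2_s}\le\|\kappa_s\|_{L^2_s}+\|\zeta_s\|_{L^2_s}$, whence $\|\X_{ss}\|_{L^\infty_s}$ and $\|\X_{ss}\|_{L^2_s}$ are both $\lesssim\|\X_{sss}\|_{L^2_s}+\|\zeta\|_{H^1_s}$. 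The unforced part of $Q$ is then handled exactly as in Lemma~\ref{lem:tension}: pairing $(\X_{sss}\cdot\X_{ss})_s$ against $\phi_s$ gives $\lesssim\|\X_{sss}\|_{L^2_s}\|\X_{ss}\|_{L^\infty_s}$, while $|\X_{sss}|^2$ gives $\lesssim\|\X_{sss}\|_{L^2_s}^2$, and these combine to at most the right-hand side of~\eqref{eq:tensionestimateforced} after the elementary bound $ab\le(1+a^2)b$ with $a=\|\X_{sss}\|_{L^2_s}$, $b=\|\zeta\|_{H^1_s}$.

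\emph{The forcing.} It remains to estimate $-(1+\gamma)(\zeta\e_{\rm n})_{sss}\cdot\X_s-\gamma(\zeta\e_{\rm n})_{ss}\cdot\X_{ss}$ in $(H^1_0)^*$. Using $\e_{\rm n}\cdot\X_s=0$ together with the Frenet identities $\e_{\rm t,s}=\kappa\e_{\rm n}$, $\e_{\rm n,s}=-\kappa\e_{\rm t}$ (equivalently, reading the corresponding terms off from the curvature formulation~\eqref{eq:barlambda}, in which $\zeta$ occurs only undifferentiated or inside $(\wt\kappa+\zeta)_s$, $(\wt\kappa+\zeta)_{ss}$), one rewrites this expression as a combination of $\zeta_s\kappa_s$, $\kappa\zeta_{ss}$, $\zeta\kappa_{ss}$, and $\kappa^3\wt\kappa$ — the last of which should be grouped with the $\kappa^4$ already contained in $|\X_{sss}|^2$. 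The terms $\kappa\zeta_{ss}$ and $\zeta\kappa_{ss}$ are paired against $\phi\in H^1_0(0,1)$ and integrated by parts once more, which slides the offending second derivative onto $\phi$ together with $\kappa_s$, respectively $\zeta_s$; the boundary contributions vanish since $\phi\in H^1_0$, and the remaining integrals are controlled using the bounds above. Assembling all the pieces and once again invoking $ab\le(1+a^2)b$ to organize the cross terms gives~\eqref{eq:tensionestimateforced}.

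The main obstacle is precisely this reorganization of the forcing: $\zeta_{ss}$ only lives in $H^{-1}_s$ and $\X_{ss}$ is not subject to homogeneous Dirichlet conditions, so the forcing cannot be bounded term-by-term in an obvious way; the argument succeeds by extracting exactly one further integration by parts compatible with the class $\phi\in H^1_0(0,1)$, together with careful bookkeeping of the dependence on $\|\zeta\|_{H^1_s}$.
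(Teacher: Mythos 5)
Your proposal follows essentially the same route as the paper's proof: existence/uniqueness via the coercive bilinear form on $H^1_0(0,1)$, then the energy identity, with the forcing tamed by exploiting $\e_{\rm n} \cdot \X_s = 0$ to eliminate the highest derivative of $\zeta$ paired against $\lambda_s$, followed by one further integration by parts onto the test function. The paper does this in vector form (observing $\zeta_{ss}\e_{\rm n}\cdot(\lambda_s \X_s) = 0$ and rewriting $-\int(\zeta\e_{\rm n})_{ss}\cdot(\lambda\X_s)_s$), whereas you do it in scalar Frenet variables; these are the same cancellation.

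There is, however, one step in your bookkeeping that does not close as written. You bound $\norm{\X_{ss}}_{L^\infty_s}$ via $\wt\kappa = \kappa - \zeta \in H^1_0$, obtaining $\norm{\X_{ss}}_{L^\infty_s} \lesssim \norm{\X_{sss}}_{L^2_s} + \norm{\zeta}_{H^1_s}$. Feeding this into the terms where $\norm{\kappa}_{L^\infty_s}$ genuinely appears --- e.g.\ $\int \zeta_s \kappa \lambda_s\,ds$ (from integrating $\kappa\zeta_{ss}$ by parts) and $\int \zeta\kappa^3\lambda\,ds$ --- produces contributions of size $\norm{\zeta}_{H^1_s}^2$ and higher, which are \emph{not} dominated by the claimed right-hand side $\norm{\X_{sss}}_{L^2_s}^2 + (1+\norm{\X_{sss}}_{L^2_s}^2)\norm{\zeta}_{H^1_s}$ when $\norm{\zeta}_{H^1_s}$ is large; the inequality $ab \le (1+a^2)b$ with $a = \norm{\X_{sss}}_{L^2_s}$, $b = \norm{\zeta}_{H^1_s}$ cannot absorb them. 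The repair is to bound $\norm{\X_{ss}}_{L^\infty_s}$ without reference to $\zeta$: since $|\X_s|^2 = 1$,
\begin{equation}
\Big|\int_0^1 \X_{ss}\,ds\Big| = |\X_s(1)-\X_s(0)| \le 2 \, , \qquad \text{hence} \quad \norm{\X_{ss}}_{L^\infty_s} \le 2 + \norm{\X_{sss}}_{L^1_s} \lesssim 1 + \norm{\X_{sss}}_{L^2_s} \, ,
\end{equation}
and the same for $\norm{\X_{ss}}_{L^2_s}$. With this substitution every cross term is at most $(1+\norm{\X_{sss}}_{L^2_s}^2)\norm{\zeta}_{H^1_s}\norm{\lambda}_{H^1_s}$ or $\norm{\X_{sss}}_{L^2_s}^2\norm{\lambda}_{H^1_s}$, and your argument yields~\eqref{eq:tensionestimateforced}.
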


\begin{proof}
The relevant energy identity for the tension is
\begin{equation}
	\label{eq:needlastterm}
\begin{aligned}
(1+\gamma) &\int_0^1 |\lambda_s|^2\,ds + \int_0^1 \abs{\X_{ss}}^2\lambda^2 \,ds \\
&= -(4+3\gamma)\int_0^1 (\X_{sss}\cdot\X_{ss}) \lambda_s\,ds - \int_0^1 \abs{\X_{sss}}^2 \lambda\,ds \\
&\quad- (1+\gamma) \int_0^1 (\zeta \e_{\rm n})_{ss} \cdot (\lambda \X_s)_s\,ds + \gamma \int_0^1 (\zeta \e_{\rm n})_{ss} \cdot \X_{ss} \lambda\,ds \, .
\end{aligned}
\end{equation}
We analyze the new terms not present in Lemma~\ref{lem:tension}. Since
\begin{equation}
\zeta_{ss} \e_{\rm n} \cdot \big(\lambda_s \X_s\big) = 0 \, ,
\end{equation}
we rewrite\footnote{Technically, this should be justified by writing $\big((\zeta \e_{\rm n})_{ss} \cdot \X_s\big)_s = \big(2\zeta_{s} (\e_{\rm n})_s + \zeta (\e_{\rm n})_{ss}\big)_s$ at the level of the tension equation.}
\begin{equation}
- \int_0^1 (\zeta \e_{\rm n})_{ss} \cdot (\lambda \X_s)_s \,ds = - \int_0^1 (2 \zeta_s (\e_{\rm n})_s + \zeta (\e_{\rm n})_{ss}) \cdot (\lambda_s \X_s) \, ds + \int_0^1 (\zeta \e_{\rm n})_s (\lambda \X_{ss})_s \, ds \, ,
\end{equation}
which, along with the final term of~\eqref{eq:needlastterm}, can be estimated by $(1 + \| \X_{sss} \|_{L^2_s}^2) \| \zeta \|_{H^1_s} \| \lambda \|_{H^1_s}$. \end{proof}

\begin{proof}[Proof of Theorem~\ref{thm:periodic}]
Fix $T > 0$ and $\zeta$ satisfying \eqref{eq:timeperiodiczeta}. We consider the \emph{time-$T$ map} (Poincar{\'e} map)
\begin{equation}
\Phi^T : L^2(0,1) \to L^2(0,1) \, ,
\end{equation}
\begin{equation}
\Phi^T(\wt{\kappa}^{\rm in}) = \wt{\kappa}(\cdot,T) \, ,
\end{equation}
where $\wt{\kappa}$ is the solution, granted by Proposition~\ref{prop:timeperiodicLWP}, to the curvature formulation with initial data $\wt{\kappa}^{\rm in}$.

For $\Phi^T$ to be well defined, we should demonstrate that every such solution $\wt{\kappa}$ exists up to time $T$ (actually, $T^* = +\infty$). Then, moreover, $\Phi^T : L^2_s \to L^2_s$ will be continuous. Let $\wt{\kappa}$ be a solution. We have the energy identity
\begin{equation}
\begin{aligned}
&\frac{1}{2} \frac{d}{dt} \int_0^1 |\X_{ss} - \zeta \e_{\rm n}|^2\,ds \\
&\quad = - \int_0^1 \big[({\bf I} + \gamma \X_s \otimes \X_s) (\Z - (\zeta \e_{\rm n})_s)_s \big]\cdot \big(\Z - (\zeta \e_{\rm n})_s\big)_s \, ds - \int_0^1 \p_t \zeta (\kappa - \zeta) \, ds \, .
\end{aligned}
\end{equation}
Let
\begin{equation}
\begin{aligned}
E_\zeta &:= \frac{1}{2}\int_0^1 \underbrace{|\X_{ss} - \zeta \e_{\rm n}|^2}_{\wt{\kappa}^2} \,ds \\
 D_\zeta &:= \int_0^1 \big[({\bf I} + \gamma \X_s \otimes \X_s) (\Z - (\zeta \e_{\rm n})_s)_s \big]\cdot \big(\Z - (\zeta \e_{\rm n})_s\big)_s \, ds \, .
\end{aligned}
\end{equation}
Then we have the Poincar{\'e} inequality
\begin{equation}
E_\zeta \leq c_0 D_\zeta \, ,
\end{equation}
proved as in~\eqref{eq:Poincareinequality}, and the differential inequality
\begin{equation}
\dot E_\zeta \leq - D_\zeta - \int_0^1 \p_t \zeta (\kappa - \zeta) \, ds \leq - \frac{c_0}{2} E_\zeta + \frac{1}{2c_0} \| \p_t \zeta \|_{L^2_s}^2 \, .
\end{equation}
We remark that this contractivity is the reason to consider free ends instead of the torus.
By Gr{\"o}nwall's inequality, we have
\begin{equation}
	\label{eq:Gronwallperiodic}
E_\zeta(t) \leq e^{-\frac{c_0 t}{2}} E_\zeta^{\rm in} + \frac{1}{2c_0} \int_0^t e^{-\frac{c_0(t-t')}{2}} \| \p_t \zeta(\cdot,t') \|_{L^2_s}^2 \, dt' \, , \quad \forall t \in (0,T^*) \, ,
\end{equation}
and additionally
\begin{equation}
\| D_\zeta \|_{L^1(0,T')} \leq C(M,T')
\end{equation}
on any finite time interval on which the solution is defined. According to the continuation criterion~\eqref{eq:continuationcriterionperiodic}, $T^* = +\infty$.

Next, we demonstrate that $\Phi^T$ stabilizes a ball. Define
\begin{equation}
N := \sup_{t \in [0,T]} \| \p_t \zeta(\cdot,t) \|_{L^2_s}^2 \, , \quad N' = N + \sup_{t \in [0,T]} \| \zeta(\cdot,t) \|_{H^1_s}^2 \, .
\end{equation}
From~\eqref{eq:Gronwallperiodic}, we have
\begin{equation}
E_\zeta(T) \leq e^{-\frac{c_0T}{2}} E_\zeta^{\rm in} + \frac{N}{c_0^2} \, .
\end{equation}
Choose $M$ satisfying $e^{-\frac{c_0T}{2}} M + N/c_0^2 \leq M$,
for example,
\begin{equation}
M = \frac{N}{(1 - e^{-\frac{c_0T}{2}}) c_0^2} \, .
\end{equation}
Then
\begin{equation}
	\label{eq:PhiTballtoitself}
\Phi^T : \overline{B_M^{L^2}} \to \overline{B_M^{L^2}} \, .
\end{equation}

Finally, we demonstrate that $\Phi^T$ as in~\eqref{eq:PhiTballtoitself} is \emph{compact}. This is done via a \emph{smoothing estimate}: We will demonstrate that
\begin{equation}
\| \wt{\kappa}(\cdot,T) \|_{H^2_s} \leq M'(M,T) \, ,
\end{equation}
that is,
\begin{equation}
\Phi^T : \overline{B_M^{L^2}} \to \overline{B_{M'}^{H^2}} \, .
\end{equation}
The compactness of the embedding $H^2(0,1) \into L^2(0,1)$ guarantees the compactness of $\Phi^T$.

Recall from Section~\ref{subsec:energetics} that $\sqrt{D_\zeta}$ controls $\| \X_{ss} - \zeta \e_{\rm n} \|_{H^2_s}$, which in turn controls $\| \wt{\kappa} \|_{H^2_s}$. There are two (roughly equivalent) ways to prove the smoothing estimate, both of which go through $D_\zeta$. One way is to write
\begin{equation}
\frac{1}{2} \frac{d}{dt} (tD_\zeta(t)) = \frac{1}{2} D_\zeta(t) + t \frac{1}{2} \frac{d}{dt} D_\zeta(t)\,,
\end{equation}
substitute an expression for $\frac{d}{dt}D_\zeta$, as in Section~\ref{sec:propagationestimates}, and close a differential inequality for $tD_\zeta(t)$. A second way, which we take, is the following: Let $\wt{t} \in [0,T/2]$. By Chebyshev's inequality, there exists a time $t_0 \in [0,\wt{t}]$ such that
\begin{equation}
	\label{eq:Dzetaboundedtimet0}
D_\zeta(t_0) \leq \wt{t}^{-1} C(M,T) \, .
\end{equation}
Below, we propagate the quantity $D_\zeta(t)$ from time $t_0$, where it is bounded by~\eqref{eq:Dzetaboundedtimet0}, to time $T$. The computations are like those in Section~\ref{sec:propagationestimates}.\footnote{Technically, one should further justify the computations below, since \emph{a priori} the solution $\wt{\kappa}$ is not known to belong to $C([t_0,T];H^2(0,1)) \cap L^2_t H^4_s((0,1) \times (t_0,T))$. One way would be to prove short-time well-posedness from initial data in $H^2(0,1)$, as was done in Section~\ref{sec:immersednoforcing}.}

Following those computations, we have
\begin{equation}
\begin{aligned}
\frac{1}{2} \frac{d}{dt} D_\zeta(t) &= \underbrace{\int_0^1 \p_t \X_s \cdot \p_t (\Z - (\zeta \e_{\rm n})_s)\,ds}_{=I_1} \\
&\quad + \underbrace{\gamma \int_0^1 (\X_s \cdot (\Z - (\zeta \e_{\rm n})_s)_s) (\p_t \X_s \cdot (\Z - (\zeta \e_{\rm n})_s))\,ds}_{=I_2} \, .
\end{aligned}
\end{equation}

In the term $I_1$, we substitute $\Z = \X_{sss} - \lambda \X_s$ to obtain
\begin{equation}
	\label{eq:Ihaveanadvantageousterm}
\begin{aligned}
\int_0^1 &\p_t \X_s \cdot \p_t (\X_{ss}- (\zeta \e_{\rm n}))_s\,ds - \int_0^1 \lambda |\p_t \X_s|^2\,ds \\
&= - \int_0^1 |\p_t \X_{ss}|^2\,ds + \int_0^1 \p_t \X_{ss} \cdot \p_t (\zeta \e_{\rm n})\,ds - \int_0^1 \lambda |\p_t \X_s|^2\,ds \, .
\end{aligned}
\end{equation}
We will split various terms by Cauchy's inequality and absorb $\p_t \X_{ss}$ contributions into the advantageous term on the left-hand side of~\eqref{eq:Ihaveanadvantageousterm}. 

First, we split the term $\int_0^1 \p_t \X_{ss} \cdot \p_t (\zeta \e_{\rm n})_s \, ds$, leaving a right-hand side contribution of (a constant multiple of)
\begin{equation}
	\label{eq:Icontribution}
\| \p_t \zeta \|_{L^2_s}^2 + \| \zeta \|_{L^\infty_s}^2 \| \p_t \X_s \|_{L^2_s}^2 \, .
\end{equation}
Observe the interpolation inequality
\begin{equation}
	\label{eq:usefulinterpolationwithDzeta}
\| \p_t \X_s \|_{L^2_s} \lesssim \| \p_t \X \|_{L^2_s} + \| \p_t \X \|_{L^2_s}^{1/2} \| \p_t \X_{ss} \|_{L^2_s}^{1/2} \, ,
\end{equation}
keeping in mind that $\| \p_t \X \|_{L^2_s} \lesssim D_\zeta^{1/2}(t)$. Therefore,~\eqref{eq:Icontribution} contributes
\begin{equation}
	\label{eq:contribution1}
N' + N' D_\zeta + (N')^2 D_\zeta 
\end{equation}
to the right-hand side remainder. Next,
\begin{equation}
\left| \int_0^1 \lambda |\p_t \X_s|^2 \, ds \right| \leq \| \lambda \|_{L^\infty_s} \| \p_t \X_s \|_{L^2_s}^2 \, ,
\end{equation}
which, after applying~\eqref{eq:usefulinterpolationwithDzeta} and splitting, contributes
\begin{equation}
	\label{eq:contribution2}
\| \lambda \|_{H^1_s} D_\zeta + \| \lambda \|_{H^1_s}^2 D_\zeta 
\end{equation}
to the right-hand side. Recalling the tension estimate
\begin{equation}
\| \lambda \|_{H^1_s} \lesssim \| \X_{sss} \|_{L^2_s}^2 + ( 1 + \| \X_{sss} \|_{L^2_s} )^2 N' \, ,
\end{equation}
we have
\begin{equation}
\| \lambda \|_{L^2_t H^1_s((0,1) \times (0,T))} \lesssim C(M,N',T) \, .
\end{equation}

Next, we estimate term $I_2$ by (a constant multiple of)
\begin{equation}
	\label{eq:IIcontribution}
D_\zeta^{1/2} \| \p_t \X_s \|_{L^2_s} \| \Z - (\zeta \e_{\rm n})_s \|_{L^\infty_s} \lesssim D_\zeta^{1/2} (D_\zeta + D_\zeta^{1/2} \| \p_t \X_{ss} \|_{L^2_s}^{1/2}) \| \Z - (\zeta \e_{\rm n})_s \|_{L^\infty_s} \, .
\end{equation}
Recall $\Z - (\zeta \e_{\rm n})_s = \X_{sss} - \lambda \X_s - (\zeta \e_{\rm n})_s$, whose terms we estimate separately as
\begin{equation}
\begin{aligned}
\| \X_{sss} - (\zeta \e_{\rm n})_s \|_{L^\infty_s} &\lesssim E_\zeta^{1/8} D_\zeta^{3/8} \,,\\
\| \lambda \X_s \|_{L^2_t H^1_s((0,1) \times (0,T))} &\leq C(M,N',T) \, .
\end{aligned}
\end{equation}
In summary,~\eqref{eq:IIcontribution} contributes
\begin{equation}
\label{eq:contribution3}
D_\zeta^{3/2} (D_\zeta^{3/8} E_\zeta^{1/8} + \| \lambda \|_{H^1_s})
\end{equation}
to the right-hand side, and, after splitting, we have
\begin{equation}
	\label{eq:contribution4}
[D_\zeta (D_\zeta^{3/8} E_\zeta^{1/8} + \| \lambda \|_{H^1_s})]^{4/3} \lesssim D_\zeta^{11/6} E_\zeta^{1/6} + D_\zeta^{4/3} \| \lambda \|_{H^1_s}^{4/3} \, .
\end{equation}
Notice that $\big\| D_\zeta^{1/3}(t) \| \lambda(\cdot,t) \|_{H^1_s}^{4/3} \big\|_{L^1_t(0,T)} \leq C(M,N',T)$.

In conclusion, combining~\eqref{eq:contribution1},~\eqref{eq:contribution2},~\eqref{eq:contribution3}, and~\eqref{eq:contribution4}, we have
\begin{equation}
\frac{d}{dt} D_\zeta(t) \leq A(t) D_\zeta(t) + B(t) \, ,
\end{equation}
where $\|A(t) \|_{L^1(0,T)}, \|B(t) \|_{L^1(0,T)} \leq C(M,N',T)$. Evolving this differential inequality from time $t_0$, we obtain the desired bound on $D_\zeta(T)$.

Finally, the Schauder fixed point theorem (Proposition~\ref{pro:schauder}) yields the existence of a fixed point and a $T$-periodic solution. \end{proof}

\begin{proposition}[Schauder fixed point theorem]
	\label{pro:schauder}
Let $X$ be a Banach space and $K \subset X$ be a non-empty bounded, closed, and convex set. Suppose that $\mc{A} : K \to K$ is compact, in the sense that it is continuous and maps bounded sequences to relatively compact sequences. Then $\mc{A}$ has a fixed point.
\end{proposition}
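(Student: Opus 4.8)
The plan is to reduce this classical statement to Brouwer's fixed point theorem via a finite‑dimensional approximation (the Schauder projection). Since $K$ is bounded and $\mc{A}$ maps bounded sets to relatively compact ones, the set $\mc{A}(K)$ is totally bounded. Fix $\varepsilon > 0$ and choose a finite $\varepsilon$‑net $y_1,\dots,y_n \in \mc{A}(K)$, so that the balls $B(y_i,\varepsilon)$ cover $\mc{A}(K)$. Put $\mu_i(y) := \max(0,\varepsilon - \|y - y_i\|)$ and define the Schauder projection
\begin{equation}
P_\varepsilon(y) := \frac{\sum_{i=1}^n \mu_i(y)\, y_i}{\sum_{i=1}^n \mu_i(y)} \, ,
\end{equation}
which is well defined and continuous on $\mc{A}(K)$ (the denominator is strictly positive there, as the $B(y_i,\varepsilon)$ cover $\mc{A}(K)$), takes values in the compact convex set $C_\varepsilon := \mathrm{conv}\{y_1,\dots,y_n\}$, and satisfies $\|P_\varepsilon(y) - y\| \leq \varepsilon$ for every $y \in \mc{A}(K)$, since $\mu_i(y) \neq 0$ forces $\|y - y_i\| < \varepsilon$. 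The key structural point is that $C_\varepsilon \subset K$, because $K$ is convex and each $y_i \in \mc{A}(K) \subset K$.

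Next I would consider the composition $P_\varepsilon \circ \mc{A}$ restricted to $C_\varepsilon$. Since $C_\varepsilon \subset K$ we have $\mc{A}(C_\varepsilon) \subset \mc{A}(K)$, so this is a continuous map $C_\varepsilon \to C_\varepsilon$, i.e.\ a continuous self‑map of a compact convex subset of the finite‑dimensional space $\mathrm{span}\{y_1,\dots,y_n\}$. Brouwer's fixed point theorem therefore supplies $x_\varepsilon \in C_\varepsilon \subset K$ with $P_\varepsilon(\mc{A}(x_\varepsilon)) = x_\varepsilon$, whence
\begin{equation}
\|x_\varepsilon - \mc{A}(x_\varepsilon)\| = \|P_\varepsilon(\mc{A}(x_\varepsilon)) - \mc{A}(x_\varepsilon)\| \leq \varepsilon \, .
\end{equation}

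Finally I would pass to the limit. Taking $\varepsilon = 1/k$ produces $x_k \in K$ with $\|x_k - \mc{A}(x_k)\| \to 0$. The points $\mc{A}(x_k)$ lie in the relatively compact set $\overline{\mc{A}(K)}$, so along a subsequence $\mc{A}(x_k) \to z$; then $x_k \to z$ as well, and $z \in K$ since $K$ is closed. Continuity of $\mc{A}$ gives $\mc{A}(z) = \lim_k \mc{A}(x_k) = z$, the desired fixed point. The only substantive ingredient is Brouwer's theorem, which is taken as known; the remaining work—verifying the elementary properties of $P_\varepsilon$—presents no real obstacle, the one point requiring mild care being to select the net points $y_i$ \emph{inside} $\mc{A}(K)$ so that $C_\varepsilon \subset K$ and the composition is a genuine self‑map to which Brouwer applies.
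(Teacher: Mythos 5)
Your proof is correct and is the standard argument: finite $\varepsilon$-nets of the relatively compact set $\mc{A}(K)$, the Schauder projection $P_\varepsilon$ into a finite-dimensional convex subset of $K$, Brouwer's theorem, and a limiting argument using total boundedness of $\mc{A}(K)$ together with closedness of $K$ and continuity of $\mc{A}$. The paper does not prove this proposition at all — it simply cites \cite[Corollary 11.2, p.~280]{gilbarg1977elliptic}, where essentially the same reduction to Brouwer's theorem is carried out — so there is nothing to compare beyond noting that your write-up supplies the standard proof the paper delegates to the literature.
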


See, e.g.,~\cite[Corollary 11.2, p. 280]{gilbarg1977elliptic}.



\section{Evolution of immersed rod with material frame in 3D}\label{sec:rod3D}


In this section, we prove Theorem~\ref{thm:GWP_curveframe}. 
We consider the coupled curve-frame evolution
\begin{align}
    \p_t\X 
    &=  -({\bf I}+\gamma\X_s\otimes\X_s)\big(\X_{sss}-\lambda\X_s -\eta\kappa_3\X_s\times\X_{ss} \big)_s \,, \quad \abs{\X_s}^2=1\,,
    \label{eq:Xss_0-repeat}\\
    \p_t\kappa_3   
    &= \bigg(\frac{\eta}{\alpha}+\eta\abs{\X_{ss}}^2\bigg)(\kappa_3)_{ss}+ \mc{R}^{\rm f}[\X,\kappa_3]\,, \label{eq:kap3_0-repeat}
\end{align}
where 
\begin{align}
    \mc{R}^{\rm f}[\X,\kappa_3] &= \eta\big(\abs{\X_{ss}}^2\big)_s(\kappa_3)_s + \eta\kappa_3\X_{ssss}\cdot\X_{ss} + \eta\kappa_3\abs{\X_{ss}}^4  \label{eq:Rf0-repeat} \\
    &\qquad 
      -\X_{sssss} \cdot (\X_s\times\X_{ss}) +\lambda\X_{sss}\cdot (\X_s\times\X_{ss})  \nonumber
\end{align}
and the tension satisfies
\begin{equation}\label{eq:tension_0-repeat}
    (1+\gamma)\lambda_{ss}  -\abs{\X_{ss}}^2\lambda = 
    -(4+3\gamma)(\X_{sss}\cdot\X_{ss})_s +\abs{\X_{sss}}^2  - \eta\kappa_3\X_s\cdot(\X_{ss}\times\X_{sss})\,.
\end{equation}
The equations are considered on $I=[0,1]$ and $\T$. When $I=[0,1]$, we additionally impose the boundary conditions
\begin{equation}
    \label{eq:rodboundaryconditions}
\X_{ss}\,,\, \X_{sss}\big|_{s=0,1} = 0 \, , \quad \kappa_3,\lambda\big|_{s=0,1} = 0 \, .
\end{equation}
We again consider~\eqref{eq:Xss_0-repeat} as an evolution for $\X_s$. Note that $\int_I \X \, ds$ may be recovered from integrating~\eqref{eq:Xss_0-repeat} in space and time. 

Throughout this section, we assume that $\eta \in (0,\eta_{\rm max}]$ and $\alpha \in (0,\alpha_{\rm max}]$, and the constants $C$ and implied constants in $\lesssim$ may depend on $\eta_{\rm max}$, $\alpha_{\rm max}$.

Given the discussion surrounding the curvature criterion~\eqref{eq:Xss_criterion}, we make the following definition.

\begin{definition}[Strong solution]
Let $T>0$ and $I = [0,1]$ or $\T$. We say that $(\X_{s},\kappa_3)$ is a \emph{strong solution} on $I \times [0,T]$ if $|\X_s|^2 = 1$ and
\begin{equation}
    \label{eq:regreqofstrong}
(\X_{s},\kappa_3) \in (L^\infty_t H^3_s \cap L^2_t H^5_s) \times (L^\infty_t H^1_s \cap L^2_t H^2_s) \text{ on } I \times (0,T) \, ,
\end{equation}
\begin{equation}
    \label{eq:esssup}
{\rm ess}\,\sup_{t \in (0,T)} \| \X_{ss} \|_{L^\infty_s(I)}^2 < \frac{1}{\alpha} \, ,
\end{equation}
\begin{equation}
\p_t \X_{ss} \in L^2_t L^2_s(I \times (0,T)) \, ,
\end{equation}
and there exists
\begin{equation}
\lambda \in L^2_t H^2_s(I \times (0,T))
\end{equation}
such that $(\X_{s},\kappa_3)$ satisfy the equations~\eqref{eq:Xss_0-repeat}-\eqref{eq:tension_0-repeat} in $\X_s$ formulation\footnote{in the sense of distributions} with tension $\lambda$. When $I=[0,1]$, we further require the boundary conditions~\eqref{eq:rodboundaryconditions}, whereas when $I=\T$, we require that $\X_s$ has zero mean.
\end{definition}

The energy estimates below can be justified for strong solutions, and moreover $(\X_s,\kappa_3) \in C([0,T];H^3_s \times H^1_s)$.\footnote{Sometimes we speak of strong solutions on $[0,T]$ instead of on $I \times [0,T]$. We say strong solution on $[0,T)$ to mean strong solution on $[0,S]$ for all $S \in (0,T)$.}

The definition of strong solution encodes information coming from the structure of the energy estimates which is not evidently available from a na{\"i}ve fixed point argument. For example, $\p_t \X_{ss}$ involves $(\kappa_3)_{sss}$, which is not bounded in $L^2_{t,s}$ by the regularity requirements~\eqref{eq:regreqofstrong}. Therefore, even our local well-posedness theory is much more subtle than the curve-only case.


\begin{proposition}[Local well-posedness]
    \label{thm:wellposednesstheoryrod}
Let
\begin{equation}
(\X_{s}^{\rm in},\kappa_3^{\rm in}) \in H^3 \times H^1(I) \, , \quad |\X_s^{\rm in}|^2 = 1 \, ,
\end{equation}
\begin{equation}
|\X_{ss}^{\rm in}|^2 < \frac{1}{\alpha} \, ,
\end{equation}
with the relevant boundary conditions. Then there exists a unique maximally defined strong solution $(\X_{ss},\kappa_3)$ on $[0,T^*)$ with initial data $(\X_s^{\rm in},\kappa_3^{\rm in})$.

(Continuation criterion) If $T^* < +\infty$, then
\begin{equation}
\limsup_{T \to T^*_-} \| \X_{ss} \|_{L^\infty_t L^\infty_s(I \times (0,T))}^2 = \frac{1}{\alpha} \, .
\end{equation}
\end{proposition}

Given the previous proposition, we may obtain the following global-in-time consequences:
\begin{proposition}[Conditional global well-posedness]
    \label{pro:GWProdthing}
In the notation of Proposition~\ref{thm:wellposednesstheoryrod}, there exist $C_1, C_2 > 0$ such that if
\begin{equation}
    \label{eq:amisatisfied}
\| \X_{ss}^{\rm in} \|_{H^2_s(I)} + \| \kappa_3^{\rm in} \|_{H^1_s(I)} \leq \frac{1}{C_1} \log \frac{1}{C_2 \alpha} \, ,
\end{equation}
then $T^* = +\infty$.

(Global solutions converge) Suppose that $T^* = +\infty$ and
\begin{equation}
\sup_{t \in [0,+\infty)} \| \X_{ss} \|_{L^\infty_s}^2 < \frac{1}{\alpha} \, .
\end{equation}
Then there exists a steady curve-frame pair $(\bar{\X},\bar{\e}) \in H^4_s \times H^2_s$, where $\bar{\e}(s)$ is a frame vector, such that
\begin{equation}
    \label{eq:rodconvergence}
(\X,\e)(t) \to (\bar{\X},\bar{\e}) \text{ in } H^4_s \times H^2_s \text{ as } t \to +\infty \, .
\end{equation}
\end{proposition}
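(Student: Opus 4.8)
The statement has two halves, and both rest on a single higher-order \emph{a priori} estimate for the coupled system \eqref{eq:Xss_0-repeat}--\eqref{eq:kap3_0-repeat} that survives the strong coupling \eqref{eq:kappa3strongcoupling}. The first half asserts $T^{*}=+\infty$ under \eqref{eq:amisatisfied}; by the continuation criterion of Proposition~\ref{thm:wellposednesstheoryrod} this reduces to keeping $\|\X_{ss}(\cdot,t)\|_{L^{\infty}_{s}}^{2}$ bounded away from $1/\alpha$ for all time. The second half asserts that a global solution satisfying \eqref{eq:Xss_criterion} uniformly in $t$ converges to a curve--frame equilibrium. The plan is therefore: (1) establish the higher-order estimate while the curvature criterion holds with a margin; (2) bootstrap it to get global existence under small data; (3) run a Lojasiewicz argument as in Section~\ref{sec:decayimmersednoforcing} for the convergence. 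For the estimate itself, recall (introduction) that as long as $\|\X_{ss}\|_{L^{\infty}_{s}}^{2}\le(1-\varepsilon_{0})/\alpha$ the dissipation in \eqref{eq:E_ID}--\eqref{eq:energydissipationintro} is coercive and controls $\X_{ssss}$ and $(\kappa_{3})_{s}$ in $L^{2}_{s}$. I would differentiate the rod dissipation functional $D_{\rm rod}(t)$ (minus the right-hand side of \eqref{eq:E_ID}) in time, exactly as in Proposition~\ref{pro:differentialineq} for the curve but retaining the $\kappa_{3}$-channel, and, using the equations, the tension bound \eqref{eq:tension_0-repeat}, and interpolation, arrive at a differential inequality of the schematic form $\tfrac{d}{dt}D_{\rm rod}\le-c_{\alpha}D_{\rm rod}+A(t)D_{\rm rod}+B(t)$, where $c_{\alpha}\gtrsim1/\alpha$ comes from the genuine forward diffusion of $\X_{ssss}$ and $(\kappa_{3})_{ss}$ and $A,B$ are polynomial in $E(t)$ and $D_{\rm rod}(t)$. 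The backward-heat term in \eqref{eq:kappa3strongcoupling}---visible because $-\X_{sssss}\cdot(\X_{s}\times\X_{ss})=-|\X_{ss}|^{2}(\kappa_{3})_{ss}+\cdots$---is absorbed into $-c_{\alpha}D_{\rm rod}$ using precisely the margin $\|\X_{ss}\|_{L^{\infty}_{s}}^{2}\le(1-\varepsilon_{0})/\alpha$ against the $\tfrac{\eta^{2}}{\alpha}\|(\kappa_{3})_{ss}\|_{L^{2}_{s}}^{2}$ and $\|\X_{sssss}\|_{L^{2}_{s}}^{2}$ dissipation; for $\eta$ small this forces one to couple the $\X_{ssss}$- and $(\kappa_{3})_{s}$-estimates and integrate by parts so the dangerous terms inherit the principal part of the dissipation.

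\textbf{Global existence.} Assume \eqref{eq:amisatisfied}. By a continuity/bootstrap argument, suppose for contradiction that $\|\X_{ss}(\cdot,t)\|_{L^{\infty}_{s}}^{2}$ reaches $\tfrac{1}{2\alpha}$ on $[0,T^{*})$, and let $T_{1}<T^{*}$ be the first such time. On $[0,T_{1}]$ the curvature criterion holds with margin $\varepsilon_{0}=\tfrac12$, so the differential inequality above is valid; since $E$ is non-increasing, $E(t)\le E^{\rm in}$, and both $E^{\rm in}$ and $D_{\rm rod}(0)$ are $\lesssim(\|\X_{ss}^{\rm in}\|_{H^{2}_{s}}+\|\kappa_{3}^{\rm in}\|_{H^{1}_{s}})^{2}=:(\text{data})^{2}$. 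A Gr\"onwall estimate then bounds $D_{\rm rod}(t)$ on $[0,T_{1}]$ by $(\text{data})^{2}e^{C(\text{data})}$, so by interpolation $\|\X_{ss}(\cdot,t)\|_{L^{\infty}_{s}}^{2}\lesssim\|\X_{ss}\|_{L^{2}_{s}}\|\X_{sss}\|_{L^{2}_{s}}+\|\X_{ss}\|_{L^{2}_{s}}^{2}\lesssim(E^{\rm in})^{1/2}D_{\rm rod}(t)^{1/2}+E^{\rm in}\lesssim(\text{data})^{2}e^{C(\text{data})}$. Requiring this to stay strictly below $\tfrac{1}{2\alpha}$ is exactly the constraint $(\text{data})\le\tfrac{1}{C_{1}}\log\tfrac{1}{C_{2}\alpha}$ of \eqref{eq:amisatisfied}; this contradicts $\|\X_{ss}(\cdot,T_{1})\|_{L^{\infty}_{s}}^{2}=\tfrac{1}{2\alpha}$. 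Hence $\|\X_{ss}\|_{L^{\infty}_{s}}^{2}<\tfrac{1}{2\alpha}$ on $[0,T^{*})$, and the continuation criterion of Proposition~\ref{thm:wellposednesstheoryrod} gives $T^{*}=+\infty$.

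\textbf{Convergence to equilibrium.} Now take $T^{*}=+\infty$ with $\sup_{t}\|\X_{ss}\|_{L^{\infty}_{s}}^{2}<1/\alpha$. The differential inequality holds on all of $[0,+\infty)$, and since $\int_{0}^{\infty}D_{\rm rod}\,dt\le E^{\rm in}$ by \eqref{eq:E_ID}, the coefficients $A,B$ are integrable; as in Section~\ref{sec:decayimmersednoforcing} this yields a uniform-in-time bound on $D_{\rm rod}$, hence uniform bounds $\X_{s}\in L^{\infty}_{t}H^{3}_{s}$, $\kappa_{3}\in L^{\infty}_{t}H^{1}_{s}$, and $D_{\rm rod}(t_{k})\to0$ along some $t_{k}\to+\infty$. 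From here the argument mirrors the closed-loop case of Theorem~\ref{thm:GWP_classical}. First, compactness: along a subsequence $(\X-\int_{I}\X\,ds,\kappa_{3})(\cdot,t_{k})\rightharpoonup(\bar\X,\bar\kappa_{3})$ in $H^{4}_{s}\times H^{1}_{s}$ and strongly in lower norms; since $D_{\rm rod}(t_{k})\to0$ while the tension and the term $(\lambda\X_{s}+\eta\kappa_{3}\X_{s}\times\X_{ss})_{s}$ converge strongly in $L^{2}_{s}$, one gets norm convergence of $\X_{ssss}(\cdot,t_{k})$, hence strong $H^{4}_{s}\times H^{1}_{s}$ convergence, with $D_{\rm rod}[\bar\X,\bar\kappa_{3}]=0$ (a rod equilibrium, $\bar\kappa_{3}$ constant). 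Second, the Lojasiewicz inequality for rods from Section~\ref{sec:rod_loja} gives $(E-\bar E)^{1-\beta}\le A\sqrt{D_{\rm rod}}$ near such an equilibrium, so $\tfrac{d}{dt}(E-\bar E)^{\beta}=-\beta(E-\bar E)^{\beta-1}D_{\rm rod}\le-\beta A^{-1}\sqrt{D_{\rm rod}}$, and the trapping argument of Section~\ref{sec:decayimmersednoforcing} shows the mean-normalized trajectory cannot leave a small $H^{4}_{s}\times H^{1}_{s}$-ball around $(\bar\X,\bar\kappa_{3})$, so it converges there; since $\int_{0}^{\infty}\|\partial_{t}\X\|_{L^{2}_{s}}\,dt\lesssim\int_{0}^{\infty}\sqrt{D_{\rm rod}}\,dt<+\infty$, $\X$ itself converges in $H^{4}_{s}$. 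Finally, the material frame evolves by \eqref{eq:e1dot}--\eqref{eq:e2dot} with in-plane rate $\omega=\tfrac{\eta}{\alpha}(\kappa_{3})_{s}$ from \eqref{eq:T_form}, so $\|\partial_{t}\e_{1}\|_{L^{2}_{s}}\lesssim\|\partial_{t}\X_{s}\|_{L^{2}_{s}}+\tfrac1\alpha\|(\kappa_{3})_{s}\|_{L^{2}_{s}}$; as $\|(\kappa_{3})_{s}\|_{L^{2}_{s}}\lesssim\sqrt{\alpha}\,\sqrt{D_{\rm rod}}$ is time-integrable and the frame ODE \eqref{eq:frame_ODE} has $H^{1}_{s}$-convergent coefficients $\kappa_{1},\kappa_{2},\kappa_{3}$, the convergence of $(\X,\kappa_{3})$ upgrades to convergence of the frame vector in $H^{2}_{s}$, which is \eqref{eq:rodconvergence}. (For free ends everything decays exponentially and this last step is immediate; in the closed-loop case the Lojasiewicz rate is what renders the frame's angular velocity time-integrable.)

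\textbf{Main obstacle.} Everything funnels through the higher-order \emph{a priori} estimate and its survival against the backward-heat coupling \eqref{eq:kappa3strongcoupling}: this is where the curvature criterion \eqref{eq:Xss_criterion} is indispensable and where the logarithmic threshold in \eqref{eq:amisatisfied} is generated. The compactness and Lojasiewicz steps, though technical, are a faithful adaptation of Section~\ref{sec:decayimmersednoforcing}, the only genuinely new bookkeeping being the material frame.
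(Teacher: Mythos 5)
Your proposal is correct and follows essentially the same route as the paper: propagate the dissipation $D$ via Gr\"onwall (Proposition~\ref{prop:propagationproprod}), bound $\| \X_{ss} \|_{L^\infty_s}^2$ by $e^{C(\text{data})}$ through Proposition~\ref{pro:rodrelationship} and Sobolev embedding/interpolation to extract the logarithmic threshold by a continuity argument, and then run the Lojasiewicz--compactness machinery of Section~\ref{sec:decayimmersednoforcing} (adapted to the frame) for convergence. One cosmetic inaccuracy: the closed-loop propagation estimate carries no damping term $-c_\alpha D$ with $c_\alpha \gtrsim 1/\alpha$ (closed-loop rod equilibria are nontrivial, so $D$ cannot decay at a uniform exponential rate), but since you never actually use that term --- Gr\"onwall with the time-integrable coefficients $A, B \sim C(E)D \in L^1_t$ suffices, and the full limit $D(t) \to 0$ needed for the trapping step follows from the same Gr\"onwall bound --- this does not affect the argument.
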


\begin{remark} 
We do not calculate explicit values for $C_1$ and $C_2$ (in principle, they can be estimated from the proof), so we do not determine whether the condition~\eqref{eq:amisatisfied} is satisfied in physical parameter regimes. We merely observe that given a value of the slenderness parameter $\epsilon \sim 10^{-2}$--$10^{-3}$, we have $\alpha \sim 10^{-4}$--$10^{-6}$ according to~\eqref{eq:alphadef}.
\end{remark}


The key point in both Propositions~\ref{thm:wellposednesstheoryrod} and~\ref{pro:GWProdthing} is to prove two \emph{a priori} estimates for strong solutions which propagate the energy $E$ and energy dissipation $D$ defined in~\eqref{eq:Et_and_Dt}. These are shown in Section~\ref{sec:energeticsrod}. In particular, the propagation of $D$ estimate is responsible for $\p_t \X_{ss} \in L^2_{t,s}$ -- see Section~\ref{subsec:propagation_estimates}. We then proceed directly to the proof of Proposition~\ref{pro:GWProdthing} in Section~\ref{subsec:GWP_rodframe}, where we exploit the Lojasiewicz inequality from Proposition~\ref{pro:rodlojascewicz} in the periodic setting.

Even with the \emph{a priori} estimates in hand, the existence of strong solutions goes through a hyperviscous regularization~\eqref{eq:Xs_evol}-\eqref{eq:kap3_evol}, in which $\nu\p_s^4 \kappa_3$, $\nu>0$, is incorporated into the $\kappa_3$ equation. The hyperviscous equations are amenable to fixed point arguments, which we carry out in Appendix~\ref{sec:hyperviscous}. We then obtain a strong solution in the limit as $\nu\to 0$ in Section \ref{subsec:LWP_rodframe}. These steps are relegated to the end of the section.


\subsection{Energetics}
    \label{sec:energeticsrod}

In analogy to the curve-only setting, we define  
\begin{equation}
\wt{\bm Z}= \X_{sss}-\lambda\X_s-\eta\kappa_3\X_s\times\X_{ss}
\end{equation}
as well as the energy and dissipation quantities
\begin{equation}\label{eq:Et_and_Dt}
\begin{aligned}
E(t) &= \frac{1}{2}\int_{I}\big(\abs{\X_{ss}}^2 + \eta\kappa_3^2\big)\,ds \,,\\
D(t) &= \int_{I}\big(({\bf I}+\gamma\X_s\otimes\X_s)\wt{\bm{Z}}_s\big)\cdot\wt{\bm{Z}}_s\, ds + \frac{\eta^2}{\alpha}\int_{I}(\kappa_3)_s^2\,ds\,.
\end{aligned}
\end{equation}
We may write the energy identity~\eqref{eq:E_ID} as 
\begin{equation}\label{eq:for_decay1}
\p_tE(t) = -D(t) \,.
\end{equation}
This computation can be justified for strong solutions on their domain of definition. Immediately, we have that $E(t)\le E^{\rm in}$ and $\int_0^\infty D(t)\,dt\le E^{\rm in}$, as in~\eqref{eq:E_D_closed}.

For the free-ended rod we can further show that $E(t)$ decays exponentially-in-time and $D(t)$ has time-integrated exponential decay, as in~\eqref{eq:E_D_free}. We first note that, by an analogous argument to \eqref{eq:muckinaboutwiththecurvature} in the curve-only setting, we have the following Poincar\'e-type inequality:
%
\begin{equation}\label{eq:rod_Poincare}
E(t) \leq \frac{1}{2\pi^4} D(t) \, .
\end{equation}
In addition, $\eta\int_0^1\abs{\kappa_3}^2\,ds\leq \pi^2 \eta \int_0^1\abs{(\kappa_3)_s}^2\,ds$ by the 1D Poincar\'e inequality. Using \eqref{eq:for_decay1}, we thus have
\begin{equation}\label{eq:for_decay2}
\p_tE(t) \le -D(t) \le -c_0E(t)\,,
\end{equation}
where, as in \eqref{eq:dtE_noforce}, we let $c_0^{-1}$ denote the best constant in \eqref{eq:rod_Poincare}. By Gr\"onwall's inequality, the free-ended rod energy satisfies 
\begin{equation}\label{eq:E_decay}
E(t) \le e^{-c_0t}E^{\rm in}\,.
\end{equation}
Additionally, from \eqref{eq:for_decay2} we obtain a time-integrated decay estimate on $D(t)$. For any $\theta\in(0,1)$, we have
\begin{equation}
\p_tE(t) + \theta D(t) \leq -(1-\theta) D(t) \leq - c_0 (1-\theta) E(t) \, ;
\end{equation}
in particular,
\begin{equation}
\p_t (Ee^{c_0 (1-\theta) t}) + \theta D e^{c_0 (1-\theta) t} \leq 0 \, .
\end{equation}
Integrating in time and using \eqref{eq:E_decay}, we have
\begin{equation}
\theta \int_0^{+\infty} e^{c_0 (1-\theta) t} D(t) \, dt \leq E^{\rm in} 
\end{equation}
for a global-in-time strong solution.

\subsubsection{Relationships between quantities}
We need to establish that the quantities $E(t)$ and $D(t)$ control $\X_{ssss}$ in a useful way. Recalling the notation $\bm{Z}=\X_{sss}-\lambda\X_s$ from Section \ref{subsec:energetics}, we define 
\begin{equation}\label{eq:D1t}
\begin{aligned}
D_1(t) &= \int_{I}\bigg( \abs{\bm{Z}_s}^2  + \gamma \big(\X_s\cdot\bm{Z}_s\big)^2
    + \frac{\eta^2}{\alpha}(\kappa_3)_s^2 
      \bigg) \,ds\,.
\end{aligned}
\end{equation}
Immediately from \eqref{eq:Xssss_free0} and \eqref{eq:Xssss_closed0}, we have
\begin{equation}\label{eq:Xssss_free}
\norm{\X_{ssss}}_{L^2_s}\lesssim D_1(t)^{1/2}(1+E(t)^{1/2})+E(t)^{5/2}
\end{equation}
in the free end setting and 
\begin{equation}\label{eq:Xssss_closed}
\norm{\X_{ssss}}_{L^2_s}\lesssim D_1(t)^{1/2}(1+E(t)^{1/2})+ E(t)+E(t)^{5/2}
\end{equation}
in the closed loop setting. 
We need to show that, under the condition $\abs{\X_{ss}}^2 \leq \frac{1- \delta}{\alpha}$ for some $\delta \in (0,1)$ (the quantitative variant of~\eqref{eq:esssup}), the full dissipation quantity $D(t)$ controls $D_1(t)$.

\begin{proposition}
    \label{pro:rodrelationship}
Suppose that $(\X_s,\kappa_3)$ satisfies $|\X_s|^2 = 1$ and $\| \X_{ss} \|_{L^\infty_s}^2 \leq \frac{1-\delta}{\alpha}$ for some $\delta \in (0,1)$.

In the free end setting, the dissipation $D$ and energy $E$ bound $D_1$ and $\X_{ssss}$ as follows:
\begin{equation}\label{eq:Xssss_free2}
\begin{aligned}
D_1(t) &\lesssim_\delta D(t)(1+E(t)^3) + E(t)^5 \,,\\
\norm{\X_{ssss}}_{L^2_s} &\lesssim_\delta D(t)^{1/2}(1+E(t)^2)+E(t)^{5/2} +E(t)^3\,.
\end{aligned}
\end{equation}

In the closed loop setting, $D$ and $E$ bound $D_1$ and $\X_{ssss}$ as:
\begin{equation}\label{eq:Xssss_closed2}
\begin{aligned}
D_1(t) &\lesssim_\delta D(t)(1+E(t)^3) +  E(t)^2 + E(t)^5 \,, \\
\norm{\X_{ssss}}_{L^2_s}  &\lesssim_\delta  D(t)^{1/2}(1+E(t)^2) + E(t)+E(t)^3\,.
\end{aligned}
\end{equation}
\end{proposition}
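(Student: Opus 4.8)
The plan is to control $D_1$ by $D$ via their difference at the level of $\bm{Z}_s$. Write $\bm{Z}=\X_{sss}-\lambda\X_s$ and $\wt{\bm{Z}}=\bm{Z}-\eta\kappa_3\,\X_s\times\X_{ss}$, so that $\bm{Z}_s=\wt{\bm{Z}}_s+w$ with
\begin{equation*}
w:=\eta(\kappa_3\,\X_s\times\X_{ss})_s=\eta(\kappa_3)_s\,\X_s\times\X_{ss}+\eta\kappa_3\,\X_s\times\X_{sss}\,.
\end{equation*}
Set $A={\bf I}+\gamma\X_s\otimes\X_s$. Expanding $\int_I(A\bm{Z}_s)\cdot\bm{Z}_s\,ds$ via $(A(a+b))\cdot(a+b)\le 2(Aa)\cdot a+2(Ab)\cdot b$, using $I\le A\le(1+\gamma)I$, and using that $\tfrac{\eta^2}{\alpha}\int_I(\kappa_3)_s^2\,ds$ is a summand of $D$, one obtains $D_1\lesssim D+\|w\|_{L^2_s}^2$. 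So everything reduces to estimating $\|w\|_{L^2_s}$ in terms of $D$ and $E$.

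Since $\X_s\perp\X_{ss}$ and $|\X_s|=1$, the first summand of $w$ is controlled \emph{entirely} by the curvature criterion~\eqref{eq:esssup}:
\begin{equation*}
\|\eta(\kappa_3)_s\,\X_s\times\X_{ss}\|_{L^2_s}^2\le\eta^2\|\X_{ss}\|_{L^\infty_s}^2\int_I(\kappa_3)_s^2\,ds=\alpha\|\X_{ss}\|_{L^\infty_s}^2\cdot\frac{\eta^2}{\alpha}\int_I(\kappa_3)_s^2\,ds\le(1-\delta)\,D\,.
\end{equation*}
This is the one genuinely delicate point: without $\alpha\|\X_{ss}\|_{L^\infty_s}^2$ being a bounded constant, the top-order feedback from the twist into $D_1$ cannot be absorbed. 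For the second summand $b:=\eta\kappa_3\,\X_s\times\X_{sss}$, I would bound $\|b\|_{L^2_s}\le\eta\|\kappa_3\|_{L^\infty_s}\|\X_{sss}\|_{L^2_s}$, interpolate $\|\X_{sss}\|_{L^2_s}^2\lesssim\|\X_{ss}\|_{L^2_s}\|\X_{ssss}\|_{L^2_s}\lesssim E^{1/2}\|\X_{ssss}\|_{L^2_s}$ (valid because $\X_{ss}$ vanishes at the endpoints, resp.\ has zero mean), and --- crucially --- keep $\|\X_{ssss}\|_{L^2_s}$ to the \emph{first} power. The identity $\X_{ssss}=\wt{\bm{Z}}_s+(\lambda\X_s)_s+w$ gives $\|\X_{ssss}\|_{L^2_s}\lesssim\sqrt D+\|(\lambda\X_s)_s\|_{L^2_s}+\|w\|_{L^2_s}$; combined with $\|w\|_{L^2_s}\le\sqrt{(1-\delta)D}+\|b\|_{L^2_s}$ and the elementary bound $\eta^2\|\kappa_3\|_{L^\infty_s}^2\lesssim E^{1/2}\sqrt D$ (from $\|\kappa_3\|_{L^\infty_s}^2\lesssim\|\kappa_3\|_{L^2_s}\|(\kappa_3)_s\|_{L^2_s}$ with $\eta\|\kappa_3\|_{L^2_s}^2\lesssim E$ and $\tfrac{\eta^2}{\alpha}\|(\kappa_3)_s\|_{L^2_s}^2\le D$; with a harmless extra $E$-term in the periodic case from the mean of $\kappa_3$), this yields a closed quadratic inequality
\begin{equation*}
\|b\|_{L^2_s}^2\lesssim E\sqrt D\,\big(\sqrt D+\|(\lambda\X_s)_s\|_{L^2_s}+\|b\|_{L^2_s}\big)\,,
\end{equation*}
whence, absorbing $\|b\|_{L^2_s}$, $\|b\|_{L^2_s}^2\lesssim(E+E^2)D+E\sqrt D\,\|(\lambda\X_s)_s\|_{L^2_s}$.

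It remains to control the tension. The analogue of Lemma~\ref{lem:tension} for~\eqref{eq:tension_0-repeat} --- whose energy identity carries the additional term $-\eta\kappa_3\lambda\,\X_s\cdot(\X_{ss}\times\X_{sss})$, estimated using~\eqref{eq:esssup} and $D$, and which in the closed-loop case again uses Fenchel's theorem exactly as in Lemma~\ref{lem:tension} --- controls $\|\lambda\|_{H^1_s}$, hence $\|(\lambda\X_s)_s\|_{L^2_s}$, by $E$ and low powers of $D$. Substituting back, and then invoking~\eqref{eq:Xssss_free}--\eqref{eq:Xssss_closed} to reinsert the dependence of $\|\X_{ssss}\|_{L^2_s}$ on $D_1$, a sequence of Young's inequalities --- which I will not reproduce, and whose only role is to split every mixed power $D^aD_1^bE^c$ so that $D_1$ reappears with a small absolute coefficient --- collapses the estimate to $D_1\lesssim_\delta D(1+E^3)+E^5$ in the free-end setting and $D_1\lesssim_\delta D(1+E^3)+E^2+E^5$ in the closed-loop setting, the extra $E^2$ coming from the nonzero means of $\bm{Z}$ and $\kappa_3$ (handled exactly as in Proposition~\ref{prop:Xssss_ests_curve}). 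Plugging this bound back into~\eqref{eq:Xssss_free}--\eqref{eq:Xssss_closed} then gives the claimed estimates on $\|\X_{ssss}\|_{L^2_s}$.

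The main obstacle, throughout, is the self-referential occurrence of the top-order quantity $\X_{ssss}$ (equivalently $D_1$) inside $w$, weighted by $\|\kappa_3\|_{L^\infty_s}$, which is not \emph{a priori} small --- this is precisely the twist--bend coupling flagged in~\eqref{eq:kappa3strongcoupling}. Two ingredients defeat it: (i) the curvature criterion~\eqref{eq:esssup}, which bounds the $(\kappa_3)_s\,\X_s\times\X_{ss}$ part of $w$ by $(1-\delta)D$ outright, independently of $\X_{ssss}$; and (ii) arranging the interpolations so that $\X_{ssss}$, and hence $\|b\|_{L^2_s}$, enters the residual estimate only linearly, which turns the would-be divergent feedback into an absorbable quadratic inequality. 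The rest is routine, but the $E$-power bookkeeping must be done with care to land exactly on the exponents $3$ and $5$.
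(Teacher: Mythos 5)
Your decomposition is, at its core, the paper's: isolate the difference between $D$ and $D_1$ at the level of $w=\eta(\kappa_3\X_s\times\X_{ss})_s$, use the curvature criterion to bound the $(\kappa_3)_s\,\X_s\times\X_{ss}$ piece by $(1-\delta)D$ outright, and control the remaining piece $\|b\|_{L^2_s}^2\lesssim \eta^2\|\kappa_3\|_{L^\infty_s}^2\|\X_{sss}\|_{L^2_s}^2\lesssim E\,D^{1/2}\|\X_{ssss}\|_{L^2_s}$ with $\X_{ssss}$ kept linear. (The paper reaches the same point slightly differently, by expanding $D$ and absorbing the cross term $2\eta\int\bm{Z}_s\cdot(\kappa_3\X_s\times\X_{ss})_s$ into $D_1$ with $\varepsilon=1-\delta/2$; your cruder triangle-inequality splitting loses only constants and is fine for a $\lesssim_\delta$ statement.) You also correctly identify the two decisive ingredients and the periodic-mean corrections.

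The place where your route goes off the rails is the detour through the tension. At the point where you have $\|b\|_{L^2_s}^2\lesssim E\,D^{1/2}\|\X_{ssss}\|_{L^2_s}$, the paper simply inserts~\eqref{eq:Xssss_free} (resp.~\eqref{eq:Xssss_closed}), i.e.\ $\|\X_{ssss}\|_{L^2_s}\lesssim D_1^{1/2}(1+E^{1/2})+E^{5/2}$, and applies Young's inequality, which lands exactly on $\varepsilon D_1+C(E^2D(1+E)+E^5)$ and hence on the stated exponents. Your alternative --- closing a quadratic in $\|b\|_{L^2_s}$ via $\X_{ssss}=\wt{\bm{Z}}_s+(\lambda\X_s)_s+w$ and then estimating the residual $E\sqrt{D}\,\|(\lambda\X_s)_s\|_{L^2_s}$ through the $H^1_s$ tension bound of Lemma~\ref{lem:tension_rod} --- reintroduces $\|\X_{ssss}\|_{L^2_s}$ multiplied by the extra factors $(1+E^{1/2})E^{1/2}$ (and $(1+E^{1/2})E^{9/8}\|\X_{ssss}\|^{3/4}$ from the $\kappa_3$ part of the tension). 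Running~\eqref{eq:Xssss_free} and Young on that produces terms of order $E^5D$ and $E^8$ (and worse from the $3/4$-power piece), which are \emph{not} dominated by $D(1+E^3)+E^5$; the bookkeeping you defer does not in fact collapse to the claimed exponents along this route. The fix is either to drop the detour entirely, or, if you insist on it, to bound $\|(\lambda\X_s)_s\|_{L^2_s}\le\|\X_{ssss}\|_{L^2_s}+\|\bm{Z}_s\|_{L^2_s}$ directly from $\bm{Z}_s=\X_{ssss}-(\lambda\X_s)_s$ rather than through the tension energy estimate; either way one recovers the paper's exponents.
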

\begin{proof}
We may write
\begin{equation}
\begin{aligned}
D(t) &= \int_{I}\bigg(\big|\wt{\bm{Z}}_s\big|^2 +\gamma(\X_s\cdot\wt{\bm{Z}}_s)^2 +\frac{\eta^2}{\alpha} (\kappa_3)_s^2\bigg)\,ds\\
&= \int_{I}\bigg(\big|\bm{Z}_s-\eta\big(\kappa_3\X_s\times\X_{ss}\big)_s\big|^2 +\gamma(\X_s\cdot\bm{Z}_s)^2 +\frac{\eta^2}{\alpha} (\kappa_3)_s^2\bigg)\,ds\\
&= D_1(t) + \eta^2\int_{I}\abs{\big(\kappa_3\X_s\times\X_{ss} \big)_s}^2\,ds - 2\eta\int_{I}
    \bm{Z}_s\cdot\big(\kappa_3\X_s\times\X_{ss} \big)_s  \,ds\,,
\end{aligned}
\end{equation}
so that
\begin{equation}\label{eq:D1est}
D_1(t) + \eta^2\int_{I}\abs{\big(\kappa_3\X_s\times\X_{ss} \big)_s}^2\,ds = D(t) + 2\eta\int_{I}
    \bm{Z}_s\cdot\big(\kappa_3\X_s\times\X_{ss} \big)_s  \,ds\,.
\end{equation}
To estimate the right-hand side cross term, we first write 
\begin{equation}\label{eq:divvy}
    2\eta\abs{\bm{Z}_s\cdot\big(\kappa_3\X_s\times\X_{ss} \big)_s}
    \le 2\eta\abs{\bm{Z}_s\cdot(\X_s\times\X_{ss})(\kappa_3)_s}+ 2\eta\abs{\bm{Z}_s\cdot(\X_s\times\X_{sss})\kappa_3}\,.
\end{equation}
We rely on the curvature condition \eqref{eq:esssup} to estimate the first term in \eqref{eq:divvy} as 
\begin{equation}
    2\abs{\bm{Z}_s}\abs{\X_{ss}}\eta\abs{(\kappa_3)_s} \le 2\abs{\bm{Z}_s}(1-\delta)^{1/2}\frac{\eta}{\alpha^{1/2}}\abs{(\kappa_3)_s}
    \le \varepsilon\abs{\bm{Z}_s}^2 + \frac{1-\delta}{\varepsilon}\frac{\eta^2}{\alpha}\abs{(\kappa_3)_s}^2
\end{equation}
for $\varepsilon>0$. Taking $\varepsilon=1-\frac{\delta}{2}$ yields 
\begin{equation}
    2\eta\big|\bm{Z}_s\cdot(\X_s\times\X_{ss})(\kappa_3)_s\big| \le \bigg(1-\frac{\delta}{2}\bigg)\abs{\bm{Z}_s}^2 + \frac{1-\delta}{1-\frac{\delta}{2}}\frac{\eta^2}{\alpha}\abs{(\kappa_3)_s}^2\,;
\end{equation}
in particular, both terms may be absorbed into the left-hand side $D_1(t)$ in \eqref{eq:D1est}.

We split the second term in \eqref{eq:divvy} as
\begin{equation}
    2\eta\big|\bm{Z}_s\cdot(\X_s\times\X_{sss})\kappa_3\big|\le \varepsilon\abs{\bm{Z}_s}^2 + \frac{\eta^2}{\varepsilon}\abs{\kappa_3}\abs{\X_{sss}}\,,
\end{equation}
and for $\varepsilon$ sufficiently small (depending on $\delta$), the $\bm{Z}_s$ term may also be absorbed into the left-hand side $D_1(t)$. 
%
To estimate the remainder, we consider the free end and closed loop cases separately.

\emph{Free ends}. In the free end setting, using the interpolation inequalities 
\begin{equation}\label{eq:interp}
    \norm{\kappa_3}_{L^\infty_s}^2 \lesssim \norm{\kappa_3}_{L^2_s}\norm{(\kappa_3)_s}_{L^2_s}\,, \quad 
    \norm{\X_{sss}}_{L^2_s}^2 \lesssim \norm{\X_{ss}}_{L^2_s}\norm{\X_{ssss}}_{L^2_s}\,,
\end{equation}
along with the bound \eqref{eq:Xssss_free}, we have
\begin{equation}
\begin{aligned}
    \int_0^1\abs{\kappa_3}^2\abs{\X_{sss}}^2\,ds &\le \norm{\kappa_3}_{L^\infty_s}^2\norm{\X_{sss}}_{L^2_s}^2  \\
    &\lesssim \norm{\kappa_3}_{L^2_s}\norm{(\kappa_3)_s}_{L^2_s} \norm{\X_{ss}}_{L^2_s}\norm{\X_{ssss}}_{L^2_s} \\
    &\lesssim E(t) D(t)^{1/2}\big(D_1(t)^{1/2}(1+E(t)^{1/2}) + E(t)^{5/2}  \big) \\
    &\le \varepsilon D_1(t) + C(\varepsilon)\big( E(t)^2 D(t)(1+E(t)) +  E(t)^5\big)\,.
\end{aligned}
\end{equation}
Taking $\varepsilon>0$ sufficiently small (again depending on $\delta$), we obtain~\eqref{eq:Xssss_free2}.

\emph{Closed loop}.
For the closed filament, the $\X_{sss}$ interpolation inequality \eqref{eq:interp} still holds. However, the interpolation inequality for $\kappa_3$ now requires a full $H^1_s$ norm on the right-hand side. Using \eqref{eq:Xssss_closed}, we may then estimate
\begin{equation}
\begin{aligned}
    \int_\T\abs{\kappa_3}^2\abs{\X_{sss}}^2\,ds &\le \norm{\kappa_3}_{L^\infty_s}^2\norm{\X_{sss}}_{L^2_s}^2  \\
    &\lesssim \norm{\kappa_3}_{L^2_s}(\norm{\kappa_3}_{L^2_s}+ \norm{(\kappa_3)_s}_{L^2_s})\norm{\X_{ss}}_{L^2_s}\norm{\X_{ssss}}_{L^2_s} \\
    &\lesssim  E(t)^{3/2} \big(D_1(t)^{1/2}(1+E(t)^{1/2}) + E(t)+ E(t)^{5/2}  \big) \\
    &\quad +  E(t) D(t)^{1/2}\big(D_1(t)^{1/2}(1+E(t)^{1/2}) + E(t)+ E(t)^{5/2}  \big)\\
    &\le  \varepsilon D_1(t) +  C(\varepsilon)\big(E(t)^2 D(t)(1+E(t)) + E(t)^2 + E(t)^5\big)  
\end{aligned}
\end{equation}
for any $\varepsilon>0$. Again taking $\varepsilon$ sufficiently small, we obtain~\eqref{eq:Xssss_closed2}.
\end{proof}

We must next estimate the tension $\lambda$ in terms of $\X_{ss}$ and $\kappa_3$.

\begin{lemma}[Tension estimates]\label{lem:tension_rod}
The tension equation~\eqref{eq:tension_0-repeat} is uniquely solvable in $H^1_0(I)$, and its solution $\lambda$ satisfies
\begin{equation}\label{eq:H1_lam}
    \norm{\lambda}_{H^1_s} \lesssim \norm{\X_{ss}}_{L^2_s}\norm{\X_{ssss}}_{L^2_s}+ \norm{\kappa_3}_{L^2_s}\norm{\X_{ss}}_{L^2_s}^{5/4}\norm{\X_{ssss}}_{L^2_s}^{3/4}\,,
\end{equation}
provided that the right-hand side of~\eqref{eq:H1_lam} is finite.

In addition, given two curve-frame pairs $(\X_s^{(1)},\kappa_3^{(1)})$, $(\X_s^{(2)},\kappa_3^{(2)})$, let $\lambda^{(1)}$, $\lambda^{(2)}$ denote the corresponding tensions. The following difference bound holds:
\begin{equation}\label{eq:lamdiff_H1}
\begin{aligned}
    \norm{\lambda^{(1)}-\lambda^{(2)}}_{H^1_s} 
    &\lesssim \big(\norm{\X_{s}^{(1)}}_{H^2_s} +\norm{\X_{s}^{(2)}}_{H^2_s}\big)\times\\
    &\quad \times\big(1 + \norm{\X_{s}^{(1)}}_{H^2_s}^2\big) \big(1+ \|\kappa_3^{(1)}\|_{H^1_s} +\|\kappa_3^{(2)}\|_{H^1_s}\big)\norm{\X_{s}^{(1)}-\X_{s}^{(2)}}_{H^2_s} \\
    &\quad + \norm{\X_{s}^{(1)}}_{H^2_s}\norm{\X_{s}^{(1)}}_{H^3_s}\|\kappa_3^{(1)}- \kappa_3^{(2)}\|_{L^2_s} \,,
\end{aligned}
\end{equation}
provided that the right-hand side of~\eqref{eq:lamdiff_H1} is finite.
\end{lemma}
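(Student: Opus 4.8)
\textbf{Proof plan for Lemma~\ref{lem:tension_rod}.}

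The plan is to treat the tension equation~\eqref{eq:tension_0-repeat} exactly as in the curve-only case (Lemma~\ref{lem:tension}), with the extra term $-\eta \kappa_3 \X_s \cdot (\X_{ss} \times \X_{sss})$ on the right-hand side handled as a perturbation. For existence and uniqueness: in the free end setting $\lambda \in H^1_0(0,1)$ follows directly from the Lax--Milgram / Riesz representation theorem applied to the bilinear form $(f,g)_* = (1+\gamma)\int f_s g_s + \int |\X_{ss}|^2 fg$, exactly as in Lemma~\ref{lem:tension}, since the right-hand side is a bounded linear functional on $H^1_0$ (here we only need the Dirichlet form to be coercive, which it is on $H^1_0(0,1)$ even without the potential term; the Fenchel-type argument is only needed on $\T$, but since the boundary conditions~\eqref{eq:rodboundaryconditions} put us in the free end setting the simpler argument suffices). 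For the \emph{a priori} bound~\eqref{eq:H1_lam}, test the equation against $\lambda$ and integrate by parts to obtain
\begin{equation}
(1+\gamma)\int_I |\lambda_s|^2 \, ds + \int_I |\X_{ss}|^2 \lambda^2 \, ds = (\gamma+2)\int_I (\X_{sss}\cdot\X_{ss})\lambda_s \, ds - \int_I |P_{\X_s}^\perp \X_{sss}|^2 \lambda \, ds - \eta \int_I \kappa_3 \, \X_s\cdot(\X_{ss}\times\X_{sss}) \, \lambda \, ds \, .
\end{equation}
The first two terms on the right are estimated precisely as in the proof of Lemma~\ref{lem:tension}, giving a contribution $\lesssim \|\X_{ss}\|_{H^1_s}^2 \|\lambda\|_{H^1_s}$, which after interpolating $\|\X_{sss}\|_{L^2_s} \lesssim \|\X_{ss}\|_{L^2_s}^{1/2}\|\X_{ssss}\|_{L^2_s}^{1/2}$ and $\|\X_{ss}\|_{L^\infty_s} \lesssim \|\X_{ss}\|_{L^2_s}^{1/2}\|\X_{sss}\|_{L^2_s}^{1/2}$ yields the first term in~\eqref{eq:H1_lam}. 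The new term is bounded by $\eta \|\kappa_3\|_{L^2_s}\|\X_{ss}\|_{L^\infty_s}\|\X_{sss}\|_{L^2_s}\|\lambda\|_{L^\infty_s}$; using $\|\X_{ss}\|_{L^\infty_s}\|\X_{sss}\|_{L^2_s} \lesssim \|\X_{ss}\|_{L^2_s}^{1/2}\|\X_{sss}\|_{L^2_s}^{3/2} \lesssim \|\X_{ss}\|_{L^2_s}^{5/4}\|\X_{ssss}\|_{L^2_s}^{3/4}$ and $\|\lambda\|_{L^\infty_s} \lesssim \|\lambda\|_{H^1_s}$, dividing through by $\|\lambda\|_{H^1_s}$, and using $\|\lambda\|_{H^1_s}^2 \lesssim$ the left-hand side (which controls $\|\lambda_s\|_{L^2_s}^2$ on $H^1_0(0,1)$, hence $\|\lambda\|_{H^1_s}^2$ by Poincaré), we obtain the second term in~\eqref{eq:H1_lam}.

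For the difference estimate~\eqref{eq:lamdiff_H1}, write, in analogy with~\eqref{eq:lam_strat},
\begin{equation}
\lambda^{(1)} - \lambda^{(2)} = \big[\mc{T}[\X_s^{(1)}]^{-1} - \mc{T}[\X_s^{(2)}]^{-1}\big](R^{(1)}) + \mc{T}[\X_s^{(2)}]^{-1}(R^{(1)} - R^{(2)}) \, ,
\end{equation}
where $\mc{T}[\X_s]^{-1}$ solves $(1+\gamma)\lambda_{ss} - |\X_{ss}|^2 \lambda = -\,(\cdot)$ and $R^{(k)}$ collects the three right-hand side contributions of~\eqref{eq:tension_0-repeat} for the $k$-th pair. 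The first bracket is handled exactly as the $q^{(1)}-q^{(2)}$ estimate in the proof of local well-posedness: expand $|\X_{ss}^{(2)}|^2 q^{(2)} - |\X_{ss}^{(1)}|^2 q^{(1)}$ as $|\X_{ss}^{(2)}|^2(q^{(2)}-q^{(1)}) + (|\X_{ss}^{(2)}|^2 - |\X_{ss}^{(1)}|^2) q^{(1)}$, giving a bound $\lesssim (\|\X_s^{(1)}\|_{H^2_s} + \|\X_s^{(2)}\|_{H^2_s}) \|\X_s^{(1)} - \X_s^{(2)}\|_{H^2_s} \|q^{(1)}\|_{H^1_s}$ with $q^{(1)} = \lambda^{(1)}$ estimated by~\eqref{eq:H1_lam}. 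The second term requires estimating $\|R^{(1)} - R^{(2)}\|_{(H^1_s)^*}$: the two ``curve-only'' pieces $|\X_{sss}|^2$ and $(\X_{sss}\cdot\X_{ss})_s$ are handled as in the local well-posedness proof, producing the factor $(1 + \|\X_s^{(1)}\|_{H^2_s}^2)$ contracted against $\|\X_s^{(1)}-\X_s^{(2)}\|_{H^2_s}$, while the twist piece $\eta \kappa_3 \X_s \cdot (\X_{ss}\times\X_{sss})$ is split as a sum of a term where $\kappa_3$ differences and terms where the curve differences; the former yields $\|\X_s^{(1)}\|_{H^2_s}\|\X_s^{(1)}\|_{H^3_s}\|\kappa_3^{(1)}-\kappa_3^{(2)}\|_{L^2_s}$ (pairing in $(H^1_s)^* \times H^1_s$ lets us distribute one derivative off $\X_{sss}^{(1)}$, and $\|\X_{ss}\|_{L^\infty_s} \lesssim \|\X_s\|_{H^2_s}$ absorbs the $L^\infty$ factor), while the latter produce $(\|\X_s^{(1)}\|_{H^2_s}+\|\X_s^{(2)}\|_{H^2_s})(1+\|\kappa_3^{(1)}\|_{H^1_s}+\|\kappa_3^{(2)}\|_{H^1_s})\|\X_s^{(1)}-\X_s^{(2)}\|_{H^2_s}$ after using $\|\kappa_3\|_{L^\infty_s}\lesssim\|\kappa_3\|_{H^1_s}$. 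Collecting everything and using $\|\mc{T}[\X_s^{(2)}]^{-1}\|_{(H^1_s)^* \to H^1_s} \lesssim 1$ gives~\eqref{eq:lamdiff_H1}.

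\textbf{Main obstacle.} The analytical content is entirely routine given Lemma~\ref{lem:tension} and the local well-posedness proof; the one point requiring care is the bookkeeping in the difference estimate, specifically making sure that in the twist term $\eta\kappa_3 \X_s\cdot(\X_{ss}\times\X_{sss})$ at most one factor ever needs an $H^3_s$ (i.e.\ $\X_{sss}$) bound — this is why that term must be paired against the test function in the $(H^1_s)^* \times H^1_s$ duality, integrating by parts to move a derivative off $\X_{sss}$ when $\kappa_3$ is \emph{not} the factor being differenced, so that the $\X_{sss}$-bound appears only in the very last line of~\eqref{eq:lamdiff_H1} multiplying $\|\kappa_3^{(1)}-\kappa_3^{(2)}\|_{L^2_s}$ (and not the higher power of $\|\X_s\|$ that would spoil the contraction argument where this lemma is applied).
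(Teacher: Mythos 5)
Your proposal is correct and follows essentially the same route as the paper: the $H^1_s$ bound comes from the energy identity for the tension equation with the new twist term handled as a perturbation and closed via the interpolation inequalities, and the difference bound uses exactly the decomposition~\eqref{eq:lam_strat} together with $(H^1_s)^*$ estimates on $R_\lambda^{(1)}-R_\lambda^{(2)}$ and the $q^{(1)}-q^{(2)}$ argument from the curve-only local well-posedness proof. One slip: the lemma is invoked in both the free end and closed loop settings (e.g., in the closed-loop branch of Proposition~\ref{prop:propagationproprod}), so you cannot dismiss $I=\T$ on the grounds of the boundary conditions~\eqref{eq:rodboundaryconditions}; there you do need the Fenchel-based coercivity argument from Lemma~\ref{lem:tension}, which you already have in hand, and the interpolation inequalities remain valid because $\X_{ss}$ has zero mean on $\T$.
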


\begin{proof}
Estimate~\eqref{eq:H1_lam} follows from analogous arguments to Lemma~\ref{lem:tension} using the form of \eqref{eq:tension_0-repeat}. 
To show the difference estimate \eqref{eq:lamdiff_H1}, as in the curve-only setting (see equation \eqref{eq:Rlambdak}), we let $R_\lambda^{(k)}$ denote the right-hand side of the tension equation \eqref{eq:tension_0-repeat} for curve-frame pair $(\X_s^{(k)},\kappa_3^{(k)})$, $k=1,2$. We may estimate the difference  
\begin{equation}\label{eq:Rlam_diff}
\begin{aligned}
    \norm{R_\lambda^{(1)}- R_\lambda^{(2)}}_{(H^1)^*} &\lesssim  \norm{\X_{s}^{(1)}-\X_{s}^{(2)}}_{H^2_s} \big(1 + (1+ \norm{\X_{s}^{(1)}}_{H^2_s}) \|\kappa_3^{(2)}\|_{H^1_s} \big)\times  \\ 
    &\quad 
    \times \big(\norm{\X_{s}^{(1)}}_{H^2_s} +\norm{\X_{s}^{(2)}}_{H^2_s}\big)
    + \norm{\X_{s}^{(1)}}_{H^2_s}\norm{\X_{s}^{(1)}}_{H^3_s}\|\kappa_3^{(1)}- \kappa_3^{(2)}\|_{L^2_s} \,.
\end{aligned}
\end{equation}
Defining $q^{(1)}$, $q^{(2)}$ as in \eqref{eq:qdef}, we may obtain the analogous estimate
\begin{equation}\label{eq:q_diff}
\begin{aligned}
    \norm{q^{(1)}-q^{(2)}}_{H^1_s}&\lesssim \big(\norm{\X_{s}^{(1)}}_{H^2_s} +\norm{\X_{s}^{(2)}}_{H^2_s}\big)\times\\
    &\quad \times\norm{\X_{s}^{(1)}-\X_{s}^{(2)}}_{H^2_s}
    \norm{\X_{s}^{(1)}}_{H^2_s}^2(1 + \|\kappa_3^{(1)}\|_{H^1_s})\,.
\end{aligned}
\end{equation}
From \eqref{eq:Rlam_diff} and \eqref{eq:q_diff}, we obtain \eqref{eq:lamdiff_H1} by the same arguments as in \eqref{eq:lam_strat}.  
\end{proof}


\subsubsection{Propagation estimates}\label{subsec:propagation_estimates}

Equipped with the bounds \eqref{eq:Xssss_free2} and \eqref{eq:Xssss_closed2} on $\X_{ssss}$, we may now turn to estimating $D(t)$.

\begin{proposition}
	\label{prop:propagationproprod}
Let $(\X_s,\kappa_3)$ be a strong solution on $[0,T]$ satisfying
\begin{equation}
\sup_{t \in [0,T]} \| \X_{ss} \|_{L^\infty}^2 \leq \frac{1-\delta}{\alpha} \, .
\end{equation}
Then, in the closed loop setting, 
\begin{equation}
    \label{eq:rodpropagationperiodic}
\begin{aligned}
\frac{1}{2}\frac{d}{dt} D(t) &\le -\frac{1}{2}\int_\T\abs{\p_t\X_{ss}}^2\,ds  
-\eta \int_\T \big(\p_t\X_s\cdot(\X_s\times\X_{ss})\big)^2\,ds - \frac{\eta^3}{2\alpha^2}\int_\T (\kappa_3)_{ss}^2\,ds\\
& \qquad + C(E,\delta) D(t)^2 + C(E,\delta) D(t)\,.
\end{aligned}
\end{equation}
In the free end setting,
\begin{equation}
    \label{eq:rodpropagationfreeend}
\begin{aligned}
\frac{1}{2}\frac{d}{dt} D(t) &\le -\frac{1}{2}\int_0^1\abs{\p_t\X_{ss}}^2\,ds  
-\eta \int_0^1 \big(\p_t\X_s\cdot(\X_s\times\X_{ss})\big)^2\,ds - \frac{\eta^3}{2\alpha^2}\int_0^1 (\kappa_3)_{ss}^2\,ds\\
& \qquad + e^{-c_0t}C(E^{\rm in},\delta)\big(D(t)^2 + D(t)\big) + C(\delta)(D(t) +D(t)^2)\,.
\end{aligned}
\end{equation}
\end{proposition}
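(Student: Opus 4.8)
The plan is to follow the strategy of Proposition~\ref{pro:differentialineq}: differentiate $D(t)$ in time, use the equations of motion and integration by parts to isolate the dissipative terms, then absorb everything that remains into those dissipative terms and into a remainder of the form $C(E,\delta)(D+D^2)$, drawing on the $\X_{ssss}$-bounds of Proposition~\ref{pro:rodrelationship}, the tension bounds of Lemma~\ref{lem:tension_rod}, interpolation, and -- crucially -- the curvature criterion $\|\X_{ss}\|_{L^\infty_s}^2\le(1-\delta)/\alpha$.

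\textbf{Differentiation.} Differentiating $D(t)$ from~\eqref{eq:Et_and_Dt}, substituting $(\mathbf{I}+\gamma\X_s\otimes\X_s)\wt{\bm{Z}}_s=-\p_t\X$ from~\eqref{eq:Xss_0-repeat}, and integrating by parts in $s$ (boundary terms vanish because $\wt{\bm{Z}}|_{s=0,1}=0$ in the free end case, and by periodicity in the closed loop case), I obtain
\[
\tfrac12\tfrac{d}{dt}D(t)=\int_I\p_t\X_s\cdot\p_t\wt{\bm{Z}}\,ds+\gamma\int_I(\X_s\cdot\wt{\bm{Z}}_s)(\p_t\X_s\cdot\wt{\bm{Z}}_s)\,ds+\tfrac{\eta^2}{\alpha}\int_I(\kappa_3)_s(\p_t\kappa_3)_s\,ds .
\]
Expanding $\p_t\wt{\bm{Z}}=\p_t\X_{sss}-\p_t(\lambda\X_s)-\eta\,\p_t(\kappa_3\X_s\times\X_{ss})$ and dropping the pieces that vanish by $\p_t\X_s\cdot\X_s=0$ and by the triple-product identity $\p_t\X_s\cdot(\p_t\X_s\times\X_{ss})=0$, the first integral contributes the advantageous term $-\int_I|\p_t\X_{ss}|^2\,ds$, the tension feedback $-\int_I\lambda|\p_t\X_s|^2\,ds$, and the twist-coupling terms $-\eta\int_I(\p_t\kappa_3)\,w\,ds$ and $-\eta\int_I\kappa_3\,\p_t\X_s\cdot(\X_s\times\p_t\X_{ss})\,ds$, where $w:=\p_t\X_s\cdot(\X_s\times\X_{ss})$. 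Integrating the $\tfrac{\eta^2}{\alpha}$-term by parts and inserting the twist equation in its two forms~\eqref{eq:kap3_0} and~\eqref{eq:kap3_0-repeat} (i.e. $w=\p_t\kappa_3-\tfrac\eta\alpha(\kappa_3)_{ss}=\eta|\X_{ss}|^2(\kappa_3)_{ss}+\mathcal{R}^{\rm f}$), the terms containing $\p_t\kappa_3$ and the $\tfrac{\eta^2}{\alpha}$-term recombine; keeping the full $-\tfrac{\eta^3}{\alpha^2}\int(\kappa_3)_{ss}^2\,ds$, discarding the manifestly non-negative contribution $\tfrac{\eta^3}{\alpha}\int|\X_{ss}|^2(\kappa_3)_{ss}^2\,ds$, and using the curvature criterion $\|\X_{ss}\|_{L^\infty_s}^2\le(1-\delta)/\alpha$ to absorb the $|\X_{ss}|$-weighted cross-term (retaining half of the $(\kappa_3)_{ss}$-dissipation as slack) yields the stated dissipative contributions $-\eta\int_I w^2\,ds$ and $-\tfrac{\eta^3}{2\alpha^2}\int_I(\kappa_3)_{ss}^2\,ds$. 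This is the exact analogue of the absorption step in the proof of Proposition~\ref{pro:rodrelationship}, where the curvature criterion makes the $|\X_{ss}|$-weighted pieces dominated by the twist dissipation $\tfrac{\eta^2}{\alpha}\int(\kappa_3)_s^2\,ds$.

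\textbf{Remainder.} It then remains to estimate $\int_I\lambda|\p_t\X_s|^2\,ds$, the $\gamma$-term, and the strong-coupling term $\eta\int_I\kappa_3\,\p_t\X_s\cdot(\X_s\times\p_t\X_{ss})\,ds$, always breaking off the $\p_t\X_{ss}$-contributions into the advantageous $-\int|\p_t\X_{ss}|^2$ via Cauchy's inequality and retaining only $-\tfrac12\int|\p_t\X_{ss}|^2$. For the tension term I use $\|\lambda\|_{H^1_s}$ from~\eqref{eq:H1_lam}, the $\X_{ssss}$-bound of Proposition~\ref{pro:rodrelationship}, and the interpolation $\|\p_t\X_s\|_{L^2_s}^2\lesssim\|\p_t\X\|_{L^2_s}\|\p_t\X_{ss}\|_{L^2_s}$ ($\p_t\X_s$ has zero mean in the loop case; the boundary conditions serve the same purpose for free ends), with $\|\p_t\X\|_{L^2_s}\lesssim D^{1/2}$; for the $\gamma$-term I use an $L^\infty_s$-interpolation for $\p_t\X_s$; for the strong-coupling term I use $\|\kappa_3\|_{L^\infty_s}^2\lesssim E+E^{1/2}D^{1/2}$ in the closed loop case (and $\lesssim E^{1/2}D^{1/2}$ for free ends, where $E$ decays) together with $\|\p_t\X_s\|_{L^2_s}^2\lesssim D$. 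Collecting these contributions gives~\eqref{eq:rodpropagationperiodic}. For the free end version~\eqref{eq:rodpropagationfreeend} I additionally split each coefficient $C(E,\delta)$ using $E(t)\le e^{-c_0t}E^{\rm in}$ from~\eqref{eq:E_decay} into an exponentially decaying part $e^{-c_0t}C(E^{\rm in},\delta)$ and an $E$-independent part $C(\delta)$, exactly as in the free end half of Proposition~\ref{pro:differentialineq}.

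\textbf{Main obstacle.} The difficulty, not present in the curve-only case, is the strong curve--frame coupling: differentiating $D$ generates products of $(\kappa_3)_{ss}$ (and of $\p_t\kappa_3$ and $w$) with $\bm{Z}_s$-type quantities carrying a factor of $\X_{ss}$, and these have no a priori sign -- the $H^2_s$-level manifestation of the apparent backward heat in~\eqref{eq:kappa3strongcoupling}. Without a smallness mechanism they would overwhelm the dissipation; the curvature criterion $\|\X_{ss}\|_{L^\infty_s}^2\le(1-\delta)/\alpha$ is precisely what makes the factor $|\X_{ss}|$ pair with the $\tfrac{\eta^2}{\alpha}$-weighted twist dissipation so that these cross-terms can be absorbed with a definite amount of slack. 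A secondary technical point is that $\p_t\X_{ss}$ -- which contains $(\kappa_3)_{sss}$ -- is only known a priori to lie in $L^2_{t,s}$ for a strong solution, so every estimate touching $\p_t\X_{ss}$ must be arranged so that only $\int|\p_t\X_{ss}|^2$ is invoked and is always absorbed into the advantageous term; indeed this proposition is what ultimately delivers $\p_t\X_{ss}\in L^2_{t,s}$.
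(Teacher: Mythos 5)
Your overall architecture — differentiate $D$, split into $J_1,J_2,J_3$, keep $-\int|\p_t\X_{ss}|^2$, then estimate the tension term, the $\gamma$-term, and the strong-coupling term — matches the paper, and those three remainder estimates are essentially the paper's. The gap is in the one step you present as routine: absorbing the twist cross term. After inserting the twist equation, the $\p_t\kappa_3$-terms from $J_1$ and $J_3$ combine into
\begin{equation*}
-\eta\int_I w^2\,ds-\frac{\eta^3}{\alpha^2}\int_I(\kappa_3)_{ss}^2\,ds-\frac{2\eta^2}{\alpha}\int_I(\kappa_3)_{ss}\,w\,ds\,,\qquad w:=\p_t\X_s\cdot(\X_s\times\X_{ss})\,,
\end{equation*}
which is exactly $-\eta\int_I(\p_t\kappa_3)^2\,ds$: the quadratic form is a perfect square, so there is \emph{zero} slack, and no pointwise Young inequality pairing $(\kappa_3)_{ss}$ against $w$ can retain both $-\eta\int w^2$ and $-\frac{\eta^3}{2\alpha^2}\int(\kappa_3)_{ss}^2$. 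The curvature criterion does not rescue this, because the coefficient $\frac{\eta^2}{\alpha}$ of the cross term carries no factor of $|\X_{ss}|$ — contrast Proposition~\ref{pro:rodrelationship}, where the cross term is $2\eta\int\bm{Z}_s\cdot(\kappa_3\X_s\times\X_{ss})_s$ and the criterion is precisely the positivity condition; that is the false analogy you are leaning on. If you instead use $|w|\le|\X_{ss}||\p_t\X_s|\le\alpha^{-1/2}(1-\delta)^{1/2}|\p_t\X_s|$, you are left with $\frac{C}{\alpha}\|\p_t\X_s\|_{L^2_s}^2$, which after interpolation against $\|\p_t\X_{ss}\|_{L^2_s}$ leaves a remainder $\frac{C}{\alpha^2}D$ — an $\alpha$-dependent constant rather than the claimed $C(E,\delta)$, which would wreck the $\log(1/\alpha)$ threshold in Theorem~\ref{thm:GWP_curveframe}. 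And if you literally substitute $w=\eta|\X_{ss}|^2(\kappa_3)_{ss}+\mc{R}^{\rm f}$ as written, you generate $\frac{\eta^2}{\alpha}\int\mc{R}^{\rm f}(\kappa_3)_{ss}\,ds$, which contains $\frac{\eta^2}{\alpha}\int\X_{sssss}\cdot(\X_s\times\X_{ss})(\kappa_3)_{ss}\,ds$; for a strong solution $\X_{sssss}$ is not bounded pointwise in time by $E$ and $D$, and your remainder list never addresses this term.

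The correct handling keeps each copy of the cross term in the form $\frac{\eta^2}{\alpha}\int(\kappa_3)_{ss}\,\p_t\X_s\cdot(\X_s\times\X_{ss})\,ds$, bounds it by $\frac{\eta^2}{\alpha}\|(\kappa_3)_{ss}\|_{L^2_s}\|\p_t\X_s\|_{L^\infty_s}\|\X_{ss}\|_{L^2_s}$ — using $\|\X_{ss}\|_{L^2_s}\lesssim E^{1/2}$, not the $L^\infty$ curvature bound — then gives up only $\varepsilon\frac{\eta^3}{\alpha^2}\|(\kappa_3)_{ss}\|_{L^2_s}^2$ (so the Young constant on the other factor is $\eta/(2\varepsilon)$, free of $\alpha$), and finally absorbs $\|\p_t\X_s\|_{L^\infty_s}^2\|\X_{ss}\|_{L^2_s}^2\lesssim\|\wt{\bm{Z}}_s\|_{L^2_s}^{1/2}\|\p_t\X_{ss}\|_{L^2_s}^{3/2}E$ into $-\frac12\int|\p_t\X_{ss}|^2$, leaving $C(E,\delta)D$. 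The $-\eta\int w^2$ dissipation is never touched. Note also that the curvature criterion enters this proposition only indirectly, through Proposition~\ref{pro:rodrelationship} (to control $\|\X_{ssss}\|_{L^2_s}$ and hence $\|\lambda\|_{L^\infty_s}$ by $D$ and $E$), not through the cross-term absorption.
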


\begin{proof}
Differentiating $D(t)$ in time, the following identity holds and can be justified for strong solutions:
 \begin{equation}
\begin{aligned}
\frac{1}{2}\frac{d}{dt} D(t) = \frac{1}{2}\frac{d}{dt}\int_{I}&\bigg(\big|\wt{\bm{Z}}_s\big|^2 +\gamma(\X_s\cdot\wt{\bm{Z}}_s)^2 +\frac{\eta^2}{\alpha} (\kappa_3)_s^2\bigg)\,ds = J_1+J_2+J_3\,,\\
J_1&= \int_{I} \big(({\bf I}+\gamma\X_s\otimes\X_s)\wt{\bm{Z}}_s\big)\cdot\p_t\wt{\bm{Z}}_s\,ds\,, \\
J_2&= \gamma\int_{I} (\X_s\cdot \wt{\bm{Z}}_s)(\p_t\X_s\cdot \wt{\bm{Z}}_s)\,ds\,, \\
J_3&= \frac{\eta^2}{\alpha}\int_{I}(\kappa_3)_s\p_t(\kappa_3)_s\,ds\,.
\end{aligned}
\end{equation}
We have
\begin{equation}
\begin{aligned}
J_1 &= - \int_{I} \big(({\bf I}+\gamma\X_s\otimes\X_s)\wt{\bm{Z}}_s\big)_s\cdot\p_t\wt{\bm{Z}}\,ds 
= \int_{I} \p_t\X_s\cdot\p_t\wt{\bm{Z}}\,ds\\
&= \int_{I} \p_t\X_s\cdot(\p_t\X_{sss} - \lambda\p_t\X_s - \eta \p_t\kappa_3\X_s\times\X_{ss}-\eta\kappa_3\X_s\times\p_t\X_{ss})\,ds\,,
\end{aligned}
\end{equation}
where we have used that $\p_t\wt{\bm{Z}}\big|_{s=0,1}=0$ in the free end setting and $\X_s\cdot\p_t\X_s=0$ for both free ends and the closed loop. Upon integrating by parts and replacing $\p_t\kappa_3$ by \eqref{eq:kap3_0-repeat}, we obtain 
\begin{equation}\label{eq:J1}
\begin{aligned}
J_1 &=  -\int_{I}\abs{\p_t\X_{ss}}^2\,ds  
-\eta \int_{I} \big(\p_t\X_s\cdot(\X_s\times\X_{ss})\big)^2\,ds - \int_{I}\lambda\abs{\p_t\X_s}^2\,ds\\
&\qquad  - \frac{\eta^2}{\alpha}\int_{I}(\kappa_3)_{ss}\p_t\X_s\cdot(\X_s\times\X_{ss})\,ds
-\eta \int_{I} \kappa_3\p_t\X_s\cdot(\X_s\times\p_t\X_{ss})\,ds\,.
\end{aligned}
\end{equation}
The first two terms in \eqref{eq:J1} have a favorable sign, but we will need to estimate the remaining three terms. As in the curve-only setting, we estimate
\begin{equation}
\abs{\int_{I}\lambda\abs{\p_t\X_s}^2\,ds} \leq \norm{\lambda}_{L^\infty_s}\norm{\p_t\X_s}_{L^2_s}^2\,,
\end{equation}
where $\norm{\lambda}_{L^\infty_s}$ may be estimated using the $H^1_s$ bound \eqref{eq:H1_lam} of Lemma \ref{lem:tension_rod}.

Again following the curve-only setting, we now consider the closed loop and free end filaments separately. 

\emph{Closed loop}.
We begin by noting that, as in \eqref{eq:ptXs_interp0}, the following interpolation inequality holds for $\norm{\p_t\X_s}_{L^2_s}^2$: 
\begin{equation}\label{eq:ptXs_interp}
\norm{\p_t\X_s}_{L^2_s}^2 \lesssim \norm{\wt{\bm{Z}}_s}_{L^2_s}\norm{\p_t\X_{ss}}_{L^2_s}\,,
\end{equation}
where we have used that $\norm{\p_t\X}_{L^2_s}\lesssim \norm{\wt{\bm{Z}}_s}_{L^2_s}$. Combining \eqref{eq:H1_lam} and \eqref{eq:ptXs_interp}, we obtain 
\begin{equation}
\begin{aligned}
\abs{\int_\T\lambda\abs{\p_t\X_s}^2\,ds} &\le \varepsilon\norm{\p_t\X_{ss}}_{L^2_s}^2 + C(\varepsilon)\,\norm{\X_{ss}}_{L^2_s}^2\norm{\X_{ssss}}_{L^2_s}^2\norm{\wt{\bm{Z}}_s}_{L^2_s}^2 \\
&\qquad + C(\varepsilon)\,\norm{\kappa_3}_{L^2_s}^2\norm{\X_{ss}}_{L^2_s}^{5/2}\norm{\X_{ssss}}_{L^2_s}^{3/2}\norm{\wt{\bm{Z}}_s}_{L^2_s}^2
\end{aligned}
\end{equation}
for any $\varepsilon>0$. As before, the first term above will be absorbed into the signed first term of $J_1$, while the second term above may be estimated as in \eqref{eq:Es_and_Ds}, using \eqref{eq:Xssss_closed2} to obtain
\begin{equation}\label{eq:J1bd1}
 \norm{\X_{ss}}_{L^2_s}^2\norm{\X_{ssss}}_{L^2_s}^2\norm{\wt{\bm{Z}}_s}_{L^2_s}^2 \le
 C(E,\delta)D(t)\norm{\wt{\bm{Z}}_s}_{L^2_s}^2 + C(E,\delta)\norm{\wt{\bm{Z}}_s}_{L^2_s}^2\,.
\end{equation}
For the third term, again using \eqref{eq:Xssss_closed2}, we have 
\begin{equation}\label{eq:J1bd2}
\begin{aligned}
\norm{\kappa_3}_{L^2_s}^2\norm{\X_{ss}}_{L^2_s}^{5/2}\norm{\X_{ssss}}_{L^2_s}^{3/2}\norm{\wt{\bm{Z}}_s}_{L^2_s}^2 
&\lesssim E(t)^{9/4}\norm{\X_{ssss}}_{L^2_s}^{3/2}\norm{\wt{\bm{Z}}_s}_{L^2_s}^2 \\
&\leq C(E,\delta)D(t)^{7/4}+ C(E,\delta)D(t)\,.
\end{aligned}
\end{equation}
We next turn to the fourth term of $J_1$. In the periodic setting, we have
\begin{equation}\label{eq:kap3ss_1}
\begin{aligned}
\abs{\frac{\eta^2}{\alpha}\int_{\T}(\kappa_3)_{ss}\p_t\X_s\cdot(\X_s\times\X_{ss})\,ds} &\leq \frac{\eta^2}{\alpha}\norm{(\kappa_3)_{ss}}_{L^2_s} \norm{\p_t\X_s}_{L^\infty_s}\norm{\X_{ss}}_{L^2_s}\\
&\leq \varepsilon \frac{\eta^3}{\alpha^2}\norm{(\kappa_3)_{ss}}_{L^2_s}^2  + C(\varepsilon) \norm{\p_t\X_s}_{L^\infty_s}^2\norm{\X_{ss}}_{L^2_s}^2\,,
\end{aligned}
\end{equation}
where the $(\kappa_3)_{ss}$ will later be absorbed into a coercive term $\frac{\eta^3}{\alpha^2}\norm{(\kappa_3)_{ss}}_{L^2_s}^2$ arising from $J_3$. The remaining term satisfies 
\begin{equation}\label{eq:kap3ss_2}
\begin{aligned}
\norm{\p_t\X_s}_{L^\infty_s}^2\norm{\X_{ss}}_{L^2_s}^2 &\lesssim 
\norm{\wt{\bm{Z}}_s}_{L^2_s}^{1/2}\norm{\p_t\X_{ss}}_{L^2_s}^{3/2}\norm{\X_{ss}}_{L^2_s}^2 \\
&\le \varepsilon\norm{\p_t\X_{ss}}_{L^2_s}^2 + C(\varepsilon)\,\norm{\wt{\bm{Z}}_s}_{L^2_s}^2\norm{\X_{ss}}_{L^2_s}^8\\
&\le
\varepsilon\norm{\p_t\X_{ss}}_{L^2_s}^2 + C(\varepsilon,\delta)\,D(t) E(t)^4
\end{aligned}
\end{equation}
for any $\varepsilon>0$.
We may estimate the final term in $J_1$ as 
\begin{equation}\label{eq:J1bd3}
\begin{aligned}
\abs{\eta\int_\T \kappa_3\p_t\X_s\cdot(\X_s\times\p_t\X_{ss})\,ds} &\lesssim \norm{\kappa_3}_{L^2_s}\norm{\p_t\X_s}_{L^\infty_s}\norm{\p_t\X_{ss}}_{L^2_s}\\
&\lesssim 
\norm{\kappa_3}_{L^2_s}\norm{\wt{\bm{Z}}_s}_{L^2_s}^{1/4}\norm{\p_t\X_{ss}}_{L^2_s}^{7/4} \\
&\le 
C(\varepsilon)\,\norm{\kappa_3}_{L^2_s}^8\norm{\wt{\bm{Z}}_s}_{L^2_s}^2 + \varepsilon\norm{\p_t\X_{ss}}_{L^2_s}^2\\
&\le C(\varepsilon,\delta)\,E(t)^4D(t) + \varepsilon\norm{\p_t\X_{ss}}_{L^2_s}^2
\end{aligned}
\end{equation}
for any $\varepsilon>0$, where we have used an interpolation inequality for $\p_t\X_s$ and that $\norm{\p_t\X}_{L^2_s} \lesssim \norm{\wt{\bm{Z}}_s}_{L^2_s}$. 
In total, for the closed filament, we may obtain the $J_1$ bound
\begin{equation}\label{eq:J1bound}
\abs{J_1} \le \frac{1}{4} \norm{\p_t\X_{ss}}_{L^2_s}^2 + \frac{\eta^3}{4\alpha^2} \norm{(\kappa_3)_{ss}}_{L^2_s}^2 + C(E,\delta) D(t)^2 + C(E,\delta) D(t)\,.
\end{equation}

The term $J_2$ may be estimated exactly as in the curve-only setting. For the periodic filament, we have
\begin{equation}
\abs{J_2}\lesssim \norm{\p_t\X_s}_{L^\infty_s}\norm{\bm{Z}_s}_{L^2_s}^2
\le \varepsilon\norm{\p_t\X_{ss}}_{L^2_s}^2 + C(\varepsilon)\,\norm{\bm{Z}_s}_{L^2_s}^4
\le \varepsilon\norm{\p_t\X_{ss}}_{L^2_s}^2 + C(\varepsilon,\delta)\,D(t)^2
\end{equation}
for any $\varepsilon>0$. 

To estimate $J_3$, we observe that, upon differentiating~\eqref{eq:kap3_0-repeat}, we have
\begin{equation}\label{eq:kap3_s}
\p_t(\kappa_3)_s = \big(\p_t\X_s\cdot(\X_s\times\X_{ss})\big)_s + \frac{\eta}{\alpha}(\kappa_3)_{sss}\,.
\end{equation}
We thus rewrite $J_3$ as 
\begin{equation}\label{eq:J3}
\begin{aligned}
J_3 &= \frac{\eta^2}{\alpha}\int_\T(\kappa_3)_s\bigg( \big(\p_t\X_s\cdot(\X_s\times\X_{ss})\big)_s + \frac{\eta}{\alpha}(\kappa_3)_{sss}\bigg)\,ds\\
&= - \frac{\eta^3}{\alpha^2}\int_\T (\kappa_3)_{ss}^2\,ds - \frac{\eta^2}{\alpha}\int_\T(\kappa_3)_{ss}\p_t\X_s\cdot(\X_s\times\X_{ss})\,ds\,.
\end{aligned}
\end{equation}
The first term is coercive, while the second term has already been estimated in \eqref{eq:kap3ss_1} and \eqref{eq:kap3ss_2}. Note from \eqref{eq:J1} that these terms appear with the same sign and do not cancel.

Combining the estimates for $J_1$, $J_2$, and $J_3$ for the periodic filament, we obtain~\eqref{eq:rodpropagationperiodic}.

\emph{Free ends}.
In the free end setting, the same type of bounds \eqref{eq:J1bd1} and \eqref{eq:J1bd2} hold for the third term of $J_1$, except that we may now build in the exponential decay \eqref{eq:E_decay} of $E(t)$ to obtain 
\begin{equation}
\begin{aligned}
\abs{\int_0^1\lambda\abs{\p_t\X_s}^2\,ds} &\le \varepsilon\norm{\p_t\X_{ss}}_{L^2_s}^2 + e^{-c_0t}C(E^{\rm in},\varepsilon,\delta)\big(D(t)^2 + D(t)\big)\\ 
&\qquad + C(\varepsilon,\delta)(D(t) +D(t)^2)\,.
\end{aligned} 
\end{equation}
Here the additional $D(t) +D(t)^2$ terms come from \eqref{eq:Xssss_free2}.

For the fourth term of $J_1$, instead of \eqref{eq:kap3ss_2}, we need to include lower order terms in the $\p_t\X_s$ interpolation inequality:
\begin{equation}\label{eq:twice}
\begin{aligned}
\norm{\p_t\X_s}_{L^\infty_s}^2\norm{\X_{ss}}_{L^2_s}^2 &\lesssim \norm{\wt{\bm{Z}}_s}_{L^2_s}^{1/2}\norm{\p_t\X}_{H^2_s}^{3/2}\norm{\X_{ss}}_{L^2_s}^2\\
&\lesssim \big( \norm{\wt{\bm{Z}}_s}_{L^2_s}^2 + \norm{\wt{\bm{Z}}_s}_{L^2_s}^{1/2}\norm{\p_t\X_{ss}}_{L^2_s}^{3/2}\big)\norm{\X_{ss}}_{L^2_s}^2 \\
&\le \varepsilon\norm{\p_t\X_{ss}}_{L^2_s}^2 + e^{-c_0t}C(E^{\rm in},\varepsilon)\norm{\wt{\bm{Z}}_s}_{L^2_s}^2
\end{aligned}
\end{equation}
for any $\varepsilon>0$. Similarly, for the fifth term of $J_1$, we must replace \eqref{eq:J1bd3} with
\begin{equation}
\begin{aligned}
\norm{\kappa_3}_{L^2_s}\norm{\p_t\X_s}_{L^\infty_s}\norm{\p_t\X_{ss}}_{L^2_s}
&\lesssim 
\norm{\kappa_3}_{L^2_s}\norm{\wt{\bm{Z}}_s}_{L^2_s}^{1/4}\norm{\p_t\X}_{H^2_s}^{7/4} \\
&\lesssim 
\norm{\kappa_3}_{L^2_s}(\norm{\wt{\bm{Z}}_s}_{L^2_s}^2+ \norm{\wt{\bm{Z}}_s}_{L^2_s}^{1/4}\norm{\p_t\X_{ss}}_{L^2_s}^{7/4}) \\
&\le 
\varepsilon\norm{\p_t\X_{ss}}_{L^2_s}^2 + e^{-c_0t}C(E^{\rm in},\varepsilon)\norm{\wt{\bm{Z}}_s}_{L^2_s}^2 
\end{aligned}
\end{equation}
for any $\varepsilon>0$.
In total, for the free-ended filament, we may bound
\begin{equation}
\begin{aligned}
\abs{J_1} &\le \frac{1}{4}\norm{\p_t\X_{ss}}_{L^2_s}^2 + \frac{\eta^3}{4\alpha^2}\norm{(\kappa_3)_{ss}}_{L^2_s}^2 \\
&\quad+ e^{-c_0t}C(E^{\rm in},\delta)\big(D(t)^2 + D(t)\big) + C(\delta)(D(t) +D(t)^2)\,.
\end{aligned}
\end{equation}

The term $J_2$ may again be estimated as in the curve-only setting \eqref{eq:free_est1}, \eqref{eq:free_est2}, yielding
\begin{equation}
\abs{J_2} \le \varepsilon\norm{\p_t\X_{ss}}_{L^2_s}^2 + C(\varepsilon,\delta)(D(t) +D(t)^2)
\end{equation}
for $\varepsilon>0$. The term $J_3$ is again given by \eqref{eq:J3}, and the second term in the free end setting may be estimated as in \eqref{eq:twice}. 

Combining the free end estimates for $J_1$, $J_2$, and $J_3$, we have~\eqref{eq:rodpropagationfreeend}.
\end{proof}

\subsection{Conditional GWP and convergence to equilibrium}\label{subsec:GWP_rodframe}

\subsubsection{Conditional GWP}

We now demonstrate Theorem \ref{thm:GWP_curveframe} that, under certain conditions, the local-in-time solution $(\X_s,\kappa_3)$ to \eqref{eq:Xss_0}-\eqref{eq:kap3_0} may be continued to a global solution. 

We begin by summarizing the consequences of Proposition~\ref{prop:propagationproprod}. Namely, by Gr{\"o}nwall's inequality, we have the \emph{a priori} estimate
\begin{equation}
    \label{eq:aprioriestwithmyd}
D(t) \leq e^{C(E^{\rm in},\delta)} [D^{\rm in} + C(E^{\rm in},\delta)] \, ,
\end{equation}
valid for a strong solution on a time interval $[0,T]$ in both free end and closed loop settings. To prove that certain strong solutions can be continued indefinitely, we should leverage the above estimate to propagate the condition $\| \X_{ss} \|_{L^\infty_s}^2 \leq \frac{1-\delta}{\alpha}$; we choose $\delta = \frac{1}{2}$.

To propagate this condition, we use the Sobolev embedding
\begin{equation}\label{eq:Xss_Xssss_embedding}
\norm{\X_{ss}}_{L^\infty_s}\lesssim \norm{\X_{ssss}}_{L^2_s}
\end{equation}
for both the free ended filament (due to the homogeneous boundary conditions) and the closed loop (since $\X_{ss}$ has zero mean). Recall that $\| \X_{ssss} \|_{L^2_s}^2$ is controlled in Proposition~\ref{pro:rodrelationship}, which we combine with~\eqref{eq:aprioriestwithmyd} and $E(t) \leq E^{\rm in}$ to obtain
\begin{equation}
\| \X_{ss} \|_{L^\infty_s}^2 \leq C e^{C(E^{\rm in})} (D^{\rm in} + 1) \, .
\end{equation}
The right-hand side is bounded by $\frac{1}{2\alpha}$ provided
\begin{equation}
\| \X_{ssss} \|_{L^2_s} \leq \frac{1}{C} \log \frac{1}{C\alpha} \, .
\end{equation}

\subsubsection{Convergence to equilibrium}
Suppose that $(\X_s,\kappa_3)$ is a global-in-time strong solution  satisfying, additionally,
\begin{equation}
\sup_{t \in [0,+\infty)} \| \X_{ss} \|_{L^\infty_s}^2 < \frac{1}{\alpha} \, .
\end{equation}

\emph{Free end}. Following the arguments in Section~\ref{sec:decayimmersednoforcing} (which rely only on the differential inequalities for $E$ and $D$), we can demonstrate in the free end setting that both $E$ and $D$ converge to zero exponentially as $t \to +\infty$ and, from~\eqref{eq:rodpropagationfreeend}, that $\| \p_t \X_{ss} \|_{L^2_s}$ has time-integrated exponential decay. From here, the main point is to demonstrate that the curve $\X$ and frame $\e$ themselves are stabilizing. We demonstrate that $\X$ is stabilizing in $L^2_s$ using~\eqref{eq:followmeforstabilizing}. Since
\begin{equation}
    \label{eq:sinceframe}
\p_s \e = (\X_{ss} \cdot \e) \X_s + \kappa_3 \X_s \times \e \text{ and } |\e|^2 = 1 \, ,
\end{equation}
we further obtain that $\p_s \e$ is converging to zero exponentially in $L^2_s$. Finally, we prove that $\e$ is stabilizing in $L^2_s$ by time-integrating
\begin{equation}
\p_t \e = - (\p_t \X_s \cdot \e)\X_s + \frac{\eta}{\alpha} (\kappa_3)_s \, ,
\end{equation}
 as we did for the curve equation in~\eqref{eq:followmeforstabilizing}, and exploiting the time-integrated exponential decay of $\p_t \X_s$ obtained by interpolating between $D^{1/2}$ and $\| \p_t \X_{ss} \|_{L^2_s}$. This completes the proof for the free-ended rod.

\emph{Closed loop}. We now focus on the closed loop. By the propagation estimates in Proposition~\ref{prop:propagationproprod} and arguments in Section~\ref{sec:decayimmersednoforcing}, we have
\begin{equation}
E(t) \leq E^{\rm in} \, , \quad \lim_{t \to +\infty} D(t) = 0 \, .
\end{equation}
These control $\X_{ss}$ and $\kappa_3$ but not immediately $\X$ and $\e$. As in the curve-only setting, we define $\wt{\X} = \X - \int_\T \X(s,t) \, ds$. By~\eqref{eq:sinceframe}, we have
\begin{equation}
\limsup_{t \to +\infty} \| \e \|_{H^2_s}^2 < +\infty \, .
\end{equation}

First, we demonstrate that \emph{any sequence $(\wt{\X},\e)(s,t_k)$ with $t_k \to +\infty$ contains a subsequence (not relabeled) converging in $H^4_s \times H^2_s$ to a steady state.} By weak compactness in $H^4_s \times H^2_s$ and the Rellich-Kondrachov theorem, there exists a subsequence satisfying
\begin{equation}
\wt{\X}(s,t_k) \to \bar{\X} \text{ in } H^{4-\varepsilon}_s \, , \; \forall \varepsilon \in (0,4] \, , \qquad \wt{\X}(s,t_k) \rightharpoonup \bar{\X} \text{ in } H^4_s
\end{equation}
\begin{equation}
\e(s,t_k) \to \bar{\e} \text{ in } H^{2-\varepsilon}_s \, , \; \forall \varepsilon \in (0,2] \, , \qquad \e(s,t_k) \rightharpoonup \bar{\e} \text{ in } H^{2}_s 
\end{equation}
\begin{equation}
\lambda[\wt{\X}(s,t_k),\e(s,t_k)] \to \lambda[\bar{\X},\bar\e] \text{ in } H^1_s \, .
\end{equation}
The gradient 
\begin{equation}
\nabla_g E = \big((\nabla_g E)_1, (\nabla_g E)_2\big) = \big( (\mathbf{I}+\gamma \X_s \otimes \X_s) \wt{\Z}_s , - ((\nabla_g E)_{1,s} \cdot \e) \X_s - \frac{\eta}{\alpha} \p_s \kappa_3 \X_s \times \e\big)
\end{equation}
is weakly continuous in $H^{-1}_s$ with respect to the above convergences, so $(\bar\X,\bar\e)$ is a steady state:
\begin{equation}
D[\bar\X,\bar\e] = 0 \, .
\end{equation}
The weak convergence of $\wt{\X}_{ssss}$ in $L^2_s$ becomes strong convergence once it is known that $\| \wt{\X}_{ssss} \|_{L^2_s} \to \| \bar{\X}_{ssss} \|_{L^2_s}$; this holds, since $D[\X,\e] \to 0 = D[\bar\X,\bar\e]$ and $(\lambda[\X_s] \X_s + \kappa_3 \X_s \times \X_{ss})_s$ converges strongly in $L^2_s$. 
Next, since $\p_s \kappa_3 \to 0$ strongly in $L^2_s$, $\e$ converges strongly in~$H^2_s$, which proves the claim.

The remainder of the analysis is an application of the Lojasiewicz inequality in Proposition~\ref{pro:rodlojascewicz}, as in the curve-only setting, with straightforward modifications.

\qed

\subsection{Local well-posedness}\label{subsec:LWP_rodframe}
Finally, in this section, we prove Proposition~\ref{thm:wellposednesstheoryrod}. As a technical tool, we consider a curve-frame evolution with \emph{hyperviscosity} $\nu \p_s^4 \kappa_3$ ($\nu > 0$) in the frame variable:
\begin{equation}
\p_t\kappa_3 + \nu(\kappa_3)_{ssss} = \bigg(\frac{\eta}{\alpha}+\eta\abs{\X_{ss}}^2\bigg)(\kappa_3)_{ss} +
     \mc{R}^{\rm f}[\X_s,\kappa_3] \, ,
\end{equation}
where $\mc{R}^{\rm f}$ is as in~\eqref{eq:kap3_0-repeat}, augmented with the boundary conditions
\begin{equation}
\kappa\,,\, \p_s^2 \kappa_3\big|_{s=0,1} = 0 \, .
\end{equation}

Local well-posedness of the hyperviscous system is shown in Appendix~\ref{sec:hyperviscous}. The computation of $\frac{1}{2} \frac{dD}{dt}$ from Section~\ref{sec:energeticsrod} can be justified for solutions to the hyperviscous equations, which yields quantitative estimates on $D(t)$ pointwise-in-time. Our strong solutions will be obtained in the limit $\nu \to 0^+$.

We first prove the following existence statement and establish uniqueness afterward.

\begin{proposition}[Short-time existence]
    \label{pro:shorttimeexistence}
Let
\begin{equation}
(\X_{s}^{\rm in},\kappa_3^{\rm in}) \in H^3 \times H^1(I)
\end{equation}
\begin{equation}
    \label{eq:initialdata1overalpha}
|\X_{ss}^{\rm in}|^2 \leq \frac{1-2\delta}{\alpha} \, .
\end{equation}
Then there exists $T > 0$, depending only on $\| \X_{ss}^{\rm in} \|_{H^2_s}$, $\| \kappa_3^{\rm in} \|_{H^1_s}$, and $\delta$, such that a strong solution exists on $[0,T]$ with initial data $(\X_s^{\rm in},\kappa_3^{\rm in})$.
\end{proposition}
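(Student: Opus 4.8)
The plan is to obtain the strong solution as a vanishing-hyperviscosity limit, $\nu \to 0^+$, of solutions to the regularized system, using the $D$-propagation estimate of Proposition~\ref{prop:propagationproprod} as the master \emph{a priori} bound and a continuity argument to propagate the curvature condition. Fix $\delta$ as in~\eqref{eq:initialdata1overalpha}. For each $\nu \in (0,1]$, Appendix~\ref{sec:hyperviscous} furnishes a local-in-time solution $(\X_s^\nu,\kappa_3^\nu)$ of the hyperviscous equations with the given initial data, regular enough that the energy identities of Section~\ref{sec:energeticsrod} and the computation of $\frac{1}{2}\frac{d}{dt}D$ are rigorous. Pairing the $\kappa_3$-equation with $\eta\kappa_3$, the term $\nu(\kappa_3)_{ssss}$ contributes $-\nu\eta\|(\kappa_3)_{ss}\|_{L^2_s}^2 \le 0$ to $\frac{d}{dt}E$, so $\p_t E \le -D$ and hence $E(t) \le E^{\rm in}$ and $\nu\|(\kappa_3^\nu)_{ss}\|_{L^2_tL^2_s}^2 \le E^{\rm in}/\eta$, both uniformly in $\nu$. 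In the same way, the extra $\nu$-terms appearing in $\frac{1}{2}\frac{d}{dt}D$ are controlled by the favorable hyperviscous dissipation together with contributions absorbed into $C(E,\delta)D$, so the conclusion of Proposition~\ref{prop:propagationproprod} holds for $(\X_s^\nu,\kappa_3^\nu)$ with $\nu$-independent constants, valid as long as $\|\X_{ss}^\nu\|_{L^\infty_s}^2 \le \frac{1-\delta}{\alpha}$.

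Let $T_\nu^* > 0$ be the supremum of times $t$ such that the hyperviscous solution exists on $[0,t]$ and $\|\X_{ss}^\nu(\cdot,\tau)\|_{L^\infty_s}^2 \le \frac{1-\delta}{\alpha}$ for all $\tau \le t$; this is positive, since $|\X_{ss}^{\rm in}|^2 \le \frac{1-2\delta}{\alpha}$ and $\tau \mapsto \|\X_{ss}^\nu(\cdot,\tau)\|_{L^\infty_s}$ is continuous. On $[0,T_\nu^*)$, Proposition~\ref{prop:propagationproprod} with $E \le E^{\rm in}$ gives $\frac{d}{dt}D \le C(E^{\rm in},\delta)(D^2+D)$, so comparison with the scalar Riccati equation yields $D^\nu(t) \le D_0$ and $\int_0^t\|\p_t\X_{ss}^\nu\|_{L^2_s}^2\,d\tau \le C_0$ for $t \le T_0$, where $D_0,C_0,T_0 > 0$ depend only on $E^{\rm in}$, $D^{\rm in}$, and $\delta$, and where both $E^{\rm in}$ and $D^{\rm in}$ are bounded in terms of $\|\X_{ss}^{\rm in}\|_{H^2_s}$ and $\|\kappa_3^{\rm in}\|_{H^1_s}$ via $|\X_s^{\rm in}|=1$ and the tension estimate~\eqref{eq:H1_lam}. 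To close the bootstrap, note that on $[0,\min(T_0,T_\nu^*))$ Proposition~\ref{pro:rodrelationship} bounds $\|\X_{ssss}^\nu(\cdot,t)\|_{L^2_s}$ in terms of $D_0$, $E^{\rm in}$, $\delta$; since $\X_{ss}^\nu(\cdot,t) - \X_{ss}^{\rm in} = \int_0^t \p_t\X_{ss}^\nu\,d\tau$, interpolation with this $H^2$-bound gives $\|\X_{ss}^\nu(\cdot,t) - \X_{ss}^{\rm in}\|_{H^1_s} \le C t^{1/4}$ with $C$ depending only on the data, and the one-dimensional embedding $\|\cdot\|_{L^\infty_s} \lesssim \|\cdot\|_{H^1_s}$ then gives
\[
\|\X_{ss}^\nu(\cdot,t)\|_{L^\infty_s}^2 \le \Big(\sqrt{\tfrac{1-2\delta}{\alpha}} + C t^{1/4}\Big)^2 = \tfrac{1-2\delta}{\alpha} + 2C\sqrt{\tfrac{1-2\delta}{\alpha}}\,t^{1/4} + C^2 t^{1/2},
\]
which is $\le \frac{1-\delta}{\alpha}$ once $t \le T_1$, a threshold depending only on the data. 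This is a strict improvement of the condition defining $T_\nu^*$, and the hyperviscous continuation criterion rules out the solution breaking down while $E$ and $D$ stay bounded; hence $T_\nu^* \ge T := \min(T_0,T_1)$, uniformly in $\nu$.

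It remains to let $\nu \to 0^+$ on $[0,T]$. The bounds above, the tension estimate~\eqref{eq:H1_lam}, the interpolation $\|\p_t\X_s\|_{L^2_s}^2 \lesssim \|\p_t\X\|_{L^2_s}\|\p_t\X_{ss}\|_{L^2_s}$ (with $\|\p_t\X\|_{L^2_s} \lesssim D^{1/2}$), and the linear parabolic estimates of Lemma~\ref{lem:parabolicestimates} applied at the $\X_{ss}$ level (exactly as in the curve-only case) give $\nu$-uniform bounds placing $(\X_s^\nu,\kappa_3^\nu)$ in the strong-solution class~\eqref{eq:regreqofstrong}, with $\lambda^\nu \in L^2_tH^2_s$, $\p_t\X_s^\nu \in L^2_tH^1_s$, $\p_t\kappa_3^\nu \in L^2_tH^{-2}_s$, and $\nu(\kappa_3^\nu)_{ssss} \to 0$ in $L^2_tH^{-2}_s$ (since $\nu\|(\kappa_3^\nu)_{ss}\|_{L^2_tL^2_s}^2 \lesssim 1$). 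By the Aubin--Lions--Simon lemma and weak-$*$ compactness, a subsequence converges to some $(\X_s,\kappa_3,\lambda)$: weak-$*$ in the uniform spaces, strongly with $\X_s^\nu \to \X_s$ in $C_tH^{3-\varepsilon}_s$ and $\kappa_3^\nu \to \kappa_3$ in $C_tH^{1-\varepsilon}_s \cap L^2_tH^{2-\varepsilon}_s$, and (by~\eqref{eq:lamdiff_H1}) $\lambda^\nu \to \lambda$. These convergences suffice to pass to the limit in every nonlinear term and in the boundary conditions, so $(\X_s,\kappa_3)$ solves~\eqref{eq:Xss_0-repeat}--\eqref{eq:tension_0-repeat}; the constraint $|\X_s|^2 = 1$ propagates exactly as in the curve-only local well-posedness proof; and the bound $\|\X_{ss}\|_{L^\infty_s}^2 \le \frac{1-\delta}{\alpha} < \frac{1}{\alpha}$, the membership $\p_t\X_{ss} \in L^2_tL^2_s$, and the regularity class~\eqref{eq:regreqofstrong} all pass to the limit. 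Hence $(\X_s,\kappa_3)$ is a strong solution on $[0,T]$.

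The main obstacle is the circularity: Propositions~\ref{prop:propagationproprod} and~\ref{pro:rodrelationship} presuppose the curvature bound $\|\X_{ss}\|_{L^\infty_s}^2 \le \frac{1-\delta}{\alpha}$, yet that bound is only maintained because of those same estimates. Breaking the circle requires the continuity argument above and, more importantly, that every constant it produces be independent of $\nu$ -- which forces one to check carefully that the hyperviscous perturbation spoils neither the energy identity nor the $\frac{1}{2}\frac{d}{dt}D$ computation (the extra $\nu$-terms being genuinely absorbable into the regularized dissipation) and that the hyperviscous solutions are smooth enough for all of these manipulations to be legitimate. The passage to the limit is then standard compactness.
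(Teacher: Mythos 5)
Your proof is correct and follows essentially the same strategy as the paper: hyperviscous regularization, $\nu$-uniform energy and $D$-propagation estimates (Propositions~\ref{pro:rodrelationship} and~\ref{prop:propagationproprod}), a continuity/bootstrap argument to preserve the curvature condition on a short time interval, and an Aubin--Lions compactness argument as $\nu\to 0^+$. The only meaningful variation is in how the $L^\infty_s$ bound on $\X_{ss}$ is propagated: you integrate $\p_t\X_{ss}\in L^2_tL^2_s$ directly in time and interpolate against the uniform $H^2_s$-bound on $\X_{ss}$ coming from Proposition~\ref{pro:rodrelationship}, whereas the paper splits $\X_{ss}$ via Duhamel's formula into a semigroup piece and a nonlinear remainder $R$ estimated through Lemma~\ref{lem:semigroupestimates}; both routes produce the same $\sqrt{(1-2\delta)/\alpha} + Ct^{1/4}$ control and are interchangeable, with yours being slightly more direct since it avoids the separate treatment of $R$.
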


\begin{proof}
Let $\nu > 0$ and $(\X_s,\kappa_3)$ be the short-time solution from Lemma~\ref{lem:qualitativeexistencehyperviscosity}. 
Recall the energy estimate
\begin{equation}
E(t) + \int_0^t D(t') \, dt' \leq E^{\rm in} \, ,
\end{equation}
and the propagation estimate from Proposition~\ref{prop:propagationproprod},
\begin{equation}
    \label{eq:energyestimateDshorttime}
D(t) \leq C(\delta,E^{\rm in}) (D^{\rm in} + 1) \, ,
\end{equation}
which is valid on any time interval $[0,T]$ over which $|\X_{ss}|^2 \leq \frac{1-\delta}{\alpha}$.

We now use Duhamel's formula to estimate $\| \X_{ss} \|_{L^\infty_s}$ and demonstrate that both the condition $|\X_{ss}|^2 \leq \frac{1-\delta}{\alpha}$ and the estimate~\eqref{eq:energyestimateDshorttime} can be propagated up to some time~$T$:\footnote{A rigorous way to phrase this is as a continuity/bootstrapping argument. Recall that $\| \X_{ss} \|_{L^\infty_s}$ varies continuously. We demonstrate that there exists $T$, as in the statement of the lemma, such that the interval on which $\| \X_{ss} \|_{L^\infty_s}^2 \leq \frac{1-\delta}{\alpha}$ is relatively open in $[0,T]$, so it must be the whole $[0,T]$.}
\begin{equation}
    \label{eq:DuhamelforLinfty}
\X_{ss} = e^{- t \p_s^4} \X_{ss}^{\rm in} + R \, , 
\end{equation}
\begin{equation}
    \label{eq:RdefforDuhamel}
\begin{aligned}
R(\cdot,t) &:= \int_0^t e^{- (t-t') \p_s^4} \p_s^2 Q \, dt' \, , \\
Q&:= (1+\gamma)\lambda_{s}\X_s + \lambda\X_{ss} 
    + \eta(\kappa_3\X_s\times\X_{ss})_s + 3\gamma\X_{s}(\X_{ss}\cdot\X_{sss})\,,
\end{aligned}
\end{equation}
where the semigroup is understood with the appropriate boundary conditions.

First, with $N := \| \X_{ss}^{\rm in} \|_{H^2_s}$, we have
\begin{equation}
    \label{eq:myH2semigroupbound}
\sup_{t \in \R_+} \| e^{- t \p_s^4} \X_{ss}^{\rm in} \|_{H^2_s}^2 + \| \p_t e^{- t \p_s^4} \X_{ss}^{\rm in} \|_{L^2_t L^2_s(I \times \R_+)}^2 \leq CN^2 \, ,
\end{equation}
as follows from the two standard energy estimates (multiply the $\X_{ss}$ equation by $\X_{ss}$ and $\p_t \X_{ss}$, respectively) in Section~\ref{sec:lineartheory}. Hence,
\begin{equation}
    \label{eq:myl2semigroupbound}
\| e^{- t \p_s^4} \X_{ss}^{\rm in} - \X_{ss}^{\rm in} \|_{L^2_s}^2 \leq C t N^2 \, .
\end{equation}
In particular, we can interpolate between $L^2_s$ in~\eqref{eq:myl2semigroupbound} and $H^2_s$ in~\eqref{eq:myH2semigroupbound} to obtain
\begin{equation}
\| e^{- t \p_s^4} \X_{ss}^{\rm in} - \X_{ss}^{\rm in} \|_{L^\infty_s}^2 \leq C t^{1/2} N^2 \, .
\end{equation}
Therefore,
\begin{equation}
	\label{eq:Linftybd1}
\| e^{- t \p_s^4} \X_{ss}^{\rm in} \|_{L^\infty_s} \leq \| \X_{ss}^{\rm in} \|_{L^\infty_s} + C t^{1/4} N \leq \sqrt{\frac{1-2\delta}{\alpha}} + Ct^{1/4} N \, .
\end{equation}

Next, we estimate $R$ defined in~\eqref{eq:RdefforDuhamel}. We have
\begin{equation}
\begin{aligned}
\| Q \|_{L^2_t L^2_s (I \times (0,T))} &\leq T^{1/2} \| R \|_{L^\infty_t L^2_s (I \times (0,T))} \leq C(\delta,E^{\rm in},D^{\rm in}) T^{1/2} \, , \\
\| R \|_{L^\infty_t L^2_s(I \times (0,T))} &\leq  C(\delta,E^{\rm in},D^{\rm in}) T^{1/2}\,,\\
\| Q \|_{L^2_t H^1_s (I \times (0,T))} &\leq C(\delta,E^{\rm in},D^{\rm in}) \, ,\\
\| R \|_{L^\infty_t H^1_s(I \times (0,T))} &\leq C(\delta,E^{\rm in},D^{\rm in}) \, ,
\end{aligned}
\end{equation}
following the estimate of Lemma~\ref{lem:semigroupestimates} proved in Appendix~\ref{sec:hyperviscous}. 
By interpolating between these estimates, we obtain
\begin{equation}
	\label{eq:Linftybd2}
\| R(\cdot,t) \|_{L^\infty_s} \leq C(\delta,E^{\rm in},D^{\rm in}) T^{1/4} \, .
\end{equation}
Combining~\eqref{eq:Linftybd1} and~\eqref{eq:Linftybd2} and taking $T \ll 1$ depending only on $\delta$, $E^{\rm in}$, and $D^{\rm in}$, we demonstrate the curvature condition can be propagated to time $T$. Therefore, we have estimates on $E, D \in L^\infty_t(0,T)$ and $\p_t \X_{ss} \in L^2_t L^2_s(I \times (0,T))$ uniformly in the hyperviscosity coefficient $\nu$. 
Finally, we apply standard compactness arguments involving the Aubin-Lions lemma, as in the construction of weak solutions to the Navier-Stokes equations~\cite[Chapter 4]{RobinsonRodrigoSadowskiBook}, ~\cite[Chapter 12]{LemarieRieusset21stCentury} \cite[Chapter 3]{TemamBook} to take $\nu \to 0^+$. \end{proof}







With this in hand, we can complete the proof of Proposition~\ref{thm:wellposednesstheoryrod} (local well-posedness) by establishing uniqueness.

\begin{proof}[Proof of Proposition~\ref{thm:wellposednesstheoryrod}]

Let $(\X_s^{(1)},\kappa_3^{(1)})$, $(\X_s^{(2)},\kappa_3^{(2)})$ be two strong solutions on $[0,T]$ with identical initial data, and let $\wt{E} = \frac{1}{2} \int_I\big( |\X_{ss}^{(1)} - \X_{ss}^{(2)}|^2 + \eta |\kappa_3^{(1)} - \kappa_3^{(2)}|^2\big)\,ds$ denote the relative energy. We may calculate the relative energy identity
\begin{equation}\label{eq:relE_ID}
\begin{aligned}
\frac{d\wt E}{dt} &+ \int_I |\p_s^2 (\X_{ss}^{(1)} - \X_{ss}^{(2)})|^2\,ds + \int_I \left( \frac{\eta^2}{\alpha} + \frac{|\X_{ss}^{(1)}|^2}{\eta} \right) |\p_s (\kappa_3^{(1)} - \kappa_3^{(2)})|^2\,ds = \sum_{i=1}^7J_i\,,  \\
J_1&= 2\eta \int_I \big[\p_s \kappa_3^{(1)} \X_s^{(1)} \times \X_{ss}^{(1)} - \p_s \kappa_3^{(2)} \X_s^{(2)} \times \X_{ss}^{(2)}\big] \cdot \p_s^2 (\X_{ss}^{(1)} - \X_{ss}^{(2)})\,ds \\
J_2&= \int_I \big[(1+\gamma) (\lambda_s^{(1)} \X_s^{(1)} - \lambda_s^{(2)} \X_s^{(2)}) + \lambda^{(1)} \X_{ss}^{(1)} - \lambda^{(2)} \X_{ss}^{(2)}\big] \cdot \p_s^2 (\X_{ss}^{(1)} - \X_{ss}^{(2)})\,ds \\
J_3&= 2\eta \int_I (\kappa_3^{(1)} \X_s^{(1)} \times \X_{sss}^{(1)} - \kappa_3^{(2)} \X_s^{(2)} \times \X_{sss}^{(2)}) \cdot \p_s^2 (\X_{ss}^{(1)} - \X_{ss}^{(2)})\,ds \\
J_4&= 3\gamma  \int_I  (\X_s^{(1)} (\X_{ss}^{(1)} \cdot \X_{sss}^{(1)}) - \X_s^{(2)} (\X_{ss}^{(2)} \cdot \X_{sss}^{(2)})) \cdot (\X_{ss}^{(1)} - \X_{ss}^{(2)})\,ds  \\
J_5&= \eta^2 \int_I (|\X_{ss}^{(1)}|^2 - |\X_{ss}^{(2)}|^2) (\kappa_3^{(2)})_{ss} (\kappa_3^{(1)} - \kappa_3^{(2)})\,ds \\
J_6&= \eta^2 \int_I \big[\kappa_3^{(1)} (\X_{ssss}^{(1)} \cdot \X_{ss}^{(1)} + |\X_{ss}^{(1)}|^4) \\
&\qquad\qquad - \kappa_3^{(2)} (\X_{ssss}^{(2)} \cdot \X_{ss}^{(2)} + |\X_{ss}^{(2)}|^4)\big] (\kappa_3^{(1)} - \kappa_3^{(2)})\,ds \\
J_7&= \eta \int_I \big[\lambda^{(1)} \X_{sss}^{(1)} \cdot (\X_s^{(1)} \times \X_{ss}^{(1)}) - \lambda^{(2)}\X_{sss}^{(2)} \cdot (\X_s^{(2)} \times \X_{ss}^{(2)})\big] (\kappa_3^{(1)} - \kappa_3^{(2)})\,ds \, .
\end{aligned}
\end{equation}
We will derive the differential inequality
\begin{equation}\label{eq:wtEdiff}
\frac{d\wt{E}}{dt} \leq C \wt{E} \, ,
\end{equation}
where the constant $C$ may depend on the particulars of the solutions, including the time $T$, $\norm{\X_{ssss}^{(j)}}_{L^\infty_tL^2_s}$, $\norm{(\kappa_3^{(j)})_s}_{L^\infty_tL^2_s}$, and $\delta$.

We begin with the only term which requires $|\X_{ss}^{(1)}|^2 < \frac{1}{\alpha}$, which is $J_1$. We rewrite $J_1$ as
\begin{equation}
\begin{aligned}
    J_1 &= \underbrace{2\eta \int_I \p_s (\kappa_3^{(1)}-\kappa_3^{(2)}) \big(\X_s^{(1)} \times \X_{ss}^{(1)} \big) \cdot \p_s^2 (\X_{ss}^{(1)} - \X_{ss}^{(2)})\,ds }_{J_{1a}}\\
    &\qquad + \underbrace{2\eta \int_I \p_s \kappa_3^{(2)}\big( \X_s^{(1)} \times \X_{ss}^{(1)} - \X_s^{(2)} \times \X_{ss}^{(2)}\big) \cdot \p_s^2 (\X_{ss}^{(1)} - \X_{ss}^{(2)})\,ds}_{J_{1b}}\,.
\end{aligned}   
\end{equation}
As long as $|\X_{ss}^{(1)}|^2 \leq \frac{1-2\delta}{\alpha}$, the term $J_{1a}$ may be estimated by
\begin{equation}
\abs{J_{1a}}\le 
(1-\delta) \frac{\eta^2}{\alpha} \norm{\p_s (\kappa_3^{(1)} - \kappa_3^{(2)})}_{L^2_s}^2 + \underbrace{\frac{\alpha}{1-\delta} \sup |\X_{ss}^{(1)}|^2}_{< 1} \norm{\p_s^2 (\X_{ss}^{(1)} - \X_{ss}^{(2)})}_{L^2_s}^2 \, ,
\end{equation}
and therefore both terms may be absorbed into the left-hand side with room to spare.
The remaining term $J_{1b}$ may be estimated as 
\begin{equation}
\begin{aligned}
    \abs{J_{1b}} &\lesssim  \norm{\p_s \kappa_3^{(2)}}_{L^2_s}
    \big( \norm{\X_s^{(1)}- \X_s^{(2)}}_{L^\infty_s}\norm{\X_{ss}^{(2)}}_{L^\infty_s} \\
    &\qquad
    + \norm{\X_{ss}^{(1)} - \X_{ss}^{(2)}}_{L^\infty_s}\big) 
    \norm{\p_s^2 (\X_{ss}^{(1)} - \X_{ss}^{(2)})}_{L^2_s} \\
    &\lesssim \wt E^{1/2}\norm{\p_s^2 (\X_{ss}^{(1)} - \X_{ss}^{(2)})}_{L^2_s} + \wt E^{3/8}\norm{\p_s^2 (\X_{ss}^{(1)} - \X_{ss}^{(2)})}_{L^2_s}^{5/4}\\
    &\le \varepsilon\norm{\p_s^2 (\X_{ss}^{(1)} - \X_{ss}^{(2)})}_{L^2_s}^2 + C(\varepsilon)\wt E\,,
\end{aligned}
\end{equation}
for any choice of $\varepsilon>0$, and we note that $C(\varepsilon)$ also depends on $\norm{\p_s \kappa_3^{(2)}}_{L^2_s}$ and $\norm{\X_{ssss}^{(2)}}_{L^2_s}$.

We continue term-by-term. To estimate $J_2$, we will require the following tension estimate, which is a direct consequence of \eqref{eq:lamdiff_H1} from Lemma \ref{lem:tension_rod}:  
\begin{equation}\label{eq:lam1lam2}
\begin{aligned}
    \norm{\lambda^{(1)}-\lambda^{(2)}}_{H^1_s} 
    &\lesssim \wt E^{1/2} + \wt E^{1/4}\norm{\p_s^2(\X_{ss}^{(1)}-\X_{ss}^{(2)})}_{L^2_s}^{1/2}\,.
\end{aligned}   
\end{equation}
Using \eqref{eq:lam1lam2} along with Lemma \ref{lem:tension_rod}, we may estimate
\begin{equation}
\begin{aligned}
    \abs{J_2} 
    &\lesssim \big(\norm{\lambda^{(1)}-\lambda^{(2)}}_{H^1_s}(1+\norm{\X_{ss}^{(1)}}_{L^2_s})\\
    &\qquad + \norm{\lambda^{(2)}}_{H^1_s}\norm{\X_{ss}^{(1)}-\X_{ss}^{(2)}}_{L^2_s}\big)\norm{\p_s^2(\X_{ss}^{(1)}-\X_{ss}^{(2)})}_{L^2_s} \\
    &\lesssim \wt E^{1/2}\norm{\p_s^2(\X_{ss}^{(1)}-\X_{ss}^{(2)})}_{L^2_s} + \wt E^{1/4}\norm{\p_s^2(\X_{ss}^{(1)}-\X_{ss}^{(2)})}_{L^2_s}^{3/2} \\
    &\le \varepsilon\norm{\p_s^2 (\X_{ss}^{(1)} - \X_{ss}^{(2)})}_{L^2_s}^2 + C(\varepsilon)\wt E \,,
\end{aligned}
\end{equation}
again for any choice of $\varepsilon>0$.
We next estimate $J_3$ as 
\begin{equation}
\begin{aligned}
    \abs{J_3}&\lesssim 
    \bigg(\norm{\kappa_3^{(1)}-\kappa_3^{(2)}}_{L^2_s} \norm{\X_{sss}^{(2)}}_{L^\infty_s} +\norm{\kappa_3^{(1)}}_{L^\infty_s}\norm{\X_s^{(1)}- \X_s^{(2)}}_{L^\infty_s}\norm{\X_{sss}^{(2)}}_{L^2_s}\\
    &\qquad +\norm{\kappa_3^{(1)}}_{L^\infty_s} \norm{\p_s(\X_{ss}^{(1)}-\X_{ss}^{(2)})}_{L^2_s} \bigg)\norm{\p_s^2 (\X_{ss}^{(1)} - \X_{ss}^{(2)})}_{L^2_s} \\
    &\lesssim \big(\wt E^{1/2} + \wt E^{1/4}\norm{\p_{s}^2(\X_{ss}^{(1)}-\X_{ss}^{(2)})}_{L^2_s}^{1/2}\big)\norm{\p_s^2 (\X_{ss}^{(1)} - \X_{ss}^{(2)})}_{L^2_s}\\
    &\le \varepsilon\norm{\p_s^2 (\X_{ss}^{(1)} - \X_{ss}^{(2)})}_{L^2_s}^2 + C(\varepsilon)\wt E\,,
\end{aligned}
\end{equation}
and similarly estimate $J_4$ as
\begin{equation}
\begin{aligned}
    \abs{J_4} &\lesssim \bigg(\norm{\X_s^{(1)}-\X_s^{(2)}}_{L^\infty_s} \norm{\X_{ss}^{(2)}}_{L^\infty_s} \norm{\X_{sss}^{(2)}}_{L^2_s} + \norm{\X_{ss}^{(1)}-\X_{ss}^{(2)}}_{L^2_s} \norm{\X_{sss}^{(2)}}_{L^\infty_s}\\
    &\qquad + \norm{\X_{ss}^{(1)}}_{L^\infty_s} \norm{\p_s(\X_{ss}^{(1)}-\X_{ss}^{(2)})}_{L^2_s}\bigg)\norm{\X_{ss}^{(1)} - \X_{ss}^{(2)}}_{L^2_s} \\
    &\lesssim \wt E + \wt E^{3/4}\norm{\p_s^2(\X_{ss}^{(1)}-\X_{ss}^{(2)})}_{L^2_s}^{1/2} \\
    &\le \varepsilon\norm{\p_s^2(\X_{ss}^{(1)}-\X_{ss}^{(2)})}_{L^2_s}^2 + C(\varepsilon)\wt E\,.
\end{aligned}
\end{equation}

For $J_5$, we integrate by parts once before estimating. We may then obtain the bound 
\begin{equation}
\begin{aligned}
    \abs{J_5}&= \bigg|\eta^2 \int_I (|\X_{ss}^{(1)}|^2 - |\X_{ss}^{(2)}|^2) (\kappa_3^{(2)})_{s} (\kappa_3^{(1)} - \kappa_3^{(2)})_s\,ds\\
    &\qquad +2\eta^2 \int_I (\X_{ss}^{(1)}\cdot\X_{sss}^{(1)} - \X_{ss}^{(2)}\cdot\X_{sss}^{(2)}) (\kappa_3^{(2)})_{s} (\kappa_3^{(1)} - \kappa_3^{(2)})\,ds\bigg|\\
    &\lesssim \norm{(\kappa_3^{(2)})_s}_{L^2_s}\bigg(\norm{\X_{ss}^{(1)}-\X_{ss}^{(2)}}_{L^\infty_s}\norm{\X_{ss}^{(1)}+\X_{ss}^{(2)}}_{L^\infty_s}\norm{\p_s(\kappa_3^{(1)} - \kappa_3^{(2)})}_{L^2_s}\\
    &+
    \big(\norm{\X_{ss}^{(1)}- \X_{ss}^{(2)}}_{L^2_s}\norm{\X_{sss}^{(2)}}_{L^\infty_s} 
    + \norm{\X_{ss}^{(1)}}_{L^\infty_s}\norm{\X_{sss}^{(1)} -\X_{sss}^{(2)}}_{L^2_s}\big) \norm{\kappa_3^{(1)} - \kappa_3^{(2)}}_{L^\infty_s}\bigg)\\
    &\lesssim \wt E^{3/8}\norm{\p_s^2(\X_{ss}^{(1)}- \X_{ss}^{(2)})}_{L^2_s}^{1/4}\norm{\p_s(\kappa_3^{(1)} - \kappa_3^{(2)})}_{L^2_s} +\wt E^{3/4}\norm{\p_s(\kappa_3^{(1)} - \kappa_3^{(2)})}_{L^2_s}^{1/2}\\
    &\qquad + \wt E^{1/2}\norm{\p_s(\kappa_3^{(1)} - \kappa_3^{(2)})}_{L^2_s}^{1/2}\norm{\p_s^2(\X_{ss}^{(1)}- \X_{ss}^{(2)})}_{L^2_s}^{1/2}\\
    &\le C(\varepsilon) \wt E + \varepsilon\norm{\p_s^2(\X_{ss}^{(1)}- \X_{ss}^{(2)})}_{L^2_s}^2 + \varepsilon\norm{\p_s(\kappa_3^{(1)} - \kappa_3^{(2)})}_{L^2_s}^2\,.
\end{aligned}
\end{equation}
Here $C(\varepsilon)$ again depends on the particulars of the solutions. We may next estimate $J_6$ as
\begin{equation}
\begin{aligned}
\abs{J_6} &\lesssim \bigg( \big(\norm{\kappa_3^{(1)}- \kappa_3^{(2)}}_{L^\infty_s} \norm{\X_{ssss}^{(2)}}_{L^2_s} + \norm{\kappa_3^{(1)}}_{L^\infty_s} \norm{\p_s^2(\X_{ss}^{(1)}-\X_{ss}^{(2)})}_{L^2_s} \big)\norm{\X_{ss}^{(2)}}_{L^\infty_s}\\
&\quad + \norm{\kappa_3^{(1)}}_{L^\infty_s} \norm{\X_{ssss}^{(1)}}_{L^2_s} \norm{\X_{ss}^{(1)}-\X_{ss}^{(2)}}_{L^\infty_s} + \norm{\kappa_3^{(1)}- \kappa_3^{(2)}}_{L^2_s}\norm{\X_{ss}^{(1)}}_{L^\infty_s}^4 \\
&\quad + \norm{\kappa_3^{(2)}}_{L^\infty_s}\norm{\X_{ss}^{(1)}-\X_{ss}^{(2)}}_{L^2_s}\big(\norm{\X_{ss}^{(1)}}_{L^\infty_s}^3+\norm{\X_{ss}^{(2)}}_{L^\infty_s}^3\big)\bigg)\norm{\kappa_3^{(1)} - \kappa_3^{(2)}}_{L^2_s} \\
&\lesssim \wt E^{3/4}\norm{\p_s(\kappa_3^{(1)}- \kappa_3^{(2)})}_{L^2_s}^{1/2} + \wt E^{1/2}\norm{\p_s^2(\X_{ss}^{(1)}-\X_{ss}^{(2)})}_{L^2_s} \\
&\qquad
+ \wt E^{5/8}\norm{\p_s^2(\X_{ss}^{(1)}-\X_{ss}^{(2)})}_{L^2_s}^{1/4} + \wt E\\
&\le C(\varepsilon) \wt E + \varepsilon\norm{\p_s^2(\X_{ss}^{(1)}- \X_{ss}^{(2)})}_{L^2_s}^2 + \varepsilon\norm{\p_s(\kappa_3^{(1)} - \kappa_3^{(2)})}_{L^2_s}^2\,.
\end{aligned}
\end{equation}

Finally, using \eqref{eq:lam1lam2} and Lemma \ref{lem:tension_rod}, we may estimate $J_7$ as 
\begin{equation}
\begin{aligned}
\abs{J_7}&\lesssim \bigg( \norm{\lambda^{(1)}- \lambda^{(2)}}_{L^\infty_s} \norm{\X_{sss}^{(2)}}_{L^2_s} \norm{\X_{ss}^{(2)}}_{L^\infty_s} \\
&\quad + \norm{\lambda^{(1)}}_{L^\infty_s} \big(\norm{\p_s(\X_{ss}^{(1)} -\X_{ss}^{(2)})}_{L^2_s}\norm{\X_{ss}^{(2)}}_{L^\infty_s} + \norm{\X_{sss}^{(1)}}_{L^\infty_s} \norm{\X_{ss}^{(1)}-\X_{ss}^{(2)}}_{L^2_s} \\
&\quad + \norm{\X_{sss}^{(1)}}_{L^2_s} \norm{\X_s^{(1)}-\X_s^{(2)}}_{L^\infty_s} \norm{\X_{ss}^{(2)}}_{L^\infty_s} \big) \bigg) \norm{\kappa_3^{(1)} - \kappa_3^{(2)}}_{L^2_s} \\
&\lesssim \wt E + \wt E^{3/4}\norm{\p_s^2(\X_{ss}^{(1)}-\X_{ss}^{(2)})}_{L^2_s}^{1/2} \\
&\le \varepsilon\norm{\p_s^2(\X_{ss}^{(1)}-\X_{ss}^{(2)})}_{L^2_s}^2 + C(\varepsilon)\wt E\,.
\end{aligned}
\end{equation}

Altogether, taking $\varepsilon$ sufficiently small, we may absorb the right-hand side terms $\norm{\p_s^2(\X_{ss}^{(1)}-\X_{ss}^{(2)})}_{L^2_s}^2$ and $\norm{\p_s(\kappa_3^{(1)} - \kappa_3^{(2)})}_{L^2_s}^2$ into the left-hand side of \eqref{eq:relE_ID} to obtain the relative energy inequality \eqref{eq:wtEdiff}.
\end{proof}



\section{Lojasiewicz inequalities}
\label{sec:rod_loja}

To demonstrate the convergence to equilibrium for~\eqref{eq:classical} and~\eqref{eq:Xss_0}-\eqref{eq:kap3_0} on the periodic interval $I = \T$, we require a powerful tool from the theory of gradient flows known as the \emph{Lojasiewicz inequality}. Even in finite dimensions, it is possible for solutions to the gradient flow $\dot x = - \nabla V$ of a smooth potential $V$ to asymptotically wind in such a way that different subsequences converge to different equilibria; see the ``goat tracks" in~\cite[p. 69]{SimonLectureNotes}. However, this is impossible for analytic $V$ due to the Lojasiewicz inequality, which may be regarded as a piece of algebraic geometry. In~\cite{SimonAnnals}, Simon discovered how to prove and exploit the Lojasiewicz inequality for certain analytic energies by reducing the inequality to finite dimensions using ellipticity. There is now an abstract framework in which to prove many such inequalities. We use this framework to prove a Lojasiewicz inequality first in the curve-only setting and then in the coupled curve-frame setting.





\subsection{Curves}


First, we consider the manifold structure of the space of curves. One may view this as a framework in which to make the gradient calculation in Section~\ref{sec:derivation} rigorous. Let
\begin{equation}
\mathbf{M} := \big\{ \X \in H^4(\T;\R^3) \,: \, |\X_s|^2 = 1 \; \forall s \in \T \big\} \, .
\end{equation}
We may locally coordinatize $\mathbf{M}$ in the following way. Let $\X \in \mathbf{M}$. Define the tangent space
\begin{equation}
T_{\X} \mathbf{M} := \big\{ \Y \in H^4(\T;\R^3) \,:\, \Y_s \cdot \X_s = 0 \; \forall s \in \T \big\} \, ,
\end{equation}
endowed with the $H^4$ topology. In a neighborhood $U$ of the origin in $T_{\X} \mathbf{M}$, we have the exponential map
\begin{equation}
{\rm Exp}_{\X} : U \subset T_{\X} \mathbf{M} \to \mathbf{M} \, ,
\end{equation}
defined pointwise through the exponential map on the sphere:
\begin{equation}
({\rm Exp}_{\X} \Y)_s(s) = \exp_{\X_s(s)}^{S^2} \Y_s(s) \, , \quad ({\rm Exp}_{\X} \Y)(0) = \X(0) + \Y(0) \, .
\end{equation}
 Then ${\rm Exp}_{\X}$ is analytic into the ambient space (since $S^2$ is analytic and the map is defined pointwise) and provides a local coordinate chart around $\X$.\footnote{We need local coordinates to apply Proposition~\ref{pro:abstractloja}. The particulars of the exponential map will not enter the computations except for property $d|_0 \rm{Exp}_{(\bar{\X},\bar{\e})} = {\rm Id}$.

 The above charts can be used to give $\mathbf{M}$ the structure of an analytic Hilbert manifold which is embedded in the ambient space $H^4(\T;\R^3)$. However, all of our calculations will take place in a single chart.}

For $\gamma \geq 0$, we endow $\mathbf{M}$ with the metric 
\begin{equation}
\label{eq:metrica}
g^\gamma_{\X}(\Y,\Z) = \int_\T (\mathbf{I} + \gamma \X_s \otimes \X_s)^{-1} \Y \cdot \Z \, ds \, , \quad \forall \Y,\Z \in T_{\X} \mathbf{M}
\end{equation}
and define the energy functional $E : \mathbf{M} \to \R$ by
\begin{equation}
E = \frac{1}{2} \int_\T |\X_{ss}|^2 \, ds \, ,
\end{equation}
both of which are analytic on the ambient space and hence also analytic when restricted to~$\mathbf{M}$.

\begin{proposition}[Curve Lojasiewicz inequality]
	\label{pro:curvelojascewicz}
Let $\X \in \mathbf{M}$ be a critical point for the energy: $d|_{\X} E = 0$. Then there exists $\delta(\X) > 0$ and $\beta \in (0,1/2)$ such that if $\Y \in \mathbf{M}$ with $\| \X - \Y \|_{H^4} \leq \delta$ and $\gamma \geq 0$, then
\begin{equation}
|E(\X) - E(\Y)|^{1-\beta} \lesssim_\gamma \sqrt{D_\gamma[\Y]} \, ,
\end{equation}
where, as in Section~\ref{subsec:energetics},
\begin{equation}
D_\gamma[\Y] = \int_{\T} ((\mathbf{I} + \gamma \X_s \otimes \X_s) \Z_s) \cdot \Z_s \, ds \, , \quad \Z = \Y_{sss} - \lambda[\Y] \Y_s \, ,
\end{equation}
with $\lambda[\Y]$ as in~\eqref{eq:lambdalojadef}.
\end{proposition}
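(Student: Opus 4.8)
The plan is to derive the inequality from the abstract Lojasiewicz--Simon theorem (Proposition~\ref{pro:abstractloja}), worked entirely in the single coordinate chart around $\X$ furnished by ${\rm Exp}_{\X}$. First I would transport the problem into this chart: writing $\Y = {\rm Exp}_{\X}\bm{w}$ for $\bm{w}$ in a neighborhood $\mathcal{U}$ of the origin in the Hilbert space $V := T_{\X}\mathbf{M}$ (with the $H^4$ inner product), set $\mathcal{E}(\bm{w}) := E({\rm Exp}_{\X}\bm{w})$. Since ${\rm Exp}_{\X}$ is analytic into the ambient space $H^4(\T;\R^3)$---it is assembled pointwise from the analytic sphere exponential---and $E$ is a quadratic polynomial on $H^4$, the composition $\mathcal{E}$ is real-analytic on $\mathcal{U}$, and $d\mathcal{E}(0) = d|_{\X}E = 0$ because $\X$ is a critical point.

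Next I would set up the gradient and the Gelfand triple. Let $H$ be the completion of $V$ for the metric $g^\gamma_{\X}$ of~\eqref{eq:metrica}; since $\mathbf{I} + \gamma\X_s\otimes\X_s$ has eigenvalues in $[1,1+\gamma]$, $H$ is an $L^2$-type Hilbert space whose norm is equivalent, with constants depending on $\gamma$, to the flat $L^2$ norm on $V$, and $V\hookrightarrow H\hookrightarrow V^*$ densely. Define $\nabla\mathcal{E}(\bm{w})\in H$ as the $g^\gamma_{\X}$-Riesz representative of $d\mathcal{E}(\bm{w})$, i.e. $g^\gamma_{\X}(\nabla\mathcal{E}(\bm{w}),\bm{v}) = d\mathcal{E}(\bm{w})\bm{v}$ for all $\bm{v}\in V$. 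By the variational computation of Section~\ref{sec:derivation}, $\nabla\mathcal{E}(\bm{w})$ is the chart representative of $(\mathbf{I} + \gamma\Y_s\otimes\Y_s)\Z_s$ with $\Z = \Y_{sss} - \lambda[\Y]\Y_s$; by Lemma~\ref{lem:tension}---Fenchel's theorem ensuring that $(1+\gamma)\p_s^2 - |\Y_{ss}|^2$ is invertible uniformly for $\Y$ near $\X$---the tension $\lambda[\Y]$ of~\eqref{eq:lambdalojadef} depends analytically on $\Y\in H^4_s$ with values in $H^2_s$ (its coefficients are polynomial in $\Y$, and operator inversion is analytic on the open set of invertible operators), so $\Z_s\in L^2_s$ and $\nabla\mathcal{E}:\mathcal{U}\to H$ is analytic. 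The identity $d|_0{\rm Exp}_{\X} = {\rm Id}$ guarantees that at $\bm{w}=0$ this representative equals the intrinsic gradient $\nabla_g E[\X]$ and that their derivatives at $0$ coincide.

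The main step---and the step I expect to be the principal obstacle---is to show that the Hessian $L := d(\nabla\mathcal{E})(0) : V\to H$ is Fredholm of index zero. It is symmetric with respect to $g^\gamma_{\X}$, being a second derivative, and since $\X$ is critical one has $\langle L\bm{v},\bm{v}\rangle_H = d^2\mathcal{E}(0)(\bm{v},\bm{v}) = \int_{\T}|\bm{v}_{ss}|^2\,ds \ge 0$; thus $L$ is the self-adjoint operator associated with the closed quadratic form $\bm{v}\mapsto\int_\T|\bm{v}_{ss}|^2\,ds$ on $V$, and $\ker L = \{\bm{v}\in V : \bm{v}_{ss}=0\}$ is the $3$-dimensional space of constant vector fields. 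Equivalently $L = \p_s^4 + K$, where the remainder $K:V\to H$---coming from the weight $\mathbf{I}+\gamma\X_s\otimes\X_s$ applied to $\p_s^4\bm{v}$ (which, after inserting the constraint identities obtained by differentiating $\bm{v}_s\cdot\X_s = 0$, is of order at most three) together with the linearized tension (solution of a second-order elliptic ODE with a third-order source)---is an operator of order at most three and hence factors through the compact embedding $(T_{\X}\mathbf{M},H^4)\hookrightarrow(T_{\X}\mathbf{M},H^3)$, so $K$ is compact. The ellipticity of $\p_s^4$ on the closed curve, once combined with the treatment of the tangential Lagrange multiplier exactly as in the derivation of the tension, gives the \emph{a priori} estimate $\|\bm{v}\|_{H^4}\lesssim\|L\bm{v}\|_H + \|\bm{v}\|_H$ for $\bm{v}\in V$, whence $L$ has closed range and finite-dimensional kernel; being symmetric, it is Fredholm of index zero. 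The delicate point is precisely that the inextensibility constraint both carves $V$ out of $H^4(\T;\R^3)$ and forces the tangential multiplier into $L$; it does not spoil the index because it is a single pointwise scalar constraint, so the correction it induces is lower order and hence compact.

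With $\mathcal{E}$ analytic, $\nabla\mathcal{E}:\mathcal{U}\to H$ analytic, and $L$ Fredholm of index zero, Proposition~\ref{pro:abstractloja} yields $C>0$, $\beta\in(0,1/2)$ and $\sigma>0$ with $|\mathcal{E}(\bm{w}) - \mathcal{E}(0)|^{1-\beta}\le C\|\nabla\mathcal{E}(\bm{w})\|_H$ whenever $\|\bm{w}\|_V\le\sigma$. To finish, I would choose $\delta = \delta(\X)>0$ small enough that $\|\Y - \X\|_{H^4}\le\delta$ forces $\Y = {\rm Exp}_{\X}\bm{w}$ with $\|\bm{w}\|_V\le\sigma$, and (shrinking $\delta$ further) that $d{\rm Exp}_{\X}|_{\bm{w}}$ is within a factor $2$ of ${\rm Id}$ and $g^\gamma_{\Y}$ within a factor $2$ of $g^\gamma_{\X}$; then $\mathcal{E}(\bm{w}) = E(\Y)$, $\mathcal{E}(0) = E(\X)$, and $\|\nabla\mathcal{E}(\bm{w})\|_H \simeq_\gamma \|\nabla_g E[\Y]\|_{g^\gamma_{\Y}} = \sqrt{D_\gamma[\Y]}$, the last identity being the one recorded in Section~\ref{subsec:energetics}. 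Combining these gives $|E(\X)-E(\Y)|^{1-\beta}\lesssim_\gamma\sqrt{D_\gamma[\Y]}$, as claimed.
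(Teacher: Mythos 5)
Your overall strategy coincides with the paper's: transport the problem into the chart given by ${\rm Exp}_{\X}$, verify analyticity of the energy and of the gradient (via analyticity of the tension map $\Y \mapsto \lambda[\Y]$), check the Fredholm index-zero property of the linearization, and invoke the abstract Lojasiewicz--Simon theorem, finally identifying $\| \nabla_g E \|_g$ with $\sqrt{D_\gamma}$. Where you diverge is the Fredholm step: the paper identifies $T_{\bar\X}\mathbf{M} \cong H^3(\T;\R)^2 \times \R^2$ via a smooth frame, so that the constraint disappears and the linearization becomes a matrix of scalar operators with leading part $\p_s^4$ on the diagonal, made Fredholm by composing with an explicit parametrix; you instead keep the constrained spaces and argue ``principal part $\p_s^4$ plus compact remainder, plus symmetry of the Hessian, plus an elliptic a priori estimate.'' That route can be made to work, but note that $\p_s^4$ alone does not map $T_{\X}\mathbf{M}$ into the constrained $L^2$-space $H$ (the linearized Lagrange multiplier is exactly what restores the constraint), so the decomposition $L = \p_s^4 + K$ and the a priori estimate have to be run in the ambient $L^2(\T;\R^3)$ and then combined with the argument that the cokernel lies in $V$ by elliptic regularity; the paper's frame reduction sidesteps all of this.

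There is, however, a concretely false claim in your Fredholm paragraph: $d^2\mathcal{E}(0)(\bm{v},\bm{v})$ is \emph{not} $\int_\T |\bm{v}_{ss}|^2\,ds$, the Hessian is \emph{not} non-negative, and $\ker L$ is \emph{not} the three-dimensional space of constants. Because ${\rm Exp}_{\X}$ is a nonlinear (curved) map, the second variation picks up the term $\int_\T \X_{ss}\cdot \big(d^2|_0{\rm Exp}_{\X}(\bm{v},\bm{v})\big)_{ss}\,ds$, which does not vanish at a critical point since $d^2|_0{\rm Exp}_{\X}(\bm{v},\bm{v})$ is not tangent to the constraint (indeed $\X_s \cdot \p_s d^2|_0 {\rm Exp}_\X(\bm{v},\bm{v}) = -|\bm{v}_s|^2$); a direct computation with the sphere exponential produces a contribution of the form $-\int_\T |\X_{ss}|^2 |\bm{v}_s|^2\,ds$, which is negative for low-frequency perturbations of, say, a multiply covered circle. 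Were your formula correct, every closed elastica would be a stable local minimum, contradicting the known instabilities discussed in Section~\ref{subsec:uncond_GWP}. Moreover the kernel visibly contains non-constant fields such as $\bm{v} = \X_s$ (reparametrization) and $\bm{v} = A\X$ for skew-symmetric $A$ (rotations), both of which satisfy $\bm{v}_s\cdot\X_s = 0$ and annihilate $dE$. Fortunately this misidentification is not load-bearing: the abstract theorem needs only Fredholmness of index zero, which your subsequent ``$L = \p_s^4 + K$ with $K$ compact, plus symmetry'' argument delivers without any positivity or kernel computation. Strike the sentence identifying $L$ with the self-adjoint operator of the form $\int|\bm{v}_{ss}|^2$ and the kernel claim, and the proof is sound.
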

One may compare with the statement of the $\gamma=0$ planar case in~\cite[Proposition 8.5, p. 434]{koiso1996motion}.

We rely on an abstract Lojasiewicz inequality which generalizes the approach in~\cite{SimonAnnals,SimonLectureNotes}, see, e.g.~\cite{HuangGradient,FeehanMaridakis}. The following variant is adapted from~\cite[Theorem 2.4.2]{kohout2021a}.

\begin{proposition}
	\label{pro:abstractloja}
Suppose we are given the following data:
\begin{enumerate}
\item A Hilbert space $(H,\langle\cdot,\cdot\rangle_H)$ and a Banach space $X$ densely embedded into $H$.
\item An analytic functional $E : U \to \R$ which is defined on an open neighborhood $U \subset X$ of the origin.
\item An analytic metric $g : U \to {\rm Sym}_2(U;\R)$ with $g \approx \langle \cdot, \cdot \rangle_H$ as bilinear forms in $U$:
\begin{equation}
\frac{1}{C} g_u(v,v) \leq \langle v,v \rangle_H \leq C g_u(v,v) \, , \quad \forall u \in U \, , \; v \in H \, .
\end{equation}
\item There is an analytic map $\nabla_g E : U \to H$ so that for every $u \in U$ and $x \in X$
\begin{equation}
d|_uE(x) := \frac{d}{dt} E(u + tx)|_{t=0} = \langle \nabla_g E(u), x \rangle_g \, .
\end{equation}
\item Suppose that $\nabla_g E(0) = 0$ and $\mathcal{L} := d|_{0} \nabla_g E$ is a Fredholm operator of index zero from $X$ to $H$.
\end{enumerate}
Then there is $\beta \in (0,1/2)$, $A < \infty$, and a neighborhood $\mathcal{O} \subset U$ of the origin so that
\begin{equation}
	\label{eq:conclusiontoobtain}
|E(u) - E(0)|^{1-\beta} \leq A \| \nabla_g E(u) \|_g \, , \quad \forall u \in \mathcal{O} \, .
\end{equation}
\end{proposition}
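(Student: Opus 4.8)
The plan is to run the Lyapunov--Schmidt reduction argument of Simon, in the abstract form of~\cite[Theorem 2.4.2]{kohout2021a}; I sketch the main steps. Write $P := \nabla_g E : U \to H$, an analytic map with $P(0) = 0$ and $\mathcal{L} := d|_0 P : X \to H$ Fredholm of index zero. The key structural observation is that $\mathcal{L}$ is symmetric with respect to $g_0$: differentiating the defining relation $d|_u E(x) = \langle P(u), x\rangle_{g_u}$ at $u = 0$ in a direction $y$ gives $d^2|_0 E(x,y) = \langle \mathcal{L} y, x\rangle_{g_0} + \langle P(0), x\rangle_{(dg)_0(y)}$, and the second term vanishes since $P(0) = 0$; symmetry of the Hessian $d^2|_0 E$ then forces $\mathcal{L}$ to be $g_0$-symmetric, hence $\langle\cdot,\cdot\rangle_H$-symmetric after the bounded equivalence $g_0 \approx \langle\cdot,\cdot\rangle_H$. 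Combined with Fredholmness of index zero, this yields: $N := \ker\mathcal{L}$ is finite-dimensional with $N \subset X$, $R := \mathrm{Range}(\mathcal{L})$ is closed, and $H = N \oplus R$ is an $\langle\cdot,\cdot\rangle_H$-orthogonal splitting. Let $\Pi : H \to N$ be the orthogonal projection; it restricts to a bounded projection of $X$, and $(\mathrm{Id}-\Pi)\mathcal{L}$ is an isomorphism of $X_R := (\mathrm{Id}-\Pi)X$ onto $R$.

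\emph{Reduction to finite dimensions.} Consider $\Psi(\xi,v) := (\mathrm{Id}-\Pi)P(\xi + v)$ for $\xi$ near $0$ in $N$ and $v$ near $0$ in $X_R$. Then $\Psi(0,0) = 0$ and $\partial_v\Psi(0,0) = (\mathrm{Id}-\Pi)\mathcal{L}|_{X_R}$ is an isomorphism onto $R$. Since $\Psi$ is analytic, the analytic implicit function theorem provides an analytic map $\xi \mapsto v(\xi)$ on a neighborhood of $0$ in $N$ with $v(0) = 0$, $Dv(0) = 0$, and $(\mathrm{Id}-\Pi)P(\xi + v(\xi)) = 0$. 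Define the reduced functional $f(\xi) := E(\xi + v(\xi))$, real-analytic on a neighborhood of $0$ in the finite-dimensional space $N$. By the classical \L ojasiewicz gradient inequality for analytic functions on $\R^{\dim N}$, there exist $\beta \in (0,1/2)$ (shrinking the \L ojasiewicz exponent if necessary so that it lies below $1/2$) and $A_0 < \infty$ with $|f(\xi) - f(0)|^{1-\beta} \le A_0\,|\nabla f(\xi)|$ for all $\xi$ near $0$.

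\emph{Transfer estimates.} It remains to establish, for $u \in U$ close enough to $0$ with $\xi := \Pi u$, the two comparisons $|\nabla f(\xi)| \lesssim \|P(u)\|_g$ and $|E(u) - E(0)| \lesssim |f(\xi) - f(0)| + \|P(u)\|_g^2$. For the first: differentiating $f(\xi) = E(\xi + v(\xi))$ gives $d|_\xi f(\zeta) = \langle P(\xi+v(\xi)), \zeta + Dv(\xi)\zeta\rangle_g$ with $\zeta \in N$; since $Dv(\xi)\zeta \in X_R$ while the $R$-component of $P(\xi+v(\xi))$ vanishes in $H$, the cross term is controlled by $\|P(\xi+v(\xi))\|\,\|Dv(\xi)\|$ and $\|g - \langle\cdot,\cdot\rangle_H\|$, both small, so $|\nabla f(\xi)| \lesssim \|P(\xi+v(\xi))\|$. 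Then, because $(\mathrm{Id}-\Pi)P$ has invertible transverse linearization along the slow manifold, the quantitative implicit function theorem gives $\|u - \xi - v(\xi)\|_X \lesssim \|(\mathrm{Id}-\Pi)P(u)\|_H \lesssim \|P(u)\|$, and Lipschitz continuity of $P$ yields $\|P(\xi+v(\xi)) - P(u)\| \lesssim \|P(u)\|$, so $|\nabla f(\xi)| \lesssim \|P(u)\|_g$. For the second comparison, write $w := u - \xi - v(\xi) \in X_R$ and Taylor expand $E$ at $\xi + v(\xi)$: the linear term $d|_{\xi+v(\xi)}E(w) = \langle P(\xi+v(\xi)), w\rangle_g$ is $O(\|P(u)\|\,\|w\|)$ using $(\mathrm{Id}-\Pi)P(\xi+v(\xi)) = 0$, and the remainder is $O(\|w\|_X^2)$; with $\|w\|_X \lesssim \|P(u)\|$ this gives $|E(u) - E(\xi+v(\xi))| \lesssim \|P(u)\|_g^2$, and $E(\xi+v(\xi)) = f(\xi)$.

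\emph{Conclusion.} Combining the transfer estimates with the finite-dimensional inequality, $|E(u) - E(0)| \lesssim (A_0\|P(u)\|_g)^{1/(1-\beta)} + \|P(u)\|_g^2$ for $u$ near $0$; since $\beta \in (0,1/2)$ we have $1 < 1/(1-\beta) \le 2$, so both terms are $\lesssim \|P(u)\|_g^{1/(1-\beta)}$ for $u$ small, whence $|E(u) - E(0)|^{1-\beta} \le A\|P(u)\|_g = A\|\nabla_g E(u)\|_g$ on a suitable neighborhood $\mathcal{O}$ of the origin, which is~\eqref{eq:conclusiontoobtain}. (When $\nabla_g E(u) = 0$ the inequality is trivial: then $u$ lies on the slow manifold and $\xi$ is a critical point of $f$, so $f(\xi) = f(0)$ and $E(u) = E(0)$.) I expect the main obstacle to be the quantitative splitting lemma in the transfer step, which requires both the effective form of the implicit function theorem to produce $\|u - \xi - v(\xi)\|_X \lesssim \|\nabla_g E(u)\|_H$ and careful bookkeeping of the mismatch between $g$ and $\langle\cdot,\cdot\rangle_H$ when pairing nearly orthogonal elements; the normalization $\beta < 1/2$ is precisely what allows the resulting quadratic error term to be absorbed at the end.
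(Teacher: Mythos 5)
Your proof is essentially correct, but it takes a genuinely different route from the paper. The paper does not re-prove the Lyapunov--Schmidt machinery at all: it reduces the variable-metric statement to the fixed-metric case of~\cite[Theorem 2.4.2]{kohout2021a} by showing (i) that $\nabla_H E = h^\sharp\, g|_u^\flat\, \nabla_g E$ is analytic, (ii) that $d|_0 \nabla_H E = h^\sharp g|_0^\flat\, d|_0\nabla_g E$ is Fredholm of index zero because the extra term $h^\sharp (d|_0 g^\flat)\nabla_g E$ vanishes at the critical point, and (iii) that the two gradient norms are comparable via the dual characterization $\|\nabla_g E(u)\|_g = \sup_{\|v\|_g\le 1} d|_uE(v)$. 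You instead re-derive the full abstract inequality from scratch --- symmetry of $\mathcal{L}$, kernel/range splitting, analytic implicit function theorem, finite-dimensional \L ojasiewicz, and the two transfer estimates --- carrying the variable metric through each step. Your version is self-contained and makes the mechanism visible (in particular, your observation that $P(0)=0$ kills the $dg$ term is exactly the same cancellation the paper uses in its Fredholm computation); the paper's version is far shorter and delegates the hard analysis to the cited theorem. Two small inaccuracies in your sketch, neither fatal: the $g_0$-symmetry of $\mathcal{L}$ gives a $g_0$-orthogonal splitting $H = N\oplus R$, not an $\langle\cdot,\cdot\rangle_H$-orthogonal one (norm equivalence does not transfer symmetry), so $\Pi$ should be the $g_0$-orthogonal projection; and $\|g_u - \langle\cdot,\cdot\rangle_H\|$ is merely bounded, not small --- but your transfer estimate for $|\nabla f(\xi)|$ only needs an upper bound on the pairing, for which boundedness of $g$ together with the smallness of $\|Dv(\xi)\|$ (from $Dv(0)=0$) already suffices.
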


The statement in~\cite{kohout2021a} is written for the special case when the metric $g = \langle \cdot, \cdot \rangle_H$ (that is, independent of $u$). We can obtain Proposition~\ref{pro:abstractloja} from this special case in the following way:


\begin{proof}
According to~\cite{kohout2021a}, Proposition~\ref{pro:abstractloja} holds assuming that
\begin{equation}
	\label{eq:toproveloja}
\nabla_H E : U \to H \text{ is analytic, and } d|_0 \nabla_H E : X \to H \text{ is Fredholm index zero} \, ,
\end{equation}
 and the conclusion is that
\begin{equation}
	\label{eq:knownlojaconclusion}
|E(u) - E(0)|^{1-\beta} \leq A \| \nabla_H E(u) \|_H \, .
\end{equation}
Our goal is two-fold: Prove~\eqref{eq:toproveloja} from our assumptions on $\nabla_g E$, and obtain the conclusion~\eqref{eq:conclusiontoobtain} from~\eqref{eq:knownlojaconclusion}.

Suppose that the hypotheses are satisfied with the analytic metric $g$. The map $g|_u^\flat : H \to H^*$, $g|_u^\flat(v) = g(v,\cdot)$, and its inverse $g|_u^\sharp$ from Riesz representation are analytic. We also define $h^\sharp : H^* \to H$ to be the Riesz representation map corresponding to $\langle \cdot, \cdot \rangle_H$. Therefore, under the assumptions,
\begin{equation}
u \mapsto d|_uE = g|_u^\flat \nabla_g E(u) \in H^*
\end{equation}
is analytic, so $u \mapsto \nabla_H E(u) = h^\sharp d|_uE$ is analytic. 

Next, we verify the Fredholm property.
\begin{equation}
d|_{u} \nabla_H E = d|_u h^\sharp dE = d|_u h^\sharp g|_u^\flat (\nabla_g E) = h^\sharp (d|_u g^\flat) \nabla_g E + h^\sharp g|_u^\flat d|_u \nabla_g E \, ,
\end{equation}
and we evaluate at the origin, which is a critical point, thereby obtaining
\begin{equation}
d|_{u} \nabla_H E= h^\sharp g|_0^\flat d|_0 \nabla_g E \, .
\end{equation}
The right-hand side is a post-composition of the Fredholm index zero operator with an isomorphism of the Hilbert space $H$. Therefore, $d|_0 \nabla_H E$ is Fredholm index zero, and we have verified~\eqref{eq:toproveloja}.

The right-hand side of the conclusion~\eqref{eq:knownlojaconclusion} is
\begin{equation}
	\label{eq:resultwithjusttheh}
\| \nabla_H E(u) \|_H = \sup_{\| v \|_{H} \leq 1} \langle \nabla E(u), v \rangle_H = \sup_{\| v \|_{H} \leq 1} d|_uE(v) \, .
\end{equation}
We have the analogous characterization
\begin{equation}
	\label{eq:appealtomeforsameresults}
\| \nabla_g E(u) \|_g = \sup_{\| v \|_g \leq 1} \langle \nabla_g E(u), v \rangle_g = \sup_{\| v \|_g \leq 1} d|_uE(v) \, .
\end{equation}
Since $\| \cdot \|_H$ and $\| \cdot \|_g$ are equivalent in a neighborhood of $u=0$, the desired conclusion~\eqref{eq:conclusiontoobtain} follows from~\eqref{eq:knownlojaconclusion} and the above characterizations of the norms.
\end{proof}

\begin{proof}[Proof of Proposition~\ref{pro:curvelojascewicz}]
We now verify the hypotheses of Proposition~\ref{pro:rodlojascewicz}. We begin with the case $\gamma = 0$; we recover $\gamma \geq 0$ at the end as a corollary. Fix $\bar{\X} \in \mathbf{M}$. We are applying the proposition in coordinates at $\bar{\X}$, namely, in a neighborhood $U$ of the origin in $T_{\bar{\X}} \mathbf{M}$. (Although the metric~\eqref{eq:metrica} \emph{looks} independent of $\X$, it is variable in the coordinate patch.) We work in the Banach space $X = T_{\bar{\X}} \mathbf{M}$, and the Hilbert space
\begin{equation}
	\label{eq:myHilbertspace}
H = \big\{ \Y \in L^2(\T;\R^3) \,:\, \Y_s \cdot \X_s = 0 \text{ in } H^{-1} \big\} \, .
\end{equation}
At the origin in $T_{\bar{\X}} \mathbf{M}$, $g = g^0$ coincides with the $L^2$ inner product on the tangent space. By analyticity, it is therefore bounded above and below by the $L^2$ inner product on a neighborhood of the origin. The gradient computation in Section~\ref{sec:derivation}, specifically,~\eqref{eq:firstvariationofE1}-\eqref{eq:gradientofE2}, yields\footnote{Moreover, when there is no preferred curvature, the calculation can be greatly simplified.}
\begin{equation}
\nabla_g E = (\X_{sss} - \lambda[\X] \X_{s})_s \, ,
\end{equation}
which is analytic. The Lagrange multiplier $\lambda[\X] : H^4 \to H^2$ is the solution to
\begin{equation}
	\label{eq:lambdalojadef}
\lambda_{ss} - |\X_{ss}|^2 \lambda + 4 (\X_{sss} \cdot \X_{ss})_s - |\X_{sss}|^2 = 0 \, ,
\end{equation}
whose analyticity can be demonstrated via the implicit function theorem.

Let $\wt{E} = E \circ {\rm Exp}_{\bar{\X}}$ and $\wt{g}$ be the metric in coordinates. Then
\begin{equation}
\nabla_g E \circ {\rm Exp}_{\bar{\X}} = d{\rm Exp}_{\bar{\X}} \nabla_{\wt{g}} \wt{E} \, , \quad \text{ in } U \, .
\end{equation}
We calculate $d|_{0}$ of the above expression:
\begin{equation}
d|_{\bar{\X}} \nabla_g E \underbrace{d|_{0} {\rm Exp}}_{= {\rm Id}} = d|_{0} d{\rm Exp}_{\bar{\X}} \underbrace{(\nabla_{\wt{g}} \wt{E})(0)}_{=0} + \underbrace{d|_{0} {\rm Exp}_{\bar{\X}}}_{= {\rm Id}} d|_{0} \nabla_{\wt{g}} \wt{E} \, ,
\end{equation}
that is,
\begin{equation}
d|_{\bar{\X}} \nabla_g E = d|_{0} \nabla_{\wt{g}} \wt{E} : T_{\bar{\X}} \mathbf{M} \to H \, ,
\end{equation}
so we can study the linearized operator without reference to coordinates.
We compute
\begin{equation}
	\label{eq:LinearizedOperatorL}
\mathcal{L}[\Y] = d|_{\bar{\X}} \nabla_g E[\Y] = \big(\Y_{sss} - (d|_{\bar{\X}}\lambda)[\Y] \X_{0,s} - \lambda[\bar{\X}] \Y_s\big)_s \, .
\end{equation}

To demonstrate the Fredholm property, we choose a smooth frame $\e_1(s),\e_2(s)$ on $\bar{\X}$, perpendicular to $\X_{0,s}$, and make the identification $T_{\bar{\X}} \mathbf{M} \cong H^3(\T;\R)^2 \times \R^2$ under the map
\begin{equation}
	\label{eq:curveidentification}
\Y \mapsto (c_1,c_2,\vec{c}_0) := \left( \Y_s \cdot \e_1 , \Y_s \cdot \e_2 , \int_\T \Y \, ds \right) \, .
\end{equation}
Likewise, we identify the Hilbert space $H$ in~\eqref{prop:propagationproprod} with $H^{-1}(\T;\R)^2 \times \R^2$. To calculate $\mathcal{L}$ under this identification, we write $\Y_s = c_1 \e_1 + c_2 \e_2$ and compute $\mathcal{L}_s \cdot \e_1$ and $\mathcal{L}_s \cdot \e_2$. (The action of the equation on the center of mass $\vec{c}_0$ is trivial.) Since
\begin{equation}
\Y_{sssss} = c_{1,ssss} \e_1 + c_{2,ssss} \e_2 + \cdots \, ,
\end{equation}
where the remainder belongs to $L^2$, the linearized operator $\mathcal{L}$ in~\eqref{eq:LinearizedOperatorL} is realized as
\begin{equation}
	\label{eq:firstidentificationlinearized}
	\wt{\mathcal{L}} = (c_1,c_2,\vec{c}_0) \mapsto
\begin{bmatrix}
\p_s^4 + \wt{A} & B &  \\
C & \p_s^4 + \wt{D}& \\
& & 0
\end{bmatrix} : H^3(\T;\R)^2 \times \R^2 \to H^{-1}(\T,\R)^2 \times \R^2 \, ,
\end{equation}
where $\wt{A}, B,C,\wt{D}$ map into $L^2(\T;\R)$. Let
\begin{equation}
P = \begin{bmatrix}
(1 + \p_s^4)^{-1} & & \\
& (1 + \p_s^4)^{-1} & \\
& & 1
\end{bmatrix} \, .
\end{equation}
Then
\begin{equation}
 P \wt{\mathcal{L}} = {\rm Id} + P \begin{bmatrix}
-1 + \wt{A} & B &  \\
C & -1 + \wt{D}& \\
& & -1
\end{bmatrix} : H^3(\T;\R)^2 \times \R^2 \circlearrowleft
\end{equation}
is a compact perturbation of the identity and hence Fredholm index zero.





Finally, to incorporate $\gamma \geq 0$, we observe that $g^\gamma$ and $g=g^0$ induce locally equivalent norms, so the conclusions of the Lojasiewicz inequality are equivalent, according to~\eqref{eq:appealtomeforsameresults}. \end{proof}

\subsection{Framed curves}

We now introduce the manifold structure on the space of framed curves
\begin{equation}
\mathbf{M}_{\rm f} := \big\{ (\X,\e) \in \mathbf{M} \times H^2(\T;\R^3) \,:\, |\e|^2 = 1 \, , \; \e \cdot \X_s = 0 \, , \; \forall s \in \T \big\} \, .
\end{equation}
The tangent space, realized in the ambient space, is 
\begin{equation}
T_{(\X,\e)} \mathbf{M}_{\rm f} = \big\{ (\Y,\bm{b}) \in T_{\X} \mathbf{M} \times H^2(\T;\R^3) \,:\, \bm{b} \cdot \e = 0 \, , \; \bm{b} \cdot \X_s + \e \cdot \Y_s = 0 \, , \; \forall s \in \T \big\} \, .
\end{equation}
It will sometimes be natural to parameterize 
\begin{equation}
	\label{eq:independentdegree}
\bm{b} = (\e \cdot \Y_s) \X_s + \underbrace{\theta \X_s \times \e}_{\wt{\bm{b}}} \, , 
\end{equation}
which reflects that the frame moves passively with the curve, and $\theta$ is the independent degree of freedom associated with the frame. More concretely, we have the identification
\begin{equation}
\iota_{\X,\e} : T_{\X} \mathbf{M} \times H^2(\T;\R) \to T_{(\X,\e)} \mathbf{M}_{\rm f}
\end{equation}
\begin{equation}
\iota_{\X,\e}(\Y,\theta) = (\Y, (\e \cdot \Y_s) \X_s + \theta \X_s \times \e) \, .
\end{equation}
Next, the exponential map $\exp_{\X_s(s)} \Y_s(s)$ induces a rotation $Q[\X_s,\Y_s](s) \in SO(3)$. Then 
\begin{equation}
{\rm Exp}_{(\X,\e)} (\Y,\bm{b}) = ({\rm Exp}_{\X} \Y, Q[\X_s,\Y_s](s) \exp_{\e}^{S_2} \wt{\bm{b}})
\end{equation}
is analytic. For $\gamma \geq 0$ and $\alpha > 0$, we endow $\mathbf{M}_{\rm f}$ with the metric
\begin{equation}
	\label{eq:gmetricforrod}
g^{\gamma,\alpha}_{\X,\e}(\Y,\bm{b};\Z,\bm{c}) = \int_\T (\mathbf{I} + \gamma \X_s \otimes \X_s)^{-1} \Y \cdot \Z \, ds + \alpha \int_\T \theta(s) \phi(s) \, ds \, ,
\end{equation}
where $\theta, \phi$ are as in~\eqref{eq:independentdegree}. For $\eta > 0$, we define the energy functional
\begin{equation}
E_\eta = \frac{1}{2} \int_\T |\X_{ss}|^2 \, ds + \frac{\eta}{2} \int_\T |\kappa_3|^2 \, ds \, ,
\end{equation}
where
\begin{equation}
\kappa_3[\X,\e] = \p_s \e \cdot (\X_s \times \e) : H^4 \times H^2 \to H^1 \, .
\end{equation}

\begin{proposition}[Rod Lojasiewicz inequality]
	\label{pro:rodlojascewicz}
Let $(\X,\e) \in \mathbf{M}_{\rm f}$. There exists $\delta(\X,\e) > 0$ and $\beta \in (0,1/2)$ such that if $(\Y,\mathbf{b}) \in \mathbf{M}_{\rm f}$ with $\| (\X,\e) - (\Y,\mathbf{b}) \|_{H^4 \times H^2} \leq \delta$ and $\eta > 0$, $\gamma \geq 0$, and $\alpha>0$, then
\begin{equation}
|E_\eta(\X,\e) - E_\eta(\Y,\mathbf{b})|^{1-\beta} \lesssim_{\eta,\gamma,\alpha} \sqrt{D_{\eta, \gamma,\alpha}[\Y,\bm{b}]} \, ,
\end{equation}
where, as in Section~\ref{sec:energeticsrod},
\begin{equation}
\begin{aligned}
D_{\eta, \gamma,\alpha}[\Y,\bm{b}] &= \int_{\T} ((\mathbf{I} + \gamma \X_s \otimes \X_s) \wt{\Z}_s) \cdot \wt{\Z}_s \, ds \, , \\
 \wt{\Z} &= \Y_{sss} - \lambda[\Y,\bm{b}] \Y_s - \eta \kappa_3[\Y,\b] \Y_s \times \Y_{ss} \, ,
 \end{aligned}
\end{equation}
with $\lambda[\Y,\bm{b}]$ as in~\eqref{eq:lambdadefforloja2}.
\end{proposition}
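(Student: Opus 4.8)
The plan is to derive Proposition~\ref{pro:rodlojascewicz} from the abstract Lojasiewicz inequality (Proposition~\ref{pro:abstractloja}), following the curve-only argument for Proposition~\ref{pro:curvelojascewicz}; the content lies entirely in verifying its hypotheses for the framed energy. If $(\bar{\X},\bar{\e}) \in \mathbf{M}_{\rm f}$ is \emph{not} a critical point of $E_\eta$ the inequality is immediate, since near such a point $D_{\eta,\gamma,\alpha}$ is bounded below while $|E_\eta(\X,\e) - E_\eta(\Y,\mathbf{b})|$ may be taken $\leq 1$; so we assume $d|_{(\bar{\X},\bar{\e})} E_\eta = 0$ and work in the chart given by ${\rm Exp}_{(\bar{\X},\bar{\e})}$. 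We take $X = T_{(\bar{\X},\bar{\e})} \mathbf{M}_{\rm f}$ with the $H^4 \times H^2$ topology and let $H$ be the completion of $X$ in the norm of $g^{\gamma,\alpha}_{(\bar{\X},\bar{\e})}$; by analyticity $g^{\gamma,\alpha}$ is comparable to $\langle \cdot,\cdot\rangle_H$ near the origin. As in the curve case, the $\gamma,\alpha$-independence of the neighborhood and of the exponent $\beta$ is recovered at the end from the equivalence of $\|\cdot\|_{g^{\gamma,\alpha}}$ to a fixed reference norm together with the identity $\| \nabla_g E \|_g = \sup_{\|v\|_g \leq 1} d|_u E(v)$, so that the final $\lesssim$ constant absorbs the dependence on $\gamma,\alpha$ (and $\eta$).

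The data are analytic: $E_\eta$, $g^{\gamma,\alpha}$, and $\kappa_3[\X,\e] = \p_s \e \cdot (\X_s \times \e)$ are fixed polynomials in $(\X_s, \X_{ss}, \e, \p_s \e)$ precomposed with bounded linear maps, hence real-analytic on the ambient space, and the Lagrange multiplier $\lambda[\Y,\mathbf{b}]$ of~\eqref{eq:lambdadefforloja2} is well defined because $\int_\T |\Y_{ss}|^2 \geq \int_\T |\Y_{ss}| \geq 2\pi$ by Fenchel's theorem (as in Lemma~\ref{lem:tension}), so that $-(1+\gamma)\p_s^2 + |\Y_{ss}|^2$ is coercive on $H^1(\T)$ and $\lambda$ is analytic in $(\Y,\mathbf{b})$ by the implicit function theorem. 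The gradient computed in Section~\ref{subsec:derivation} (with $\zeta_1=\zeta_2=\zeta_3=0$), written in the parameterization of the tangent space by a curve perturbation $\Y$ and a twist $\theta$, is
\begin{equation}
\nabla_g E_\eta = \Big( (\mathbf{I} + \gamma \X_s \otimes \X_s) \wt{\Z}_s \,,\, -\tfrac{\eta}{\alpha}(\kappa_3)_s \Big) \,, \qquad \wt{\Z} = \X_{sss} - \lambda \X_s - \eta \kappa_3 \X_s \times \X_{ss} \,,
\end{equation}
which is analytic into $H$, and a direct computation gives $\| \nabla_g E_\eta \|_g^2 = \int_\T \wt{\Z}_s \cdot (\mathbf{I} + \gamma \X_s \otimes \X_s) \wt{\Z}_s \, ds + \tfrac{\eta^2}{\alpha} \int_\T (\kappa_3)_s^2 \, ds = D_{\eta,\gamma,\alpha}[\Y,\mathbf{b}]$, so the conclusion~\eqref{eq:conclusiontoobtain} is precisely the claimed inequality.

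The one substantive point is that $\mathcal{L} := d|_{(\bar{\X},\bar{\e})} \nabla_g E_\eta : X \to H$ is Fredholm of index zero; as in the curve case, $d|_0 {\rm Exp} = {\rm Id}$ lets us compute $\mathcal{L}$ intrinsically. Fix a smooth orthonormal frame $\mathbf{f}_1,\mathbf{f}_2 \perp \bar{\X}_s$ along $\bar{\X}$ and identify $X \cong H^3(\T;\R)^2 \times H^2(\T;\R) \times \R^3$ by $(\Y,\mathbf{b}) \mapsto (\Y_s \cdot \mathbf{f}_1, \Y_s \cdot \mathbf{f}_2, \theta, \int_\T \Y \, ds)$, where $\theta$ is the component of $\mathbf{b}$ along $\bar{\X}_s \times \bar{\e}$ (the frame twist after removing the curve-induced tilt); correspondingly $H \cong H^{-1}(\T;\R)^2 \times L^2(\T;\R) \times \R^3$. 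Linearizing — and using that $d\kappa_3$ restricted to a pure twist variation equals $\theta_s$ (from $|\bar{\e}|^2 = 1$) — one finds that in these coordinates $\mathcal{L}$ has principal part
\begin{equation}
\mathcal{L}_0 = \begin{pmatrix} \p_s^4 & 0 & a_1 \p_s^3 \\ 0 & \p_s^4 & a_2 \p_s^3 \\ 0 & 0 & -\tfrac{\eta}{\alpha} \p_s^2 \end{pmatrix} \,\oplus\, 0_{\R^3} \,,
\end{equation}
with smooth coefficients $a_1,a_2$: the two curve blocks $\p_s^4 : H^3(\T) \to H^{-1}(\T)$, the twist block $-\tfrac{\eta}{\alpha}\p_s^2 : H^2(\T) \to L^2(\T)$, a trivial action on the center of mass, no fourth-order off-diagonal curve coupling (because $\mathbf{f}_1 \cdot \mathbf{f}_2 = 0$), and the twist entering the curve equations only at order three (through $(\kappa_3 \X_s \times \X_{ss})_s$). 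Everything else — the coupling of the curve variables into the twist equation, which is of order one since $d\kappa_3$ is of order zero in $\Y_s$, and all subprincipal remainders — is of strictly lower order than the target spaces permit, hence defines a compact operator $K : X \to H$ by Rellich's theorem. Since $\mathcal{L}_0$ is block upper-triangular with diagonal blocks that are Fredholm of index zero on $\T$ (kernels and cokernels being the constants), $\mathcal{L}_0$, and therefore $\mathcal{L} = \mathcal{L}_0 + K$, is Fredholm of index zero; equivalently, post-composing with ${\rm diag}\big((1+\p_s^4)^{-1}, (1+\p_s^4)^{-1}, \tfrac{\alpha}{\eta}(1+\p_s^2)^{-1}, {\rm Id}\big)$ exhibits $\mathcal{L}$ as a compact perturbation of the identity, exactly as for~\eqref{eq:firstidentificationlinearized}. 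I expect this linearization and order-count to be the main technical step; the point worth stressing is that the strong curve--frame coupling enters $\mathcal{L}$ only below the leading order, so no backward-heat cancellation of the kind discussed in Section~\ref{subsec:uncond_GWP} is needed here -- in the natural $(\X,\e)$ variables the twist equation is, at the linearized level, a genuine forward heat equation. With the hypotheses verified, Proposition~\ref{pro:abstractloja} supplies $\beta \in (0,1/2)$ and a neighborhood of $(\bar{\X},\bar{\e})$ depending only on $(\bar{\X},\bar{\e})$ (and $\eta$), which completes the proof.
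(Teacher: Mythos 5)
Your proposal is correct and follows essentially the same route as the paper: verify the hypotheses of the abstract Lojasiewicz inequality (Proposition~\ref{pro:abstractloja}) in the exponential chart, compute $\nabla_g E_\eta$ so that $\|\nabla_g E_\eta\|_g^2$ is the dissipation, and establish the Fredholm index-zero property of the linearization by identifying the tangent space with frame coefficients plus twist and exploiting the block-triangular structure (the $\theta\to$curve coupling sits above the diagonal, the curve$\to\theta$ coupling is lower order hence compact). The only imprecision is the parenthetical claim that post-composing with the diagonal preconditioner exhibits $\mathcal{L}$ as a compact perturbation of the identity: the preconditioned twist-to-curve entry (e.g.\ $(1+\p_s^4)^{-1}a_i\p_s^3 : H^2 \to H^3$) is bounded but not compact, so one genuinely needs the strictly-upper-triangular (nilpotent) structure to conclude invertibility modulo compacts --- which is exactly how the paper phrases it, and which your primary index-additivity argument already covers.
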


\begin{proof}[Proof of Proposition~\ref{pro:rodlojascewicz}]
We proceed with calculations analogous to the curve case. Define the Banach space $X = T_{(\bar{\X},\bar{\e})} \mathbf{M}_{\rm f}$ and Hilbert space
\begin{equation}
H = \big\{ (\Y,\b) \in L^2(\T;\R^3)^2 \,:\, \Y_s \cdot \X_s = 0 \, ,\; \bm{b} \cdot \e = 0 \, , \; \bm{b} \cdot \X_s + \e \cdot \Y_s = 0\, \text{ in } H^{-1}(\T) \big\}\,.
\end{equation}
We begin with $\gamma = 0$ and we suppress the dependence of $g$ on $\alpha$. The gradient $\nabla_g E$ with $g$ as in~\eqref{eq:gmetricforrod} will have two components. From the calculations in Section~\ref{sec:derivation}, specifically,~\eqref{eq:firstvariationofE1}-\eqref{eq:gradientofE3}, we have
\begin{equation}
(\nabla_g E)_1 = (\X_{sss} - \lambda[\X,\e] \X_{s} - \eta \kappa_3 \X_s \times \X_{ss})_s
\end{equation}
\begin{equation}
(\nabla_g E)_2 = - ((\nabla_g E)_{1,s} \cdot \e) \X_s - \frac{\eta}{\alpha} (\kappa_3)_s \X_s \times \e \, ,
\end{equation}
where $\lambda[\X,\e] : H^4 \times H^2 \to H^2$ is the solution $\lambda$ to
\begin{equation}
	\label{eq:lambdadefforloja2}
\lambda_{ss} - |\X_{ss}|^2 \lambda = - 4 (\X_{sss} \cdot \X_{ss})_s + |\X_{sss}|^2 - \eta \kappa_3 \X_s \cdot (\X_{ss} \times \X_{sss}) \, .
\end{equation}
Let $(\bar{\X},\bar{\e})$ be a rod equilibrium, which is smooth with twist $\kappa_3 \equiv C_0$ constant. Then
\begin{equation}
\begin{aligned}
\mathcal{L}_1[\Y,\b] &= d|_{(\bar{\X},\bar{\e})} \nabla_g E[\Y,\b]_1 \\
& = \big(\Y_{sss} - d|_{(\bar{\X},\bar{\e})} \lambda[\Y,\b] \X_s - \lambda[\bar{\X},\bar{\e}] \Y_s\big)_s \\
&\qquad - \eta \big(d|_{(\bar{\X},\bar{\e})} \kappa_3 [\Y,\b] \X_s \times \X_{ss} + C_0 (\Y_s \times \X_{ss} + \X_s \times \Y_{ss})\big)_s \\
\mathcal{L}_2[\Y,\b] &= d|_{(\bar{\X},\bar{\e})} \nabla_g E[\Y,\b]_2 \\
& = - (\mathcal{L}_1[\Y,\b]_{s} \cdot \e) \X_s - \frac{\eta}{\alpha} \big(d|_{(\bar{\X},\bar{\e})} \kappa_3[\Y,\b]\big)_{s} \X_s \times \e \, ,
\end{aligned}
\end{equation}
where in the $\mathcal{L}_2$ calculation we have used that $(\bar{\X},\bar{\e})$ is a critical point. We compute
\begin{equation}
d|_{(\X,\e)} \kappa_3[\Y,\b] = \p_s \b \cdot (\X_s \times \e) + \p_s \e \cdot (\Y_s \times \e + \X_s \times \b) \, .
\end{equation}
It will be advantageous to rewrite $\mathcal{L} = d|_{(\bar{\X},\bar{\e})} \nabla_g E$ under the map $\iota_{\X,\e}$ and examine the $\theta$ component of $\mathcal{L}$. This is done by writing $\bm{b}$ as in~\eqref{eq:independentdegree} and computing $\mathcal{L}_2 \cdot (\X_s \times \e)$:
\begin{equation}
- \frac{\eta}{\alpha} \big(d|_{(\bar{\X},\bar{\e})} \kappa_3[\Y,\b]\big)_{s} = - \frac{\eta}{\alpha} \theta_{ss} + R \, ,
\end{equation}
where $R$ maps into $H^1$. 
As in the curve-only setting~\eqref{eq:curveidentification}, we identify $\Y$ with its coefficients $c_1,c_2$ in a smooth frame and a center of mass $\vec{c}_0$. Under these identifications, the linearized operator $\mathcal{L}$ becomes, compared to~\eqref{eq:firstidentificationlinearized}, the block operator
\begin{equation}
\wt{\mathcal{L}} : (c_1,c_2,\vec{c}_0,\theta) \mapsto
\begin{bmatrix}
\p_s^3 + \wt{A}_{11} & A_{12} & & - \eta (\X_s \times \X_{ss}) \cdot \e_1 \p_{ss} + \wt{A}_{14} \\
A_{21} & \p_s^3 + \wt{A}_{22} & & - \eta (\X_s \times \X_{ss}) \cdot \e_2 \p_{ss}  + \wt{A}_{24} \\
& & 0 & \\
B_{41} & B_{42} & & - \frac{\eta}{\alpha} \p_{ss}  + \wt{B}_{44}
\end{bmatrix}
\end{equation}
\begin{equation}
H^3(\T;\R)^2 \times \R^2 \times H^2(\T;\R) \to H^{-1}(\T,\R)^2 \times \R^2 \times L^2(\T;\R) \, ,
\end{equation}
where the $A$, $\wt{A}$ operators map into $L^2(\T;\R)$ and the $B$, $\wt{B}$ operators map into $H^1(\T;\R)$. Therefore, we write
\begin{equation}
P = \begin{bmatrix}
(1 + \p_s^4)^{-1} &&& \\
& (1 + \p_s^4)^{-1} && \\
&& 1& \\
&&& (1 - \frac{\eta}{\alpha} \p_s^2)^{-1}
\end{bmatrix}\,,
\end{equation}
\begin{equation}
\begin{aligned}
P\wt{\mathcal{L}} &= {\rm Id} + P \begin{bmatrix}
0&&& - \eta (\X_s \times \X_{ss}) \cdot \e_1 \p_{ss} \\
&0&&- \eta (\X_s \times \X_{ss}) \cdot \e_2 \p_{ss}  \\
& & 0 & \\
&&& 0
\end{bmatrix}\\
&\qquad  + P \begin{bmatrix}
-1 + \wt{A}_{11} & A_{12} & & \wt{A}_{14} \\
A_{21} & -1 + \wt{A}_{22} & &  \wt{A}_{24} \\
& & -1 & \\
B_{41} & B_{42} & & -1 + \wt{B}_{44}
\end{bmatrix} \, ,
\end{aligned}
\end{equation}
i.e.,  $P\wt{\mathcal{L}}={\rm Id} + \text{ strictly upper triangular } + \text{ compact} : H^3(\T;\R)^2 \times \R^2 \times H^2(\T;\R) \circlearrowleft$, 
which is a compact perturbation of an invertible operator and hence Fredholm index zero.

Finally, when $\gamma \geq 0$, we argue as before by appealing to equivalent norms, as in~\eqref{eq:appealtomeforsameresults}.
\end{proof}













\section{Further investigations}

\subsection{Vanishing rotational friction coefficient}\label{subsec:alphato0}
We next consider the closed rod in the limit as $\alpha\to 0$ and prove Theorem \ref{cor:alpha0}, global well-posedness without restriction on $\norm{\X_{ss}}_{L^\infty_s}$. Expressing the closed curve and frame system \eqref{eq:Xss_alpha0}-\eqref{eq:kap3_alpha0} as $\alpha\to 0$ in terms of $(\X_s,\overline\kappa_3)$, we have 
\begin{align}
    \p_t\X_{s} + (\X_{s})_{ssss}
    &=  (\mc{R}^0[\X_s])_s  \label{eq:Xss_alpha0} \\
    \p_t\overline\kappa_3 &= \overline{\mc{R}}(t) \label{eq:kap3_alpha0}\,,
\end{align}
with
\begin{equation}
\begin{aligned}
\mc{R}^0[\X_s] &= (1+\gamma)\lambda_{s}\X_s + \lambda\X_{ss} 
    + \eta\overline\kappa_3(\X_s\times\X_{sss}) +  3\gamma\X_{s}(\X_{ss}\cdot\X_{sss})\\
\overline{\mc{R}}(t) &= \eta\overline\kappa_3\int_{\T}\bigg(-\abs{\X_{sss}}^2 +\abs{\X_{ss}}^4\bigg)\,ds\\
&\qquad+ \int_{\T}\bigg(\X_{ssss} \cdot (\X_s\times\X_{sss})+\lambda\X_{sss}\cdot (\X_s\times\X_{ss})\bigg)\,ds\,.
\end{aligned}
\end{equation}
The tension $\lambda$ satisfies 
\begin{equation}
    (1+\gamma)\lambda_{ss}  -\abs{\X_{ss}}^2\lambda = 
    -(4+3\gamma)(\X_{sss}\cdot\X_{ss})_s +\abs{\X_{sss}}^2  - \eta\overline\kappa_3\X_s\cdot(\X_{ss}\times\X_{sss})\,.
\end{equation}

Since $\overline\kappa_3$ is constant in $s$, local well-posedness for $(\X_s,\overline\kappa_3) \in \big(C([0,T];H^1_s) \cap L^2_t H^3_s\big) \times C([0,T])$ follows from analysis similar to Section~\ref{sec:immersednoforcing}, and the solution can be continued provided that $\X_s \in L^\infty_t H^1_s \cap L^2_t H^3_s(\T \times (0,T))$ and $\overline\kappa_3$ remains bounded, which we verify below.

To obtain global well-posedness, we again turn to energy methods. We now have 
\begin{equation}
\wt{\bm{Z}} = \X_{sss}-\lambda \X_s-\eta\overline\kappa_3\X_s\times\X_{ss}\,,
\end{equation}
and the analogous energy and dissipation quantities are given by 
 \begin{equation}
 E(t) = \frac{1}{2}\int_\T\bigg(\abs{\X_{ss}}^2+ \eta\overline\kappa_3^2\bigg)\,ds\,, \quad
D(t) = \int_\T\big(({\bf I}+\gamma\X_s\otimes\X_s)\wt{\bm{Z}}_s\big)\cdot\wt{\bm{Z}}_s\, ds\,.
 \end{equation}
The system \eqref{eq:Xss_alpha0}-\eqref{eq:kap3_alpha0} then satisfies the energy identity 
\begin{equation}
\begin{aligned}
    \frac{dE}{dt} =-D(t)\,.
\end{aligned}
\end{equation}

Recall that the size restriction on $\norm{\X_{ss}}_{L^\infty_s}$ in the $\alpha>0$ case stemmed from needing to bound $\norm{\X_{ssss}}_{L^2_s}$ by $D(t)$ and $E(t)$. In the $\alpha\to 0$ case, we may attain such a bound without the size restriction. In particular, as in the $\alpha>0$ case, we define 
\begin{equation}
\begin{aligned}
D_1(t) &= \int_\T\bigg( \abs{\bm{Z}_s}^2  + \gamma \big(\X_s\cdot\bm{Z}_s\big)^2 
      \bigg) \,ds\,.
\end{aligned}
\end{equation}
where again $\bm{Z}=\X_{sss}-\lambda\X_s$. Again we have 
\begin{equation}\label{eq:Xssss_closed_again}
\norm{\X_{ssss}}_{L^2_s}\lesssim D_1(t)^{1/2}(1+E(t)^{1/2})+ E(t)+E(t)^{5/2}
\end{equation}
by \eqref{eq:Xssss_closed0}. We then have
\begin{equation}\label{eq:D1D_ineq}
D_1(t) + \eta^2\int_\T\abs{\overline\kappa_3\big(\X_s\times\X_{ss} \big)_s}^2\,ds \le D(t) + 2\eta\int_\T
    \abs{\overline\kappa_3\bm{Z}_s\cdot\big(\X_s\times\X_{ss} \big)_s}  \,ds\,.
\end{equation}

Splitting up the right-hand side as in \eqref{eq:divvy}, we still have a term to estimate:
\begin{equation}
\begin{aligned}
    \eta^2\overline\kappa_3^2\int_\T\abs{\X_{sss}}^2\,ds 
    &\lesssim \overline\kappa_3^2\norm{\X_{ss}}_{L^2_s}\norm{\X_{ssss}}_{L^2_s}\\
    &\lesssim  D_1(t)^{1/2}(E(t)^{3/2}+E(t)^2)+ E(t)^{5/2}+E(t)^4\\
    &\le \varepsilon D_1(t) + C(E,\varepsilon)\,.
\end{aligned}
\end{equation}
Taking $\varepsilon= 1/4$, we may absorb $D_1(t)$ into the left-hand side of \eqref{eq:D1D_ineq} to obtain
\begin{equation}
 D_1(t) \le 4D(t) + C(E)\,, 
 \qquad \norm{\X_{ssss}}_{L^2_s} \le C(E)D(t) + C(E)\,.
 \end{equation} 
 This verifies that $\X_s \in L^2_t H^3_s(I \times (0,T))$ on any finite interval, and hence the solution can be continued indefinitely.

\subsection{On unconditional curve-frame well-posedness}\label{subsec:uncond_GWP}
Despite the condition \eqref{eq:Xss_criterion} on the maximum filament curvature required in our proof of Theorem \ref{thm:GWP_curveframe}, we have reason to suspect that the coupled curve-frame evolution \eqref{eq:Xss_0}-\eqref{eq:kap3_0} is in fact well posed without such a condition. 
In particular, the apparent backward heat evolution may not actually be present, as it is cancelled by the strong coupling between \eqref{eq:Xss_0} and \eqref{eq:kap3_0}. 
We will use the $\kappa_j$ formulation \eqref{eq:kap1}-\eqref{eq:kap3} with $\zeta_1=\zeta_2=\zeta_3=0$ and consider the linearized curve-frame evolution for a closed filament about the multiply-covered covered circle with constant (very large) curvature $\overline\kappa$ and constant twist rate $b$. For this state, the filament tension is constant and satisfies $\lambda_0=(1-\eta)b^2$. If $b=0$, this is a steady state of the equations. 

Linearizing \eqref{eq:kap1}-\eqref{eq:kap3} about $(\kappa_1,\kappa_2,\kappa_3)=(\overline\kappa,0,b)$ for $\overline\kappa,b$ constant, we obtain the system 
\begin{align}
\p_t \kappa_1 &= -(\kappa_1)_{ssss} + \big( (5-2\eta)b^2 -(1+\gamma) \overline\kappa^2\big)(\kappa_1)_{ss} +(4-\eta)b(\kappa_2)_{sss} 
      \nonumber \\
    &\quad 
    - (2-\eta)b^3(\kappa_2)_s
    +(4-4\eta)b\overline\kappa(\kappa_3)_{ss}
     - 2(1-\eta)\overline\kappa b^3\kappa_3
    + \overline\kappa b^2\wt\lambda   
    + \gamma\overline\kappa\wt\lambda_{ss}\,, \\
\p_t \kappa_2 &= -(\kappa_2)_{ssss} + (5-2\eta)b^2(\kappa_2)_{ss}  + \big( (2-\eta)b^3 -(1+\gamma)b\overline\kappa^2 \big)(\kappa_1)_s  \nonumber \\
    &\quad  
    -(4-\eta)b(\kappa_1)_{sss}
     -(1-\eta)\overline\kappa(\kappa_3)_{sss} \nonumber \\
     &\quad + \overline\kappa\bigg( 5(1-\eta) b^2 - \frac{\eta}{\alpha}\bigg)(\kappa_3)_s  
    - (1-\gamma)\overline\kappa b\wt\lambda_s \,,\\
\p_t \kappa_3 &= \bigg(\frac{\eta}{\alpha} - (1-\eta)\overline\kappa^2\bigg)(\kappa_3)_{ss} +2(1-\eta)\overline\kappa^2b^2\kappa_3 -\overline\kappa(\kappa_2)_{sss}  \nonumber
      \\
    &\quad  
      + (2-\eta)b^2\overline\kappa(\kappa_2)_s- (3-\eta)b\overline\kappa(\kappa_1)_{ss}  
    -  b\overline\kappa^2\wt\lambda \,,
\end{align}
along with the linearized tension equation (where here $\wt\lambda = -(\lambda+\kappa^2)=-(\lambda+\kappa_1^2)$, as in~\cite{oelz2011curve}) 
\begin{equation}
(1+\gamma)\wt\lambda_{ss} - \overline\kappa^2\wt\lambda = (2+\gamma)\overline\kappa(\kappa_1)_{ss} - (2-\eta)b\overline\kappa(\kappa_2)_s  +2(\eta-1)\overline\kappa^2b\kappa_3 \,.
\end{equation}
Note that if $b=0$, the $\kappa_1$ evolution completely decouples from the $(\kappa_2,\kappa_3)$ evolution:
\begin{align}
\p_t \kappa_1 &= -(\kappa_1)_{ssss}  -(1+\gamma)\overline\kappa^2(\kappa_1)_{ss}  +\gamma \overline\kappa\wt\lambda_{ss}\,, \\
\p_t \kappa_2 &= -(\kappa_2)_{ssss} -(1- \eta)\overline\kappa(\kappa_3)_{sss} - \frac{\eta}{\alpha}\overline\kappa(\kappa_3)_s \,, \\
\p_t \kappa_3 &= \frac{\eta}{\alpha}(\kappa_3)_{ss} + (\eta-1)\overline\kappa^2(\kappa_3)_{ss}  -\overline\kappa(\kappa_2)_{sss}\,,
\end{align}
where
\begin{equation}
(1+\gamma)\wt\lambda_{ss}- \overline\kappa^2\wt\lambda = (2+\gamma)\overline\kappa(\kappa_1)_{ss}  \,. 
\end{equation}

Expanding each $\kappa_j(s,t)$, $j=1,2,3$, and $\wt\lambda(s,t)$ as Fourier series in $s$,
\begin{equation}
 \kappa_j(s,t)=\sum_{n=-\infty}^\infty e^{i2\pi n s}\wh\kappa_j(n,t)\,, \qquad 
 \wt\lambda(s,t) =\sum_{n=-\infty}^\infty e^{i2\pi n s}\wh\lambda(n,t)\,,
\end{equation} 
we may solve for the tension coefficients $\wh\lambda(n,t)$ as
\begin{equation}
\wh\lambda = \frac{(2\pi n)^2(2+\gamma)\overline\kappa}{(1+\gamma)(2\pi n)^2+ \overline\kappa^2}\wh\kappa_1 + \frac{(i2\pi n)(2-\eta)b\overline\kappa}{(1+\gamma)(2\pi n)^2+ \overline\kappa^2}\wh\kappa_2  -\frac{2(\eta-1)\overline\kappa^2b}{(1+\gamma)(2\pi n)^2+ \overline\kappa^2}\wh\kappa_3 \,.
\end{equation}
The coefficients $\wh\kappa_j(n,t)$ then satisfy the following system of ODEs:
\begin{align}
\p_t\wh\kappa_1 &= -(2\pi n)^2\bigg( (2\pi n)^2 +(5-2\eta)b^2 -(1+\gamma) \overline\kappa^2 +\frac{(2+\gamma)\overline\kappa^2((2\pi n)^2 \gamma-b^2)}{(1+\gamma)(2\pi n)^2+ \overline\kappa^2}\bigg) \wh\kappa_1 \nonumber \\
    &\quad -i(2\pi n)\bigg( (2\pi n)^2(4-\eta)b+ (2-\eta)b^3 +\frac{(2-\eta)b\overline\kappa^2((2\pi n)^2 \gamma-b^2)}{(1+\gamma)(2\pi n)^2+ \overline\kappa^2}\bigg)\wh\kappa_2  \label{eq:hatsys1}\\
    &\quad 
    -\bigg( (2\pi n)^2(4-4\eta)b\overline\kappa +2(1-\eta)\overline\kappa b^3 +\frac{2(1-\eta)\overline\kappa^3b((2\pi n)^2\gamma-b^2)}{(1+\gamma)(2\pi n)^2+ \overline\kappa^2}\bigg)\wh\kappa_3 \nonumber \\
\p_t\wh\kappa_2 &= i(2\pi n)b\bigg( (2\pi n)^2(4-\eta) + (2-\eta)b^2 -(1+\gamma)\overline\kappa^2 -\frac{(2\pi n)^2(1-\gamma)(2+\gamma)\overline\kappa^2}{(1+\gamma)(2\pi n)^2+ \overline\kappa^2}\bigg)\wh\kappa_1 \nonumber \\
    &\quad -(2\pi n)^2\bigg( (2\pi n)^2 +(5-2\eta)b^2 -\frac{(1-\gamma)(2-\eta)\overline\kappa^2 b^2}{(1+\gamma)(2\pi n)^2+ \overline\kappa^2}\bigg)\wh\kappa_2  \label{eq:hatsys2}
    \\
    &\quad + i(2\pi n)\bigg( (2\pi n)^2(1-\eta)\overline\kappa + (5-5\eta)\overline\kappa b^2 - \frac{\eta}{\alpha}\overline\kappa - \frac{2(1-\gamma)(1-\eta)\overline\kappa^3 b^2}{(1+\gamma)(2\pi n)^2+ \overline\kappa^2}\bigg)\wh\kappa_3   \nonumber \\
\p_t\wh\kappa_3 &= (2\pi n)^2\bigg( (3-\eta)b\overline\kappa -\frac{(2+\gamma)b\overline\kappa^3}{(1+\gamma)(2\pi n)^2+ \overline\kappa^2}\bigg)\wh\kappa_1  \nonumber
      \\
    &\quad  
     +i(2\pi n)\bigg((2\pi n)^2\overline\kappa + (2-\eta)b^2\overline\kappa - \frac{(2-\eta)b^2\overline\kappa^3}{(1+\gamma)(2\pi n)^2+ \overline\kappa^2}\bigg)\wh\kappa_2  \label{eq:hatsys3} \\
    &\quad 
     -\bigg((2\pi n)^2\bigg(\frac{\eta}{\alpha} - (1-\eta)\overline\kappa^2\bigg) -2(1-\eta)\overline\kappa^2b^2 -\frac{2(\eta-1)b^2\overline\kappa^4}{(1+\gamma)(2\pi n)^2+ \overline\kappa^2}\bigg)\wh\kappa_3 \,. \nonumber
\end{align}
As $n\to\infty$, the leading order asymptotic behavior of the system \eqref{eq:hatsys1}-\eqref{eq:hatsys3} is given by 
\begin{equation}
\p_t \begin{pmatrix}
\wh\kappa_1\\
\wh\kappa_2\\
\wh\kappa_3
\end{pmatrix}
\sim
-(2\pi n)^2
\begin{pmatrix}
(2\pi n)^2 & i(2\pi n)(4-\eta)b & 4(1-\eta)b\overline\kappa\\
-i(2\pi n)(4-\eta)b & (2\pi n)^2 & -i(2\pi n)(1-\eta)\overline\kappa \\
-(3-\eta)b\overline\kappa & -i(2\pi n)\overline\kappa & \frac{\eta}{\alpha}-(1-\eta)\overline\kappa^2
\end{pmatrix}
\begin{pmatrix}
\wh\kappa_1\\
\wh\kappa_2\\
\wh\kappa_3
\end{pmatrix}\,.
\end{equation}
We may calculate the eigenvalues $\mu_1(n),\mu_2(n),\mu_3(n)$ of the above matrix, which, as $n\to\infty$, satisfy 
\begin{equation}\label{eq:muks}
\mu_1\,,\;\mu_2 = -(2\pi n)^4 + O(n^3)\,, \quad \mu_3 = -(2\pi n)^2\frac{\eta}{\alpha} + O(n)\,, \qquad n\to\infty\,.
\end{equation}
In particular, no matter how large $\overline\kappa$ is, all $\mu_j$ are negative beyond some finite wavenumber $n$. This suggests that the backward heat operator apparently present in \eqref{eq:kappa3strongcoupling} for large initial curvatures is not really there; in fact, the strong coupling between the curve and frame evolutions actually helps counteract the development of backward heat. 

To see the effect of the strong coupling more explicitly, we consider the $b=0$ setting. The system \eqref{eq:hatsys1}-\eqref{eq:hatsys3} then simplifies to 
\begin{align}
\p_t \wh\kappa_1 &= \frac{(2\pi)^2(1+\gamma)n^2}{(2\pi)^2(1+\gamma)n^2+\overline\kappa^2}\big( -(2\pi)^4n^4 +\overline\kappa^4  \big)\wh\kappa_1 \,,  \label{eq:kap1_nob}\\
\p_t \begin{pmatrix}
\wh\kappa_2\\
\wh\kappa_3
\end{pmatrix} &= 
\begin{pmatrix}
-(2\pi n)^4 &  -i2\pi n\overline\kappa \big( (2\pi n)^2(\eta-1) + \frac{\eta}{\alpha}\big) \\
i(2\pi n)^3\overline\kappa & -(2\pi n)^2\big(\frac{\eta}{\alpha}+ (\eta-1)\overline\kappa^2\big)
\end{pmatrix}
\begin{pmatrix}
\wh\kappa_2\\
\wh\kappa_3
\end{pmatrix} \,.
\label{eq:kap2kap3_nob}
\end{align}
We may explicitly write down the eigenvalues of the matrix in \eqref{eq:kap2kap3_nob} as 
\begin{equation}\label{eq:mu1mu2}
\begin{aligned}
\mu_2\,, \mu_3&= -\frac{(2\pi n)^2}{2}\bigg[ (2\pi n)^2 + \frac{\eta}{\alpha}+ (\eta-1)\overline\kappa^2 \\
&\qquad \pm \sqrt{\bigg((2\pi n)^2 - \bigg(\frac{\eta}{\alpha}+ (\eta-1)\overline\kappa^2\bigg)\bigg)^2
+ 4\overline\kappa^2\bigg(\frac{\eta}{\alpha}+ (2\pi n)^2(\eta-1) \bigg)}\bigg]\\
&=-\frac{(2\pi n)^2}{2}\bigg[ (2\pi n)^2 + \frac{\eta}{\alpha}+ (\eta-1)\overline\kappa^2 \\
&\qquad \pm \sqrt{\bigg( (2\pi n)^2 +\frac{\eta}{\alpha}+ (\eta-1)\overline\kappa^2 \bigg)^2
+ 4\frac{\eta}{\alpha}\big(\overline\kappa^2
- (2\pi n)^2 \big)}\bigg]\,.
\end{aligned}
\end{equation}
As $n\to\infty$, we obtain the same leading order behavior as \eqref{eq:muks}. Here, however, we can see more precisely how the strong coupling between the $\kappa_2$ and $\kappa_3$ equations exactly cancels the backward heat part $(2\pi n)^2\overline\kappa^2(\eta-1)$ appearing in \eqref{eq:kap2kap3_nob}.

Our aim with the above spectral calculations is to highlight the possible regularizing role of the strong curve-frame coupling, and in particular, the apparent cancellation of the backward heat behavior through this coupling. Although our focus here is on the asymptotic spectral behavior as $n\to\infty$, we note that there are a variety of low wavenumber instabilities that are interesting in their own right. For the case $b=0$ above, we can see from the expression \eqref{eq:mu1mu2} that for large enough $\overline\kappa$, $\mu_3$ is positive for small $n$, so the multiply-covered circle is linearly unstable to long-wave perturbations. This is related to the well-studied Michell's instability \cite{michell1890small,goriely2006twisted,zajac1962stability,bartels2020numerical}, which arises due to the balance between the bending and twisting energy of the rod. For Michell's instability, a build-up of twist energy in a closed planar ring is relieved by deformation, i.e., releasing some of the extra twist energy into bending. Here we see the analogous phenomenon: a build-up of bending energy may be relieved by twisting the rod.

\appendix
\numberwithin{theorem}{section}

\section{Hyperviscous regularization}
\label{sec:hyperviscous}

We consider \eqref{eq:Xss_0-repeat}-\eqref{eq:tension_0-repeat} with an additional hyperviscous term in the $\kappa_3$ equation: 
\begin{align}
    \p_t\X_s + (\X_s)_{ssss}
    &=  \big(\mc{R}^{\rm c}_1[\X_s,\kappa_3] \big)_s + \big(\mc{R}^{\rm c}_2[\X_s,\kappa_3] \big)_{ss} \label{eq:Xs_evol}\\
    \p_t\kappa_3 + \nu(\kappa_3)_{ssss}
    &= \mc{R}^{\rm f}_0[\X_s,\kappa_3] + \big(\mc{R}^{\rm f}_1[\X_s,\kappa_3]\big)_s + \big(\mc{R}^{\rm f}_2[\X_s,\kappa_3]\big)_{ss}\,. \label{eq:kap3_evol}
\end{align}
Here $\nu>0$ is a parameter that we will eventually send to zero. The remainder terms are given by
\begin{align}
    \mc{R}^{\rm c}_1[\X_s,\kappa_3] &= -\gamma\lambda\X_{ss} +  3\gamma\X_{s}(\X_{ss}\cdot\X_{sss}) \\
    \mc{R}^{\rm c}_2[\X_s,\kappa_3] &= (1+\gamma)\lambda\X_s + \eta\kappa_3\X_s\times\X_{ss} \\
    \mc{R}^{\rm f}_0[\X_s,\kappa_3] &= -2\eta\X_{ss}\cdot\X_{sss}(\kappa_3)_s - \eta\kappa_3\X_{ssss}\cdot\X_{ss} -2\eta\kappa_3\abs{\X_{sss}}^2   \nonumber \\
    &\quad 
    + \eta\kappa_3\abs{\X_{ss}}^4
      +\X_{ssss} \cdot (\X_s\times\X_{sss}) +\lambda\X_{sss}\cdot (\X_s\times\X_{ss}) \\  
    \mc{R}^{\rm f}_1[\X_s,\kappa_3] &= -\X_{ssss} \cdot (\X_s\times\X_{ss}) \\  
    \mc{R}^{\rm f}_2[\X_s,\kappa_3] &= \bigg(\frac{\eta}{\alpha}+\eta\abs{\X_{ss}}^2\bigg)\kappa_3 \,.
\end{align}
The filament tension $\lambda$ continues to satisfy \eqref{eq:tension_0-repeat}, which we rewrite here for convenience: 
\begin{equation}\label{eq:tension3}
    (1+\gamma)\lambda_{ss}  -\abs{\X_{ss}}^2\lambda = 
    -(4+3\gamma)(\X_{sss}\cdot\X_{ss})_s +\abs{\X_{sss}}^2  - \eta\kappa_3\X_s\cdot(\X_{ss}\times\X_{sss})\,.
\end{equation}

We show the following qualitative existence and uniqueness result.
\begin{lemma}[Qualitative short-time theory with hyperviscosity]
    \label{lem:qualitativeexistencehyperviscosity}
Let
\begin{equation}
\X_{s}^{\rm in} \in H^3_s \, , \quad |\X_s^{\rm in}|^2 = 1 \, ,
\quad
\kappa_3^{\rm in} \in H^1_s\,,
\end{equation}
satisfying the boundary conditions, and consider \eqref{eq:Xs_evol}-\eqref{eq:kap3_evol} for some $\nu>0$. Then there exists $T = T(\X^{\rm in},\kappa_3^{\rm in},\nu) > 0$ and a solution
\begin{equation}
\X_{s} \in L^\infty_t H^3_s \cap L^2_t H^5_s\,, \quad 
\kappa_3 \in L^\infty_t H^1_s \cap L^2_t H^3_s
\end{equation}
to the hyperviscous problem. The solution is unique in this class and can be extended to a maximal solution provided the norms do not blow up.
\end{lemma}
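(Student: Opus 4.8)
The plan is to prove Lemma~\ref{lem:qualitativeexistencehyperviscosity} by a contraction--mapping (Picard) argument on the mild formulation of~\eqref{eq:Xs_evol}--\eqref{eq:kap3_evol}, in close parallel with the curve-only local well-posedness of Section~\ref{sec:immersednoforcing}. For fixed $\nu>0$ the $\kappa_3$ equation~\eqref{eq:kap3_evol} has principal part $\partial_t+\nu\partial_s^4$, a genuinely fourth-order parabolic operator, so the term $\eta\abs{\X_{ss}}^2(\kappa_3)_{ss}$ is a lower-order perturbation and the curvature condition~\eqref{eq:Xss_criterion} plays no role at this stage. The first step is thus to record the two linear ingredients: the biharmonic heat estimates of Section~\ref{sec:lineartheory} (Lemma~\ref{lem:parabolicestimates}) for the $\X_s$ variable, and their $\nu$-dependent fourth-order analogues for $\kappa_3$, which are the content of Lemma~\ref{lem:semigroupestimates}. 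Both free evolutions gain two spatial derivatives in $L^2_t$-based norms, and for a forcing presented in divergence form $f_0+\partial_sf_1+\partial_{ss}f_2$ (with $f_0\in L^1_t$, $f_1\in L^{4/3}_t$, $f_2\in L^2_t$ in the appropriate Sobolev scale) the Duhamel operator maps into $L^\infty_tH^3_s\cap L^2_tH^5_s$ and $L^\infty_tH^1_s\cap L^2_tH^3_s$, respectively --- which is exactly why the remainders are displayed in divergence form in~\eqref{eq:Xs_evol}--\eqref{eq:kap3_evol}. The constants in the $\kappa_3$ estimates degenerate as $\nu\to0^+$, which is harmless here since $\nu$ is fixed; the $\nu$-dependence of $T$ enters only through them.

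The second step is the fixed-point setup. Work in the space
\[
\mathcal Z_T=\bigl\{(\X_s,\kappa_3)\in\bigl(C([0,T];H^3_s)\cap L^2_tH^5_s\bigr)\times\bigl(C([0,T];H^1_s)\cap L^2_tH^3_s\bigr)\ \text{on}\ I\times(0,T),\ \text{with the stated BCs}\bigr\},
\]
and define the map $(\Y_s,\vartheta)\mapsto(\X_s,\kappa_3)$ by solving~\eqref{eq:Xs_evol} and~\eqref{eq:kap3_evol} linearly with the remainders evaluated at $(\Y_s,\vartheta)$ and with $\lambda=\lambda[\Y_s,\vartheta]$ the solution of the elliptic tension equation~\eqref{eq:tension3}; the latter is uniquely solvable and satisfies polynomial estimates --- and Lipschitz difference estimates --- by Lemma~\ref{lem:tension_rod} (the closed-loop case using Fenchel's theorem as in Lemma~\ref{lem:tension}). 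Let $M_0$ bound the $C([0,T];\cdot)$ norms and $M_1$ the high $L^2_t$ norms of $(\Y_s,\vartheta)$. The crucial structural point is that \emph{every} nonlinear contribution, when estimated by using the high norms $L^2_tH^5_s$ and $L^2_tH^3_s$ \emph{generously} (so that, e.g., $\X_{sss}$ and $\X_{ssss}$ are placed in $L^2_t$- or $L^4_t$-based spaces), is bounded either by a polynomial in $M_0$ times at least two copies of $M_1$ --- which lands it in an $L^1_t$- or $L^{4/3}_t$-in-time space, admissible for the $f_0$, $f_1$ slots --- or by $T^{c}$ (some $c>0$) times $M_0^aM_1^b$ with $b\ge1$. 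Combined with the fact that the free-evolution high norms are $o_{T\to0^+}(1)$ (dominated convergence: $\norm{e^{-t\partial_s^4}\X_s^{\rm in}}_{L^2_tH^5_s(0,T)}\to0$, and likewise for the $\nu$-semigroup at the $H^3_s$ level), this lets one first fix $M_0\sim\norm{\X_s^{\rm in}}_{H^3_s}+\norm{\kappa_3^{\rm in}}_{H^1_s}$, then choose $M_1$ small depending on $M_0$, then $T$ small, exactly as in~\eqref{eq:ballstab}; the map then stabilizes a ball and is a contraction on it, and the Banach fixed-point theorem delivers the unique solution in $\mathcal Z_T$. Re-running the estimates with $\Y_s=\X_s$ upgrades the $L^\infty_t\cap L^2_t$ membership to continuity in time.

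It remains to propagate the pointwise constraint and to record the continuation criterion. Dotting~\eqref{eq:Xs_evol} with $\X_s$ and substituting~\eqref{eq:tension3} yields a closed scalar parabolic equation for $\abs{\X_s}^2-1$ with vanishing initial datum, whose unique solution is identically zero, exactly as in the curve-only proof; hence $\abs{\X_s}^2\equiv1$. Since the time of existence produced above depends only on $\norm{\X_s^{\rm in}}_{H^3_s}$, $\norm{\kappa_3^{\rm in}}_{H^1_s}$, and $\nu$, the local solution extends to a maximal one, which can be continued as long as those norms (equivalently, in view of the energy identity and the $\frac{1}{2}\frac{d}{dt}D$ computation of Section~\ref{sec:energeticsrod}, which remain valid for the hyperviscous system because the extra term only contributes a favorable $-\nu\int(\kappa_3)_{sss}^2\,ds$) do not blow up.

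The main obstacle is the borderline regularity of the curve--frame coupling: the transport term $\X_{sssss}\cdot(\X_s\times\X_{ss})$ (split between $\mc{R}^{\rm f}_0$ and $(\mc{R}^{\rm f}_1)_s$) and the terms $\kappa_3\,\X_{ssss}\cdot\X_{ss}$ and $\X_{ssss}\cdot(\X_s\times\X_{sss})$ in $\mc{R}^{\rm f}_0$ sit at precisely the regularity the solution space controls, so closing the estimates --- both the stabilization/contraction estimates and the corresponding difference estimates, where one must additionally absorb the derivative loss of the tension map $\X_s\mapsto\lambda$ via Lemma~\ref{lem:tension_rod} --- hinges on carefully exploiting the divergence structure of~\eqref{eq:Xs_evol}--\eqref{eq:kap3_evol} to extract from each such term either a second high-norm factor or a positive power of $T$. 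Everything else is a routine, if lengthy, exercise in one-dimensional Sobolev interpolation and the algebra property of $H^k(I)$ for $k\ge1$.
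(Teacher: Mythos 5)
Your proposal is conceptually sound and would work, but it differs from the paper's proof in its regularity bookkeeping. The paper closes the fixed-point argument one derivative lower: it works in $\mc{Y}= (L^\infty_tH^2_s\cap L^2_tH^4_s)\times (L^\infty_tH^1_s\cap L^2_tH^3_s)$, so the contraction estimates and the tension estimates (Lemma~\ref{lem:tension_rod}, \eqref{eq:Y_H1_lam}--\eqref{eq:Y_H2_lam}) only ever need $\X_s$ up to $H^2_s$ pointwise in time; the full claim $\X_s\in L^\infty_tH^3_s\cap L^2_tH^5_s$ is then recovered \emph{a posteriori} by a bootstrap pass through the equation. You instead run the contraction directly in $C([0,T];H^3_s)\cap L^2_tH^5_s$, which is legitimate but forces you to estimate $\lambda$, $\mc{R}^{\rm c}_j$, and $\mc{R}^{\rm f}_j$ at one order higher regularity inside the fixed point (e.g. $\lambda\in L^2_tH^3_s$ rather than $L^2_tH^2_s$); the algebra closes, but there are noticeably more terms to track, and the paper's two-step route is cleaner precisely because it postpones the high-order estimates to the bootstrap, where the solution is already known.

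The other place where your account diverges is the choice of smallness parameters. You invoke the $M_0/M_1$ split of Section~\ref{sec:immersednoforcing} and say the hierarchy of choices is ``exactly as in~\eqref{eq:ballstab}.'' That is not quite right, and it imports complexity the paper deliberately avoids. In \eqref{eq:ballstab}, the $M_1^2M_0$ term carries \emph{no} power of $T$ (it comes from the tension estimate landing in $L^2_tH^1_s$ without a Hölder gain), which is why $M_1$ has to be chosen small before $T$. In the hyperviscous system, however, the extra fourth-order parabolicity in $\kappa_3$ makes \emph{every} nonlinear contribution in \eqref{eq:Rc1T}--\eqref{eq:Rf2T} pick up a positive power of $T$ (at your $H^3$ level the needed $\|\X_s\|_{L^2_tH^4_s}$ is $O(T^{1/4})$ by interpolation between $L^\infty_tH^3_s$ and $L^2_tH^5_s$). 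Consequently, a single ball parameter $N\sim\|\X_s^{\rm in}\|_{H^2_s}+\|\kappa_3^{\rm in}\|_{H^1_s}$ together with $T$ small suffices, and the $M_0/M_1$ split, while not incorrect, is unnecessary. The remaining ingredients you describe --- propagating $|\X_s|^2=1$ by the scalar parabolic equation, uniqueness via the difference estimates (which the paper proves separately as the relative-energy Lipschitz bound, not inside the fixed point), the $\nu$-dependence of $T$, and the irrelevance of the curvature criterion~\eqref{eq:Xss_criterion} at the hyperviscous level --- agree with the paper's argument.
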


\begin{proof}[Proof of Lemma~\ref{lem:qualitativeexistencehyperviscosity}]
We will consider the pair $(\X_s,\kappa_3)$ belonging to the space 
\begin{equation}
    \label{eq:Yspaceinappendix}
    \mc{Y}= (L^\infty_tH^2_s\cap L^2_tH^4_s)\times (L^\infty_tH^1_s\cap L^2_tH^3_s) \cap \{ {\rm BCs} \} \,,
\end{equation}
equipped with the norm 
\begin{equation}
    \norm{(\X_s,\kappa_3)}_{\mc{Y}} := \norm{\X_s}_{L^\infty_tH^2_s} + \norm{\X_s}_{L^2_tH^4_s} + \norm{\kappa_3}_{L^\infty_tH^1_s} + \norm{\kappa_3}_{L^2_tH^3_s}\,.
\end{equation}
The space $\mc{Y}$ does not fall exactly into the energy estimates in Section~\ref{sec:lineartheory}, and we instead appeal to the estimates in Lemma~\ref{lem:semigroupestimates}. 
To close a fixed point argument for $(\X_s,\kappa_3)$ in $\mc{Y}$, we will need bounds for $\norm{\mc{R}^{\rm c}_1}_{L^{4/3}_tH^2_s}$, $\norm{\mc{R}^{\rm c}_2}_{L^2_tH^2_s}$, $\|\mc{R}^{\rm f}_0\|_{L^1_tH^1_s}$, $\|\mc{R}^{\rm f}_0\|_{L^{4/3}_tH^1_s}$, and $\|\mc{R}^{\rm f}_2\|_{L^2_tH^1_s}$. We will not explicitly track the dependence of these estimates on the parameter $\nu$ as $\nu\to 0$. 

\textbf{Stabilizes a ball}. We begin by estimating the tension $\lambda$ in terms of $\X_{ss}$ and $\kappa_3$. By analogous arguments to Lemma \ref{lem:tension} using the form of \eqref{eq:tension3}, we obtain 
\begin{equation}\label{eq:Y_H1_lam}
    \norm{\lambda}_{H^1_s} \lesssim \norm{\X_s}_{H^2_s}^2(1 + \norm{\kappa_3}_{H^1_s})\,.
\end{equation}
We may also obtain the $H^2_s$ bound
\begin{equation}\label{eq:Y_H2_lam}
    \norm{\lambda}_{H^2_s} 
    \lesssim (\norm{\X_s}_{H^4_s}^{1/2}\norm{\X_s}_{H^2_s}^{3/2} + \norm{\X_s}_{H^2_s}^4)(1 + \norm{\kappa_3}_{H^1_s}) 
\end{equation}
directly from the equation \eqref{eq:tension3}.
Using the tension estimates \eqref{eq:Y_H1_lam} and \eqref{eq:Y_H2_lam}, we may obtain the following $H^2_s$ bounds for $\mc{R}^{\rm c}_1$ and $\mc{R}^{\rm c}_2$:
\begin{align}
    \norm{\mc{R}^{\rm c}_1}_{H^2_s} &\lesssim 
    \norm{\lambda}_{H^2_s}\norm{\X_s}_{H^2_s} + \norm{\lambda}_{H^1_s}\norm{\X_s}_{H^2_s}^{1/2}\norm{\X_s}_{H^4_s}^{1/2} \nonumber \\
    &\quad + \norm{\X_s}_{H^2_s}^{5/2}\norm{\X_s}_{H^4_s}^{1/2} + \norm{\X_s}_{H^2_s}\norm{\X_s}_{H^4_s} \nonumber \\
    &\lesssim 
    (\norm{\X_s}_{H^4_s}^{1/2}\norm{\X_s}_{H^2_s}^{5/2} + \norm{\X_s}_{H^2_s}^5)(1 + \norm{\kappa_3}_{H^1_s}) + \norm{\X_s}_{H^2_s}\norm{\X_s}_{H^4_s} \,, \label{eq:Rc1}\\
    \norm{\mc{R}^{\rm c}_2}_{H^2_s} &\lesssim
    \norm{\lambda}_{H^2_s} + \norm{\lambda}_{H^1_s}\norm{\X_s}_{H^2_s}
    + \norm{\kappa_3}_{H^3_s}^{1/2}\norm{\kappa_3}_{H^1_s}^{1/2}\norm{\X_s}_{H^2_s} \nonumber \\
    &\quad + \norm{\kappa_3}_{H^1_s}\norm{\X_s}_{H^2_s}^2
    + \norm{\kappa_3}_{H^1_s}\norm{\X_s}_{H^4_s}^{1/2}\norm{\X_s}_{H^2_s}^{1/2} \nonumber \\
    &\lesssim \big(\norm{\X_s}_{H^4_s}^{1/2}(\norm{\X_s}_{H^2_s}^{3/2}+ \norm{\X_s}_{H^2_s}^{1/2}) + \norm{\X_s}_{H^2_s}^4\big)(1 + \norm{\kappa_3}_{H^1_s})  \label{eq:Rc2} \\
    &\quad + \norm{\kappa_3}_{H^3_s}^{1/2}\norm{\kappa_3}_{H^1_s}^{1/2}\norm{\X_s}_{H^2_s}\,. \nonumber
\end{align}
We may further obtain the following $H^1_s$ bounds for $\mc{R}^{\rm f}_0$, $\mc{R}^{\rm f}_1$, and $\mc{R}^{\rm f}_2$:
\begin{align}
    \norm{\mc{R}^{\rm f}_0}_{H^1_s} &\lesssim 
    \norm{\X_s}_{H^2_s}\norm{\X_s}_{H^4_s}(1+\norm{\kappa_3}_{H^1_s}) 
    +\norm{\X_s}_{H^2_s}^2\norm{\kappa_3}_{H^1_s}^{1/2}\norm{\kappa_3}_{H^3_s}^{1/2} \nonumber \\
    &\quad 
    + \norm{\kappa_3}_{H^1_s}\norm{\X_s}_{H^2_s}^4 
    + \norm{\X_s}_{H^4_s}^{1/2}\norm{\X_s}_{H^2_s}^{5/2}
    +\norm{\lambda}_{H^1_s}\norm{\X_s}_{H^2_s}^{3/2}\norm{\X_s}_{H^4_s}^{1/2} \nonumber \\
    &\lesssim 
    \big(\norm{\X_s}_{H^2_s}\norm{\X_s}_{H^4_s} + \norm{\X_s}_{H^4_s}^{1/2}\norm{\X_s}_{H^2_s}^{7/2}\big)(1+\norm{\kappa_3}_{H^1_s}) \label{eq:Rf0} \\
    &\quad +\norm{\X_s}_{H^2_s}^2\norm{\kappa_3}_{H^1_s}^{1/2}\norm{\kappa_3}_{H^3_s}^{1/2} \,, \nonumber \\
    \norm{\mc{R}^{\rm f}_1}_{H^1_s} &\lesssim \norm{\X_s}_{H^2_s}\norm{\X_s}_{H^4_s} \,, \label{eq:Rf1} \\
    \norm{\mc{R}^{\rm f}_2}_{H^1_s} &\lesssim \norm{\kappa_3}_{H^1_s}(1+\norm{\X_s}_{H^2_s}^2) \,. \label{eq:Rf2}
\end{align}

Let $S_T(\X_{ss},\kappa_3)$ denote the time-$T$ solution map for the system \eqref{eq:Xs_evol}-\eqref{eq:kap3_evol}. We may now show that $S_T(\cdot,\cdot)$ maps a ball to a ball in $\mc{Y}$. Let $\norm{(\X_{ss},\kappa_3)}_{\mc{Y}} \le N$ for some $N>0$.
Integrating the inequalities \eqref{eq:Rc1}, \eqref{eq:Rc2}, \eqref{eq:Rf0}, \eqref{eq:Rf1}, and \eqref{eq:Rf2} in time from 0 to $T$, we may estimate 
\begin{align}
    \norm{\mc{R}^{\rm c}_1}_{L^{4/3}_tH^2_s} &\lesssim 
    (T^{1/2}\norm{\X_s}_{L^2_tH^4_s}^{1/2}\norm{\X_s}_{L^\infty_tH^2_s}^{5/2} + T^{3/4}\norm{\X_s}_{L^\infty_tH^2_s}^5)(1 + \norm{\kappa_3}_{L^\infty_tH^1_s}) \nonumber \\
    &\quad  + T^{1/4}\norm{\X_s}_{L^\infty_tH^2_s}\norm{\X_s}_{L^2_tH^4_s}  \nonumber \\
    &\lesssim  T^{1/2}N^3(1 + T^{1/4}N^2)(1 + N) + T^{1/4}N^2 \,, \label{eq:Rc1T}\\
    \norm{\mc{R}^{\rm c}_2}_{L^2_tH^2_s} &\lesssim
    (1 + \norm{\kappa_3}_{L^\infty_tH^1_s})\big(T^{1/4} \norm{\X_s}_{L^2_tH^4_s}^{1/2}(\norm{\X_s}_{L^\infty_tH^2_s}^{3/2}+ \norm{\X_s}_{L^\infty_tH^2_s}^{1/2}) \nonumber \\
    &\quad + T^{1/2}\norm{\X_s}_{L^\infty_tH^2_s}^4\big) + T^{1/4}\norm{\kappa_3}_{L^2_tH^3_s}^{1/2}\norm{\kappa_3}_{L^\infty_tH^1_s}^{1/2}\norm{\X_s}_{L^\infty_tH^2_s}  \nonumber \\
    &\lesssim 
    T^{1/4} N \big(1+N + T^{1/4}N^3\big)(1 + N) \,, \label{eq:Rc2T} \\
    \norm{\mc{R}^{\rm f}_0}_{L^1_tH^1_s} &\lesssim 
    (1+\norm{\kappa_3}_{L^\infty_tH^1_s})\big(T^{1/2}\norm{\X_s}_{L^\infty_tH^2_s}\norm{\X_s}_{L^2_tH^4_s}  \nonumber \\
    &\quad + T^{3/4}\norm{\X_s}_{L^2_tH^4_s}^{1/2}\norm{\X_s}_{L^\infty_tH^2_s}^{7/2}\big) + T^{3/4}\norm{\X_s}_{L^\infty_tH^2_s}^2\norm{\kappa_3}_{L^\infty_tH^1_s}^{1/2}\norm{\kappa_3}_{L^2_tH^3_s}^{1/2}  \nonumber \\
    &\lesssim 
    T^{1/2}N^2\big(1+ T^{1/4}N^2\big)(1+N) + T^{3/4}N^3\,, \label{eq:Rf0T} \\
    \norm{\mc{R}^{\rm f}_1}_{L^{4/3}_tH^1_s} &\lesssim T^{1/4}\norm{\X_s}_{L^\infty_tH^2_s}\norm{\X_s}_{L^2_tH^4_s} 
    \lesssim T^{1/4}N^2\,, \label{eq:Rf1T} \\
    \norm{\mc{R}^{\rm f}_2}_{L^2_tH^1_s} &\lesssim T^{1/2}\norm{\kappa_3}_{L^\infty_tH^1_s}(1+\norm{\X_s}_{L^\infty_tH^2_s}^2) 
    \lesssim  T^{1/2}N(1+N^2) \,. \label{eq:Rf2T}
\end{align}
Using the form of the estimates in Lemma~\ref{lem:semigroupestimates} for \eqref{eq:Xs_evol} and \eqref{eq:kap3_evol}, we thus obtain 
\begin{equation}
\begin{aligned}
    \norm{S_T(\X_{ss},\kappa_3)}_\mc{Y}
    &\le \norm{\X_s^{\rm in}}_{H^2_s} + \norm{\kappa_3^{\rm in}}_{H^1_s} + C(\norm{\mc{R}^{\rm c}_1}_{L^{4/3}_tH^2_s} + \norm{\mc{R}^{\rm c}_2}_{L^2_tH^2_s}) \\
    &\quad + C(\norm{\mc{R}^{\rm f}_0}_{L^1_tH^1_s} + \norm{\mc{R}^{\rm f}_1}_{L^{4/3}_tH^1_s}+ \norm{\mc{R}^{\rm f}_2}_{L^2_tH^1_s}) \\
    &\le \norm{\X_s^{\rm in}}_{H^2_s} + \norm{\kappa_3^{\rm in}}_{H^1_s} \\
    &\quad + C\, T^{1/4}\,N \big( 1+N^2 + T^{1/4}N^2(1 + T^{1/4}N^2)(1 + N^4) \big)  \,.
\end{aligned}
\end{equation}
Choosing $N=2(\norm{\X_s^{\rm in}}_{H^2_s} + \norm{\kappa_3^{\rm in}}_{H^1_s})$ and taking $T$ small enough that $C\, T^{1/4}\,N \big( 1+N^2 + T^{1/4}N^2(1 + T^{1/4}N^2)(1 + N^4) \big)\le \frac{N}{4}$\,, we obtain 
\begin{equation}\label{eq:ball2ball}
    \norm{S_T(\X_s,\kappa_3)}_{\mc{Y}} \le \frac{3}{4}N\,.
\end{equation}

\textbf{Contraction}. We next show that the solution map is a contraction on the ball of radius $N$ in $\mc{Y}$. Consider two curve-frame pairs $(\X_s^{(1)},\kappa_3^{(1)})$ and $(\X_s^{(2)},\kappa_3^{(2)})$ belonging to this ball. We again begin with the tension, and again follow the curve-only setting. As in \eqref{eq:Rlambdak}, we let $R_\lambda^{(k)}$ denote the right-hand side of the tension equation \eqref{eq:tension3} for curve-frame pair $(\X_s^{(k)},\kappa_3^{(k)})$, $k=1,2$. We may estimate the difference  
\begin{equation}\label{eq:Y_Rlam_diff}
\begin{aligned}
    \norm{R_\lambda^{(1)}- R_\lambda^{(2)}}_{(H^1)^*} &\lesssim  \norm{\X_{s}^{(1)}-\X_{s}^{(2)}}_{H^2_s} \big(1 + (1+ \norm{\X_{s}^{(1)}}_{H^2_s}) \|\kappa_3^{(2)}\|_{H^1_s} \big)\times  \\ 
    &\qquad 
    \times \big(\norm{\X_{s}^{(1)}}_{H^2_s} +\norm{\X_{s}^{(2)}}_{H^2_s}\big)
    + \norm{\X_{s}^{(1)}}_{H^2_s}^2\|\kappa_3^{(1)}- \kappa_3^{(2)}\|_{H^1_s} \,.
\end{aligned}
\end{equation}
Defining $q^{(1)}$, $q^{(2)}$ as in \eqref{eq:qdef}, by analogous arguments, we obtain the estimate
\begin{equation}\label{eq:qdiff}
\begin{aligned}
    \norm{q^{(1)}-q^{(2)}}_{H^1_s}&\lesssim \big(\norm{\X_{s}^{(1)}}_{H^2_s} +\norm{\X_{s}^{(2)}}_{H^2_s}\big)\times\\
    &\quad \times\norm{\X_{s}^{(1)}-\X_{s}^{(2)}}_{H^2_s}
    \norm{\X_{s}^{(1)}}_{H^2_s}^2(1 + \|\kappa_3^{(1)}\|_{H^1_s})\,.
\end{aligned}
\end{equation}
Combining \eqref{eq:Y_Rlam_diff} and \eqref{eq:qdiff}, we may then obtain the following bound for $\lambda^{(1)}-\lambda^{(2)}$ on $\mc{Y}$:
\begin{align}
    \norm{\lambda^{(1)}-\lambda^{(2)}}_{L^\infty_tH^1_s} 
    \lesssim N(1 + N^3) \norm{\X_{s}^{(1)}-\X_{s}^{(2)}}_{L^\infty_tH^2_s} 
     + N^2\|\kappa_3^{(1)}- \kappa_3^{(2)}\|_{L^\infty_tH^1_s} \,. \label{eq:Y_lamdiff_H1} 
%
\end{align}
%
%
%
From the tension equation \eqref{eq:tension3} and the $L^\infty_tH^1_s$ bound \eqref{eq:Y_lamdiff_H1}, we may additionally obtain the following $L^2_tH^2_s$ estimate for $\lambda^{(1)}-\lambda^{(2)}$ on $\mc{Y}$:
\begin{equation}\label{eq:Y_lamdiff_H2}
\begin{aligned}
    \norm{\lambda^{(1)}-\lambda^{(2)}}_{L^2_tH^2_s} 
    &\lesssim  T^{1/4}\,N\bigg( \norm{\X_{s}^{(1)}-\X_{s}^{(2)}}_{L^2_tH^4_s}^{1/2}\norm{\X_{s}^{(1)}-\X_{s}^{(2)}}_{L^\infty_tH^2_s}^{1/2} \\
    &\quad +\norm{\X_{s}^{(1)}-\X_{s}^{(2)}}_{L^\infty_tH^2_s} 
    \bigg) + T^{1/2}\,N^2\bigg((1+N^2)\|\kappa_3^{(1)}- \kappa_3^{(2)}\|_{L^\infty_tH^1_s} \\
    &\quad + \big(1+ N^4\big) \norm{\X_{s}^{(1)}-\X_{s}^{(2)}}_{L^\infty_tH^2_s} \bigg)\,.
\end{aligned}
\end{equation}
Furthermore, each of the right-hand side differences $(\mc{R}^{\rm c}_j)^{(1)} -(\mc{R}^{\rm c}_j)^{(2)}$ and $(\mc{R}^{\rm f}_j)^{(1)} -(\mc{R}^{\rm f}_j)^{(2)}$ may be shown to satisfy
\begin{equation}
\begin{aligned}
&\norm{(\mc{R}^{\rm c}_1)^{(1)}-(\mc{R}^{\rm c}_1)^{(2)}}_{L^{4/3}_tH^2_s} \lesssim 
    T^{1/4}N\norm{\lambda^{(1)}-\lambda^{(2)}}_{L^2_tH^2_s}
    + T^{1/2}N\norm{\lambda^{(1)}-\lambda^{(2)}}_{L^\infty_tH^1_s} \\
    &\qquad + T^{1/4}N\big(1+ T^{1/4}N(1+T^{1/4}N^2)(1+N)\big)\norm{(\X_{s}^{(1)}-\X_{s}^{(2)},\kappa_3^{(1)}- \kappa_3^{(2)})}_{\mc{Y}} \,,\\
&\norm{(\mc{R}^{\rm c}_2)^{(1)}-(\mc{R}^{\rm c}_2)^{(2)}}_{L^2_tH^2_s} \lesssim
    \norm{\lambda^{(1)}-\lambda^{(2)}}_{L^2_tH^2_s} + T^{1/2}N\norm{\lambda^{(1)}-\lambda^{(2)}}_{L^\infty_tH^1_s} \\
    &\qquad + T^{1/4}N\big(1+T^{1/4}(1+N^2)\big)(1+N^2)\norm{(\X_{s}^{(1)}-\X_{s}^{(2)},\kappa_3^{(1)}- \kappa_3^{(2)})}_{\mc{Y}}\,, \\
&\norm{(\mc{R}^{\rm f}_0)^{(1)}-(\mc{R}^{\rm f}_0)^{(2)}}_{L^1_tH^1_s} \lesssim 
    T^{1/2}\big(1+ T^{1/4}N\big)(N+N^3)\norm{(\X_{s}^{(1)}-\X_{s}^{(2)},\kappa_3^{(1)}- \kappa_3^{(2)})}_{\mc{Y}} \,, \\
& \norm{(\mc{R}^{\rm f}_1)^{(1)}-(\mc{R}^{\rm f}_1)^{(2)}}_{L^{4/3}_tH^1_s} 
    \lesssim T^{1/4}N\norm{(\X_{s}^{(1)}-\X_{s}^{(2)},\kappa_3^{(1)}- \kappa_3^{(2)})}_{\mc{Y}} \,, \\
&\norm{(\mc{R}^{\rm f}_2)^{(1)}-(\mc{R}^{\rm f}_2)^{(2)}}_{L^2_tH^1_s} \lesssim 
    T^{1/2}(1+N^2)\norm{(\X_{s}^{(1)}-\X_{s}^{(2)},\kappa_3^{(1)}- \kappa_3^{(2)})}_{\mc{Y}} \,.
\end{aligned}
\end{equation}

Altogether, using Lemma~\ref{lem:semigroupestimates}, we have
\begin{equation}
\begin{aligned}
    \norm{S_T(\X_{s}^{(1)} -\X_{s}^{(2)},\kappa_3^{(1)} -\kappa_3^{(2)})}_{\mc{Y}} & \le C\big(T^{1/4}(N+N^3)+ T^{1/2}(1+N^6) \\
    & + T^{3/4}(N^2+N^7) \big)\norm{(\X_{s}^{(1)}-\X_{s}^{(2)},\kappa_3^{(1)}- \kappa_3^{(2)})}_{\mc{Y}} \,.
\end{aligned}
\end{equation}
Taking $T$ small enough that $C\big(T^{1/4}(N+N^3)+ T^{1/2}(1+N^6) + T^{3/4}(N^2+N^7)\big)<1$, we obtain a contraction on $\mc{Y}$, and hence the existence of a unique solution $(\X_s,\kappa_3)\in (L^\infty_tH^2_s\cap L^2_tH^4_s)\times(L^\infty_tH^1_s\cap L^2_tH^3_s)$.

\textbf{Bootstrapping}. We next apply a bootstrapping argument to demonstrate, qualitatively, that
\begin{equation}\label{eq:qualboot}
\X_s \in L^\infty_tH^3_s\cap L^2_tH^5_s(I \times (0,T)) \, . 
\end{equation}
This will be enough to justify the quantitative energy estimates in Section \ref{sec:rod3D}. We again consider the $\X_s$ equation~\eqref{eq:Xs_evol}. According to Lemma~\ref{lem:parabolicestimates},~\eqref{eq:qualboot} will hold provided that $(\mathcal{R}^{\rm c}_1)_s, (\mathcal{R}^{\rm c}_2)_{ss} \in L^2_t H^1_s$. 
It suffices to verify $\mathcal{R}^{\rm c}_1 \in L^2_t H^2_s$ and $\mathcal{R}^{\rm c}_2 \in L^2_t H^3_s$ based on our current information that $\X_s \in L^\infty_t H^2_s \cap L^2_t H^4_s$ and $\kappa_3 \in L^\infty_t H^1_s \cap L^2_t H^3_s$. Throughout, we rely on the tame estimates
\begin{equation}
\| fg \|_{H^k} \lesssim \| f \|_{L^\infty} \| g \|_{H^k} + \| f \|_{H^k} \| g \|_{L^\infty}
\end{equation}
and 1D Sobolev embedding into $L^\infty$. The main non-trivial point to verify is that $\lambda \in L^2_t H^3_s$. For this, we examine the equation
\begin{equation}
(1+\gamma) \lambda_{ss} = |\X_{ss}|^2 \lambda - (4+3\gamma) (\X_{sss} \cdot \X_{ss})_s + |\X_{sss}|^2 - \eta \kappa_3 \X_s (\X_{ss} \times \X_{sss}) \, .
\end{equation}
Under the current assumptions, the right-hand side belongs to $L^2_t H^1_s$. This completes the bootstrapping argument.
\end{proof}

\subsection{Parabolic estimates, revisited}
We prove an extension of Lemma~\ref{lem:parabolicestimates}.
\begin{lemma}
    \label{lem:semigroupestimates}
Let $T > 0$ and $I = [0,1]$ or $\T$. Suppose $u^{\rm in} \in H^1_0$, $f_0 \in L^1_t (H^1_0)_s$, $f_1 \in L^{4/3}_t H^1_s$, and $f_2 \in L^2_t H^1_s$ (with norms evaluated on $I \times (0,T)$). Then the solution $u$ to the Dirichlet biharmonic problem
\begin{equation}
    \label{eq:biharmonictointerpolate}
\begin{aligned}
(\p_t + \p_s^4) u &= f_0 + \p_s f_1 + \p_s^2 f_2 \\
u\,,\,\p_s u\big|_{\p I} &= 0 \\
u\big|_{t=0} &= u^{\rm in}
\end{aligned}
\end{equation}
belongs to $C([0,T];H^1_s) \cap L^2_t H^3_s$, and
\begin{equation}
    \label{eq:myinterpolation}
\| u \|_{L^\infty_t H^1_s} + \| \p_s^2 u \|_{L^2_t H^1_s} \lesssim \| f_0 \|_{L^1_t (H^1_0)_s} + \| f_1 \|_{L^{4/3}_t H^1_s} + \| f_2 \|_{L^2_t H^1_s} \, .
\end{equation}
\end{lemma}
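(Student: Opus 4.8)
The plan is to read the statement as Lemma~\ref{lem:parabolicestimates} shifted up by one spatial derivative and to exploit linearity of the solution map: split $u = u_{\rm in} + u_0 + u_1 + u_2$ according to the four data (initial datum $u^{\rm in}$ and forcings $f_0, f_1, f_2$, with the other three set to zero), estimate each piece, and sum. The torus and the interval behave differently and I would treat them separately.

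On $I=\T$ the boundary conditions are vacuous and $\partial_s$ commutes with the equation, so I would simply differentiate. The function $w:=\partial_s u$ solves $(\partial_t+\partial_s^4)w=\partial_s f_0+\partial_s(\partial_s f_1)+\partial_s^2(\partial_s f_2)$ with datum $\partial_s u^{\rm in}\in L^2_s$; since $f_j\in\cdot_t H^1_s$, the three forcing pieces lie in $L^1_tL^2_s$, $L^{4/3}_tL^2_s$, $L^2_tL^2_s$, so Lemma~\ref{lem:parabolicestimates} gives $\|w\|_{L^\infty_tL^2_s}+\|w\|_{L^2_tH^2_s}\lesssim \|u^{\rm in}\|_{H^1_s}+\|f_0\|_{L^1_tH^1_s}+\|f_1\|_{L^{4/3}_tH^1_s}+\|f_2\|_{L^2_tH^1_s}$. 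The remaining mode is the mean $\overline u(t)=\int_\T u\,ds$, which satisfies $\frac{d}{dt}\overline u=\int_\T f_0\,ds$, hence $|\overline u(t)|\le|\overline u^{\rm in}|+\|f_0\|_{L^1_tL^1_s}$. Using $\|v\|_{H^k_s(\T)}\approx\|\partial_s v\|_{H^{k-1}_s}+|\overline v|$ and the interpolation $\|u\|_{L^2_tH^3_s}\lesssim\|u\|_{L^\infty_tH^1_s}+\|\partial_s u\|_{L^2_tH^2_s}$, one obtains~\eqref{eq:myinterpolation}; continuity of $u$ into $H^1_s$ follows from $w\in C_tL^2_s$ and $\overline u\in C^1_t$.

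On $I=[0,1]$ the clamped conditions $u,\partial_s u|_{\partial I}=0$ are not preserved by $\partial_s$, so differentiation is obstructed, and I would instead use the self-adjoint realization $A$ of $\partial_s^4$ with clamped conditions (shifting by $I$, or quotienting the kernel, as needed), recording the domain identifications $D(A^{1/4})=H^1_0$ and $D(A^{3/4})=H^3_s\cap\{u,\partial_s u|_{\partial I}=0\}$ with equivalent norms, obtained by interpolating the scale $L^2_s,\ D(A^{1/2})=H^2_0,\ D(A)=H^4_s\cap H^2_0$. The $u^{\rm in}$-piece is controlled by functional calculus: $\|A^{1/4}e^{-tA}u^{\rm in}\|_{L^\infty_tL^2_s}\le\|u^{\rm in}\|_{H^1_0}$ and $\int_0^\infty\|A^{3/4}e^{-tA}u^{\rm in}\|_{L^2_s}^2\,dt=\frac12\|A^{1/4}u^{\rm in}\|_{L^2_s}^2$. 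The $f_0$- and $f_2$-pieces are handled by the $D(A^{1/4})$-weighted energy estimate (test against $A^{1/2}u_i$, legitimate after a standard mollification since \emph{a priori} the forcing is only in $H^{-1}_s$): this yields $\frac{d}{dt}\frac12\|A^{1/4}u_i\|^2+\|A^{3/4}u_i\|^2=\langle(\text{forcing}),A^{1/2}u_i\rangle$, bounded for $i=0$ by $\|f_0\|_{H^1_0}\|A^{1/4}u_0\|$ and integrated directly, and for $i=2$ by one integration by parts (the boundary term vanishes since $A^{1/2}u_2\in H^2_0$) to $\|f_2\|_{H^1_s}\|A^{3/4}u_2\|$, closed by Young's inequality using $\|f_2\|_{L^2_tH^1_s}$.

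The genuinely delicate term, and the main obstacle, is the scaling-critical piece $u_1$ forced by $\partial_s f_1$ with $f_1\in L^{4/3}_tH^1_s$: a bare energy estimate at this regularity would require $f_1\in L^2_tL^2_s$, which is strictly stronger on a finite interval, so one must reproduce the mechanism behind the $f_1$-estimate of Lemma~\ref{lem:parabolicestimates}. For the top-order output I would use Duhamel's formula together with the smoothing bound $\|A^{3/4}e^{-rA}\|_{L^2_s\to L^2_s}\lesssim r^{-3/4}$ and Young's inequality for Lorentz spaces ($r^{-3/4}\in L^{4/3,\infty}(0,\infty)$ convolved against $L^{4/3}_t$ lands in $L^2_t$), obtaining $\|A^{3/4}u_1\|_{L^2_tL^2_s}\lesssim\|f_1\|_{L^{4/3}_tH^1_s}$. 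For the $L^\infty_t$ part I would use the energy identity $\frac12\|A^{1/4}u_1(t)\|^2+\int_0^t\|A^{3/4}u_1\|^2=\int_0^t\langle\partial_s f_1,A^{1/2}u_1\rangle$, bound the right-hand side by $\|f_1\|_{L^{4/3}_tH^1_s}\,\|A^{1/2}u_1\|_{L^4_tL^2_s}$ and then interpolate $\|A^{1/2}u_1\|_{L^4_tL^2_s}\lesssim\|A^{1/4}u_1\|_{L^\infty_tL^2_s}^{1/2}\|A^{3/4}u_1\|_{L^2_tL^2_s}^{1/2}$, absorbing exactly as in the base case. Summing the four pieces gives~\eqref{eq:myinterpolation}, and membership in $C([0,T];H^1_s)\cap L^2_tH^3_s$ follows together with $\partial_s^2 u\in L^2_tH^1_s$. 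Throughout, the point requiring care is the bookkeeping of which integrations by parts are admissible under the clamped conditions, i.e.~tracking that the relevant factors lie in $H^2_0$.
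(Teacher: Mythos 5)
Your proof is correct and follows essentially the same route as the paper: both work with the self-adjoint clamped realization of $\p_s^4$, use the Grisvard-type identifications $\mathcal{D}(A^{1/4}) = H^1_0$ and $\mathcal{D}(A^{3/4}) = H^3 \cap H^2_0$, and obtain the result by raising the $L^2$-level energy estimates by a quarter power of the operator (the paper compresses this into ``apply $L^{1/4}$ to the equation and invoke Lemma~\ref{lem:parabolicestimates},'' whereas you re-derive the critical $L^{4/3}_t$ piece explicitly via Duhamel, the smoothing bound $\|A^{3/4}e^{-rA}\|\lesssim r^{-3/4}$, and Lorentz--Young). The only difference is expository: you make the piece-by-piece bookkeeping and the torus mean-mode explicit, which the paper delegates to the base lemma and to ``Fourier series.''
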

\begin{proof}
We concern ourselves only with $I=[0,1]$, since the periodic estimates can be justified on the basis of Fourier series. Consider the unbounded operator $L : D(L) \subset L^2_s \to L^2_s$ defined~by 
\begin{equation}
L = \p_s^4 \, , \quad D(L) := \{ H^4_s(I) : u, \p_s u|_{s=0,1} = 0 \} \, .
\end{equation}
Then $L$ is positive and self-adjoint, and $-L$ generates an analytic semigroup. The eigenvalues $\lambda_k$ and $L^2_s$-normalized eigenfunctions $\phi_k$, $k \in \N$, were computed in~\cite{mori2023well}. The fractional powers of $L$ are expressed in terms of the eigenfunction expansions
\begin{equation}
L^\theta u = \sum_{k \in \N} \lambda_k^\theta \langle u_k, \phi_k \rangle \phi_k(s) \, ,
\end{equation}
where $\theta \geq 0$ and $u \in \mathcal{D}(L^\theta)$ consists of those $u \in L^2$ satisfying $(\lambda_k^\theta |\langle u_k, \phi_k \rangle|)_{k \in \N} \in \ell^2(\N)$. Crucially, we have the following characterization of the domains of fractional powers of the operators~\cite{Grisvard1967}:
\begin{equation}
\mathcal{D}(L^\theta) := H^\sigma \cap H^{\min(\sigma,2)}_0 \, , \quad \forall \sigma = 4 \theta \in [0,4] \, ,
\end{equation}
with the norm equivalence
\begin{equation}
\| u \|_{H^\sigma_s} \simeq_\sigma \| u \|_{L^2_s} + \| L^\theta u \|_{L^2_s} \, , \quad \forall u \in \mathcal{D}(L^\theta) \, .
\end{equation}
We can moreover identify domains of $L^{-\theta}$, suitable defined, with $H^{-4\theta}$ when $4\theta \leq 5/2$. With this notation, the standard energy estimate\footnote{One can prove the estimates also for forcing terms in the Chemin-Lerner spaces $\wt{L}^q_t \mathcal{D}(L^\theta)_s$, which are controlled by $L^q_t \mathcal{D}(L^\theta)_s$ when $q \leq 2$:
\begin{equation}
\| g \|_{L^q_t \mathcal{D}(L^\theta)_s} = \| \| ( \lambda_k^{\theta} g_k ) \|_{\ell^2(\N)} \|_{L^q_t} \geq \| ( \lambda_k^{\theta} \| g_k \|_{L^q_t} ) \|_{\ell^2(\N)} =: \| g \|_{\wt{L}^q_t \mathcal{D}(L^\theta)_s} \, .
 \end{equation}}
 in $C([0,T];L^2) \cap L^2_t \dot H^2_s(I \times (0,T))$ for the Dirichlet problem
\begin{equation}
\begin{aligned}
(\p_t + \p_s^4) u &= g^{(0)} + g^{(1)} + g^{(2)} \\
u,\p_s u|_{s=0,1} &= 0 \\
u|_{t=0} = u^{\rm in} \in H^2_0(I)
\end{aligned}
\end{equation}
with
\begin{equation}
g^{(0)} \in L^1_t L^2_s \, , \quad g^{(1)} \in L^{4/3}_t H^{-1}_s \, , \quad g^{(2)} \in L^2_t H^{-2}_s
\end{equation}
can be ``raised" by simply applying $L^\theta$ to the equation. The estimate~\eqref{eq:myinterpolation} follows from the choice $\theta = 1/4$. \end{proof}

\section{Curvature formulation with forcing}\label{app:forcing}
Here we record the full expressions for the rod evolution \eqref{eq:kap1}-\eqref{eq:kap3} forced by intrinsic curvature $\zeta_1(s,t)$, $\zeta_2(s,t)$ and twist $\zeta_3(s,t)$. Writing out the system as an evolution for $(\kappa_1,\kappa_2,\kappa_3)$ highlights the potential backward heat operator in the $\kappa_3$ evolution and emphasizes the complicated effects of forcing terms. 

Recalling the expressions \eqref{eq:f_form} and \eqref{eq:u_RFT}, we may calculate 
\begin{align}
    &\bu_s\cdot\e_1 = 
    -(\kappa_1 - \zeta_1)_{sss} + (\kappa_3)_{ss}(\kappa_2 - \zeta_2) -\eta\kappa_2(\kappa_3 - \zeta_3)_{ss} +\mc{R}_{\rm a}[\kappa_j,\zeta_j,\lambda]  \label{eq:us_e1}\\
    &\bu_s\cdot\e_2 = 
     -(\kappa_2 - \zeta_2)_{sss} - (\kappa_3)_{ss}(\kappa_1 - \zeta_1) + \eta\kappa_1(\kappa_3 - \zeta_3)_{ss} + \mc{R}_{\rm b}[\kappa_j,\zeta_j,\lambda] \label{eq:us_e2} \\
    &(1+\gamma)\lambda_{ss}- (\kappa_1^2+\kappa_2^2)\lambda = -\mc{R}_{\rm t}[\kappa_j,\zeta_j] \,, \label{eq:wtlambda}
\end{align}
where 
\begin{equation}\label{eq:Rdefs}
\begin{aligned}
    \mc{R}_{\rm a} &= 3\kappa_3(\kappa_2 - \zeta_2)_{ss}-\eta(\kappa_2)_{ss}(\kappa_3 - \zeta_3)+ 3(\kappa_3)_s(\kappa_2 - \zeta_2)_s -2\eta(\kappa_2)_s(\kappa_3 - \zeta_3)_s \\
        &\quad  +3\kappa_3 (\kappa_3)_s(\kappa_1 - \zeta_1)  + 3\kappa_3^2(\kappa_1 - \zeta_1)_s - \eta\big(\kappa_1\kappa_3(\kappa_3 - \zeta_3)\big)_s - \eta\kappa_3\big(\kappa_1(\kappa_3 - \zeta_3)\big)_s\\
        &\quad - \kappa_3^3(\kappa_2 - \zeta_2) +\eta\kappa_2\kappa_3^2(\kappa_3 - \zeta_3)  
        +(1+\gamma) \big[\kappa_3\kappa_1^2(\kappa_2 - \zeta_2)
        -\kappa_1^2(\kappa_1 - \zeta_1)_s \\  
        &\quad -\kappa_1\kappa_2(\kappa_2 - \zeta_2)_s  -\kappa_1\kappa_2\kappa_3(\kappa_1 - \zeta_1) -\kappa_1(\lambda+\kappa_1^2+\kappa_2^2)_s\big]\\
        &\quad + \big((\lambda+\kappa_1^2+\kappa_2^2) \kappa_1\big)_s- (\lambda+\kappa_1^2+\kappa_2^2)\kappa_2\kappa_3 \\
\mc{R}_{\rm b} &=  
    - 3\kappa_3(\kappa_1 - \zeta_1)_{ss}
     + \eta(\kappa_1)_{ss}(\kappa_3 - \zeta_3)
     - 3(\kappa_3)_s(\kappa_1 - \zeta_1)_s
      + 2\eta(\kappa_1)_s(\kappa_3 - \zeta_3)_s\\
    &\quad  + 3\kappa_3 (\kappa_3)_s(\kappa_2 - \zeta_2) +3\kappa_3^2(\kappa_2 - \zeta_2)_s
    -\eta\big(\kappa_2\kappa_3(\kappa_3 - \zeta_3)\big)_s 
    -\eta\kappa_3\big(\kappa_2(\kappa_3 - \zeta_3)\big)_s \\
    &\quad +\kappa_3^3(\kappa_1 - \zeta_1) - \eta\kappa_1\kappa_3^2(\kappa_3 - \zeta_3) +(1+\gamma)\big[-\kappa_2\kappa_1(\kappa_1 - \zeta_1)_s  -\kappa_2^2(\kappa_2 - \zeta_2)_s\\
    &\quad +\kappa_2\kappa_1 \kappa_3(\kappa_2 - \zeta_2)-\kappa_2^2\kappa_3(\kappa_1 - \zeta_1) -\kappa_2(\lambda+\kappa_1^2+\kappa_2^2)_s\big] \\
    &\quad + \big((\lambda+\kappa_1^2+\kappa_2^2) \kappa_2\big)_s+ (\lambda+\kappa_1^2+\kappa_2^2) \kappa_1\kappa_3 \\
\mc{R}_{\rm t} &= (1+\gamma)\big(-\kappa_1(\kappa_1 + \zeta_1)_s- \kappa_2(\kappa_2 + \zeta_2)_s -\kappa_3[\kappa_1 (\kappa_2 - \zeta_2)-\kappa_2(\kappa_1 - \zeta_1)]\big)_s  \\
    &\quad  +(\kappa_1^2+\kappa_2^2)^2+ \kappa_1(\kappa_1 - \zeta_1)_{ss} +\kappa_2(\kappa_2 - \zeta_2)_{ss} 
    +\eta(\kappa_1^2+\kappa_2^2)\kappa_3(\kappa_3 - \zeta_3)
     \\
    &\quad - \eta\big(\kappa_2\big(\kappa_1(\kappa_3 - \zeta_3)\big)_s-\kappa_1\big(\kappa_2(\kappa_3 - \zeta_3)\big)_s\big) +(\kappa_3)_s\big(\kappa_2 (\kappa_1 - \zeta_1)-\kappa_1 (\kappa_2 - \zeta_2)\big)  \\
    &\quad + 2\kappa_3\big(\kappa_2(\kappa_1 - \zeta_1)_s-\kappa_1(\kappa_2 - \zeta_2)_s\big) -\kappa_3^2\big(\kappa_1(\kappa_1 - \zeta_1) +\kappa_2(\kappa_2 - \zeta_2)\big) \,.
\end{aligned}
\end{equation}

Using \eqref{eq:us_e1}, \eqref{eq:us_e2}, and \eqref{eq:T_form}, we may write out the right-hand side of \eqref{eq:kappa1dot}-\eqref{eq:kappa3dot} as
\begin{align}
\p_t \kappa_1 
    &= -(\kappa_1 - \zeta_1)_{ssss} + (\kappa_3)_{sss}(\kappa_2 - \zeta_2)-\eta\kappa_2(\kappa_3 - \zeta_3)_{sss} +\mc{R}_1[\kappa_j,\zeta_j,\lambda] \label{eq:kap1B} \\
\p_t \kappa_2 
    &= -(\kappa_2 - \zeta_2)_{ssss} - (\kappa_3)_{sss}(\kappa_1 - \zeta_1) + \eta\kappa_1(\kappa_3 - \zeta_3)_{sss} + \mc{R}_2[\kappa_j,\zeta_j,\lambda] \label{eq:kap2B} \\
\p_t \kappa_3 
    &= \frac{\eta}{\alpha}(\kappa_3 - \zeta_3)_{ss} +\eta(\kappa_1^2+\kappa_2^2)(\kappa_3 - \zeta_3)_{ss} -\big( \kappa_1(\kappa_1 - \zeta_1)+\kappa_2(\kappa_2 - \zeta_2)\big)(\kappa_3)_{ss}   \nonumber \\
    &\qquad  -\kappa_1(\kappa_2 - \zeta_2)_{sss}+\kappa_2(\kappa_1 - \zeta_1)_{sss} +\mc{R}_3[\kappa_j,\zeta_j,\lambda]  \label{eq:kap3B}\,,
\end{align}
with
\begin{equation}
\begin{aligned}
    \mc{R}_1&= \big(\mc{R}_{\rm a}\big)_s  + (\kappa_3)_{ss}(\kappa_2 - \zeta_2)_s -\eta\big((\kappa_2)_s+\kappa_1\kappa_3\big)(\kappa_3 - \zeta_3)_{ss}\\
    &\quad +\kappa_3(\kappa_2 - \zeta_2)_{sss} +\kappa_3 (\kappa_3)_{ss}(\kappa_1 - \zeta_1) -\kappa_3\mc{R}_{\rm b}  + \frac{\eta}{\alpha}\kappa_2(\kappa_3-\zeta_3)_s\\
    \mc{R}_2&= 
    \big(\mc{R}_{\rm b}\big)_s - (\kappa_3)_{ss}(\kappa_1 - \zeta_1)_s -\eta\big(\kappa_2\kappa_3-(\kappa_1)_s\big)(\kappa_3 - \zeta_3)_{ss}  \\
    &\quad -\kappa_3(\kappa_1 - \zeta_1)_{sss}+ \kappa_3(\kappa_3)_{ss}(\kappa_2 - \zeta_2)  +\kappa_3\mc{R}_{\rm a} - \frac{\eta}{\alpha}\kappa_1(\kappa_3-\zeta_3)_s \\
    \mc{R}_3 &= -\kappa_2\mc{R}_{\rm a}+ \kappa_1\mc{R}_{\rm b} \,.
\end{aligned}
\end{equation}

\subsubsection*{Acknowledgments} The authors thank Sigurd Angenent and Saverio Spagnolie for helpful discussions. The authors acknowledge support from the NSF under grants DMS-2406947 (awarded to DA) and DMS-2406003 (awarded to LO) and from the Office of the Vice Chancellor for Research and Graduate Education at the University of Wisconsin-Madison with funding from the Wisconsin Alumni Research Foundation. 


\bibliographystyle{abbrv}
\bibliography{RFT_bib}

\end{document}